\documentclass[a4paper,12pt]{article}

\usepackage{xeCJK}
\usepackage{charter} 
\usepackage[english]{babel}
\usepackage{fontspec}
\babelprovide{chinese-traditional}
\usepackage[utf8]{inputenc} 
\usepackage[T1]{fontenc}

\usepackage{standalone}
\usepackage{fp}

\usepackage{tabularx}     
\usepackage{latexsym} 
\usepackage{longtable}
\usepackage{ltablex}  
\usepackage{booktabs}  
\usepackage{array}  
\usepackage{float}   
\usepackage{multirow}  
\usepackage{makecell}  

\usepackage[flushleft]{threeparttable}
\usepackage{dcolumn}
\newcolumntype{.}{D{.}{.}{-1}}
\newcolumntype{d}[1]{D{.}{\cdot}{#1}}

\usepackage{psfrag}
\usepackage{epstopdf}
\usepackage{graphics}     
\usepackage{graphicx}
\usepackage{pgfplots}
\usepackage{textcomp}  
\usepackage{pdfpages}  
\usepackage{tikz,fullpage}
\usepackage{latexsym}
\usetikzlibrary{arrows,petri,topaths,backgrounds,snakes,patterns,positioning}
\usetikzlibrary{pgfplots.groupplots}
\definecolor{mygray}{rgb}{0.9,0.9,0.9} 

\usepackage[dvipsnames]{xcolor}

\usepackage{fix-cm}   
\usepackage{geometry}
\usepackage{fullpage}  
\usepackage[titles]{tocloft}  
\usepackage[nottoc]{tocbibind}
\usepackage[raggedright]{titlesec}  
\usepackage[titletoc,title]{appendix}
\usepackage{rotfloat}  
\usepackage[justification=raggedright, labelfont=bf]{caption}  
\usepackage{enumitem}
\setlist[enumerate]{itemsep=-1pt}
\newlist{enumerate_noindent_newline}{enumerate}{1}
\setlist[enumerate_noindent_newline]{label=(\roman*),itemjoin=\\,itemsep=-1pt}
\newlist{enumerate_noindent_nonewline}{enumerate*}{1}
\setlist[enumerate_noindent_nonewline]{label=(\roman*),itemjoin=\,}
\usepackage{relsize}	   
\usepackage{anysize}
\usepackage{fancyhdr} 
\fancypagestyle{myplain}
{
  \fancyhf{}

  \fancyfoot[C]{\thepage}
}
\titleformat{\section}
  {\normalfont\Large\bfseries}{\thesection.}{1em}{}
  \titleformat{\subsection}
  {\normalfont\large\bfseries}{\thesubsection.}{1em}{} 
 

\usepackage[hang, flushmargin, bottom]{footmisc}

\usepackage{pdflscape}

\usepackage{authblk}
\usepackage[plainpages=false,breaklinks=true, citecolor=MidnightBlue, colorlinks=true, linkcolor=MidnightBlue, urlcolor=MidnightBlue,breaklinks]{hyperref}
\usepackage{breakcites}
\usepackage[round]{natbib}

\usepackage{bibentry}
\nobibliography*

\usepackage{amsmath}     
\usepackage{amssymb}
\usepackage{bm}
\usepackage{bbm}
\usepackage{mathrsfs}
\usepackage{mathtools}
\usepackage{rotating}     
\usepackage{amsthm}
\usepackage{etoolbox}
\usepackage{thmtools}
\newtheorem{assumption}{Assumption}
\newtheorem{theorem}{Theorem}
\newtheorem{lemma}{Lemma}
\newtheorem{proposition}{Proposition}
\newtheorem{corollary}{Corollary}

\newtheorem{definition}{Definition}

\newtheorem{example}{Example}

\renewcommand{\thmcontinues}[1]{continued}

\usepackage{cleveref}
\crefname{assumption}{Assumption}{Assumptions}
\Crefname{assumption}{Assumption}{Assumptions}
\crefname{example}{Example}{Examples}
\Crefname{example}{Example}{Examples}
\crefname{condition}{Condition}{Conditions}
\Crefname{condition}{Condition}{Conditions}
\crefname{theorem}{Theorem}{Theorems}
\Crefname{theorem}{Theorem}{Theorems}
\crefname{lemma}{Lemma}{Lemmas}
\Crefname{lemma}{Lemma}{Lemmas}
\crefname{proposition}{Proposition}{Propositions}
\Crefname{proposition}{Proposition}{Propositions}
\crefname{corollary}{Corollary}{Corollaries}
\Crefname{corollary}{Corollary}{Corollaries}
\crefname{definition}{Definition}{Definitions}
\Crefname{definition}{Definition}{Definitions}
\crefname{remark}{Remark}{Remarks}
\Crefname{remark}{Remark}{Remarks}
\crefname{table}{table}{tables}
\Crefname{table}{Table}{Tables}

\usepackage{scalerel}

\DeclareMathOperator*{\Var}{\mathbb{V}}
\newcommand{\E}{\mathbb{E}}

\newcommand{\prob}[1]{\mathbb{P}\left(#1\right)}
\newcommand{\probnb}{\mathbb{P}}

\newcommand{\Probn}{\mathbb{P}_n}
\newcommand{\Probnb}{\bar{\mathbb{P}}_n}
\newcommand{\Eb}[1]{\E\left[#1\right]}
\newcommand{\Ebc}[2]{\Eb{\left. #1\,\right\vert\, #2}}

\newcommand{\Vb}[1]{\Var\left[#1\right]}

\newcommand{\os}[2]{\overset{#1}{#2}}

\makeatletter
\newcommand{\mypm}{\mathbin{\mathpalette\@mypm\relax}}
\newcommand{\@mypm}[2]{\ooalign{%
  \raisebox{.1\height}{$#1+$}\cr
  \smash{\raisebox{-.6\height}{$#1-$}}\cr}}
\makeatother
\makeatletter
\newcommand{\mymp}{\mathbin{\mathpalette\@mymp\relax}}
\newcommand{\@mymp}[2]{\ooalign{%
  \raisebox{.5\height}{$#1-$}\cr
  \smash{\raisebox{-.2\height}{$#1+$}}\cr}}
\makeatother
\makeatletter
\newcommand{\pushright}[1]{\ifmeasuring@#1\else\omit\hfill$\displaystyle#1$\fi\ignorespaces}
\newcommand{\pushleft}[1]{\ifmeasuring@#1\else\omit$\displaystyle#1$\hfill\fi\ignorespaces}
\makeatother
\newcommand{\tld}[1]{\tilde{#1}}
\newcommand{\fr}[2]{\frac{#1}{#2}}

\newcommand{\ba}[1]{\bar{#1}}
\newcommand{\chk}[1]{\check{#1}}

\newcommand{\norm}[2]{\left\lVert{#2}\right\rVert_{#1}}

\newcommand{\supnorm}[1]{\left\lVert{#1}\right\rVert_{\infty}}
\newcommand{\deriv}{\mathrm{d}}
\newcommand{\dderiv}{\,\mathrm{d}}
\newcommand{\Deriv}{\mathrm{D}}
\newcommand{\bigO}[1]{O\left(#1\right)}

\newcommand{\bigOPs}[2]{O_{{#1}}\left(#2\right)}
\newcommand{\smallO}[1]{o\left(#1\right)}

\newcommand{\smallOPs}[2]{o_{#1}\left(#2\right)}
\newcommand{\indep}{\rotatebox[origin=c]{90}{$\models$}}
\renewcommand{\indep}{\!\perp\!\!\!\perp}

\newcommand{\rsquig}{\rightsquigarrow}
\newcommand{\rsquigs}[1]{\overset{#1}{\rightsquigarrow}}

\newcommand{\ceq}{\coloneqq}  
\newcommand{\eqc}{\eqqcolon} 
\newcommand{\indic}[1]{\mathbbm{1}_{#1}}

\newcommand{\Int}{\textup{Int}\,}
\newcommand{\set}[1]{\left\{#1\right\}}
\newcommand{\sumn}[1]{\sum_{#1\in[n]}}
\newcommand{\nsumn}[1]{\fr{1}{n}\sum_{#1\in[n]}}

\newcommand{\normaldist}{\mathcal{N}}
\newcommand{\real}{\mathbb{R}}

\newcommand{\dummy}{\{0,1\}}

\newcommand{\lin}{\mathrm{lin}\,}
\newcommand{\tanset}{\mathcal{T}}
\newcommand{\cond}{{\vert\,}}
\newcommand{\mhit}[1]{\mathit{#1}}
\newcommand{\derivtnull}{\fr{\deriv}{\deriv t}\big\vert_{t=0}}

\newcommand{\Vsupp}{\mathcal{V}}
\newcommand{\Ximagesp}{\mathfrak{X}}
\newcommand{\Vimagesp}{\mathfrak{V}}
\newcommand{\Zimagesp}{\mathfrak{Z}}
\newcommand{\Vsigma}{\mathscr{F}_{\Vimagesp}}
\newcommand{\Xsigma}{\mathscr{F}_{\Ximagesp}}
\newcommand{\Zsigma}{\mathscr{F}_{\Zimagesp}}
\newcommand{\VXsigma}{\mathscr{F}_{\Vimagesp\times\Ximagesp}}

\newcommand{\VZsigma}{\mathscr{F}_{\Vimagesp\times\Zimagesp}}
\newcommand{\Vsigmasub}[1]{\mathscr{F}_{{\Vimagesp}_{#1}}}
\newcommand{\mesV}{\nu_V}
\newcommand{\mesX}{\nu_X}

\newcommand{\mesVXsub}[1]{\nu_{V_{#1}X}}
\newcommand{\bsetV}{B_{\mathsf{v}}}

\newcommand{\bsetX}{B_{\mathsf{x}}}
\newcommand{\bsetZ}{B_{\mathsf{z}}}

\newcommand{\Qbar}{\bar{Q}}
\newcommand{\qbar}{\bar{q}}
\newcommand{\Qinv}{Q^{-1}}
\newcommand{\linopQX}{Q_\Xsupp}
\newcommand{\linopQXadj}{Q_{\Xsupp}^{*}}
\newcommand{\linopQXinv}{\linopQX^{-1}}
\newcommand{\linopQXinvDom}{L_2(P_{VX})\cap L_2(P_V\otimes\Qbar)}
\newcommand{\linopQXinvSpa}{\mathrm{L}_2}
\newcommand{\linopQXinvMes}{P_{\mathrm{L}}}

\newcommand{\setQident}{\mathcal{Q}_{\psi}}
\newcommand{\setQ}{\mathcal{Q}}
\newcommand{\setQdisc}{\mathcal{Q}_J}
\newcommand{\setQdiscinv}{\mathcal{Q}^{\mathrm{I}}_J}
\newcommand{\setQgen}{\mathcal{Q}_{\delta}}

\newcommand{\setQbar}{\mathcal{P}_{\Ximagesp}}
\newcommand{\linopL}{L_Q}

\newcommand{\muX}{\mu_{\Xsupp}}
\newcommand{\muXt}{\mu_{\Xsupp,t}}
\newcommand{\muXhat}{{\hat{\mu}_{\Xsupp}}}
\newcommand{\muXchk}{{\check{\mu}_{\Xsupp}}}
\newcommand{\muXs}[1]{\mu_{#1}}
\newcommand{\muXsub}[1]{\mu_{\Xsupp,#1}}
\newcommand{\muXmodel}{\mathcal{M}(\Theta)}
\newcommand{\gammaV}{\gamma_{\Vsupp}}

\newcommand{\gammaVt}{\gamma_{\Vsupp,t}}
\newcommand{\gammaVhat}{\hat{\gamma}_{\Vsupp}}
\newcommand{\gammaVtld}{\tld{\gamma}_{\Vsupp}}
\newcommand{\gammaVchk}{\check{\gamma}_{\Vsupp}}
\newcommand{\gammaVsub}[1]{\gamma_{\Vsupp,#1}}
\newcommand{\rieszX}{r}
\newcommand{\rieszXs}[1]{\rieszX_{#1}}
\newcommand{\rieszXhat}{\hat{r}}
\newcommand{\rieszXchk}{\check{r}}

\newcommand{\rieszXmodelgamma}{\mathcal{R}_\gamma(\Theta)}

\newcommand{\piX}{\pi_{\Xsupp}}
\newcommand{\piXhat}{\hat{\pi}_{\Xsupp}}

\newcommand{\neigh}{\mathrm{Nb}}
\newcommand{\neighTheta}{\mathrm{Nb}(\theta_0)}
\newcommand{\neighGamma}{\mathrm{Nb}(\gammaV(c))}

\newcommand{\expderiv}{e}
\newcommand{\expderivhat}{\hat{\expderiv}}
\newcommand{\expderivchk}{\check{\expderiv}}
\newcommand{\nweight}{w}
\newcommand{\secnu}{\vartheta}  
\newcommand{\secnuimagesp}{\mathcal{T}}
\newcommand{\secnuhat}{\hat{\secnu}} 
\newcommand{\secnuchk}{\check{\secnu}} 
\newcommand{\infnupar}{\theta}
\newcommand{\infnuparhat}{\hat\theta}
\newcommand{\infnuparchk}{\check\theta}

\newcommand{\innerprodsub}[3]{\langle#1,#2\rangle_{#3}}

\newcommand{\Xsupp}{\mathcal{X}}

\begin{document}




\thispagestyle{empty}

\newgeometry{
  top=25mm,
  bottom=20mm,
  inner=25mm,
  outer=20mm,
}


\title{{\Huge \textbf{Private Rate-Double-Robust Inference}}}
\date{ }

\author[1]{M\'at\'e Kormos}
\author[2]{Aad van der Vaart}

\affil[1]{\footnotesize{Department of Mathematics, Computer Science and Statistics, Ghent University, Krijgslaan 299, \newline Ghent, 9000, Belgium}}
\affil[2]{\footnotesize{Delft Institute of Applied Mathematics, Delft University of Technology, Mekelweg 4, \newline Delft, 2628 CD,  The Netherlands}}

\begin{titlepage}
\maketitle
\thispagestyle{empty}

\vspace{2cm}


\noindent We reconcile privacy protection and rate-double-robust inference. The privacy of individuals is protected by a local privacy mechanism: injecting noise into their sensitive data, revealing only the noisy data for inference. Hence, privacy protection hinders inference. In contrast, the inference of a target parameter is rate-double-robust when the large-sample bias of an estimator of the parameter is characterised by a trade-off between the estimation errors of two other, nuisance, parameters. Hence, rate-double-robustness facilitates inference. Our starting point of reconciliation is a class of rate-double-robust target parameters indexed linearly by an infinite-dimensional and nonlinearly by a low-dimensional regression. Among others, this includes causal parameters. To infer these targets privately, we show how suitable privacy mechanisms transfer the semiparametric properties of the sensitive-data model to the private setting. Rate-double-robustness is transferred, enabling locally-private, unbiased and semiparametrically efficient inference of our target parameters. Finally, we transform general nonparametric nuisance estimators into private ones, which inherit convergence properties of their nonprivate counterparts. For parametric nuisance models, we develop a private method-of-moments estimator and its large-sample inference theory.


\end{titlepage}






\newpage
\setcounter{page}{0}


\pagenumbering{roman}
\begingroup
\clearpage
\let\clearpage\relax
\vspace*{-1.8cm}
\tableofcontents
\endgroup





\newpage

\setcounter{page}{0}
\pagenumbering{arabic}



\section{Introduction}
\label{priv:sec:introduction}

Sensitive data of units in a sample are desirable to protect. This may be accomplished by a privacy mechanism, which disguises the sensitive data by deliberately injecting noise into them. Next, only the noisy --- and not the sensitive --- data are revealed, preserving the privacy of sampled units, but hindering inference.

This paper is concerned with the inference of a parameter $\chi(P_{VX})$ of the distribution $P_{VX}$ of the data $(V,X)$ when $X$ is privacy-protected. Hence, we wish to infer $\chi(P_{VX})$ from the data $(V,Z)$, where $Z$ is the noisy version of $X$ disguised by a given privacy mechanism.

We guarantee privacy by local mechanisms: the noise is injected to the sensitive data of each unit. Then we consider inference under a fixed mechanism, employed uniformly for each unit. While some of our results hold for general local mechanisms, specialising the mechanisms yields more interesting results. Our specialised mechanisms leave the sensitive data intact with probability $\alpha$, and output pure noise with probability $1-\alpha$, guaranteeing total-variation privacy \citep{barber_privacy_2014}. It is this $\alpha$-identity which enables inference. Under this specialisation, $X$ can take values in any measurable space, such as metric spaces. Thus, these mechanisms are much more flexible than those using additive noise \citep{hutchison_calibrating_2006}.

We focus on the inference of parameters $\chi(P_{VX})$ with a rate-double-robustness (or mixed-bias) property. A parameter has this property if the large-sample bias of an estimator thereof is characterised by the product of the estimation errors of two other parameters, which are then called nuisance parameters. The product is attractive as the errors can compensate each other. This is favourable for infinite-dimensional nuisance parameters with large estimation errors. If the product vanishes, $\chi(P_{VX})$ can be inferred unbiasedly in the large-sample limit. Examples of rate-double-robust parameters include average treatment effects.

Our contribution is threefold. First, we propose a novel class of rate-double-robust parameters in the \emph{nonprivate} setting, motivated by \cite{rotnitzky_characterization_2021} and \cite{chernozhukov_automatic_2022}. The class comprises parameters which depend linearly on an infinite-dimensional regression and nonlinearly on a low-dimensional regression. 
While \cite{rotnitzky_characterization_2021} consider dependence parameters more general than regressions, they only allow for linear dependencies.\footnote{Their sufficient conditions for a product-form bias \cite[Proposition 3]{rotnitzky_characterization_2021} stipulate a parameter structure where both factors in the product are ratios of two regressions, with the same denominator in both. In general, the variationally dependent denominators do not naturally translate into double-robustness.  An exception is when the denominator and the nominator are chosen so that the resulting ratio in each factor is itself a regression function. But then these parameters are strictly included in our class.} While \citet[Example 6]{chernozhukov_automatic_2022} show that the average treatment effect on the treated --- which falls into our class --- is rate-double-robust, they do not generalise this result to nonlinear dependencies on low-dimensional regressions. Generalisation is straightforward as low-dimensional parameters are estimable at a fast rate.


Second, turning to the private setting, we provide conditions for the privacy mechanism to infer $\chi(P_{VX})$ from the observed noisy data $(V,Z)\sim P_{VZ}$, and show how our specialised mechanisms satisfy them.
Namely, we connect the semiparametric properties of the statistical models for $P_{VX}$ and $P_{VZ}$. Under our specialised mechanisms, if a parameter is rate-double-robust in the nonprivate setting, then so it remains  in the private setting. This leads to privacy-protected large-sample unbiased inference of $\chi(P_{VX})$ from $(V,Z)$, for fast-enough nuisance estimators. The limiting variance increases with the noise level of the privacy mechanism, but it is semiparametrically efficient in the private model induced by a nonparametric model for $P_{VX}$ and by the specialised mechanism.

Third, we study private estimation of the nuisance parameters. Expressing them as expected-loss minimisers in the \emph{private} setting paves the way for estimation through empirical risk minimisation. Alternatively, given a nonprivate ``source'' estimator in a general class of nonparametric estimators, we transform it to an estimator which uses only the noisy data $(V,Z)$. We show how the transformed estimator inherit the guarantees of its nonprivate source. For example, the convergence rates of privatised kernel and orthogonal series estimators remain the same as those of their nonprivate counterparts inflated by the noise level of the privacy mechanism. For parametric nuisance models, we develop a private method-of-moments estimator for $\real^K$-valued parameters identified from moment conditions \citep{hansen_large_1982,newey_chapter_1994}. We also derive its limiting distribution --- an apparently new result in private inference.

In summary, to the best of our knowledge, our work is the first achieving locally private, efficient and unbiased rate-double-robust inference for parameters as general as the ones in our proposed class, and with data taking values in generic spaces.


In \Cref{priv:sec:literature}, we situate our work in the literature. In \Cref{priv:sec:dr}, we introduce our rate-double-robust class without privacy. \Cref{priv:sec:priv} adds privacy. \Cref{priv:sec:estim_dr_priv} discusses private estimation of the double-robust parameters. \Cref{priv:sec:estim_nuisance_priv} focuses on the private estimation of nuisance parameters. \Cref{priv:sec:conclusion} concludes.



\section{Literature}
\label{priv:sec:literature}

Privacy-preserving inference \citep{warner_randomized_1965,evfimievski_limiting_2003,hutchison_calibrating_2006} can offer central or local privacy guarantees; see \cite{desfontaines_sok_2022} for a  survey.\footnote{Some other privacy notions are  \emph{element-level privacy} \citep{asi_element_2019} or homomorphic encryption \citep{gentry_fully_2009,yang_comprehensive_2019}.} Central mechanisms inject noise into sample aggregates, hence are less stringent than the local ones adopted by us, which noise individual data.

In the central paradigm, parametric (regression) models are studied by \cite{smith_efficient_2008}, \cite{sheffet_differentially_2017}, \cite{alabi_differentially_2020}, \cite{jiang_analysis_2024}, and the nonparametric median by \cite{drechsler_non-parametric_2021}. In the local paradigm, convergence rates \citep{loh_high-dimensional_2012,acharya_hadamard_2019,berrett_strongly_2021} and Fisher-information bounds \citep{barnes_fisher_2020} are derived.

Causal parameters are important instances in our class. Their private inference is addressed by \cite{kusner_private_2016}, \cite{zhu_causal_2022}, \cite{ohnishi_locally_2023}, and \cite{agarwal_causal_2024}. Compared to them, we support more general private covariate adjustment or parameters, under less stringent assumptions about $X$.


Private efficiency theory, also optimising over the privacy mechanism, is pioneered by \cite{steinberger_efficiency_2023} for parametric models. We only consider efficient inference for a \emph{given} mechanism, but we adopt nonparametric models like \cite{duchi_minimax_2018} and \cite{duchi_right_2024} do. They provide minimax rates up to constants, whereas our results are asymptotically exact. \cite{butucea_interactive_2023} considers minimax rates in nonparametric models, specialised to the expected density; we infer parameters in a broad class.

Private M-estimators have been well studied for parametric models \citep{chaudhuri_differentially_2011,kifer_private_2012,bassily_differentially_2014,yamamoto_differentially_2017,lei_differentially_2011,slavkovic_perturbed_2021,mangold_high-dimensional_2023}. Yet, only \cite{asi_element_2019} and \cite{asi_near_2020} appear to derive limiting distributions. Compared to the former, we offer more stringent, local privacy; unlike the latter, our method-of-moments estimator is asymptotically unbiased.




\section{Rate-Double-Robust Inference}
\label{priv:sec:dr}

In this section, we construct our target parameters $\chi(P_{VX})$ and derive their inferential properties \emph{without} privacy. This serves as a basis for private inference in \Cref{priv:sec:priv}. Thus, for now, we consider the nonprivate setting when the sensitive data $(V,X)$ are observable.

Here, $(V,X)$ is a random element defined on the probability space $(\Omega, \mathscr{F}_\Omega, \probnb)$, with distribution $P_{VX}$ belonging to $\mathcal{P}_{VX}\subseteq \mathcal{P}_{\Vimagesp\Ximagesp}$, where $\mathcal{P}_{\Vimagesp\Ximagesp}$ is the set of all possible distributions on the measurable space $(\Vimagesp\times\Ximagesp,\VXsigma)$, where $\Vimagesp=\Vimagesp_\mathfrak{1}\times\Vimagesp_\mathfrak{2}\times\ldots$. Our primary interest is in the nonparametric model
\begin{align}
\mathcal{P}_{VX}=\mathcal{P}_{\Vimagesp\Ximagesp}.\label{priv:eq:nonparamodel_set} \tag{M}
\end{align}

Each functional $\chi:\mathcal{P}_{VX}\to\real$ yields a parameter $\chi(P_{VX})$. \Cref{priv:sec:dr:subsec:class} introduces our class of $\chi$ which yield rate-double-robust parameters; \Cref{priv:sec:dr:subsec:inferential_properties} studies their inferential properties.

\emph{Notation.}\quad For $h:\Vimagesp\times\Ximagesp\to\real$, we let $P_{VX}h\ceq P_{VX}h(V,X)\ceq\int_{\Vimagesp\times\Ximagesp}h(v,x)\dderiv P_{VX}(v,x)$. Fix $p\in[1,\infty)$. With $\norm{L_p(P_{VX})}{h}\ceq\left(P_{VX}|h|^p\right)^{\fr{1}{p}}$, write $L_p(P_{VX})$ for all $h:\Vimagesp\times\Ximagesp\to\real$ with $\norm{L_p(P_{VX})}{h}^p<\infty$, and $L_p^0(P_{VX})$ for all $h\in L_p(P_{VX})$ with $P_{VX}h=0$. Let $\supnorm{h}\ceq\sup_{(v,x)\in\Vimagesp\times\Ximagesp}|h(v,x)|$, and $\rho((h,a),(h',a'))\ceq \norm{L_2(P_{VX})}{h-h'}+|a-a'|$ be a metric on $L_2(P_{VX})\times\real\ni(h,a)$. Let $\delta_{(v,x)}$ be the Dirac measure at $(v,x)\in\Vimagesp\times\Ximagesp$. We call a $(\pi_{1}\circ V,\pi_{2}\circ V,\ldots)$ a collection of coordinates of $V$, if all the $\pi_j\circ v\in \Vimagesp_\mathfrak{j'}$ for all $v\in\Vimagesp$ for some $\Vimagesp_\mathfrak{j'}\in \{\Vimagesp_\mathfrak{1},\Vimagesp_\mathfrak{2},\ldots\}$; for example, if $\Vimagesp=\Vimagesp_\mathfrak{1}\times\Vimagesp_\mathfrak{2}\times\Vimagesp_\mathfrak{3}$ with corresponding $V=(V_1,V_2,V_3)$, then $(V_2,V_1)$ is a collection of coordinates of $V$. 


\subsection{Rate-Double-Robust Parameter Class}
\label{priv:sec:dr:subsec:class}

Let $V_1$ and $V_2$ be two arbitrary collections of coordinates of $V$ with values in $\Vimagesp_{\mhit{1}}$ and $\Vimagesp_{\mhit{2}}$, respectively, where, importantly, $\Vimagesp_{\mhit{2}}$ is finite. For given $m,g:\Vimagesp\times\Ximagesp\to\real$, define the regressions
\begin{align}
\label{priv:eq:regressions_def}\tag{R}
\begin{aligned}
\muX(v_{\mhit{1}},x)&\ceq \Ebc{m(V,X)}{V_{\mhit{1}}=v_{\mhit{1}},X=x},\quad (v_{\mhit{1}},x)\in \Vimagesp_{\mhit{1}}\times\Ximagesp, \\
\gammaV(v_{\mhit{2}})&\ceq \Ebc{g(V,X)}{V_{\mhit{2}}=v_{\mhit{2}}},\quad v_{\mhit{2}}\in\Vimagesp_{\mhit{2}},
\end{aligned}
\end{align}
assuming $\muX\in L_2(P_{V_{\mhit{1}}X}),\gammaV\in L_2(P_{V_{\mhit{2}}})$. For a given $f:\Vimagesp\times\Ximagesp\times L_2(P_{V_{\mhit{1}}X})\times L_2(P_{V_{\mhit{2}}})\to\real$, our targets are $\chi(P_{VX})=\E f(V,X, \muX, \gammaV)$. As $\Vimagesp_{\mhit{2}}$ is finite, we can define without loss of generality our target parameter as
\begin{align}
\chi(P_{VX})&\ceq \E f(V,X, \muX, \gammaV(c)) \label{priv:eq:dr_para}\tag{T}
\end{align}
for a fixed $c\in\Vimagesp_{\mhit{2}}$, and $f:\Vimagesp\times\Ximagesp\times L_2(P_{V_{\mhit{1}}X})\times \Gamma \to\real$, for $\Gamma\supseteq g(\Ximagesp,\Vimagesp)$. We constrain $f$, requiring that
\begin{align}
L_2(P_{V_{\mhit{1}}X})\ni\mu \mapsto  \E  f(V,X,\mu,\gamma) &\text{ be $\norm{L_2(P_{VX})}{\cdot}$-continuous for all }\gamma\in\Gamma, \label{priv:eq:f_continu}\tag{C.C} \\
L_2(P_{V_{\mhit{1}}X})\ni\mu\mapsto f(V,X,\mu,\gamma)& \text{ be linear } P_{VX}\text{-a.s. for all } \gamma\in\Gamma,\label{priv:eq:f_lin}\tag{C.L}  \\
\Gamma \ni\gamma\mapsto f(V,X,\mu,\gamma)& \text{ be twice continuously differentiable }P_{VX}\text{-a.s. for all }\nonumber \\	
&\text{ } \mu\in L_2(P_{V_{\mhit{1}}X})\text{ with } \text{derivatives $\partial_\gamma f$, $\partial_\gamma^2 f$.} \label{priv:eq:f_diff}\tag{C.D}  
\end{align}
Conditions \eqref{priv:eq:f_continu}, \eqref{priv:eq:f_lin}, and \eqref{priv:eq:f_diff} restrict the structure of the dependencies on the regressions to enforce rate-double-robustness. 

Further, we impose the integrability conditions 
\begin{align}
\label{priv:eq:integrability}\tag{C.I}
\begin{aligned}
\E f(V,X, \muX, \gammaV(c))^2 <\infty, \quad \E \indic{V_{\mhit{2}}=c}g(V,X)^2<\infty, \\
\Ebc{m(V,X)^2}{V_{\mhit{1}},X}<\infty\quad \text{ $P_{V_{\mhit{1}}X}$-a.s..}
\end{aligned}
\end{align}


Different choices of $(V_{\mhit{1}},V_{\mhit{2}},m,g,f)$ satisfying the above conditions give rise to a class of parameters of the form \eqref{priv:eq:dr_para}. In \Cref{priv:app:sec:dr:subsec:examples}, we present examples such as average treatment effects (\cite{rotnitzky_characterization_2021}; \cite{chernozhukov_automatic_2022}), ``geometric parameters,'' and parameters from economics. We also demonstrate how dependence on \emph{multiple} regressions can be accommodated. In \Cref{priv:sec:dr:subsec:inferential_properties}, we show how parameters in this class lend themselves to rate-double-robust inference. 

\subsection{Inferential Properties}
\label{priv:sec:dr:subsec:inferential_properties}

Consider the one-step estimator \citep{bolthausen_semiparametric_2002,van_der_vaart_higher_2014,laber_semiparametric_2024} of  $\chi(P_{VX})$. Starting from an arbitrary ``plug-in'' estimator $\chi(\hat P_{VX})$ constructed from a random sample $\mathcal{S}\ceq((V_i,X_i))_{i\in[n]}$ from $P_{VX}$, the one-step estimator 
\begin{align}
\hat\chi_n\ceq \chi(\hat P_{VX})+\Probn \hat{\tld\chi} =  \chi(\hat P_{VX})+\nsumn{i}\hat{\tld\chi}(V_i,X_i) \label{priv:eq:bias_corrected_chihat}
\end{align}
corrects for the plug-in bias via a directional-derivative expansion of the functional $\chi$ \citep{van_der_vaart_higher_2014}. This derivate is representable with the so-called efficient influence function $\tld\chi$ of $\chi(P_{VX})$ for the model $\mathcal{P}_{VX}$. Hence, informally,
\begin{align}
\chi(\hat P_{VX})-\chi(P_{VX})\approx -P_{VX}\hat{\tld\chi}, \label{priv:eq:vonmises_approx}
\end{align}
expanding by the estimate $\hat{\tld\chi}$ of $\tld\chi$ in the direction $P_{VX}-\hat P_{VX}$. This motivates \eqref{priv:eq:bias_corrected_chihat} with the unknown $P_{VX}\hat{\tld\chi}$ estimated with $\Probn\hat{\tld\chi}$. 

To construct $\hat{\tld\chi}$ in \eqref{priv:eq:bias_corrected_chihat}, we need $\tld\chi$. Let $\rieszX\in L_2(P_{V_{\mhit{1}}X})$ be the function satisfying
\begin{align}
\E f(V,X, \mu, \gammaV(c))=\E \rieszX(V_{\mhit{1}},X)\mu(V_{\mhit{1}},X)\quad \text{ for all } \mu\in L_2(P_{V_{\mhit{1}}X}). \label{priv:eq:riesz_rep}
\end{align}
The existence and uniqueness of $\rieszX$ follows from the Riesz representation theorem by \eqref{priv:eq:f_continu} and \eqref{priv:eq:f_lin}, whence $\rieszX$ is called the Riesz representer. The representer $\rieszX$, whose dependence on $(P_{VX},f,c)$ is silent in our notation, is obtained by manipulating the left-hand side of \eqref{priv:eq:riesz_rep} until the right-hand side is reached (see \Cref{priv:app:sec:dr:subsec:examples}). With $\rieszX$, $\tld\chi$ is derived in \Cref{priv:eqprop:eif}. The dependence on the regressions manifests itself in \eqref{priv:eq:eif_chi}; under no dependence, the (generalised) derivates $\rieszX,\partial_\gamma f$ are zero.

\begin{proposition}[Efficient Influence Function of $\chi(P_{VX})$]
\label{priv:eqprop:eif}
In the nonparametric model \eqref{priv:eq:nonparamodel_set} for $P_{VX}$, the efficient influence function $\tld\chi:
\Vimagesp\times\Ximagesp\to\real$ of $\chi(P_{VX})$ in \eqref{priv:eq:dr_para} is, at $P_{VX}$,
\begin{align}
\label{priv:eq:eif_chi}
\begin{aligned}
\tld\chi(v,x)\ceq&\, \rieszX(v_{\mhit{1}},x)(m(v,x)-\muX(v_{\mhit{1}},x))\\
&+\fr{\indic{v_{\mhit{2}}=c}}{p_{V_{\mhit{2}}}(c)}(g(v,x)-\gammaV(c))\E \partial_\gamma f(V,X, \muX, \gammaV(c))  \\
&+ f(v,x, \muX, \gammaV(c))-\chi(P_{VX}), \quad (v,x)\in\Ximagesp\times\Vimagesp, 
\end{aligned}
\end{align}
where we denote by $v_{\mhit{1}},v_{\mhit{2}}$ the coordinates of $v$ that correspond to $V_{\mhit{1}},V_{\mhit{2}}$ of $V$. When $V_\mhit{2}=\varnothing$, it is understood that $\fr{\indic{v_{\mhit{2}}=c}}{p_{V_{\mhit{2}}}(c)}(g(v,x)-\gammaV(c))=g(v,x)-\E g(V,X)$.
\end{proposition}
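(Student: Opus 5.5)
We compute the pathwise derivative of $\chi$ along smooth one-dimensional submodels and read off the influence function. In the nonparametric model \eqref{priv:eq:nonparamodel_set} the tangent space is all of $L_2^0(P_{VX})$. Any influence function is therefore efficient, and it suffices to show
\begin{align*}
\derivtnull \chi(P_t)=P_{VX}\tld\chi\, s
\end{align*}
for every score $s\in L_2^0(P_{VX})$. We may restrict to bounded scores and use the submodels $\deriv P_t=(1+ts)\dderiv P_{VX}$; density of bounded scores together with $\tld\chi\in L_2$ then extends the conclusion to all scores. The integrability conditions \eqref{priv:eq:integrability} ensure that every term of $\tld\chi$ is square integrable.

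Write $\chi(P_t)=P_t f(\cdot,\mu_t,\gamma_t)$, where $\mu_t=\muXt$ and $\gamma_t=\gammaVt(c)$. We split the derivative into three pieces by a product-rule argument.

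\emph{Distribution piece.} Differentiating the outer measure with $\mu,\gamma$ held at their true values gives $P_{VX}\bigl(f(\cdot,\muX,\gammaV(c))-\chi(P_{VX})\bigr)s$.

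\emph{Regression $\mu$.} By \eqref{priv:eq:f_lin} and \eqref{priv:eq:riesz_rep}, $P_{VX}f(\cdot,\mu_t,\gamma)=\E\rieszX\mu_t$. The standard conditional-mean derivative is
\begin{align*}
\derivtnull \mu_t=\Ebc{(m-\muX)s}{V_{\mhit{1}},X}.
\end{align*}
Combining the two by the tower property gives $P_{VX}\rieszX(m-\muX)s$. To make this rigorous, write $\mu_t-\muX=t\,\Ebc{(m-\muX)s}{V_{\mhit{1}},X}+o(t)$ in $L_2(P_{V_{\mhit{1}}X})$. This holds because $\mu_t=\E_{P_{VX}}[m(1+ts)\mid\cdot]/\E_{P_{VX}}[1+ts\mid\cdot]$ with $s$ bounded and $\Ebc{m^2}{V_{\mhit{1}},X}<\infty$. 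Continuity \eqref{priv:eq:f_continu} then transfers this expansion to the functional.

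\emph{Regression $\gamma$.} Since $\Vimagesp_{\mhit{2}}$ is finite, $\gamma_t=P_t[\indic{V_{\mhit{2}}=c}g]/P_t[\indic{V_{\mhit{2}}=c}]$. Its derivative is
\begin{align*}
\derivtnull\gamma_t=P_{VX}\Bigl[\indic{V_{\mhit{2}}=c}\bigl(g-\gammaV(c)\bigr)s\Bigr]\big/p_{V_{\mhit{2}}}(c).
\end{align*}
By \eqref{priv:eq:f_diff} and a mean-value argument, the derivative of $P_{VX}f(\cdot,\muX,\gamma_t)$ equals this quantity times $\E\partial_\gamma f(V,X,\muX,\gammaV(c))$. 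When $V_{\mhit{2}}=\varnothing$, the same computation with the indicator replaced by $1$ gives the stated convention.

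\emph{Cross terms.} The terms in which two of the three ingredients vary simultaneously are $o(t)$. For instance, $(P_t-P_{VX})[f(\cdot,\mu_t,\gamma_t)-f(\cdot,\muX,\gammaV(c))]$ is $t$ times a quantity that vanishes as $t\to0$, since $s$ is bounded and $f$ is continuous in $(\mu,\gamma)$ in $L_2$. The interaction between $\mu_t$ and $\gamma_t$ is handled the same way.

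\emph{Main obstacle.} The delicate step is justifying differentiation under the integral in the $\gamma$-direction jointly with $\mu_t$, because \eqref{priv:eq:f_diff} only gives pointwise differentiability. I would first try a domination assumption, for example local boundedness of $\partial_\gamma^2 f$ in $L_1$, which appears implicit in the twice-differentiability requirement. With that in hand, a second-order Taylor bound gives the $o(t)$ remainder, and linearity in $\mu$ makes the dependence on $\mu_t$ harmless. Summing the three pieces yields exactly \eqref{priv:eq:eif_chi}.
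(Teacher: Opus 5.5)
Your proposal follows the paper's own proof: both compute $\derivtnull\chi(P_{VX,t})$ along one-dimensional submodels with score $s$, split it into the distribution term $P_{VX}f(\cdot,\muX,\gammaV(c))s$, the $\mu$-term (via the Riesz representer and $\derivtnull\muXt=\Ebc{(m-\muX)s}{V_{\mhit{1}},X}$) and the $\gamma$-term (via the chain rule with $\derivtnull\gammaVt(c)=P_{VX}[\indic{V_{\mhit{2}}=c}(g-\gammaV(c))s]/p_{V_{\mhit{2}}}(c)$), and centre using $P_{VX}s=0$. The differences are only technical: you use bounded scores with linear submodels $(1+ts)\dderiv P_{VX}$ instead of exponential tilts, and you try to control the remainder and cross terms that the paper handles by simply assuming differentiation and expectation commute; you should also check $P_{VX}\tld\chi=0$ explicitly (by the tower property for the first two terms), as the paper does, since the derivative identity determines $\tld\chi$ only up to an additive constant.
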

\begin{proof}
All proofs are in the appendix.
\end{proof}

Now we verify that $\chi(P_{VX})$ in \eqref{priv:eq:dr_para} is rate-double-robust. \Cref{priv:thm:dr} below controls the approximation error in \eqref{priv:eq:vonmises_approx}. Because $\Vimagesp_{\mhit{2}}$ is finite, the first, product term dominates in \eqref{priv:eq:chihat_biasdecomp} for reasonable ``estimators''  $p_{V_{\mhit{2}}}'(c),\gammaV'(c)$. This proves the one-step estimator \eqref{priv:eq:bias_corrected_chihat} of $\chi(P_{VX})$ rate-double-robust with bias characterised by the product of estimation errors of the nuisance parameters $\muX,\rieszX$.

\begin{theorem}[Rate-Double-Robustness]
\label{priv:thm:dr}
Let $\muX',r'\in L_2(P_{V_{\mhit{1}}X})$ and $p_{V_{\mhit{2}}}'(c),\gammaV'(c),\chi',\expderiv''\in\real$ all be arbitrary. Set 
\begin{align}
\label{priv:eq:eif_chi_prime}
\begin{aligned}
\tld\chi'(v,x)\ceq&\, \rieszX'(v_{\mhit{1}},x)(m(v,x)-\muX'(v_{\mhit{1}},x))+\fr{\indic{v_{\mhit{2}}=c}}{p_{V_{\mhit{2}}}'(c)}(g(v,x)-\gammaV'(c))\expderiv''  \\
&+ f(v,x, \muX', \gammaV'(c))-\chi'. 
\end{aligned}
\end{align}
Then
\begin{align}
\label{priv:eq:chihat_biasdecomp}
\begin{aligned}
\chi'-\chi(P_{VX})+P_{VX}\tld\chi' =&\, -P_{VX}(r-r')(\muX-\muX')   \\
&+(\gammaV(c)-\gammaV'(c))\left(\fr{p_{V_{\mhit{2}}}(c)}{p_{V_{\mhit{2}}}'(c)}\expderiv''-\expderiv'\right) \\
&-(\gammaV(c)-\gammaV'(c))^2\fr{P_{VX}\partial_\gamma^2 f(V,X,\muX',\widetilde{\gammaV(c)})}{2}  
\end{aligned}
\end{align}
for some $\widetilde{\gammaV(c)}$ between $\gammaV(c)$ and $\gammaV'(c)$, and $\expderiv'\ceq P_{VX}\partial_\gamma f(V,X,\muX',\gammaV'(c))$.
\end{theorem}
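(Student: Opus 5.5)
The identity is purely algebraic, so the plan is to compute $P_{VX}\tld\chi'$ term by term from \eqref{priv:eq:eif_chi_prime}, add $\chi'-\chi(P_{VX})$, and regroup. The $-\chi'$ in $\tld\chi'$ cancels the leading $\chi'$. The goal is then
\[
P_{VX}\bigl[r'(m-\muX')\bigr]+\fr{p_{V_{\mhit{2}}}(c)}{p'_{V_{\mhit{2}}}(c)}(\gammaV(c)-\gammaV'(c))\expderiv''+P_{VX}f(V,X,\muX',\gammaV'(c))-\chi(P_{VX}).
\]
The second term comes from conditioning on $V_{\mhit{2}}$: by \eqref{priv:eq:regressions_def}, $P_{VX}\indic{V_{\mhit{2}}=c}g=p_{V_{\mhit{2}}}(c)\gammaV(c)$. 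The integrability in \eqref{priv:eq:integrability} justifies all the expectations.

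For the first term, condition on $(V_{\mhit{1}},X)$. Since $r'$ depends only on $(V_{\mhit{1}},X)$, the tower property gives $P_{VX}r'(m-\muX')=P_{VX}r'(\muX-\muX')$. Now handle $f$. Split
\[
P_{VX}f(\cdot,\muX',\gammaV'(c))-\chi(P_{VX})
=\bigl[P_{VX}f(\cdot,\muX',\gammaV'(c))-P_{VX}f(\cdot,\muX',\gammaV(c))\bigr]+\bigl[P_{VX}f(\cdot,\muX',\gammaV(c))-P_{VX}f(\cdot,\muX,\gammaV(c))\bigr].
\]
For the second bracket, use linearity \eqref{priv:eq:f_lin} and the Riesz representation \eqref{priv:eq:riesz_rep} at $\gamma=\gammaV(c)$. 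This gives $-P_{VX}r(\muX-\muX')$. Combined with $P_{VX}r'(\muX-\muX')$, it yields $-P_{VX}(r-r')(\muX-\muX')$.

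For the first bracket, apply a second-order Taylor expansion of $\gamma\mapsto f(V,X,\muX',\gamma)$ around $\gammaV'(c)$, evaluated at $\gammaV(c)$, using \eqref{priv:eq:f_diff}. This gives
\[
f(\cdot,\gammaV(c))-f(\cdot,\gammaV'(c))=\partial_\gamma f(\cdot,\gammaV'(c))\Delta+\tfrac12\partial_\gamma^2 f(\cdot,\tilde\gamma)\Delta^2,
\qquad \Delta\ceq\gammaV(c)-\gammaV'(c).
\]
Negating and taking expectations, the first bracket equals $-\Delta\expderiv'-\tfrac{\Delta^2}{2}P_{VX}\partial_\gamma^2f(\ldots)$. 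Adding the $p_{V_{\mhit{2}}}$ term then produces exactly the second and third lines of \eqref{priv:eq:chihat_biasdecomp}.

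The main obstacle is the Taylor step. The intermediate point $\tilde\gamma$ given by the pointwise Lagrange form depends on $(V,X)$, whereas the statement has a single $\widetilde{\gammaV(c)}$ inside the expectation. I would handle this by expanding the function $\phi(\gamma)\ceq P_{VX}f(V,X,\muX',\gamma)$ directly. This requires differentiating under the integral, that is, showing $\phi''(\gamma)=P_{VX}\partial_\gamma^2 f(\cdot,\gamma)$ on the segment between $\gammaV(c)$ and $\gammaV'(c)$, via dominated convergence with a local integrable bound on $\partial_\gamma f$ and $\partial_\gamma^2 f$. I would make that bound explicit as an implicit regularity assumption. Alternatively, I would read $\widetilde{\gammaV(c)}$ as a measurable random intermediate point, keeping the expectation outside.
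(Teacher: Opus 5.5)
Your argument is correct and uses the same ingredients as the paper: the Riesz identity with $h=\muX'-\muX$, the orthogonality $P_{VX}[(m-\muX)h]=0$, the identity $P_{VX}\indic{V_{\mhit{2}}=c}g=p_{V_{\mhit{2}}}(c)\gammaV(c)$, and a second-order Taylor expansion of $\gamma\mapsto f(V,X,\muX',\gamma)$ about $\gammaV'(c)$. You simply arrange them as a direct computation, where the paper subtracts $P_{VX}[\chi(P_{VX})+\tld\chi]$ together with two vanishing terms. The paper expands pointwise and silently puts the resulting $(V,X)$-dependent intermediate point inside $P_{VX}$, so the obstacle you flag is real and not addressed there; either of your repairs (differentiating $\gamma\mapsto P_{VX}f(V,X,\muX',\gamma)$ twice under a local domination condition, or reading $\widetilde{\gammaV(c)}$ as a measurable random intermediate point) fixes it.
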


The function $\tld\chi$ is a key object in semiparametric efficiency theory. By definition, it is an element of the $L_2(P_{VX})$-completion $\overline{\lin}\tanset_{VX}$ of the linear span of the \emph{tangent set of the model $\mathcal{P}_{VX}$ at $P_{VX}$}, $\tanset_{VX}$: the set of all
directions in which $P_{VX}$ can be perturbed so that the perturbed $P_{VX}$ remains in $\mathcal{P}_{VX}$. For the nonparametric model $\mathcal{P}_{\Vimagesp\Ximagesp}$ in \eqref{priv:eq:nonparamodel_set}, $\overline{\lin}\tanset_{VX}=\tanset_{VX}=L_2^0(P_{VX})$. Among all elements of $\overline{\lin}\tanset_{VX}$, it is $\tld\chi$ whose squared norm $P_{VX}\tld\chi^2$ is the limiting variance of asymptotically efficient estimators of $\chi(P_{VX})$ (\Citet[Definition 2.8, Part III]{bolthausen_semiparametric_2002}). In the nonprivate setting, we show that \eqref{priv:eq:bias_corrected_chihat} is asymptotically efficient (\Cref{priv:app:sec:nonprivate_estimation}). In the following, we prove an analogue in the private setting.





\section{Privacy}
\label{priv:sec:priv}

To preserve the privacy of each unit in the sample $\mathcal{S}=((V_i,X_i))_{i\in[n]}$, a noisy version $Z_i$ of $X_i$ is generated, and only $\bar{\mathcal{S}}\ceq((V_i,Z_i))_{i\in[n]}$ is revealed to infer $\chi(P_{VX})$. In \Cref{priv:sec:priv:subsec:priv_mech}, we describe how $Z_i$ is generated to enable inference from $\bar{\mathcal{S}}$, which is discussed in \Cref{priv:sec:priv:subsec:inferential_properties}.

\subsection{Privacy Mechanism}
\label{priv:sec:priv:subsec:priv_mech}

The $Z_i$ are generated from $X_i$ via a privacy mechanism, which can be thought of as a random map. Given a measurable space $(\Zimagesp,\Zsigma)$, the $Z_i$ are generated as random draws $Z_i\cond ((V_j,X_j))_{j\in[n]}\sim Q(\cdot\cond X_i)$ for all $i\in[n]$ for a $Q\in\setQ(\Ximagesp\to\Zimagesp)$, where $\setQ(\Ximagesp\to\Zimagesp)$ is the set of all Markov kernels $Q:\Zsigma\times\Ximagesp\to[0,1]$, so that $B\mapsto Q(B\cond x)$ is a probability measure for all $x\in\Ximagesp$, and $x\mapsto Q(B\cond x)$ is measurable for every $B\in\Zsigma$. The kernel $Q$ is called a \emph{local noninteractive privacy mechanism}: local, because it noises the data of each unit $i$, and noninteractive, because it does not use the data of units $j\neq i$ to generate $Z_i$ \citep{steinberger_efficiency_2023}. Therefore, $((V_i,Z_i))_{i\in[n]}$ is a random sample from the distribution of $(V,Z)$, the mixture
\begin{align}
P_{VZ}(\bsetV, \bsetZ)= \int_{\bsetV}\int_{\Ximagesp}Q(\bsetZ\cond x) \dderiv P_{VX}(v,x),\quad \bsetV\in\Vsigma, \bsetZ\in\Zsigma. \label{priv:eq:distribution_vz}
\end{align}
The distribution $P_{VZ}$, determined by $Q$ and $P_{VX}$, thus belongs to the model
\begin{align}
\mathcal{P}_{VZ}(Q,\mathcal{P})\ceq\left\{P\in\mathcal{P}_{\Vimagesp\Zimagesp}: P(\bsetV, \bsetZ)= \int_{\bsetV}\int_{\Ximagesp}Q(\bsetZ\cond x) \dderiv \tld P(v,x) \right.  \nonumber \\
\text{ holds for all } \left. \bsetV\in\Vsigma, \bsetZ\in\Zsigma, \text{ as } \tld P \text{ runs through } \mathcal{P}\subset \mathcal{P}_{\Vimagesp\Ximagesp} \vphantom{\int_{\bsetV}\int_{\Ximagesp}} \right\}, \label{priv:eq:vzq_model}
\end{align}
the set of all possible distributions of $(V,Z)$ induced by the mechanism $Q$ as the distribution of $(V,X)$ varies across $\mathcal{P}$; here, $\mathcal{P}_{\Vimagesp\Zimagesp}$ is the set of all probability distributions on $(\Vimagesp\times\Zimagesp,\VZsigma)$. Fixing $\mathcal{P}=\mathcal{P}_{VX}$ and $\tld P=P_{VX}$ in \eqref{priv:eq:vzq_model} yields $P=P_{VZ}$.

How to choose the mechanism? First, the output space $\Zimagesp$ has to be specified. The choice of $\Zimagesp$ is an unexplored topic in privacy literature, beyond our current scope. 
 Hence, for some of our results to follow, $\Zimagesp$ can be any given measurable space; however, $\Zimagesp=\Ximagesp$ --- the usual choice in the literature --- shall yield more insightful results.

Second, given $\Zimagesp$, a mechanism $Q\in\setQ(\Ximagesp\to\Zimagesp)$ has to be chosen. Regard \eqref{priv:eq:distribution_vz} as the flow of information from $P_{VX}$ to $P_{VZ}$. If $Q(\cdot\cond x)$ does not depend on $x$, then $Z$ carries no information about $X$, constituting maximal privacy but precluding inference. In the other extreme, if $\Zimagesp=\Ximagesp$, and $Q(\cdot\cond x)=\delta_{x}$ concentrates on $X$, then $P_{VZ}=P_{VX}$, leading to the opposite effect. Thus, a sufficient and necessary condition for the identification of \emph{every} parameter $\chi(P_{VX})$ from $P_{VZ}$ is the existence of a map $\linopL:\mathcal{P}_{VZ}(Q, \mathcal{P}_{VX})\to\mathcal{P}_{VX}$ such that
\begin{align}
 P_{VX}=\linopL(P_{VZ}). \label{priv:eq:linopL}
 \end{align}
The map $\linopL$ inverts \eqref{priv:eq:distribution_vz} to recover $P_{VX}$ from every $P_{VZ}$ generated by a given $Q$. With $\linopL$, every parameter\footnote{The existence of an $\linopL$ satisfying \eqref{priv:eq:linopL} is not necessary (but clearly sufficient) for the identification of some parameters --- e.g.\ when $\chi$ is the functional of only the marginal $P_V$.} of the sensitive-data distribution is identifiable from the noisy-data distribution as
\begin{align}
\psi(P_{VZ})\ceq \chi \circ \linopL(P_{VZ}) = \chi(P_{VX}). \label{priv:eq:identification_psi}
\end{align}
The dependence of $\psi$ on $Q$ remains implicit in our notation. To infer $\chi(P_{VX})$, we wish to choose $Q$ such that $L_Q$ exists. Consider first a discrete $X$.

\begin{example}[Finitely discretely distributed $X$]
\label{priv:ex:discrete_covar}
Suppose that $\Ximagesp=\set{x_1,\ldots,x_{|\Ximagesp|}}$ and $\Zimagesp=\set{z_1,\ldots,z_{|\Zimagesp|}}$ are finite sets, and that $P_{VX}$ has a  $\mesV\times\mesX$-density $p_{VX}$ for the counting measure $\mesX$.
Then $P_{VZ}$ in \eqref{priv:eq:distribution_vz} admits a $\mesV\times\mesX$-density
\begin{align}
p_{VZ}(v,z)=\sum_{x\in\Ximagesp} Q(\set{z}\cond x)p_{VX}(v,x),\quad (v,z)\in\Vimagesp\times\Zimagesp. \label{priv:eq:vz_disrete_dens}
\end{align}
Representing $Q$ as the $|\Zimagesp|$-by-$|\Ximagesp|$ matrix
\begin{align}
Q = \begin{bmatrix}
(Q(\set{z_j}\mid x_1))_{j\in[|\Zimagesp|]} & (Q(\set{z_j}\mid x_2))_{j\in[|\Zimagesp|]}  & \cdots & (Q(\set{z_j}\mid x_{|\Ximagesp|}))_{j\in[|\Zimagesp|]} 
\end{bmatrix}, \label{priv:eq:q_as_matrix}
\end{align}
the display \eqref{priv:eq:vz_disrete_dens} is equivalent to
\begin{align*}
[0,1]^{|\Zimagesp|\times 1}\ni \ba p_{VZ}(v) \ceq 
(p_{VZ}(v,z_j))_{j\in[|\Zimagesp|]}
= Q (p_{VX}(v,x_j))_{j\in[{|\Ximagesp|}]}
\eqc Q \ba p_{VX}(v),\quad v\in\Vimagesp.
\end{align*}
For $z\in\real^{|\Zimagesp|\times 1}, x\in\real^{|\Ximagesp|\times 1}$ and  $Q\in\setQ(\set{x_1,\ldots,x_{|\Ximagesp|}}\to\set{z_1,\ldots,z_{|\Zimagesp|}})$, consider the system of linear equations
$z=Qx$ in $x$. Only if $|\Zimagesp|\geq|\Ximagesp|$, can this system have a unique solution. Let us impose $|\Zimagesp|\ceq|\Ximagesp|\eqc J$, and set
\begin{align}
\setQdisc\ceq \setQ(\set{x_1,\ldots,x_J}\to\set{z_1,\ldots,z_J}). \label{priv:eq:discreteQdef}
\end{align}
Then the system has a unique solution for all $z\in\real^{J\times 1}$ if and only if $Q\in\setQdisc$ viewed as a matrix is invertible with inverse $\Qinv$, in which case the solution is $\Qinv z$  (e.g.\ \Citet[Theorems 1.3 and 1.4]{piziak_matrix_2007}). Conclude that if $Q$ is invertible, then $\ba p_{VX}(v)=\Qinv \ba p_{VZ}(v)$ for all $v\in\Vimagesp$. Hence, if 
\begin{align}
Q\in \setQdiscinv\ceq \set{Q\in \setQdisc: \Qinv \text{ exists}}  \label{priv:eq:discreteQinv}
\end{align}
then $\linopL$ in \eqref{priv:eq:linopL} exists, is unique, and is completely determined by the matrix $\Qinv$. An example (\Citet[Section 2.2]{steinberger_efficiency_2023}) of $Q\in \setQdiscinv$ is
\begin{align}
Q=c_{J,\alpha}
\begin{bmatrix}
e^\alpha & 1 & \cdots &1 \\
1 & e^\alpha & \cdots & 1 \\
\vdots & \vdots & \ddots & \vdots \\
1 & 1& \cdots & e^\alpha
\end{bmatrix}, \Qinv = c_{J,\alpha}^{(-1)}
\begin{bmatrix}
e^\alpha+J-2 & -1 & \cdots &-1 \\
-1 & e^\alpha+J-2 & \cdots & -1 \\
\vdots & \vdots & \ddots & \vdots \\
-1 & -1& \cdots & e^\alpha+J-2
\end{bmatrix}\label{priv:eq:discrete_covar_Qexample}
\end{align}
with $c_{J,\alpha}\ceq\fr{1}{e^\alpha+J-1}$ and $c_{J,\alpha}^{(-1)}\ceq \fr{e^\alpha+J-1}{e^{2\alpha}+(J-2)e^\alpha-J+1}$ for any $\alpha>0$. 
\end{example}

\Cref{priv:ex:discrete_covar} shows in \eqref{priv:eq:discrete_covar_Qexample} the role of the parameter $\alpha$ determining the privacy level: an $\alpha\approx 0$ equalises the entries of $Q$, with no information flowing from $X$ to $Z$. Formally, $Q$ in \eqref{priv:eq:discrete_covar_Qexample} satisfies 
$(e^\alpha-1)$-total-variational privacy for any $0<\alpha\leq\log(2)$:

\begin{definition}[{{Local $\alpha$-Total-Variation Privacy ($\alpha$-LTVP) \Citet[Definition 4]{barber_privacy_2014}}}]
\label{priv:def:local_tv_priv}
For $0\leq\alpha\leq1$, a mechanism $Q\in\setQ(\Ximagesp\to\Zimagesp)$ is locally $\alpha$-total-variationally private if $\sup_{B\in\Zsigma}|Q(B\cond x)-Q(B\cond x')|\leq\alpha$ for all $x,x'\in\Ximagesp$.
\end{definition}

For generic $X$ as well, total-variational privacy proves to be a suitable paradigm to ensure the existence of $\linopL$. Consider 
 \begin{align}
 \setQgen\ceq \left\{ Q\in \setQ(\Ximagesp\to\Ximagesp): Q(B \cond x)= \alpha \delta_x(B)+(1-\alpha)\Qbar(B), \alpha\in(0,1), \Qbar\in \setQbar \right\}, \label{priv:eq:qtv}
 \end{align}
where $\setQbar$ is the set of all probability measures on $(\Ximagesp,\Xsigma)$. The $Z$ drawn from a mechanism in \eqref{priv:eq:qtv} for a unit with $X=x$ equals $x$ itself with probability $\alpha$, and is pure noise drawn from $\Qbar$ with probability $1-\alpha$. Hence, the smaller $\alpha$, the stricter the privacy. Trivially, any $Q$ in \eqref{priv:eq:qtv} is $\alpha$-LTVP, and, by \Cref{priv:lem:qtc_implications} in \Cref{priv:app:priv:subsec:aux}, it ensures the existence of
\begin{align*}
(\linopL P_{VZ})(\bsetV,\bsetX)\ceq \fr{1}{\alpha}P_{VZ}(\bsetV,\bsetX)-\fr{1-\alpha}{\alpha}P_V(\bsetV)\Qbar(\bsetX) \\
=\fr{1}{\alpha}P_{VZ}(\bsetV,\bsetX)-\fr{1-\alpha}{\alpha}P_{VZ}(\bsetV, \Ximagesp)\Qbar(\bsetX)=P_{VX}(\bsetV,\bsetX),\quad \bsetV\in\Vsigma,\bsetX\in\Xsigma,
\end{align*}
a linear map, whereby the identification \eqref{priv:eq:identification_psi} of $\chi(P_{VX})$ from $P_{VZ}$ readily follows.

It may seem restrictive that for discrete $X$ we allow for any invertible mechanism, but for generic $X$ we confine ourselves to \eqref{priv:eq:qtv}. However, for generic $X$, it appears difficult to obtain $L_Q$ without a Dirac measure; see \Cref{priv:app:priv:subsec:invertible_mechanisms} for further discussion. In summary, we collect in
\begin{align}
\setQident \ceq \setQdiscinv\cup\setQgen
\end{align}
the set of mechanisms implying \eqref{priv:eq:identification_psi}, understanding that $Q\in\setQdiscinv$ only if $\Ximagesp$ and $P_{VX}$ are as in \Cref{priv:ex:discrete_covar}.

\subsection{Private Inferential Properties}
\label{priv:sec:priv:subsec:inferential_properties}

Now we derive the private analogue of the inferential properties in \Cref{priv:sec:dr:subsec:inferential_properties}. This entails the efficient influence function of $\psi(P_{VZ})$ in \eqref{priv:eq:identification_psi} at $P_{VZ}$ in the model $\mathcal{P}_{VZ}(Q, \mathcal{P}_{VX})$ of \eqref{priv:eq:vzq_model}, \emph{given} a mechanism $Q\in\setQ(\Ximagesp\to\Zimagesp)$.\footnote{See \Cref{priv:lem:semiparametric_vz} in \Cref{priv:app:priv:subsec:private_inferential_prop} for further semiparametric properties.} In line with private-inference practice, $Q$ is treated as common knowledge available for inference. Some of our results hold for any model $\mathcal{P}_{VX}$, but the main interest is in the nonparametric model $\mathcal{P}_{\Vimagesp\Ximagesp}$ of \eqref{priv:eq:nonparamodel_set}.

For $Q\in\setQ(\Ximagesp\to\Zimagesp)$, define the linear operator $\linopQX: L_2(P_{VZ})\to L_2(P_{VX})$ as
\begin{align}
(\linopQX k)(v,x)&\ceq \int_\Zimagesp k(v,z)Q(\deriv z\cond x)=\Ebc{k(V,Z)}{V=v,X=x},  \label{priv:eq:linopQX} 
\end{align}
whose properties are derived in \Cref{priv:lem:linopqx_properties} in \Cref{priv:app:priv:subsec:aux}. In particular, it has adjoint $\linopQXadj:L_2(P_{VX})\to L_2(P_{VZ}), h\mapsto\Ebc{h(V,X)}{V=\cdot,Z=\cdot}$, and is invertible: when $Q\in\setQdisc$, then $\linopQXinv$ exists and is unique if and only if $Q\in\setQdiscinv$ with inverse $(Q^\intercal)^{-1}$ viewed as a transposed matrix; more generally, when $Q\in\setQgen$, then the range-restricted $\linopQX:L_2(P_{VZ})\to \linopQXinvDom$ and its inverse $\linopQXinv:\linopQXinvDom\to L_2(P_{VZ})$ are
\begin{align*}
(\linopQX k)(v,x)&=\alpha k(v,x)+(1-\alpha)\int_\Ximagesp k(v,z)\Qbar(\deriv z), \quad (v,x)\in\Vimagesp\times\Ximagesp, \\
(\linopQXinv h)(v,z)&=\fr{1}{\alpha}h(v,z)-\fr{1-\alpha}{\alpha}\int_\Ximagesp h(v,x)\Qbar (\deriv x), \quad (v,z)\in\Vimagesp\times\Ximagesp.
\end{align*}

\begin{theorem}[Efficient Influence Function of $\psi(P_{VZ})$]
\label{priv:prop:eif_priv}
Suppose that $\chi(P_{VX})$ has efficient influence function $\varphi\in \overline{\lin}\tanset_{VX}$ at $P_{VX}$ in some model $\mathcal{P}_{VX}\subset \mathcal{P}_{\Vimagesp\Ximagesp}$ with tangent set $\tanset_{VX}$, and that $Q\in\setQident$. If $\varphi \in \linopQX\linopQXadj \tanset_{VX}$, then the efficient influence function of $\psi(P_{VZ})$ at $P_{VZ}$ in the model $\mathcal{P}_{VZ}(Q, \mathcal{P}_{VX})$ of \eqref{priv:eq:vzq_model} is $\linopQXinv\varphi$. If $\varphi=\tld\chi$ in \eqref{priv:eq:eif_chi} in the nonparametric model $\mathcal{P}_{VX}=\mathcal{P}_{\Vimagesp\Ximagesp}$ satisfies $\tld\chi\in\linopQXinvDom$, then 
\begin{align}
\tld\psi\ceq \linopQXinv \tld\chi\label{priv:eq:eif_chi_vz}
\end{align}
is the efficient influence function of $\psi(P_{VZ})$ at $P_{VZ}$ in the model $\mathcal{P}_{VZ}(Q, \mathcal{P}_{\Vimagesp\Ximagesp})$.
\end{theorem}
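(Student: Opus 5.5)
We will use the standard result on information loss models (\citet[Part III]{bolthausen_semiparametric_2002}): if $(V,Z)$ is obtained from $(V,X)$ through the fixed kernel $Q$, then every smooth submodel $t\mapsto P_{VX,t}$ with score $h\in\tanset_{VX}$ induces the submodel $t\mapsto P_{VZ,t}$ via \eqref{priv:eq:distribution_vz}. Its score is the conditional expectation $\Ebc{h(V,X)}{V,Z}=\linopQXadj h$. Hence the tangent set of $\mathcal{P}_{VZ}(Q,\mathcal{P}_{VX})$ at $P_{VZ}$ is $\tanset_{VZ}=\linopQXadj\tanset_{VX}$.

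\textbf{Step 1 (pathwise differentiability).} Along such a path, $\psi(P_{VZ,t})=\chi(P_{VX,t})$ by \eqref{priv:eq:identification_psi}. This uses $Q\in\setQident$, so that $\linopL$ exists and every path of $P_{VZ}$ comes from a unique path of $P_{VX}$. Therefore
\[
\derivtnull\psi(P_{VZ,t})=P_{VX}\varphi h .
\]
We want a $k\in\overline{\lin}\tanset_{VZ}$ with $P_{VZ}k\,\linopQXadj h=P_{VX}\varphi h$ for all $h\in\tanset_{VX}$. By adjointness,
\[
P_{VZ}k\,\linopQXadj h=P_{VX}(\linopQX k)h ,
\]
so it suffices that $\linopQX k-\varphi\perp\tanset_{VX}$. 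The choice $k=\linopQXinv\varphi$ makes $\linopQX k=\varphi$ exactly, so $k$ is an influence function. The inverse is well defined because $Q\in\setQident$, using the explicit forms of $\linopQXinv$ for $\setQdiscinv$ and $\setQgen$ from \Cref{priv:lem:linopqx_properties}.

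\textbf{Step 2 (efficiency: $k$ lies in the closed tangent span).} This is where the hypothesis $\varphi\in\linopQX\linopQXadj\tanset_{VX}$ enters. Write $\varphi=\linopQX\linopQXadj h_0$ with $h_0\in\tanset_{VX}$. Then
\[
\linopQXinv\varphi=\linopQXadj h_0\in\linopQXadj\tanset_{VX}=\tanset_{VZ}.
\]
This requires injectivity of $\linopQX$ on the relevant domain, which I take from \Cref{priv:lem:linopqx_properties}. An influence function lying in $\overline{\lin}\tanset_{VZ}$ is the efficient one, i.e.\ the projection of any influence function onto the closed tangent span. This proves the first claim.

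\textbf{Step 3 (nonparametric case).} Here $\tanset_{VX}=L_2^0(P_{VX})$. The main obstacle is verifying $\tld\chi\in\linopQX\linopQXadj L_2^0(P_{VX})$ when we only know $\tld\chi\in\linopQXinvDom$. I would bypass this by showing directly that $\linopQXinv\tld\chi\in\overline{\lin}\tanset_{VZ}$, as follows.
\begin{itemize}
\item For $Q\in\setQgen$, the induced model $\mathcal{P}_{VZ}(Q,\mathcal{P}_{\Vimagesp\Ximagesp})$ contains all $P$ of the form $\alpha\tld P+(1-\alpha)\tld P_V\otimes\Qbar$. Perturbations $\mathrm{d}P_{VZ,t}=(1+tk)\,\mathrm{d}P_{VZ}$ with bounded mean-zero $k$ stay in the model whenever $\linopL P_{VZ,t}$ is a probability measure. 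That holds for small $t$ as long as $\linopL$ applied to $k\,\deriv P_{VZ}$ has a density in $L_2(P_{VX})$, which is exactly the condition $\linopQXinvDom$. So $\tanset_{VZ}$ is dense in the mean-zero subspace containing $\linopQXinv\tld\chi$.
\item Mean zero is checked directly: $P_{VZ}\linopQXinv\tld\chi=P_{VX}\linopQX\linopQXinv\tld\chi=P_{VX}\tld\chi=0$.
\item For $Q\in\setQdiscinv$, $\linopQXadj$ is a bijection between finite-dimensional conditional spaces, so $\linopQXadj L_2^0(P_{VX})=L_2^0(P_{VZ})$ and the claim is immediate. I would need to verify that the bijection preserves the mean-zero constraint.
\end{itemize}

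Combining Steps 1 and 3 gives that $\tld\psi=\linopQXinv\tld\chi$ is the efficient influence function.
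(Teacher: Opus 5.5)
Your Steps 1 and 2 are sound. For the general-model claim they are more direct than the paper: the paper starts from a hypothetical efficient influence function $\tld\Psi=\linopQXinv(f_0+\varphi)$ with $f_0\in\tanset_{VX}^\perp$ and eliminates $f_0$ through positive definiteness of $(\linopQX\linopQXadj)^{-1}$, which also needs $\linopQXadj$ to be invertible; you simply exhibit $\linopQXinv\varphi=\linopQXadj h_0\in\tanset_{VZ}$.

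The gap is in Step 3 for $Q\in\setQgen$: you assert, but do not show, that $\tanset_{VZ}$ is dense in a mean-zero subspace containing $\linopQXinv\tld\chi$. A direct computation gives
\[
\linopL(k\,\deriv P_{VZ})=k\,\deriv P_{VX}+\fr{1-\alpha}{\alpha}(k-\bar k)\,\deriv(P_V\otimes\Qbar),\qquad \bar k(v)\ceq\Ebc{k(V,Z)}{V=v}.
\]
Keeping $(1+tk)\,\deriv P_{VZ}$ in the model therefore requires the second part to be dominated by $P_{VX}$, which fails in general when $P_V\otimes\Qbar\not\ll P_{VX}$. For a linear path it also requires a bounded density. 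This is a condition linking $\Qbar$ and $P_{X\cond V}$. It is not the hypothesis $\tld\chi\in\linopQXinvDom$, which only ensures $\linopQXinv\tld\chi\in L_2(P_{VZ})$. Moreover, you never show that the admissible $k$ are dense in $L_2^0(P_{VZ})$. So $\linopQXinv\tld\chi\in\overline{\lin}\tanset_{VZ}$ is left unproved.

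The missing idea, which is the paper's route, removes the density question altogether. With $\tanset_{VX}=L_2^0(P_{VX})$, your Step 1 computation shows that any mean-zero gradient $k\in L_2^0(P_{VZ})$ satisfies $P_{VX}[(\linopQX k-\tld\chi)s]=0$ for all $s\in L_2^0(P_{VX})$. Hence $\linopQX k-\tld\chi$ is $P_{VX}$-a.s.\ constant, and that constant equals $P_{VZ}k-P_{VX}\tld\chi=0$. So $\linopQX k=\tld\chi$, and the injectivity of $\linopQX$ you already use in Step 2 forces $k=\linopQXinv\tld\chi$: there is exactly one mean-zero gradient. The projection of any gradient onto $\overline{\lin}\tanset_{VZ}$ is again a mean-zero gradient, so it equals $\linopQXinv\tld\chi$. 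That function is therefore the efficient influence function, for $\setQgen$ and $\setQdiscinv$ alike, and your separate discrete bullet becomes unnecessary. Equivalently, the closure of $\linopQXadj L_2^0(P_{VX})$ is the orthocomplement in $L_2^0(P_{VZ})$ of the trivial kernel of $\linopQX$, which is Lemma~\ref{priv:lem:semiparametric_vz}(iii).
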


In \Cref{priv:prop:eif_priv}, the conditions $\varphi \in \linopQX\linopQXadj \tanset_{VX}$ and $\tld\chi\in\linopQXinvDom$ are important. For instance, if $\Qbar$ of $Q\in\setQgen$ has large mass at extreme locations of $\tld\chi$, the latter may fail. If they hold, then an asymptotically efficient estimator of $\psi(P_{VZ})$ based on a random sample from $P_{VZ}\in \mathcal{P}_{VZ}(Q,$ $\mathcal{P}_{\Vimagesp\Ximagesp})$ with a given $Q\in\setQident$ has limiting variance 
\begin{align}
P_{VZ}\tld\psi^2=P_{VX}[\linopQX(\tld\psi^2)]=P_{VX}[\linopQX[(\linopQXinv \tld\chi)(\linopQXinv\tld\chi)]] \label{priv:eq:eff_var_vz}
\end{align}
by the properties of $\linopQX$. When there is no privacy, so $\linopQX$ and $\linopQXinv$ are the identity, \eqref{priv:eq:eff_var_vz} equals the nonprivate efficiency bound $P_{VX}\tld\chi^2$ in \Cref{priv:sec:dr:subsec:inferential_properties}, as expected.  Specifically, if $Q\in\setQgen$, then we have the bounds 
\begin{align}
\begin{aligned}\label{priv:eq:eff_var_vz_bound}
P_{VX}\tld\chi^2+\fr{1-\alpha}{\alpha}\left((P_V\otimes\Qbar)\tld\chi^2- P_V\left(\int \tld\chi(V,x)\Qbar(\deriv x)\right)^2\right) \leq P_{VZ}\tld\psi^2 \\ \leq \fr{2-\alpha}{\alpha} P_{VX}\tld\chi^2+\fr{2(2-\alpha)(1-\alpha)}{\alpha^2}(P_V\otimes\Qbar)\tld\chi^2
\end{aligned}
\end{align}
by \Cref{priv:lem:linopqx_properties}; hence, the private efficiency bound $P_{VZ}\tld\psi^2$ is never smaller than the nonprivate bound $P_{VX}\tld\chi^2$, and the stricter the privacy, the larger this gap in general.




\section{Private Estimation}
\label{priv:sec:estim_dr_priv}

In this section, we construct a private analogue of the one-step estimator \eqref{priv:eq:bias_corrected_chihat} of \eqref{priv:eq:dr_para}, and show that it reaches the efficient limit \eqref{priv:eq:eff_var_vz} in the nonparametric model \eqref{priv:eq:nonparamodel_set} for $P_{VX}$, given a mechanism $Q\in\setQident$.
We assume that three, mutually independent, random samples $\bar{\mathcal{S}}=((V_i,Z_i))_{i\in[n]}$, $\bar{\mathcal{S}}'=((V_i',Z_i'))_{i\in[n]},$ $\bar{\mathcal{S}}''=((V_i'',Z_i''))_{i\in[n]}$ from $P_{VZ}$ are available for inference.\footnote{If  $\mathcal{S}=\mathcal((V_i,X_i))_{i\in[n]}$, $\mathcal{S}'= ((V_i',X_i'))_{i\in[n]}$, $\mathcal{S}''=((V_i'',X_i''))_{i\in[n]}$ are three, mutually independent, random samples from $P_{VX}$, then, given a $Q\in\setQident$, the samples $\bar{\mathcal{S}}',\bar{\mathcal{S}}''$ are obtained by drawing $Z_i'\cond (\mathcal{S},\bar{\mathcal{S}},\mathcal{S}',\mathcal{S}'')\sim Q(\cdot\cond X_i')$ and $Z_i''\cond (\mathcal{S},\bar{\mathcal{S}},\mathcal{S}',\bar{\mathcal{S}}',\mathcal{S}'')\sim Q(\cdot\cond X_i'')$ for all $i\in[n]$.}

Analogously to \eqref{priv:eq:bias_corrected_chihat}, we begin with an initial estimator $\psi(\hat P_{VZ})$ and correct it as
\begin{align}
\hat\psi_n &\ceq \psi(\hat P_{VZ})+\Probnb \hat{\tld\psi}= \psi(\hat P_{VZ})+\nsumn{i} \hat{\tld\psi}(V_i,Z_i),
\end{align}
where $\Probnb$ is the empirical measure of $\bar{\mathcal{S}}$, and 
\begin{align}
\hat{\tld\psi}\ceq&\, \linopQXinv \check{\tld\chi}, \label{priv:eq:psihat} \\
\chk{\tld\chi}(v,x)\ceq&\, \rieszXchk(v_{\mhit{1}},x)(m(v,x)-\muXchk(v_{\mhit{1}},x))+\fr{\indic{v_{\mhit{2}}=c}}{\chk p_{V_{\mhit{2}}}(c)}(g(v,x)-\gammaVchk(c))\expderivchk \nonumber \\
&+ f(v,x, \muXchk, \gammaVchk(c))-\psi(\hat P_{VZ}) 
\eqc \chk{\tld\chi}_0(v,x)- \psi(\hat P_{VZ}). \label{priv:eq:eif_chi_hat_vz}
\end{align}
are estimates of the private and nonprivate influence functions $\tld\psi$ and $\tld\chi$ in \eqref{priv:eq:eif_chi_vz} and \eqref{priv:eq:eif_chi}, respectively. The inverse $\linopQXinv$ of \eqref{priv:eq:linopQX} is known as $Q$ is known. As $\linopQXinv h=h$ for a constant function $h$, we have $\hat\psi_n=\nsumn{i}  (\linopQXinv\chk{\tld\chi}_0)(V_i,Z_i)$, so $\psi(\hat P_{VZ})\in\real$ can be arbitrary.

In contrast, all the estimates 
\begin{align}
\label{priv:eq:nuisance_vz}
\begin{aligned}
\chk\eta&\ceq (\rieszXchk,\muXchk,\gammaVchk(c),\chk p_{V_{\mhit{2}}}(c), \expderivchk)\in L_2(P_{V_{\mhit{1}}X})\times L_2(P_{V_{\mhit{1}}X}) \times\Gamma\times \real\times\real\,\text{ of }  \\ 
\eta&\ceq (\rieszX,\muX,\gammaV(c), p_{V_{\mhit{2}}}(c), \expderiv) 
\end{aligned}
\end{align}
are based on the noisy samples $\bar{\mathcal{S}}'$ and $\bar{\mathcal{S}}''$ as clarified in \Cref{priv:tab:priv_dr_samples}. Specifically, by \Cref{priv:lem:linopqx_properties}, 
\begin{align}
P_{VZ}\linopQXinv h=P_{VX}h\,\text{ for all } h\in\linopQXinvDom. \label{priv:eq:changemeasure}
\end{align}
This motivates the estimators
\begin{align}
\expderivchk \ceq  \nsumn{i} \partial_\gamma \bar{f}(V_i'',Z_i'',\muXchk,\gammaVchk(c)), \label{priv:eq:expderivhat_vz} \\
\gammaVchk(c)\ceq  \fr{1}{\chk p_{V_{\mhit{2}}}(c)}\nsumn{i}\ba g_c(V_i',Z_i'),\quad \chk p_{V_{\mhit{2}}}(c)\ceq N_c/n,\, N_c\ceq \sumn{i}\indic{V_{\mhit{2}i}'=c}, \label{priv:eq:gammaVchk}\\
\bar{f}(v,z,\mu,\gamma)\ceq (\linopQXinv(v,x)\mapsto f(v,x,\mu,\gamma))(v,z), \, (v,z,\mu,\gamma)\in \Vimagesp\times\Zimagesp\times L_2(P_{V_{\mhit{1}}X})\times\Gamma, \label{priv:eq:fbar} \\
\ba g_c(v,z)\ceq (\linopQXinv g_c)(v,z),\quad  g_c(v,x)\ceq  \indic{v_{\mhit{2}}=c}g(v,x),\quad (v,x,z)\in\Vimagesp\times\Ximagesp\times\Zimagesp.
\end{align}
Indeed, $P_{VZ}\partial_\gamma \bar{f}=P_{VX}\partial_\gamma f$ and $\E\gammaVchk(c)=\gammaV(c)$, by $\partial_\gamma \bar{f}=\linopQXinv \partial_\gamma f$ and the definition \eqref{priv:eq:regressions_def} of $\gammaV(c)$ as $\fr{1}{p_{V_{\mhit{2}}}(c)}\E g_c(V,X)$. It follows that $\gammaV(c)$ and $p_{V_{\mhit{2}}}(c)$ are estimable by \eqref{priv:eq:gammaVchk} at rate $\bigOPs{P_{VZ}}{n^{-1/2}}$ because of their low dimensionality.

The estimation of $(\muX,\rieszX)$ is addressed in \Cref{priv:sec:estim_nuisance_priv}. If the estimators \eqref{priv:eq:nuisance_vz} are consistent and $f$ is continuous in an appropriate norm accounting for the noise measure $\Qbar$ in \eqref{priv:eq:qtv} as specified in \Cref{priv:ass:consistent_nuisance_priv} in \Cref{priv:app:sec:assumptions}, then the second, empirical process term in the decomposition
\begin{align}
\sqrt{n}(\hat\psi_n - \psi(P_{VZ})) &= \sqrt{n}\Probnb \tld\psi+\sqrt{n}(\Probnb-P_{VZ})(\hat{\tld{\psi}}-\tld\psi)+\sqrt{n}\bar{R}_n, \label{priv:eq:chi_eff_decomp_vz} \\
\ba R_n&\ceq \psi(\hat P_{VZ})-\psi(P_{VZ})+P_{VZ} \hat{\tld\psi}, \label{priv:eq:chi_eff_decomp_bias_vz}
\end{align}
is $\smallOPs{P_{VZ}}{1}$. The first term $\sqrt{n}\Probnb \tld\psi \rsquigs{P_{VZ}}$ $\normaldist(0,P_{VZ}\tld\psi^2)$ by the standard central limit theorem as $\tld\psi\in L_2^0(P_{VZ})$ is the efficient influence function \eqref{priv:eq:eif_chi_vz}. The change-of-measure property \eqref{priv:eq:changemeasure} combined with \Cref{priv:prop:eif_priv} and \eqref{priv:eq:eif_chi_hat_vz} implies that the third term
\begin{align}
 \label{priv:eq:bias_r_vz}
 \begin{aligned}
\bar R_n =&\ \psi(\hat P_{VZ})-\chi(P_{VX})+P_{VX}\chk{\tld\chi} \\
=&\ -P_{VX}(\rieszX-\rieszXchk)(\muX-\muXchk) +(\gammaV(c)-\gammaVchk(c))\left(\fr{p_{V_{\mhit{2}}}(c)}{\chk p_{V_{\mhit{2}}}(c)}\expderivchk-\bar{\expderiv}'\right) \\
&-(\gammaV(c)-\gammaVchk(c))^2\fr{P_{VX}\partial_\gamma^2 f(V,X,\muXchk,\gammaVchk(c))}{2},  
\end{aligned}
\end{align}
for some $\gammaVtld(c)$ between $\gammaV(c)$ and $\gammaVchk(c)$, and
\begin{align}
\bar{\expderiv}'\ceq P_{VX}\partial_\gamma f(V,X,\muXchk,\gammaVchk(c)) = P_{VZ}\partial_\gamma \bar{f}(V,Z,\muXchk,\gammaVchk(c)). \label{priv:eq:derivhatprime_vz}
\end{align}
Whence, $\sqrt{n}\ba R_n=\smallOPs{P_{VZ}}{1}$ under vanishing product $(\muXchk-\muX)(\rieszXchk-\rieszX)$ of estimation errors and regularity conditions. This yields our main result whereby $\hat\psi_n$ is rate-double-robust and asymptotically efficient in the nonparametric model \eqref{priv:eq:nonparamodel_set} for $P_{VX}$.

\begin{assumption}[Rates of Private Estimators]
\label{priv:ass:rates_nuisance_priv}
The estimators \eqref{priv:eq:nuisance_vz} satisfy $P_{VX}((\muXchk-\muX)(\rieszXchk-\rieszX))=\smallOPs{P_{VZ}}{n^{-1/2}}$, $\gammaVchk(c)-\gammaV(c)=\bigOPs{P_{VZ}}{n^{-1/2}}$, $\chk p_{V_{\mhit{2}}}(c)-p_{V_{\mhit{2}}}(c)=\bigOPs{P_{VZ}}{n^{-1/2}}$, and $P_{VX} \partial_\gamma^2 f(V,X,\muXchk,\gammaVtld(c))=\bigOPs{P_{VZ}}{1}$ for $\gammaVtld(c)$ between $\gammaVchk(c)$ and $\gammaV(c)$.
\end{assumption}
 
\begin{corollary}[Asymptotic Efficiency of $\hat\psi_n$]
\label{priv:cor:psihat_efficiency}
Suppose that $P_{VZ}\in\mathcal{P}_{VZ}(Q, \mathcal{P}_{\Vimagesp\Ximagesp})$ for a fixed mechanism $Q\in\setQident$. If \Cref{priv:ass:consistent_nuisance_priv,priv:ass:rates_nuisance_priv} hold, then
$\sqrt{n}(\hat\psi_n-\psi(P_{VZ}))\rsquigs{P_{VZ}}\normaldist(0, P_{VZ}\tld\psi^2)$ as $n\to\infty$.
\end{corollary}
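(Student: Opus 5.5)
The plan is to work from the three-term decomposition \eqref{priv:eq:chi_eff_decomp_vz} and treat each term in turn, concluding with Slutsky's lemma. First, I verify that $\tld\psi=\linopQXinv\tld\chi$ lies in $L_2^0(P_{VZ})$. By \Cref{priv:prop:eif_priv}, which applies under the membership $\tld\chi\in\linopQXinvDom$ assumed in \Cref{priv:ass:consistent_nuisance_priv}, it is the efficient influence function. The change-of-measure identity \eqref{priv:eq:changemeasure} gives $P_{VZ}\tld\psi=P_{VX}\tld\chi=0$, and the upper bound in \eqref{priv:eq:eff_var_vz_bound}, or finiteness of the matrix $\Qinv$ in the discrete case, gives $P_{VZ}\tld\psi^2<\infty$. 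Since $\bar{\mathcal{S}}$ is i.i.d.\ from $P_{VZ}$, the classical central limit theorem then yields $\sqrt n\Probnb\tld\psi\rsquigs{P_{VZ}}\normaldist(0,P_{VZ}\tld\psi^2)$.

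Second, I handle the empirical-process term $\sqrt n(\Probnb-P_{VZ})(\hat{\tld\psi}-\tld\psi)$ by sample splitting. The function $\hat{\tld\psi}$ depends only on $\bar{\mathcal S}',\bar{\mathcal S}''$, which are independent of $\bar{\mathcal S}$. Conditional on them, the term has mean zero and conditional variance at most $P_{VZ}(\hat{\tld\psi}-\tld\psi)^2$. By conditional Chebyshev together with dominated convergence for conditional probabilities, it therefore suffices to show $P_{VZ}(\linopQXinv(\chk{\tld\chi}-\tld\chi))^2=\smallOPs{P_{VZ}}{1}$. The constant $\psi(\hat P_{VZ})$ is harmless: $\linopQXinv$ fixes constants and $(\Probnb-P_{VZ})$ annihilates them, so I may work with $\chk{\tld\chi}_0$. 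The bound on $P_{VZ}(\linopQXinv h)^2$ from \Cref{priv:lem:linopqx_properties} (the right side of \eqref{priv:eq:eff_var_vz_bound} applied to $h$) reduces this to $L_2(P_{VX})$ and $L_2(P_V\otimes\Qbar)$ convergence of $\chk{\tld\chi}_0-\tld\chi-\text{const}$. Expanding $\chk{\tld\chi}_0$ term by term and using the triangle inequality, that convergence follows from consistency of $\chk\eta$ and the continuity of $f$ in both norms, which is exactly what \Cref{priv:ass:consistent_nuisance_priv} stipulates. The cross term $\rieszXchk(m-\muXchk)-\rieszX(m-\muX)$ splits as $(\rieszXchk-\rieszX)(m-\muXchk)+\rieszX(\muX-\muXchk)$, which is controlled under the assumed boundedness or consistency conditions.

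Third, I treat the remainder. By \eqref{priv:eq:changemeasure}, $P_{VZ}\hat{\tld\psi}=P_{VX}\chk{\tld\chi}$, so $\bar R_n$ has the form \eqref{priv:eq:bias_r_vz}. This is exactly \Cref{priv:thm:dr} applied with $(\muX',r',\gammaV'(c),p'_{V_{\mhit 2}}(c),\expderiv'',\chi')=(\muXchk,\rieszXchk,\gammaVchk(c),\chk p_{V_{\mhit2}}(c),\expderivchk,\psi(\hat P_{VZ}))$, evaluated for fixed values of the independent nuisance samples. I then bound the three terms with \Cref{priv:ass:rates_nuisance_priv}:
\begin{itemize}
\item the product term is $\smallOPs{P_{VZ}}{n^{-1/2}}$ directly by assumption;
\item the quadratic term is $\bigOPs{P_{VZ}}{n^{-1}}\cdot\bigOPs{P_{VZ}}{1}$;
\item the middle term is $\bigOPs{P_{VZ}}{n^{-1/2}}$ times $\frac{p_{V_{\mhit2}}(c)}{\chk p_{V_{\mhit2}}(c)}\expderivchk-\bar\expderiv'$.
\end{itemize}
For the last factor, $\chk p_{V_{\mhit2}}(c)\to p_{V_{\mhit2}}(c)>0$, and $\expderivchk-\bar\expderiv'=(\bar{\mathbb P}''_n-P_{VZ})\partial_\gamma\bar f(\cdot,\muXchk,\gammaVchk(c))$. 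Here $\muXchk$ is built from $\bar{\mathcal S}'$ only, per \Cref{priv:tab:priv_dr_samples}, so conditional Chebyshev gives this difference as $\bigOPs{P_{VZ}}{n^{-1/2}}$, provided $P_{VZ}(\partial_\gamma\bar f)^2$ is stochastically bounded (again from \Cref{priv:ass:consistent_nuisance_priv}). Hence $\sqrt n\bar R_n=\smallOPs{P_{VZ}}{1}$, and Slutsky concludes.

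The main obstacle is the second step. Its difficulty is the transfer of $L_2$ consistency through the unbounded-looking inverse $\linopQXinv$, because of the factor $1/\alpha$ and the $\Qbar$-integral that requires control in $L_2(P_V\otimes\Qbar)$ and not only in $L_2(P_{VX})$. The delicate part is the nonlinear dependence of $f$ on $\gammaVchk(c)$. I would handle it by a mean-value expansion in $\gamma$ under \eqref{priv:eq:f_diff}, combined with linearity \eqref{priv:eq:f_lin} in $\mu$ and the assumed continuity in both norms.
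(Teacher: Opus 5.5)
Your proposal is correct and follows essentially the paper's route. It uses the decomposition \eqref{priv:eq:chi_eff_decomp_vz}, the CLT for $\sqrt{n}\Probnb\tld\psi$, sample splitting plus boundedness of $\linopQXinv$ for the norm $\norm{\linopQXinvSpa}{\cdot}$ to kill the empirical-process term (the paper's \Cref{priv:lem:chi_eff_empprocess_priv}), and \Cref{priv:thm:dr} with the change of measure \eqref{priv:eq:changemeasure} for $\bar R_n$, where $\expderivchk-\bar{\expderiv}'=(\Probnb''-P_{VZ})\partial_\gamma\bar f(\cdot,\muXchk,\gammaVchk(c))$ is controlled through the independence of $\bar{\mathcal S}''$ from $\bar{\mathcal S}'$, as in the paper. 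Two small remarks: the nonlinearity in $\gamma$ needs no mean-value expansion, since \eqref{priv:eq:f_sq_continu_priv} already gives joint continuity in $(\mu,\gamma)$; and for $Q\in\setQdiscinv$ your Step 2 should use the finite-dimensional bound $\norm{L_2(P_{VZ})}{\linopQXinv h}\lesssim\norm{L_2(P_{VX})}{h}$ of \Cref{priv:lem:norms} (which needs $p_{VX}$ bounded away from zero) rather than \eqref{priv:eq:eff_var_vz_bound}, which is stated only for $Q\in\setQgen$.
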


\begin{table}
\caption{Use of Samples for Estimation}\label{priv:tab:priv_dr_samples}
\centering
\begin{threeparttable}[c]
\centering
\begin{tabular}{lccc}
Estimators & \multicolumn{3}{c}{Samples}  \\ 
& 				$\bar{\mathcal{S}}$ & $\bar{\mathcal{S}}'$ & $\bar{\mathcal{S}}''$ \\ \toprule
$\hat\psi_n$ 	&		\checkmark	&	\checkmark		& 	\checkmark	\\
$\expderivchk$ & 				&		\checkmark		&	\checkmark	\\
$\rieszXchk,\muXchk,\gammaVchk(c), \chk p_{V_{\mhit{2}}}(c)$ & & \checkmark & \\ \bottomrule
\end{tabular}
\begin{tablenotes}
\item \footnotesize{A sample is used in the construction of an estimator if and only if $\checkmark$ is present in their corresponding cell.}
\end{tablenotes}
\end{threeparttable}
\end{table}



\section{Private Estimation of Nuisance Parameters}
\label{priv:sec:estim_nuisance_priv}

\Cref{priv:cor:psihat_efficiency} essentially shows that if the product of the estimation errors of the regression $\muX$ and of the Riesz representer $\rieszX$ is small enough, then $\hat\psi_n$ is asymptotically efficient. In this section, we consider the estimation of $(\muX,\rieszX)$ from the random sample $\bar{\mathcal{S}}'$ of \Cref{priv:sec:estim_dr_priv} from $P_{VZ}\in\mathcal{P}_{VZ}(Q, \mathcal{P}_{VX})$ given a fixed mechanism $Q\in\setQident$.

By the Cauchy--Schwarz inequality, the product is bounded by individual errors as $P_{VX}(\muXchk-\muX)(\rieszXchk-\rieszX)\leq \sqrt{P_{VX}(\muXchk-\muX)^2}\sqrt{P_{VX}(\rieszXchk-\rieszX)^2}$. If $\muX$ belongs to a finite-dimensional model smoothly indexed by $\theta\in\real^K$, then the product vanishes fast enough for \Cref{priv:cor:psihat_efficiency} to apply under regularity conditions, even if $\rieszXchk$ is an infinite-dimensional nonparametric thus slower estimator, and \emph{vice versa}. To accommodate this tradeoff, we present results for finite- and infinite-dimensional models.

For finite-dimensional smooth models, a common estimator is the method-of-moments (\cite{hansen_large_1982,newey_chapter_1994}). In \Cref{priv:prop:rate_mm_priv}, we devise a locally-private method-of-moments estimator enabled by property \eqref{priv:eq:changemeasure}, and derive its limiting distribution under the standard regularity conditions of  \Cref{priv:ass:mm_priv} in \Cref{priv:app:priv_nuisance:subsec:finite_dim}. As in  \eqref{priv:eq:eff_var_vz_bound}, the worst-case dependence of the limiting variance $\ba\Sigma$ on $\alpha$ is $1/\alpha^2$ driven by $\ba \Phi$.

\begin{proposition}[Private Method-of-Moments]
\label{priv:prop:rate_mm_priv}
Let $\theta_0\ceq\arg\min_{\theta\in\Theta} P_{VX}\Xi_\theta$ for a fixed $\Xi_{\theta}:\Vimagesp\times\Ximagesp\to\real$, $\theta\in\Theta\subset\real^K$, for a fixed $K$. Let $\phi_{\tld\theta}(v,x)\ceq\Deriv_\theta \Xi_{\tld\theta}(v,x)^\intercal$ and $\dot\phi_{\tld\theta}(v,x)\ceq \Deriv_\theta \phi_{\tld\theta}(v,x)$ be the derivates as maps to $\real^{K\times 1}$ and to $\real^{K\times K}$, respectively, for $(v,x,\tld\theta)\in\Vimagesp\times\Ximagesp\times\Theta$. Let $\ba A_n\in\real^{K\times K}$ be an arbitrary sequence of (possibly random and then $\sigma(\bar{\mathcal{S}}')$-measurable) matrices with $\ba A_n\os{P_{VZ}}{\to}\ba A_0$ as $n\to\infty$ for a symmetric positive definite $\ba A_0$, and $\chk\theta$ be the solution to 
$\theta\mapsto \ba\Lambda_n(\theta)\ceq \big(\Probnb'\ba\phi_{\theta}^\intercal\big)\ba A_n(\Probnb'\ba\phi_{\tld\theta}\big) \equiv 0$ 
up to $\ba\Lambda_n(\chk\theta)=\smallOPs{P_{VZ}}{n^{-1/2}}$, where $\ba\phi_{\tld\theta}\ceq\Deriv_\theta \ba\Xi_{\tld\theta}$ with $\ba \Xi_\theta\ceq \linopQXinv \Xi_\theta$ for the inverse $\linopQXinv$ of \eqref{priv:eq:linopQX}. Let \Cref{priv:ass:mm_priv} in \Cref{priv:app:priv_nuisance:subsec:finite_dim} hold. If $P_{VZ}\in\mathcal{P}_{VZ}(Q, \mathcal{P}_{VX})$, for a fixed $Q\in\setQident$ and $P_{VX}\in\mathcal{P}_{VX}$ satisfying the given assumptions, then $\sqrt{n}(\chk\theta-\theta_0)\overset{P_{VZ}}{\rsquig} \mathcal{N}(0,\ba\Sigma)$ as $n\to\infty$, where $\ba \Sigma \ceq ({\dot\Phi^\intercal}\ba A_0{\dot\Phi})^{-1}{\dot\Phi^\intercal}\ba A_0 \ba \Phi \ba A_0{\dot\Phi}({\dot\Phi^\intercal}\ba A_0{\dot\Phi})^{-1}$, $\dot \Phi \ceq P_{VX} \dot \phi_{\theta_0}$, $\ba \Phi \ceq P_{VZ}\ba\phi_{\theta_0}\ba\phi_{\theta_0}^\intercal$
with $\dot \Phi =P_{VX}\dot\phi_{\theta_0}$ and $\phi_{\theta_0}=\Deriv_\theta\Xi_{\theta_0}^\intercal$.
Further, let $\xi_{\theta}:\Vimagesp\times\Ximagesp\to\real$, $\theta\in\Theta$, be (possibly random and then $\sigma(\bar{\mathcal{S}}')$-measurable) functions satisfying \Cref{priv:ass:mm_priv}. 
Then $\norm{L_2(P_{VX})}{\xi_{\chk\theta}-\xi_{\theta_0}}=\bigOPs{P_{VZ}}{n^{-1/2}}$.
\end{proposition}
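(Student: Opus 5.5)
The argument is the standard generalized-method-of-moments one, run on the noisy sample $\bar{\mathcal{S}}'$. The population objects are translated back to $P_{VX}$ through the change-of-measure identity \eqref{priv:eq:changemeasure}.

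\emph{Identification step.} By linearity of $\linopQXinv$ and (under \Cref{priv:ass:mm_priv}) interchange of differentiation with $\linopQXinv$, we have $\ba\phi_\theta=\linopQXinv\phi_\theta$. Hence \eqref{priv:eq:changemeasure} gives
\[
P_{VZ}\ba\phi_\theta=P_{VX}\phi_\theta \quad\text{and}\quad P_{VZ}\Deriv_\theta\ba\phi_\theta=P_{VX}\dot\phi_\theta .
\]
In particular $P_{VZ}\ba\phi_{\theta_0}=0$ by the first-order condition for $\theta_0$, and the population Jacobian is $\dot\Phi$.

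\emph{Consistency.} The map $\theta\mapsto\Probnb'\ba\phi_\theta$ converges uniformly in probability to $\theta\mapsto P_{VX}\phi_\theta$; I would take the Glivenko--Cantelli-type condition from \Cref{priv:ass:mm_priv}. Together with $\ba A_n\to\ba A_0$ positive definite and identifiability of $\theta_0$ as the unique zero, the usual argmin/Z-estimator argument then gives $\chk\theta\to\theta_0$ in probability.

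\emph{Linearization.} Expand $\Probnb'\ba\phi_{\chk\theta}=\Probnb'\ba\phi_{\theta_0}+\Probnb'\Deriv_\theta\ba\phi_{\tld\theta}\,(\chk\theta-\theta_0)$ with mean values $\tld\theta$ taken row by row. Consistency and a uniform law of large numbers for $\Deriv_\theta\ba\phi_\theta$ near $\theta_0$ give $\Probnb'\Deriv_\theta\ba\phi_{\tld\theta}\to\dot\Phi$. I read the estimating equation as the GMM first-order condition
\[
\big(\Probnb'\Deriv_\theta\ba\phi_{\chk\theta}\big)^\intercal\ba A_n\,\Probnb'\ba\phi_{\chk\theta}=\smallOPs{P_{VZ}}{n^{-1/2}} .
\]
Substituting the expansion and solving yields
\[
\sqrt n(\chk\theta-\theta_0)=-(\dot\Phi^\intercal\ba A_0\dot\Phi)^{-1}\dot\Phi^\intercal\ba A_0\sqrt n\,\Probnb'\ba\phi_{\theta_0}+\smallOPs{P_{VZ}}{1},
\]
which requires $\dot\Phi$ of full column rank. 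The ordinary multivariate CLT applies because $\ba\phi_{\theta_0}$ has mean zero under $P_{VZ}$ and covariance $\ba\Phi$, finite by the moment assumption, which needs $\phi_{\theta_0}\in\linopQXinvDom$ componentwise. This gives $\sqrt n\,\Probnb'\ba\phi_{\theta_0}\rsquig\normaldist(0,\ba\Phi)$, and Slutsky yields $\ba\Sigma$. The main obstacle is the uniform control of $\Deriv_\theta\ba\phi_\theta$ near $\theta_0$. The noisy functions $\linopQXinv\dot\phi_\theta$ involve the $\Qbar$-integral term, so the envelope or Lipschitz condition must hold under both $P_{VX}$ and $P_V\otimes\Qbar$. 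I would state it as a local dominated-derivative condition in \Cref{priv:ass:mm_priv} and check it via the explicit form of $\linopQXinv$ for $Q\in\setQgen$ (or the finite matrix $\Qinv$ for $Q\in\setQdiscinv$).

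\emph{Second claim.} The $\xi_\theta$ are $\sigma(\bar{\mathcal{S}}')$-measurable, and \Cref{priv:ass:mm_priv} supplies the needed smoothness, for which I would use a Lipschitz condition of the form
\[
\norm{L_2(P_{VX})}{\xi_\theta-\xi_{\theta_0}}\le L_n\|\theta-\theta_0\| \quad\text{on a neighbourhood of } \theta_0,
\]
with $L_n=\bigOPs{P_{VZ}}{1}$. Since $\chk\theta$ lies in that neighbourhood with probability tending to one and $\|\chk\theta-\theta_0\|=\bigOPs{P_{VZ}}{n^{-1/2}}$ by the first part, the claim follows immediately.
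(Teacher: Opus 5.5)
Your proposal is correct and is essentially the paper's proof written out. The paper likewise gets $\ba\phi_\theta=\linopQXinv\phi_\theta$, $P_{VZ}\ba\phi_{\theta_0}=0$ and $P_{VZ}\Deriv_\theta\ba\phi_\theta=P_{VX}\dot\phi_\theta$ from the change of measure. It then argues that the moment and envelope bounds, stated in $L_1(P_{VZ})$, put $(\Probnb',P_{VZ})$ under the hypotheses of the nonprivate \Cref{priv:lemma:rate_mm} (Hansen's theorems), and it handles $\xi$ with the same mean-value bound. For $Q\in\setQgen$, $L_1(P_{VZ})$ control is equivalent to control under both $P_{VX}$ and $P_V\otimes\Qbar$, so the step you single out as the main obstacle is already covered.

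Your reading of the estimating equation as a first-order condition holding up to $\smallOPs{P_{VZ}}{n^{-1/2}}$ is in fact necessary. The literal requirement $\ba\Lambda_n(\chk\theta)=\smallOPs{P_{VZ}}{n^{-1/2}}$ on the scalar quadratic form only forces $\Probnb'\ba\phi_{\chk\theta}=\smallOPs{P_{VZ}}{n^{-1/4}}$, which is too weak for root-$n$ asymptotics. Also, just as in the paper's appeal to Hansen, your consistency step tacitly needs a uniform law of large numbers over all of $\Theta$ and uniqueness of the zero of $\theta\mapsto P_{VX}\phi_\theta$. \Cref{priv:ass:mm_priv} does not literally provide these, since it gives only local conditions and a unique minimiser of $P_{VX}\Xi_\theta$.
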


If $\muX$ or $\rieszX$ follows a smooth model satisfying the conditions of \Cref{priv:prop:rate_mm_priv}, then they are estimable at parametric rate $\bigOPs{P_{VZ}}{n^{-1/2}}$. Indeed, the correctly parametrised versions of the (well-known) relations (\Cref{priv:lem:riesz_maxim} in \Cref{priv:app:priv_nuisance})
\begin{align}
\muX = \arg\min_{\mu\in L_2(P_{V_\mhit{1}}X)}P_{VX}\bigg[\Delta_{\mu}^2(V,X)\bigg], \quad  \Delta_{\mu}^2(v,x)\ceq (m(v,x) - \mu(v_{\mhit{1}},x))^2,  \label{priv:eq:sq_crit}  \\
\rieszX_\gamma = \arg\min_{h\in L_2(P_{V_\mhit{1}}X)}P_{VX}\Upsilon_{\gamma,h}, \quad  \Upsilon_{\gamma, h}(v,x) \ceq h(v_\mhit{1},x)^2-2f(v,x,h,\gamma), \gamma\in\Gamma, \label{priv:eq:risz_crit}
\end{align}
for the Riesz representer $\rieszX_\gamma$ of $\mu\mapsto \E f(V,X,\mu,\gamma)$, lead to $\norm{L_2(P_{VX})}{\xi_{\chk\theta}-\xi_{\theta_0}}=\bigOPs{P_{VZ}}{n^{-1/2}}$ for $\xi_{\theta_0}\in\{\muX,\rieszX\}$; see \Cref{priv:cor:muXpara_rate_priv,priv:cor:rieszXpara_rate_priv} in  \Cref{priv:app:priv_nuisance}.

For infinite-dimensional models $\theta_0\in\{\muX,\rieszX\}$, we transform nonprivate estimators $\infnuparhat$ into private ones for estimation from the noisy data $\bar{\mathcal{S}}'$. We consider estimators and their private transformation of the form
\begin{align}
\infnuparhat(v_\mhit{1},x)\ceq\nsumn{i}\nweight_{n,i}(v_\mhit{1}, x, V_{i}',X_i',\secnuhat), \label{priv:eq:nonparaestim} \\
\label{priv:eq:nonparaestim_priv}
\begin{aligned}
\infnuparchk(v_\mhit{1},x)&\ceq\nsumn{i}\ba\nweight_{n,i}(v_\mhit{1},x,V_i',Z_i',\secnuchk), \\
 \ba\nweight_{n,i}(v_\mhit{1},x,v',z',\secnu)&\ceq (\linopQXinv(v',x')\mapsto \nweight_{n,i}(v_\mhit{1}, x, v',x',\secnu))(v',z'),
\end{aligned}
\end{align}
where $\nweight_{n,i}:\Vimagesp_{\mhit{1}}\times\Ximagesp\times\Vimagesp\times\Ximagesp\times\secnuimagesp\to\real$, $i\in[n]$, is a triangular array of known, nonrandom functions. Here, $\secnuhat$ estimates $\secnu\in\secnuimagesp$, which allows $\theta_0$ to depend on a ``secondary-nuisance'' parameter $\secnu$. We remain agnostic about $\secnuimagesp$, affording flexible formulations. The operator $\linopQXinv$ is the inverse of  \eqref{priv:eq:linopQX}, and $\secnuchk$ is an estimator of $\secnu$ computed from $\bar{\mathcal{S}}'$.

\Cref{priv:prop:priv_bound} bounds the error of $\infnuparchk$. In \eqref{priv:eq:nonparaestim_priv_bound}, the first term is the secondary-nuisance error (e.g.\ the error of the conditioning density in kernel regression estimates), which is trivially zero if there is no dependence on $\secnu$; the second is a variance term, which usually scales inversely with the effective sample size (e.g. $1/(nh^d)$ for $d$-dimensional kernel estimators with bandwidth $h$) modulo the noise level of the privacy mechanism. The third term in \eqref{priv:eq:nonparaestim_priv_bound} is the squared bias, which, thanks to \eqref{priv:eq:changemeasure}, is identical to the nonprivate bias (e.g.\ $h^{2\beta}$ for said kernel estimates of $\beta$-smooth parameters). These terms generally require a case-by-case analysis, but arguments in the nonprivate setting can carry over to the private setting under the mechanism $Q\in\setQgen$. The noise level generally affects the first two terms, but never the third. This can translate into ``usual,'' \emph{nonprivate} convergence rates of \emph{private} versions of well-known estimators such as kernel or orthogonal series, modulo the noise level of the mechanism $Q$; see \Cref{priv:app:priv_nuisance:subsec:infinite_dim}.


\begin{proposition}[Private Error Bounds]
\label{priv:prop:priv_bound}
Fix a $Q\in\setQident$, and let $\infnuparchk$ be defined according to \eqref{priv:eq:nonparaestim_priv}. Suppose that for a sequence of constants $(a_n)$,
\begin{align}
\label{priv:eq:secnubound}
\begin{aligned}
P_{VX}T_{n}^2&=\bigOPs{P_{VZ}}{a_n^2},  \\
T_{n}(v_\mhit{1}, x)&\ceq \nsumn{i}\left(\ba\nweight_{n,i}(v_\mhit{1},x,V_i',Z_i',\secnuchk)-\ba\nweight_{n,i}(v_\mhit{1},x,V_i',Z_i',\secnu)\right),\quad (v_\mhit{1},x)\in \Vimagesp_{\mhit{1}}\times\Ximagesp. 
\end{aligned}
\end{align}
Let $\ba \sigma_i^2(v_\mhit{1},x)\ceq \Vb{\ba\nweight_{n,i}(v_\mhit{1},x,V,Z,\secnu)}$, $(i,v_\mhit{1},x)\in [n]\times \Vimagesp_{\mhit{1}}\times\Ximagesp$. Then for any $\theta\in L_2(P_{V_{\mhit{1}}X})$,
\begin{align}
\label{priv:eq:nonparaestim_priv_bound}
\begin{aligned}
P_{VX}(\infnuparchk-\infnupar)^2 \leq &\, \bigOPs{P_{VZ}}{a_n^2}+\bigOPs{P_{VZ}}{\fr{1}{n^2}\sumn{i}\int \ba \sigma_i^2(v_\mhit{1},x)  \dderiv P_{V_{\mhit{1}}X}(v_{\mhit{1}},x)} \\
&+4\int\left\{\nsumn{i}P_{VX}[\nweight_{n,i}(v_\mhit{1},x,V,X,\secnu)]-\infnupar(v_\mhit{1},x)\right\}^2\dderiv P_{V_{\mhit{1}}X}(v_{\mhit{1}},x).
\end{aligned}
\end{align}
Morover, if $Q\in\setQgen$, then 
\begin{align}
\ba \sigma_i^2(v_\mhit{1},x)\leq \fr{4}{\alpha^2}\E \nweight_{n,i,}^2(v_\mhit{1},x, V, Z,\secnu) \label{priv:eq:sigmacondbound}
\end{align}
for all $(i,v_\mhit{1},x)\in [n]\times\Vimagesp_{\mhit{1}}\times\Ximagesp$.
\end{proposition}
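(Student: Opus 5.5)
We decompose the error pointwise in $(v_\mhit{1},x)$ into three pieces and bound each separately. Write
\begin{align*}
\infnuparchk-\infnupar = T_n + \Big(\nsumn{i}\ba\nweight_{n,i}(\cdot,V_i',Z_i',\secnu)-\nsumn{i}P_{VZ}\ba\nweight_{n,i}(\cdot,V,Z,\secnu)\Big) + \Big(\nsumn{i}P_{VZ}\ba\nweight_{n,i}(\cdot,V,Z,\secnu)-\infnupar\Big),
\end{align*}
call the last two brackets $S_n$ and $B_n$, and use $(a+b+c)^2\le 2a^2+4b^2+4c^2$ (or $3(a^2+b^2+c^2)$ with adjusted constants), then integrate against $P_{V_\mhit{1}X}$. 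The first piece is $\bigOPs{P_{VZ}}{a_n^2}$ directly by \eqref{priv:eq:secnubound}.

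For the bias piece $B_n$, we apply the change-of-measure property \eqref{priv:eq:changemeasure} for fixed $(v_\mhit{1},x)$ to the function $(v',x')\mapsto\nweight_{n,i}(v_\mhit{1},x,v',x',\secnu)$. This gives $P_{VZ}\ba\nweight_{n,i}(v_\mhit{1},x,V,Z,\secnu)=P_{VX}\nweight_{n,i}(v_\mhit{1},x,V,X,\secnu)$, provided that function lies in $\linopQXinvDom$, which we assume implicitly as needed for $\ba\nweight_{n,i}$ to be defined. Hence $B_n$ equals the nonprivate bias appearing in the third term of \eqref{priv:eq:nonparaestim_priv_bound}.

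For the variance piece $S_n$, note that $\secnu$ is nonrandom and the $(V_i',Z_i')$ are i.i.d.\ from $P_{VZ}$. By independence, for each fixed $(v_\mhit{1},x)$ we get $\E S_n(v_\mhit{1},x)^2=\fr{1}{n^2}\sumn{i}\ba\sigma_i^2(v_\mhit{1},x)$. Fubini (Tonelli) then gives $\E\int S_n^2\dderiv P_{V_\mhit{1}X}=\fr{1}{n^2}\sumn{i}\int\ba\sigma_i^2\dderiv P_{V_\mhit{1}X}$. Markov's inequality converts this expectation bound into the stated $\bigOPs{P_{VZ}}{\cdot}$ bound. Absorbing constant factors into the $O_P$ terms yields \eqref{priv:eq:nonparaestim_priv_bound}. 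The main care point is measurability and joint integrability needed for Fubini, which we handle by requiring the relevant integrals to be finite (otherwise the bound is trivial).

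For \eqref{priv:eq:sigmacondbound} with $Q\in\setQgen$, we use the explicit inverse. Writing $w(v',z')$ for the weight at fixed $(v_\mhit{1},x,\secnu)$, we have $\ba w(v',z')=\fr1\alpha w(v',z')-\fr{1-\alpha}{\alpha}\int w(v',x')\Qbar(\deriv x')$. Then
\begin{align*}
\ba\sigma_i^2\le\E\ba w^2\le \fr{2}{\alpha^2}\E w^2(V,Z)+\fr{2(1-\alpha)^2}{\alpha^2}\E\Big(\int w(V,x')\Qbar(\deriv x')\Big)^2.
\end{align*}
By Jensen, the last expectation is at most $\E\int w^2(V,x')\Qbar(\deriv x')$. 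The key step is then to bound this by $\E w^2(V,Z)/(1-\alpha)$. Since $Z$ given $V$ equals a $\Qbar$-draw with probability $1-\alpha$, independently of $V$, we have $\E w^2(V,Z)=\alpha\E w^2(V,X)+(1-\alpha)(P_V\otimes\Qbar)w^2\ge(1-\alpha)(P_V\otimes\Qbar)w^2$. Combining, the total is at most $\fr{2}{\alpha^2}(1+(1-\alpha))\E w^2(V,Z)\le\fr4{\alpha^2}\E w^2(V,Z)$, which is \eqref{priv:eq:sigmacondbound}.
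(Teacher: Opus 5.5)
Your proposal is correct and follows essentially the same route as the paper. Both use the three-way split with the bound $2T_n^2+4S_n^2+4B_n^2$, the change of measure for the bias, the i.i.d.\ variance computation plus Markov for the stochastic term, and, for $Q\in\setQgen$, the explicit form of $\linopQXinv$, Jensen, and $P_V\otimes\Qbar\le\fr{1}{1-\alpha}P_{VZ}$. The only cosmetic difference is that you bound $\ba\sigma_i^2$ by the second moment of $\ba w$ before splitting, whereas the paper first uses $\Vb{X+Y}\le 2(\Vb{X}+\Vb{Y})$; both yield the constant $2(2-\alpha)/\alpha^2\le 4/\alpha^2$.
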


\section{Conclusion}
\label{priv:sec:conclusion}

We introduced a class of rate-double-robust target parameters indexed linearly by an infinite-dimensional and nonlinearly by a low-dimensional regression. The inference of these targets was considered in a private setting, when the data --- with values in general spaces --- of each individual are protected by a local privacy mechanism. We derived the semiparametric properties of the private model, and constructed asymptotically efficient estimators under a fixed privacy mechanism. This efficiency was shown attainable under the same rate-double-robustness condition as in the nonprivate setting, involving two infinite-dimensional parameters. Therefore, we concluded with the private estimation of these two parameters, preserving their nonprivate rates modulo the noise level of the privacy mechanism.

\vspace*{0cm}
{\scriptsize
\bibliography{./thesis_print.bib}

@article{kohler_multivariate_2008,
  title = {Multivariate {{Orthogonal Series Estimates}} for {{Random Design Regression}}},
  author = {Kohler, Michael},
  year = {2008},
  month = oct,
  journal = {Journal of Statistical Planning and Inference},
  volume = {138},
  number = {10},
  pages = {3217--3237},
  publisher = {Elsevier BV},
  issn = {0378-3758},
  doi = {10.1016/j.jspi.2008.01.011},
  urlll = {https://linkinghub.elsevier.com/retrieve/pii/S0378375808000475},
  urldate = {2025-07-23},
  copyright = {https://www.elsevier.com/tdm/userlicense/1.0/},
  langid = {english}
}

@book{tsybakov_introduction_2009,
  title = {Introduction to {{Nonparametric Estimation}}},
  author = {Tsybakov, Alexandre B.},
  year = {2009},
  series = {Springer {{Series}} in {{Statistics}}},
  publisher = {Springer New York},
  address = {New York, NY},
  issn = {0172-7397},
  doi = {10.1007/b13794},
  urlll = {https://link.springer.com/10.1007/b13794},
  urldate = {2025-07-29},
  copyright = {https://www.springernature.com/gp/researchers/text-and-data-mining},
  isbn = {978-0-387-79051-0 978-0-387-79052-7},
  langid = {english}
}

@InProceedings{kusner_private_2016,
  title = 	 {Private Causal Inference},
  author = 	 {Kusner, Matt J. and Sun, Yu and Sridharan, Karthik and Weinberger, Kilian Q.},
  booktitle = 	 {Proceedings of the 19th International Conference on Artificial Intelligence and Statistics},
  pages = 	 {1308--1317},
  year = 	 {2016},
  editor = 	 {Gretton, Arthur and Robert, Christian C.},
  volume = 	 {51},
  series = 	 {Proceedings of Machine Learning Research},
  address = 	 {Cadiz, Spain},
  month = 	 {09--11 May},
  publisher =    {PMLR},
  urlll = 	 {https://proceedings.mlr.press/v51/kusner16.html}
}

@article{butucea_interactive_2023,
  title = {Interactive versus Noninteractive Locally Differentially Private Estimation: {{Two}} Elbows for the Quadratic Functional},
  shorttitle = {Interactive versus Noninteractive Locally Differentially Private Estimation},
  author = {Butucea, Cristina and Rohde, Angelika and Steinberger, Lukas},
  year = {2023},
  month = {apr},
  journal = {The Annals of Statistics},
  volume = {51},
  number = {2},
  issn = {0090-5364},
  doi = {10.1214/22-AOS2254},
  urlll = {https://projecteuclid.org/journals/annals-of-statistics/volume-51/issue-2/Interactive-versus-noninteractive-locally-differentially-private-estimation--Two-elbows/10.1214/22-AOS2254.full},
  urldate = {2026-05-12}
}

@article{ray_semiparametric_2020,
  title = {Semiparametric {{Bayesian}} Causal Inference},
  author = {Ray, Kolyan and Van Der Vaart, Aad},
  year = 2020,
  month = oct,
  journal = {The Annals of Statistics},
  volume = {48},
  number = {5},
  issn = {0090-5364},
  doi = {10.1214/19-AOS1919},
  urlll = {https://projecteuclid.org/journals/annals-of-statistics/volume-48/issue-5/Semiparametric-Bayesian-causal-inference/10.1214/19-AOS1919.full},
  urldate = {2025-08-21}
}

@unpublished{pollard_asymptotia_2005,
  title = {Asymptopia: {{An Exposition}} of {{Statistical Asymptotic Theory}}},
  author = {Pollard, David},
  year = 2005,
  urlll = {http://www.stat.yale.edu/~pollard/Courses/607.spring05/handouts/DQM.pdf}
}

@inproceedings{acharya_hadamard_2019,
  title = {{Hadamard} {Response}: {Estimating} {Distributions} {Privately}, {Efficiently}, and {With} {Little} {Communication}},
  booktitle = {Proceedings of the Twenty-Second International Conference on Artificial Intelligence and Statistics},
  author = {Acharya, Jayadev and Sun, Ziteng and Zhang, Huanyu},
  editor = {Chaudhuri, Kamalika and Sugiyama, Masashi},
  year = {2019},
  series = {Proceedings of Machine Learning Research},
  volume = {89},
  pages = {1120--1129},
  publisher = {PMLR},
  urlll = {https://proceedings.mlr.press/v89/acharya19a.html}
}

@misc{agarwal_causal_2024,
  title = {Causal {{Inference}} {{With}} {{Corrupted Data}}: {{Measurement Error}}, {{Missing Values}}, {{Discretization}}, and {{Differential Privacy}}},
  shorttitle = {Causal {{Inference}} with {{Corrupted Data}}},
  author = {Agarwal, Anish and Singh, Rahul},
  year = {2024},
  month = feb,
  number = {arXiv:2107.02780},
  eprint = {2107.02780},
  primaryclass = {cs, econ, math, stat},
  publisher = {arXiv},
  urlll = {http://arxiv.org/abs/2107.02780},
  urldate = {2024-08-26},
  archiveprefix = {arXiv}
}

@misc{alabi_differentially_2020,
  title = {Differentially {{Private Simple Linear Regression}}},
  author = {Alabi, Daniel and McMillan, Audra and Sarathy, Jayshree and Smith, Adam and Vadhan, Salil},
  year = {2020},
  month = jul,
  number = {arXiv:2007.05157},
  eprint = {2007.05157},
  primaryclass = {cs, stat},
  publisher = {arXiv},
  urlll = {http://arxiv.org/abs/2007.05157},
  urldate = {2023-05-23},
  archiveprefix = {arXiv}
}

@misc{asi_element_2019,
  title = {Element {{Level Differential Privacy}}: {{The Right Granularity}} of {{Privacy}}},
  shorttitle = {Element {{Level Differential Privacy}}},
  author = {Asi, Hilal and Duchi, John C. and Javidbakht, Omid},
  year = {2019},
  month = dec,
  number = {arXiv:1912.04042},
  eprint = {1912.04042},
  primaryclass = {cs, stat},
  publisher = {arXiv},
  urlll = {http://arxiv.org/abs/1912.04042},
  urldate = {2024-09-13},
  archiveprefix = {arXiv}
}

@misc{asi_near_2020,
  title = {Near {{Instance-Optimality}} in {{Differential Privacy}}},
  author = {Asi, Hilal and Duchi, John C.},
  year = {2020},
  month = may,
  number = {arXiv:2005.10630},
  eprint = {2005.10630},
  primaryclass = {cs, stat},
  publisher = {arXiv},
  urlll = {http://arxiv.org/abs/2005.10630},
  urldate = {2024-09-13},
  archiveprefix = {arXiv}
}

@misc{barber_privacy_2014,
  title = {Privacy and {{Statistical Risk}}: {{Formalisms}} and {{Minimax Bounds}}},
  shorttitle = {Privacy and {{Statistical Risk}}},
  author = {Barber, Rina Foygel and Duchi, John C.},
  year = {2014},
  month = dec,
  number = {arXiv:1412.4451},
  eprint = {1412.4451},
  primaryclass = {cs, math, stat},
  publisher = {arXiv},
  urlll = {http://arxiv.org/abs/1412.4451},
  urldate = {2024-08-23},
  archiveprefix = {arXiv}
}

@article{barnes_fisher_2020,
  title = {Fisher {{Information Under Local Differential Privacy}}},
  author = {Barnes, Leighton Pate and Chen, Wei-Ning and {\"O}zg{\"u}r, Ayfer},
  year = {2020},
  journal = {IEEE Journal on Selected Areas in Information Theory},
  volume = {1},
  number = {3},
  pages = {645--659},
  doi = {10.1109/JSAIT.2020.3039461}
}

@misc{bassily_differentially_2014,
  title = {Differentially {{Private Empirical Risk Minimization}}: {{Efficient Algorithms}} and {{Tight Error Bounds}}},
  shorttitle = {Differentially {{Private Empirical Risk Minimization}}},
  author = {Bassily, Raef and Smith, Adam and Thakurta, Abhradeep},
  year = {2014},
  month = oct,
  number = {arXiv:1405.7085},
  eprint = {1405.7085},
  primaryclass = {cs, stat},
  publisher = {arXiv},
  urlll = {http://arxiv.org/abs/1405.7085},
  urldate = {2023-05-24},
  archiveprefix = {arXiv}
}

@article{berrett_strongly_2021,
  title = {Strongly {{Universally Consistent Nonparametric Regression}} and {{Classification}} {{With}} {{Privatised Data}}},
  author = {Berrett, Thomas B. and Gy{\"o}rfi, L{\'a}szl{\'o} and Walk, Harro},
  year = {2021},
  month = jan,
  journal = {Electronic Journal of Statistics},
  volume = {15},
  number = {1},
  issn = {1935-7524},
  doi = {10.1214/21-EJS1845}
}

@incollection{bolthausen_semiparametric_2002,
  title = {Semiparametric {{Statistics}}},
  booktitle = {Lectures on {{Probability Theory}} and {{Statistics}}},
  author = {Bolthausen, Erwin and {van der Vaart}, Aad W. and Perkins, Edwin},
  editor = {Bernard, Pierre and Morel, Jean-Michel and Takens, Floris and Teissier, Bernard},
  year = {2002},
  series = {Lecture {{Notes}} in {{Mathematics}}},
  volume = {1781},
  publisher = {Springer Berlin Heidelberg},
  address = {Berlin, Heidelberg},
  doi = {10.1007/b93152},
  urlll = {http://link.springer.com/10.1007/b93152},
  urldate = {2024-03-15},
  isbn = {978-3-540-43736-9 978-3-540-47944-4}
}

@misc{chaudhuri_differentially_2011,
  title = {Differentially {{Private Empirical Risk Minimization}}},
  author = {Chaudhuri, Kamalika and Monteleoni, Claire and Sarwate, Anand D.},
  year = {2011},
  month = feb,
  number = {arXiv:0912.0071},
  eprint = {0912.0071},
  primaryclass = {cs},
  publisher = {arXiv},
  urlll = {http://arxiv.org/abs/0912.0071},
  urldate = {2023-05-24},
  archiveprefix = {arXiv}
}

@article{chernozhukov_automatic_2022,
  title = {Automatic {{Debiased Machine Learning}} of {{Causal}} and {{Structural Effects}}},
  author = {Chernozhukov, Victor and Newey, Whitney K. and Singh, Rahul},
  year = {2022},
  journal = {Econometrica},
  volume = {90},
  number = {3},
  pages = {967--1027},
  issn = {0012-9682},
  doi = {10.3982/ECTA18515},
  urlll = {https://www.econometricsociety.org/doi/10.3982/ECTA18515},
  urldate = {2024-08-13},
  langid = {english}
}

@misc{desfontaines_sok_2022,
  title = {{{SoK}}: {{Differential Privacies}}},
  shorttitle = {{{SoK}}},
  author = {Desfontaines, Damien and Pej{\'o}, Bal{\'a}zs},
  year = {2022},
  month = nov,
  number = {arXiv:1906.01337},
  eprint = {1906.01337},
  primaryclass = {cs},
  publisher = {arXiv},
  urlll = {http://arxiv.org/abs/1906.01337},
  urldate = {2024-05-23},
  archiveprefix = {arXiv}
}

@misc{drechsler_non-parametric_2021,
  title = {Non-{{Parametric Differentially Private Confidence Intervals}} for the {{Median}}},
  author = {Drechsler, Joerg and {Globus-Harris}, Ira and McMillan, Audra and Sarathy, Jayshree and Smith, Adam},
  year = {2021},
  month = jul,
  number = {arXiv:2106.10333},
  eprint = {2106.10333},
  primaryclass = {cs, stat},
  publisher = {arXiv},
  urlll = {http://arxiv.org/abs/2106.10333},
  urldate = {2023-05-17},
  archiveprefix = {arXiv}
}

@article{duchi_minimax_2018,
  title = {Minimax {{Optimal Procedures}} for {{Locally Private Estimation}}},
  author = {Duchi, John C. and Jordan, Michael I. and Wainwright, Martin J.},
  year = {2018},
  month = jan,
  journal = {Journal of the American Statistical Association},
  volume = {113},
  number = {521},
  pages = {182--201},
  issn = {0162-1459, 1537-274X},
  doi = {10.1080/01621459.2017.1389735},
  urlll = {https://www.tandfonline.com/doi/full/10.1080/01621459.2017.1389735},
  urldate = {2024-09-12},
  langid = {english}
}

@article{duchi_right_2024,
  title = {The {Right} {Complexity} {Measure} in {Locally} {Private} {Estimation}: {It} {Is} {Not} the {Fisher} {Information}},
  shorttitle = {The Right Complexity Measure in Locally Private Estimation},
  author = {Duchi, John C. and Ruan, Feng},
  year = {2024},
  month = feb,
  journal = {The Annals of Statistics},
  volume = {52},
  number = {1},
  issn = {0090-5364},
  doi = {10.1214/22-AOS2227},
  urlll = {https://projecteuclid.org/journals/annals-of-statistics/volume-52/issue-1/The-right-complexity-measure-in-locally-private-estimation--It/10.1214/22-AOS2227.full},
  urldate = {2024-08-23}
}

@inproceedings{evfimievski_limiting_2003,
  title = {Limiting {{Privacy Breaches}} in {{Privacy Preserving Data Mining}}},
  booktitle = {Proceedings of the Twenty-Second {{ACM SIGMOD-SIGACT-SIGART}} Symposium on {{Principles}} of Database Systems},
  author = {Evfimievski, Alexandre and Gehrke, Johannes and Srikant, Ramakrishnan},
  year = {2003},
  month = jun,
  pages = {211--222},
  publisher = {ACM},
  address = {San Diego California},
  doi = {10.1145/773153.773174},
  urlll = {https://dl.acm.org/doi/10.1145/773153.773174},
  urldate = {2024-09-11},
  isbn = {978-1-58113-670-8},
  langid = {english}
}

@phdthesis{gentry_fully_2009,
  title = {A {{Fully Homomorphic Encryption Scheme}}},
  author = {Gentry, Craig},
  year = {2009},
  urlll = {https://crypto.stanford.edu/craig},
  school = {Stanford University}
}

@article{hahn_role_1998,
  title = {On the {{Role}} of the {{Propensity Score}} in {{Efficient Semiparametric Estimation}} of {{Average Treatment Effects}}},
  author = {Hahn, Jinyong},
  year = {1998},
  month = mar,
  journal = {Econometrica},
  volume = {66},
  number = {2},
  eprint = {2998560},
  eprinttype = {jstor},
  pages = {315},
  issn = {00129682},
  doi = {10.2307/2998560},
  urlll = {https://www.jstor.org/stable/2998560?origin=crossref},
  urldate = {2021-09-17}
}

@article{hansen_large_1982,
  title = {Large {{Sample Properties}} of {{Generalized Method}} of {{Moments Estimators}}},
  author = {Hansen, Lars Peter},
  year = {1982},
  month = jul,
  journal = {Econometrica},
  volume = {50},
  number = {4},
  eprint = {1912775},
  eprinttype = {jstor},
  pages = {1029},
  issn = {00129682},
  doi = {10.2307/1912775},
  urlll = {https://www.jstor.org/stable/1912775?origin=crossref},
  urldate = {2024-09-06}
}

@incollection{hutchison_calibrating_2006,
  title = {Calibrating {{Noise}} to {{Sensitivity}} in {{Private Data Analysis}}},
  booktitle = {Theory of {{Cryptography}}},
  author = {Dwork, Cynthia and McSherry, Frank and Nissim, Kobbi and Smith, Adam},
  editor = {Hutchison, David and Kanade, Takeo and Kittler, Josef and Kleinberg, Jon M. and Mattern, Friedemann and Mitchell, John C. and Naor, Moni and Nierstrasz, Oscar and Pandu Rangan, C. and Steffen, Bernhard and Sudan, Madhu and Terzopoulos, Demetri and Tygar, Dough and Vardi, Moshe Y. and Weikum, Gerhard and Halevi, Shai and Rabin, Tal},
  year = {2006},
  volume = {3876},
  pages = {265--284},
  publisher = {Springer Berlin Heidelberg},
  address = {Berlin, Heidelberg},
  doi = {10.1007/11681878_14},
  urlll = {http://link.springer.com/10.1007/11681878_14},
  urldate = {2024-09-11},
  isbn = {978-3-540-32731-8 978-3-540-32732-5}
}

@inproceedings{jiang_analysis_2024,
  title = {Analysis of {{Differentially Private Synthetic Data}}: {{A Measurement Error Approach}}},
  booktitle = {{{AAAI Conference}} on {{Artificial Intelligence}}},
  author = {Jiang, Yangdi and Liu, Yi and Yan, Xiaodong and Charest, Anne-Sophie and Kong, Linglong and Jiang, Bei},
  year = {2024},
  urlll = {https://api.semanticscholar.org/CorpusID:268699082}
}

@inproceedings{kifer_private_2012,
  title = {Private {{Convex Empirical Risk Minimization}} and {{High-Dimensional Regression}}},
  booktitle = {Proceedings of the 25th {{Annual Conference}} on {{Learning Theory}}},
  author = {Kifer, Daniel and Smith, Adam and Thakurta, Abhradeep},
  editor = {Mannor, Shie and Srebro, Nathan and Williamson, Robert C.},
  year = {2012},
  month = jun,
  series = {Proceedings of {{Machine Learning Research}}},
  volume = {23},
  pages = {25.1--25.40},
  publisher = {PMLR},
  address = {Edinburgh, Scotland},
  urlll = {https://proceedings.mlr.press/v23/kifer12.html}
}

@book{kress_linear_2014,
  title = {Linear {{Integral Equations}}},
  author = {Kress, Rainer},
  year = {2014},
  series = {Applied Mathematical Sciences},
  edition = {3rd},
  number = {volume 82},
  publisher = {Springer},
  address = {New York},
  isbn = {978-1-4614-9592-5},
  lccn = {QA431 .K776 2014}
}

@inproceedings{lei_differentially_2011,
  title = {Differentially {{Private M-Estimators}}},
  booktitle = {{{NIPS}}},
  author = {Lei, Jing},
  year = {2011}
}

@article{loh_high-dimensional_2012,
  title = {High-{{Dimensional Regression}} {{With}} {{Noisy}} and {{Missing Data}}: {{Provable Guarantees}} {{With}} {{Nonconvexity}}},
  shorttitle = {High-Dimensional Regression with Noisy and Missing Data},
  author = {Loh, Po-Ling and Wainwright, Martin J.},
  year = {2012},
  month = jun,
  journal = {The Annals of Statistics},
  volume = {40},
  number = {3},
  eprint = {1109.3714},
  primaryclass = {cs, math, stat},
  issn = {0090-5364},
  doi = {10.1214/12-AOS1018},
  urlll = {http://arxiv.org/abs/1109.3714},
  urldate = {2023-05-23},
  archiveprefix = {arXiv}
}

@inproceedings{mangold_high-dimensional_2023,
  title = {{High}-{Dimensional} {Private} {Empirical} {Risk} {Minimization} by {Greedy} {Coordinate} {Descent}},
  booktitle = {Proceedings of The 26th International Conference on Artificial Intelligence and Statistics},
  author = {Mangold, Paul and Bellet, Aur\'elien and Salmon, Joseph and Tommasi, Marc},
  editor = {Ruiz, Francisco and Dy, Jennifer and van de Meent, Jan-Willem},
  year = {2023},
  series = {Proceedings of Machine Learning Research},
  volume = {206},
  pages = {4894--4916},
  publisher = {PMLR},
  urlll = {https://proceedings.mlr.press/v206/mangold23a.html}
}

@incollection{newey_chapter_1994,
  title = {Chapter 36: {{Large Sample Estimation}} and {{Hypothesis Testing}}},
  booktitle = {Handbook of {{Econometrics}}},
  author = {Newey, Whitney K. and McFadden, Daniel},
  year = {1994},
  volume = {4},
  pages = {2111--2245},
  publisher = {Elsevier},
  doi = {10.1016/S1573-4412(05)80005-4},
  urlll = {https://linkinghub.elsevier.com/retrieve/pii/S1573441205800054},
  urldate = {2024-09-06},
  isbn = {978-0-444-88766-5},
  langid = {english}
}

@phdthesis{neyman_jerzy_applications_1924,
  title = {On the {{Applications}} of the {{Theory}} of {{Probability}} to {{Agricultural Experiments}}},
  author = {Neyman, Jerzy},
  year = {1924},
  school = {University of Warsaw}
}

@misc{ohnishi_locally_2023,
  title = {Locally {{Private Causal Inference}} for {{Randomized Experiments}}},
  author = {Ohnishi, Yuki and Awan, Jordan},
  year = {2023},
  publisher = {arXiv},
  doi = {10.48550/ARXIV.2301.01616},
  urlll = {https://arxiv.org/abs/2301.01616},
  urldate = {2024-09-12},
  copyright = {arXiv.org perpetual, non-exclusive license}
}

@book{piziak_matrix_2007,
  title = {Matrix {{Theory}}},
  author = {Piziak, Robert and Odell, Patrick L.},
  year = {2007},
  month = feb,
  edition = {1st},
  publisher = {{Chapman and Hall/CRC}},
  doi = {10.1201/9781420009934},
  urlll = {https://www.taylorfrancis.com/books/9781420009934},
  urldate = {2024-08-25},
  isbn = {978-1-4200-0993-4},
  langid = {english}
}

@book{polanin_handbook_1998,
  title = {Handbook of {{Integral Equations}}},
  author = {Pol{\^a}nin, Andrej Dmitrievi{\v c} and Manzhirov, Aleksandr Vladimirovich},
  year = {1998},
  edition = {1st},
  publisher = {CRC press},
  address = {Boca Raton, London, New York, Washington},
  isbn = {978-0-8493-2876-3},
  langid = {english},
  lccn = {515.45}
}

@book{reed_methods_1972,
  title = {Methods of {{Modern Mathematical Physics}}},
  author = {Reed, Michael and Simon, Barry},
  year = {1972},
  publisher = {Academic Press},
  address = {New York},
  isbn = {978-0-12-585001-8},
  langid = {english},
  lccn = {530.15}
}

@article{rotnitzky_characterization_2021,
  title = {Characterization of {{Parameters}} {{With}} a {{Mixed Bias Property}}},
  author = {Rotnitzky, Andrea and Smucler, Ezequiel and Robins, James M.},
  year = {2021},
  month = mar,
  journal = {Biometrika},
  volume = {108},
  number = {1},
  pages = {231--238},
  issn = {0006-3444, 1464-3510},
  doi = {10.1093/biomet/asaa054},
  urlll = {https://academic.oup.com/biomet/article/108/1/231/5899828},
  urldate = {2024-08-10},
  copyright = {https://academic.oup.com/journals/pages/open\_access/funder\_policies/chorus/standard\_publication\_model},
  langid = {english}
}

@article{rubin_estimating_1974,
  title = {Estimating {{Causal Effects}} of {{Treatments}} in {{Randomized}} and {{Nonrandomized Studies}}},
  author = {Rubin, Donald B.},
  year = {1974},
  journal = {Journal of Educational Psychology},
  volume = {66},
  number = {5},
  pages = {688--701},
  issn = {0022-0663},
  doi = {10.1037/h0037350},
  urlll = {http://content.apa.org/journals/edu/66/5/688},
  urldate = {2020-10-05},
  langid = {english}
}

@inproceedings{sheffet_differentially_2017,
  title = {Differentially {{Private Ordinary Least Squares}}},
  booktitle = {Proceedings of the 34th {{International Conference}} on {{Machine Learning}}},
  author = {Sheffet, Or},
  editor = {Precup, Doina and Teh, Yee Whye},
  year = {2017},
  month = aug,
  series = {Proceedings of {{Machine Learning Research}}},
  volume = {70},
  pages = {3105--3114},
  publisher = {PMLR},
  urlll = {https://proceedings.mlr.press/v70/sheffet17a.html}
}

@misc{slavkovic_perturbed_2021,
  title = {Perturbed {M}-{Estimation}: {A} {Further} {Investigation} of {Robust} {Statistics} for {Differential} {Privacy}},
  author = {Slavkovic, Aleksandra B. and Molinari, Roberto},
  year = {2021},
  urlll = {https://api.semanticscholar.org/CorpusID:237194827}
}

@misc{smith_efficient_2008,
  title = {Efficient, {{Differentially Private Point Estimators}}},
  author = {Smith, Adam},
  year = {2008},
  month = sep,
  number = {arXiv:0809.4794},
  eprint = {0809.4794},
  primaryclass = {cs},
  publisher = {arXiv},
  urlll = {http://arxiv.org/abs/0809.4794},
  urldate = {2023-05-17},
  archiveprefix = {arXiv}
}

@misc{steinberger_efficiency_2023,
  title = {Efficiency in {{Local Differential Privacy}}},
  author = {Steinberger, Lukas},
  year = {2023},
  month = jan,
  number = {arXiv:2301.10600},
  eprint = {2301.10600},
  primaryclass = {math, stat},
  publisher = {arXiv},
  urlll = {http://arxiv.org/abs/2301.10600},
  urldate = {2023-03-27},
  archiveprefix = {arXiv}
}

@book{van_der_vaart_asymptotic_1998,
  title = {Asymptotic {{Statistics}}},
  author = {{van der Vaart}, Aad W.},
  year = {1998},
  series = {Cambridge {{Series}} on {{Statistical}} and {{Probabilistic Mathematics}}},
  publisher = {Cambridge University Press},
  isbn = {1-107-26372-7},
  langid = {english},
  lccn = {98015176}
}

@article{warner_randomized_1965,
  title = {Randomized {{Response}}: {{A Survey Technique}} for {{Eliminating Evasive Answer Bias}}},
  shorttitle = {Randomized {{Response}}},
  author = {Warner, Stanley L.},
  year = {1965},
  month = mar,
  journal = {Journal of the American Statistical Association},
  volume = {60},
  number = {309},
  pages = {63--69},
  issn = {0162-1459, 1537-274X},
  doi = {10.1080/01621459.1965.10480775},
  urlll = {http://www.tandfonline.com/doi/abs/10.1080/01621459.1965.10480775},
  urldate = {2024-08-26},
  langid = {english}
}

@incollection{yamamoto_differentially_2017,
  title = {Differentially {{Private Empirical Risk Minimization}} {{With}} {{Input Perturbation}}},
  booktitle = {Discovery {{Science}}},
  author = {Fukuchi, Kazuto and Tran, Quang Khai and Sakuma, Jun},
  editor = {Yamamoto, Akihiro and Kida, Takuya and Uno, Takeaki and Kuboyama, Tetsuji},
  year = {2017},
  volume = {10558},
  pages = {82--90},
  publisher = {Springer International Publishing},
  address = {Cham},
  doi = {10.1007/978-3-319-67786-6_6},
  urlll = {http://link.springer.com/10.1007/978-3-319-67786-6_6},
  urldate = {2024-09-12},
  isbn = {978-3-319-67785-9 978-3-319-67786-6},
  langid = {english}
}

@article{yang_comprehensive_2019,
  title = {A {{Comprehensive Survey}} on {{Secure Outsourced Computation}} and {{Its Applications}}},
  author = {Yang, Yang and Huang, Xindi and Liu, Ximeng and Cheng, Hongju and Weng, Jian and Luo, Xiangyang and Chang, Victor},
  year = {2019},
  month = oct,
  journal = {IEEE Access},
  volume = {7},
  pages = {159426--159465},
  issn = {2169-3536},
  doi = {10.1109/ACCESS.2019.2949782}
}

@misc{zhu_causal_2022,
  title = {Causal {{Inference}} {{With}} {{Treatment Measurement Error}}: {{A Nonparametric Instrumental Variable Approach}}},
  shorttitle = {Causal {{Inference}} with {{Treatment Measurement Error}}},
  author = {Zhu, Yuchen and Gultchin, Limor and Gretton, Arthur and Kusner, Matt and Silva, Ricardo},
  year = {2022},
  month = jun,
  number = {arXiv:2206.09186},
  eprint = {2206.09186},
  primaryclass = {cs, stat},
  publisher = {arXiv},
  urlll = {http://arxiv.org/abs/2206.09186},
  urldate = {2023-06-23},
  archiveprefix = {arXiv}
}

@article{van_der_vaart_higher_2014,
  title = {Higher {{Order Tangent Spaces}} and {{Influence Functions}}},
  author = {Van Der Vaart, Aad},
  date = {2014-11-01},
  year = {2014},
  journal = {Statistical Science},
  shortjournal = {Statist. Sci.},
  volume = {29},
  number = {4},
  issn = {0883-4237},
  doi = {10.1214/14-STS478},
  urlll = {https://projecteuclid.org/journals/statistical-science/volume-29/issue-4/Higher-Order-Tangent-Spaces-and-Influence-Functions/10.1214/14-STS478.full},
  urldate = {2026-03-31}
}

@inbook{laber_semiparametric_2024,
  title = {Semiparametric {{Doubly Robust Targeted Double Machine Learning}}: {{A Review}}},
  shorttitle = {Semiparametric {{Doubly Robust Targeted Double Machine Learning}}},
  booktitle = {Handbook of {{Statistical Methods}} for {{Precision Medicine}}},
  author = {Kennedy, Edward H.},
  date = {2024-10-07},
  year = {2024},
  edition = {1},
  pages = {207--236},
  publisher = {{Chapman and Hall/CRC}},
  location = {Boca Raton},
  doi = {10.1201/9781003216223-10},
  url = {https://www.taylorfrancis.com/books/9781003216223/chapters/10.1201/9781003216223-10},
  urldate = {2026-03-31},
  bookauthor = {Laber, Eric and Chakraborty, Bibhas and Moodie, Erica E. M. and Cai, Tianxi and Laan, Mark Van Der},
  isbn = {978-1-003-21622-3},
  langid = {english}
}
\bibliographystyle{plainnat}
}




\newpage

\begin{appendices}


\section{Assumptions}
\label{priv:app:sec:assumptions}

Results in \Cref{priv:sec:estim_dr_priv,priv:sec:estim_nuisance_priv} rely on the following assumptions, respectively.

\begin{assumption}[Consistent Private Estimators]
\label{priv:ass:consistent_nuisance_priv}
Either the privacy mechanism $Q\in\setQgen$ in \eqref{priv:eq:qtv}; or $Q\in\setQdiscinv$ and $(V,X)$ is distributed on a finite set with density $p_{VX}$ with respect to the counting measure. Let $\iota\ceq 1$ in the former and $\iota\ceq 0$ in the latter case. Define the norm $\norm{\linopQXinvSpa}{\cdot}\ceq\norm{L_2(P_{VX})}{\cdot}+\iota\norm{L_2(P_V\otimes \Qbar)}{\cdot}$ and the measure $\linopQXinvMes\ceq P_{VX}+\iota P_V \otimes \Qbar$, and let $\linopQXinvSpa\ceq \set{f: \Vimagesp\times\Ximagesp \to \real: \norm{\linopQXinvSpa}{f}<\infty}$. It holds that $\tld\chi,\chk{\tld\chi},\rieszX\in \linopQXinvSpa$ and
\begin{align}
\norm{\linopQXinvSpa}{f(\cdot,\mu,\gamma)-f(\cdot,\muX,\gammaV(c))}\to 0&\, \text{ as }\, \rho((\mu,\gamma),(\muX,\gammaV(c)))\to 0,\label{priv:eq:f_sq_continu_priv}  \\
\norm{\linopQXinvSpa}{\partial_\gamma f(\cdot,\mu,\gamma)-\partial_\gamma f(\cdot,\muX,\gammaV(c))}\to 0&\, \text{ as }\, \rho((\mu,\gamma),(\muX,\gammaV(c)))\to 0,\label{priv:eq:f_diffcontinu_priv}
\end{align}
and
\begin{align}
\norm{\linopQXinvSpa}{\rieszXchk-\rieszX}&=\smallOPs{P_{VZ}}{1}, \label{priv:eq:consistency_riesz_l2_vz} \\
\gammaVchk(c)-\gammaV(c)&=\smallOPs{P_{VZ}}{1}, \label{priv:eq:consistency_gammaVhat_vz} \\
\chk p_{V_2}(c)-p_{V_2}(c)&=\smallOPs{P_{VZ}}{1}. \label{priv:eq:consistency_phat_vz}
\end{align}
Further, it either holds that
\begin{align}
\supnorm{m-\muX}&=\bigO{1}, \label{priv:eq:consistency_bigobound_vz} \\
\supnorm{\muX-\muXchk}&=\smallOPs{P_{VZ}}{1},  \label{priv:eq:consistency_mu_supn_vz}
\end{align}
or that
\begin{align}
\supnorm{m-\muXchk}&=\bigOPs{P_{VZ}}{1}, \label{priv:eq:consistency_bigobound_hat_vz} \\
\norm{\linopQXinvSpa}{\muX-\muXchk}&=\smallOPs{P_{VZ}}{1},  \label{priv:eq:consistency_mu_l2_vz} \\
\linopQXinvMes\left(\set{(V_{\mhit{1}},X)\in \Vimagesp_{\mhit{1}}\times\Ximagesp: |r(V_{\mhit{1}},X)|>\ba R }\right)&=0  \label{priv:eq:consistency_riesz_bound_vz}
\end{align}
for some constant $\ba R<\infty$. We may replace $\norm{\linopQXinvSpa}{\cdot}$ with $\supnorm{.}$ and \eqref{priv:eq:consistency_riesz_bound_vz} with $\supnorm{\rieszX}<\infty$.
\end{assumption}

\begin{assumption}[Private Method-of-Moments]
\label{priv:ass:mm_priv}
The set $\Theta\subset\real^K$ is compact, and $\theta_0\in\Int\Theta$ is the unique minimiser of $\theta\mapsto P_{VX}\Xi_\theta$ for $\Xi_{\theta}:\Vimagesp\times\Ximagesp\to\real$, $\theta\in\Theta$. The derivative $\phi_{\tld\theta}(v,x)\ceq\Deriv_\theta \Xi_{\tld\theta}(v,x)^\intercal$ as a map to $\real^{K\times 1}$ exists at all $(v,x,\tld\theta)\in\Vimagesp\times\Ximagesp\times\neighTheta$, for a neighbourhood $\neighTheta$ of $\theta_0$, and satisfy $\norm{L_1(P_{VZ})}{\norm{2}{\phi_{\theta_0}}^2}<\infty$, where, for a fixed $(v,x)\in\Vimagesp\times\Ximagesp$, $\norm{2}{\phi_{\theta_0}}^2(v,x)$ is sum of the $K$ squared entries of $\phi_{\theta_0}(v,x)$. The derivative $\dot\phi_{\tld\theta}(v,x)\ceq \Deriv_\theta \phi_{\tld\theta}(v,x)$ as a map to $\real^{K\times K}$ exists at all $(v,x,\tld\theta)\in\Vimagesp\times\Ximagesp\times\neighTheta$, and $\theta\mapsto \dot\phi_{\theta}(v,x)$ is continuous at all $(v,x,\theta)\in\Vimagesp\times\Ximagesp\times\neighTheta$, with the expectation $P_{VX}\dot\phi_{\theta_0}$ existent and invertible as a matrix. Further, $\norm{L_1(P_{VZ})}{\sup_{\theta\in\neighTheta}\norm{1}{\dot \phi_\theta}}<\infty$, where, for a fixed $(v,x)\in\Vimagesp\times\Ximagesp$, $\norm{1}{\dot \phi_\theta}(v,x)$ is the sum of the absolute values of the $K^2$ entries of $\dot\phi_\theta(v,x)$. The $\xi_\theta$ are such that the derivative $\Deriv_\theta\xi_{\tld\theta}(v,x)$ exists at all $\tld\theta\in\neighTheta$ for all $(v,x)\in\Vimagesp\times\Ximagesp$, and $\norm{L_2(P_{VX})}{\sup_{\tld\theta\in\neighTheta}\norm{2}{\Deriv_\theta \xi_{\tld\theta}}}=\bigOPs{P_{VZ}}{1}$.
\end{assumption}



\section{Rate-Double-Robust Inference}
\label{priv:app:sec:dr}

\Cref{priv:app:sec:dr:subsec:examples} contains examples of parameters in our rate-double-robust class. \Cref{priv:app:sec:dr:subsec:proofs_main} proves the main results in \Cref{priv:sec:dr}. 

\subsection{Examples}
\label{priv:app:sec:dr:subsec:examples}

In this section, we present examples of parameters $\chi(P_{VX})=\E f(V,X, \muX, \gammaV(c))$ defined in \Cref{priv:sec:dr:subsec:class}. In \Cref{priv:ex:ate_dr,priv:ex:att_dr} we discuss causal estimands; in \Cref{priv:ex:deriv,priv:ex:ray_int,priv:ex:line_int},  ``geometric'' parameters. In  \Cref{priv:ex:known_riesz}, we study the special case of a known Riesz representer illustrated by \Cref{priv:ex:discounting} with a simple model from economics. Finally, in \Cref{priv:ex:extension}, we consider the extension of our class to feature dependence on multiple regressions.

In \Cref{priv:ex:ate_dr,priv:ex:att_dr}, we adopt the potential outcome framework of \cite{neyman_jerzy_applications_1924} and \cite{rubin_estimating_1974}. Specifically, we consider a binary treatment $D\in\dummy$ and an observed outcome $Y=DY^1+(1-D)Y^0$ for partially unobserved potential outcomes $Y^0,Y^1$ with values in $\real$. Let $X$ be covariates taking values in a measurable space $(\Ximagesp,\Xsigma)$ with law $P_X$ and satisfying unconfoundedness $Y^d\indep D\mid X$ for $d\in\dummy$. Let
\begin{align}
\label{priv:eq:causal_definitions}
\begin{aligned}
\muX(d,x)&\ceq\Ebc{Y}{D=d,X=x} \\
\piX(d\cond x)&\ceq \Ebc{\indic{D=d}}{X=x}
\end{aligned}
\end{align}
for $(d,x)\in\dummy\times\Ximagesp$. We assume that $\piX(d\cond X)\geq \epsilon$ for all $d\in\dummy$ $P_{VX}$-a.s., for some $\epsilon>0$. In \Cref{priv:ex:ate_dr,priv:ex:att_dr}, \Cref{priv:eqprop:eif} recovers the familiar efficient influence functions for average treatment effects, under the nonparametric model with unknown propensity score. Moreover, the bias formulae in \Cref{priv:ex:att_dr} show the average treatment effect on the treated rate-double-robust. This latter result is aligned with \citet[Example 6]{chernozhukov_automatic_2022}, and is an improvement on \citet[Example 12]{rotnitzky_characterization_2021}, who too, establish asymptotic normality, but not efficiency, as this parameter is not natively included in their class.

\begin{example}[Average Treatment Effect]
\label{priv:ex:ate_dr}
In the causal context \eqref{priv:eq:causal_definitions}, let $V\ceq (Y,D)$ and $V_{\mhit{1}}\ceq D$, $m(V,X)\ceq Y$; $V_{\mhit{2}}$, $g$, and $\gammaV$ are not used. Then $\E Y^d=\E \muX(d,X)$ for $d\in\dummy$. Hence, $f(V,X,\muX,\gammaV(c))\ceq \muX(d,X)$ gives $\chi(P_{VX})= \E Y^d$, while $f(V,X,\muX,\gammaV(c))\ceq \muX(1,X)-\muX(0,X)$ gives $\chi(P_{VX})= \E Y^1-\E Y^0$, with \eqref{priv:eq:f_continu}, \eqref{priv:eq:f_lin}, \eqref{priv:eq:f_diff}, \eqref{priv:eq:sq_continu} holding.

The Riesz representer for $\E Y^d$ is 
$\rieszX(d',x)=\fr{\indic{d'=d}}{\piX(d \cond x)}$ 
and for $\E Y^1-\E Y^0$, it is 
$\rieszX(d,x)=\fr{d}{\piX(1\cond x)}-\fr{1-d}{1-\piX(1\cond x)}.$
None of these representers $r$ depends on $\gammaV(c)$.

The efficient influence function \eqref{priv:eq:eif_chi}, in the nonparametric model (with unknown $\piX$), of $\E Y^d$, with $\partial_\gamma f = 0$, is
\begin{align*}
\tld\chi(\tld v,\tld x) =\fr{\indic{\tld d=d}}{\piX(d\mid \tld x)}(\tld y-\muX(d,\tld x))+\muX(d,\tld x) - \chi(P_{VX});
\end{align*}
of $\chi(P_{VX})=\E Y^1 - \E Y^0$, with $\partial_\gamma f = 0$, is
\begin{align*}
\tld\chi(v,x)=&\, \fr{d}{\piX(1\cond x)}(y-\muX(1,x)) -\fr{1-d}{1-\piX(1\cond x)}(y-\muX(0,x)) \\
&+\muX(1,X)-\muX(0,X) - \chi(P_{VX}), 
\end{align*}
as in \citet[Proof of Theorem 1]{hahn_role_1998}, since $\indic{\tld d=d}h(\tld d,\tld x)=\indic{\tld d=d}h(d,\tld x)$ for any $h$. Note that while the model for $(Y^0,Y^1,D,$ $X)$ is \emph{not} nonparametric, because it is constrained by $Y^d\indep D\cond X$, the model for $(Y,D,X)$ is nonparametric if the models for $Y^0\cond X$, $Y^1\cond X$, $D\cond X$ and $X$ are all nonparametric.

Consequently, the bias \eqref{priv:eq:chihat_biasdecomp} established by \Cref{priv:thm:dr} is
\begin{align*}
R_n \ceq&\ -P_{VX}(\rieszX-\rieszXhat)(\muX-\muXhat) \\
=&\, P_{DX}\left[\indic{D=d}\fr{\piXhat(d\cond X)-\piX(d\cond X)}{\piXhat(d\cond X)\piX(d\cond X)}\big(\muXhat(D,X)-\muX(D,X)\big)\right],
\end{align*}
for $\E Y^d$ where $\muXhat,\rieszXhat$ are some estimators of its Riesz representer and regression, respectively. For $\E Y^1-\E Y^0$, the same quantity is
\begin{align*}
R_n \ceq -P_{VX}(\rieszX-\rieszXhat)(\muX-\muXhat) \\
= P_{DX}\left[D\fr{\piXhat(1\cond X)-\piX(1\cond X)}{\piXhat(1\cond X)\piX(1\cond X)}\big(\muXhat(D,X)-\muX(D,X)\big)\right] \\
+P_{DX}\left[(1-D)\fr{\piXhat(1\cond X)-\piX(1\cond X)}{(1-\piXhat(1\cond X))(1-\piX(1\cond X))}\big(\muXhat(D,X)-\muX(D,X)\big)\right]
\end{align*}
for the corresponding regression and representer (estimators). By the tower property of expectation conditioning on $X$ and by the definition of $\piX$, we arrive to the usual bias formulae
\begin{align*}
R_n =&\, P_{X}\left[\fr{\piXhat(d\cond X)-\piX(d\cond X)}{\piXhat(d\cond X)}\big(\muXhat(d,X)-\muX(d,X)\big)\right] & \text{ for } \E Y^d, \\
R_n =&\, P_{X}\left[\fr{\piXhat(1\cond X)-\piX(1\cond X)}{\piXhat(1\cond X)}\big(\muXhat(1,X)-\muX(1,X)\big)\right] & \\
&+P_{X}\left[\fr{\piXhat(1\cond X)-\piX(1\cond X)}{1-\piXhat(1\cond X)}\big(\muXhat(0,X)-\muX(0,X)\big)\right] & \text{ for } \E Y^1-\E Y^0.
\end{align*}

\end{example}

\noindent If we did not allow for the dependence on the parameter $\gammaV$, our class of parameters \eqref{priv:eq:dr_para} and conditions \eqref{priv:eq:f_continu}, \eqref{priv:eq:f_lin} would be a strict subset of those of \cite{rotnitzky_characterization_2021}. Allowing for such a nonlinear but smooth dependence enables us to capture more parameters, such as the average treatment effect on the treated, which was indeed shown double robust by \citet[Example 6]{chernozhukov_automatic_2022}. 

\begin{example}[Average Treatment Effect on the Treated]
\label{priv:ex:att_dr}
Consider the setting of \Cref{priv:ex:ate_dr}, but now take $g(V,X)\ceq D$, and $\gammaV(c)\ceq p_1$ for $p_1\ceq \E D$;  $V_\mhit{2}$ is again unused and taken to be empty. Then $\Ebc{Y^0}{D=1}=\E D\muX(0,X)/p_1$. Hence $f(V,X,\muX,\gammaV)\ceq D\muX(0,X)/p_1$ gives $\chi(P_{VX})= \Ebc{Y^0}{D=1}$, while $f(V,X,\muX,\gammaV)\ceq D(\muX(1,X)-\muX(0,X))/p_1$ gives $\chi(P_{VX})= \Ebc{Y^1-Y^0}{D=1}$,
with \eqref{priv:eq:f_continu}, \eqref{priv:eq:f_lin}, \eqref{priv:eq:f_diff}, \eqref{priv:eq:sq_continu} holding.

The Riesz representer for $\Ebc{Y^0}{D=1}$ is 
$\rieszX(d,x)=\fr{1-d}{p_1}\fr{\piX(1\cond x)}{1-\piX(1\cond x)},$ 
and for $\Ebc{Y^1-Y^0}{D=1}$, it is 
$\rieszX(d,x)=\fr{d}{p_1}-\fr{1-d}{p_1}\fr{\piX(1\cond x)}{1-\piX(1\cond x)}.$
Both representers $\rieszX$ depend on $\gammaV(c)=p_1$.

The efficient influence function \eqref{priv:eq:eif_chi}, in the nonparametric model (with unknown $\piX$), of $\Ebc{Y^0}{D=1}$, with $\partial_\gamma f(V,X,\muX,\gammaV(c))=-D\muX(0,X)/p_1^2$, is
\begin{align*}
\tld\chi(v,x)=&\, \fr{1-d}{p_1}\fr{\piX(1\cond x)}{1-\piX(1\cond x)}(y-\muX(d,x))-\fr{d-p_1}{p_1}\chi(P_{VX})+D\muX(0,X)/p_1 \\
&- \chi(P_{VX}) \\
=& \fr{1-d}{p_1}\fr{\piX(1\cond x)}{1-\piX(1\cond x)}(y-\muX(0,x))+\fr{d}{p_1}\muX(0,X) -\fr{d}{p_1}\chi(P_{VX});
\end{align*}
of $\Ebc{Y^1-Y^0}{D=1}$, with $\partial_\gamma f(V,X,\muX,\gammaV(c))=-D(\muX(1,X)-\muX(0,X))/p_1^2$, is
\begin{align*}
\tld\chi(v,x)=&\,\left( \fr{d}{p_1}-\fr{1-d}{p_1}\fr{\piX(1\cond x)}{1-\piX(1\cond x)}\right) (y-\muX(d,x))-\fr{d-p_1}{p_1}\chi(P_{VX}) \\
&+ \fr{d}{p_1}(\muX(1,x)-\muX(0,x))-\chi(P_{VX}) \\
=&\,\fr{d}{p_1}(y-\muX(1,x))-\fr{1-d}{p_1}\fr{\piX(1\cond x)}{1-\piX(1\cond x)}(y-\muX(0,x)) \\
&+ \fr{d}{p_1}(\muX(1,x)-\muX(0,x)) + \fr{d}{p_1}\chi(P_{VX})
\end{align*}
as in \citet[Proof of Theorem 1]{hahn_role_1998}. The remark on the nonparametric nature of the $(Y,D,X)$-model in \Cref{priv:ex:ate_dr} applies here equally.

Consequently, the first term of the bias in \eqref{priv:eq:bias_r} established by \Cref{priv:thm:dr} for $\Ebc{Y^0}{D=1}$ is
\begin{align*}
-P_{VX}(\rieszX-\rieszXhat)(\muX-\muXhat) \\
= \fr{\hat p_1- p_1}{\hat p_1 p_1}P_X\left[\fr{(1-\piXhat(1\cond X))\piX(1\cond X)}{1-\piXhat(1\cond X)}(\muXhat(0,X)-\muX(0,X))\right] \\
+ \fr{p_1}{p_1\hat p_1}P_{X}\left[\fr{(\piX(1\cond X)-\piXhat(1\cond X))(\muXhat(0,X)-\muX(0,X))}{1-\piXhat(1\cond X)}\right],
\end{align*}
by the tower property of expectation conditioning on $X$; for $\Ebc{Y^1-Y^0}{D=1}$, it is
\begin{align*}
-P_{VX}(\rieszX-\rieszXhat)(\muX-\muXhat) = \fr{\hat p_1 - p_1}{\hat p_1 p_1}P_X\left[\piX(1\cond X)(\muXhat(1,X)-\muX(1,X))\right] \\
-\fr{\hat p_1- p_1}{\hat p_1 p_1}P_X\left[\fr{(1-\piXhat(1\cond X))\piX(1\cond X)}{1-\piXhat(1\cond X)}(\muXhat(0,X)-\muX(0,X))\right] \\
- \fr{p_1}{p_1\hat p_1}P_{X}\left[\fr{(\piX(1\cond X)-\piXhat(1\cond X))(\muXhat(0,X)-\muX(0,X))}{1-\piXhat(1\cond X)}\right].
\end{align*}
Suppose $\expderivhat-\expderiv'=\smallOPs{P_{VX}}{1}$ and that $P_{YDX} \partial_\gamma^2 f(Y,D,X,\muXhat,\tld p_1)=\bigOPs{P_{VX}}{1}$. Then for the bias to vanish as $\smallOPs{P_{VX}}{n^{-1/2}}$, it suffices that $\norm{L_1(P_{VX})}{\Delta}=\smallOPs{P_{VX}}{n^{-1/2}}$, where $\Delta(x)\ceq (\muXhat(0\cond x)-\muX(0\cond x))(\piX(1\cond x)-\piXhat(1\cond x))$, and $\muXhat$ be consistent because $\hat p_1-p_1=\bigOPs{P_{VX}}{n^{-1/2}}$, provided $1-\piXhat(1\cond x)$ is bounded away from zero.

\end{example}

Parameters with a more geometric interpretation are also included in our class.

\begin{example}[Average Approximate Derivative]
\label{priv:ex:deriv}
Let $\Ximagesp=\real$,  and $V\ceq Y$ for a random variable $Y$. Set $m(V,X)\ceq Y$, and assume that $\muX(v_\mhit{1},x)=\Ebc{Y}{V_\mhit{1}=v_\mhit{1},X=x}$ does not depend on $v_\mhit{1}$, so we use $\muX(x)$ to refer to its value. Assume that the marginal distribution $P_X$ of $X$ admits a Lebesque density $p_X$.

Fix a strictly positive constant $\epsilon\in\real$. Setting $f(v,x,\mu,\gamma)\ceq \fr{\mu(x+\epsilon)-\mu(x)}{\epsilon}$ and $J_{P_{VX},\epsilon}(\mu)\ceq \Eb{\fr{\mu(X+\epsilon)-\mu(X)}{\epsilon}}$, $\mu\in L_2(P_X)$, yields the parameter
\begin{align*}
\chi(P_{VX}) = J_{P_{VX},\epsilon}(\muX) = \Eb{\fr{\muX(X+\epsilon)-\muX(X)}{\epsilon}}=\int_{-\infty}^\infty \fr{\muX(x+\epsilon)-\muX(x)}{\epsilon}p_X(x)\deriv x.
\end{align*} 
For small $\epsilon$, the integrand represents an approximate derivative of $\muX$, although we do not actually require that $\muX$ be differentiable, unlike \citet[Example 2: Weighted Average Derivative]{chernozhukov_automatic_2022}.

Assume that $(P_{VX},\epsilon)$ satisfies
\begin{align}
b(P_{VX},\epsilon)\ceq \int_{-\infty}^\infty \left(\fr{p_X(x-\epsilon)}{p_X(x)}\right)^2 p_X(x)\deriv x<\infty. \label{priv:eq:b_bound}
\end{align}
Then $J_{P_{VX},\epsilon}$ has Riesz representer $\rieszX(x)=\fr{p_X(x-\epsilon)-p_X(x)}{\epsilon p_X(x)}$, whose dependence on $(P_{VX},\epsilon)$ is suppressed in notation. The last display ensures that $J_{P_{VX},\epsilon}$ is continuous so that the Riesz representation theorem applies. Indeed, by a change of variables, and the Cauchy--Schwarz inequality, for $\mu,\tld\mu\in L_2(P_X)$,
\begin{align*}
|J_{P_{VX},\epsilon}(\mu)-J_{P_{VX},\epsilon}(\tld\mu)|\leq \Eb{|\mu(X)-\tld\mu(X)|\fr{p_X(X-\epsilon)}{p_X(X)}}+\norm{L_2(P_X)}{\mu-\tld\mu}  \\
\leq \left(\sqrt{b(P_{VX},\epsilon)}+1\right)\norm{L_2(P_X)}{\mu-\tld\mu}.
\end{align*}
Hence, \eqref{priv:eq:f_continu}, \eqref{priv:eq:f_lin}, and \eqref{priv:eq:f_diff} hold. In addition, if $(P_{VX},\epsilon)$ satisfies $\supnorm{\fr{p_X(\cdot-\epsilon)}{p_X(\cdot)}}<\infty$ --- which is stronger than \eqref{priv:eq:b_bound} --- , then \eqref{priv:eq:sq_continu} holds by similar arguments.

\end{example}

\begin{example}[Ray-Type Integral]
\label{priv:ex:ray_int}
Let $\Ximagesp=\real^d$, and $V\ceq (Y,W)$, for a random variable $Y$, and a random vector $W$ with values $\real^d$ and Lebesgue density $p_W$.  Set $m(V,X)\ceq Y$, and assume that $\muX(v_\mhit{1},x)\equiv\muX(x)$ does not depend on $v_\mhit{1}$. Suppose that $P_X$ has a Lebesgue density $p_X$. 

Fix constants $0<t_0<t_1$. Setting $f(v,x,\mu,\gamma)\ceq \int_{t_{0}}^{t_{1}} \mu(tw)\norm{2}{w}\dderiv t$ and $J_{P_{VX},t_{0},t_{1}}(\mu)\ceq \E \int_{t_0}^{t_1} \mu(tW)\norm{2}{W}\dderiv t$, $\mu\in L_2(P_X)$, where $\norm{2}{w}=\sqrt{\sum_{j=1}^d w_j^2}$ is the Euclidean norm on $\real^d$, yields the parameter
\begin{align*}
\chi(P_{VX})\ceq J_{P_{VX},t_0,t_1}(\muX)= \E \int_{t_0}^{t_1} \muX(tW)\norm{2}{W}\dderiv t=\int_{\real^d}\int_{t_0}^{t_1} \muX(tw)\norm{2}{w}\dderiv t p_W(w)\dderiv w.
\end{align*}
The quantity $f(v,x,\mu,\gamma)$ is the integral of $\mu$ along the curve $\zeta_{w}: [t_0,t_1]\to\real^d$, $\zeta_{w}(t)\ceq t w$, the line segment between $t_0w$ and $t_1w$, and $J_{P_{VX},t_0,t_1}(\mu)$ is its average across draws from $p_W$. With $t_0$ close to zero and $t_1$ to infinity, $f(v,x,\mu,\gamma)$ approximates the ray emanating from the origin in the direction of $w$, but care must be taken to ensure integrability: if $(P_{VX},t_0,t_1)$ is such that
\begin{align*}
\real^d \ni x\mapsto \fr{\norm{2}{x}}{p_X(x)}\int_{t_0}^{t_1}p_W\left(\fr{x}{t}\right)\fr{1}{t^{d+1}}\dderiv t \in L_2(P_X),
\end{align*}
then the map in the display is the Riesz representer of $J_{P_{VX},t_0,t_1}$, with \eqref{priv:eq:f_continu} and \eqref{priv:eq:f_lin} holding. (Condition \eqref{priv:eq:f_diff} is trivially satisfied.) This follows from
\begin{align*}
J_{P_{VX},t_0,t_1}(\mu)=\int_{\real^d}\int_{t_0}^{t_1} \mu(tw)\norm{2}{w}\dderiv t\, p_W(w)\dderiv w = \int_{t_0}^{t_1}\int_{\real^d}\mu(u)\norm{2}{u/t}p_W(u/t)\fr{1}{t^d}\dderiv u \dderiv t \\
= \int_{\real^d}\mu(u)\fr{\norm{2}{u}}{p_X(u)}\int_{t_0}^{t_1}p_W(u/t)\fr{1}{t^{d+1}}\dderiv t  p_X(u) \dderiv u = \E \mu(X)\fr{\norm{2}{X}}{p_X(X)}\int_{t_0}^{t_1}p_W(X/t)\fr{1}{t^{d+1}}\dderiv t,
\end{align*}
where the second equality is by a change of variables $(wt, t)\mapsto (u, t)$ with a Jacobian whose determinant is $t^{-d}$.
\end{example}

\begin{example}[Line Integral]
\label{priv:ex:line_int}
Instead of taking an integral along a ray as in \Cref{priv:ex:ray_int}, we can integrate along the curve $\zeta_{w_{\mathrm{b}},w_\mathrm{e}}:[0,1]\to\real^d$, $\zeta_{w_{\mathrm{b}},w_{\mathrm{e}}}(t)\ceq (1-t) w_{\mathrm{b}}+tw_{\mathrm{e}}$, which is the straight line connecting $w_{\mathrm{b}}$ and $w_{\mathrm{e}}$ in $\real^d$.

 Let $\Ximagesp=\real^d$, and $V\ceq (Y, W)$, for a random variable $Y$, and a random vector $W=(W_{\mathrm{b}}, W_{\mathrm{e}})$ with values $\real^d\times\real^d$ and Lebesgue density $p_W$.  Set $m(V,X)\ceq Y$, and assume that $\muX(v_\mhit{1},x)\equiv\muX(x)$ does not depend on $v_\mhit{1}$. Suppose that $P_X$ has a Lebesgue density $p_X$. Setting $f(v,x,\mu,\gamma)\ceq \int_{0}^1 \mu((1-t) w_{\mathrm{b}}+tw_{\mathrm{e}})\norm{2}{w_\mathrm{e}-w_\mathrm{b}}\dderiv t$ and $J_{P_{VX}}(\mu)=\E \int_{0}^1 \mu((1-t) W_{\mathrm{b}}+tW_{\mathrm{e}})\norm{2}{W_\mathrm{e}-W_\mathrm{b}}\dderiv t$ yields the parameter
\begin{align*}
 \chi(P_{VX})= J_{P_{VX}}(\muX)= \E \int_{0}^1 \muX((1-t) W_{\mathrm{b}}+tW_{\mathrm{e}})\norm{2}{W_\mathrm{e}-W_\mathrm{b}}\dderiv t \\
 = \int_{\real^d\times\real^d} \int_{0}^1 \muX((1-t) w_{\mathrm{b}}+tw_{\mathrm{e}})\norm{2}{w_\mathrm{e}-w_\mathrm{b}}\dderiv t\,  p_W(w_\mathrm{b}, w_\mathrm{e}) \dderiv w_\mathrm{b} \dderiv w_\mathrm{e}.
\end{align*}
This is the average line integral of $\muX$ across draws of random endpoints $(W_{\mathrm{b}}, W_{\mathrm{e}})$ from $p_W$. If $P_{VX}$ is such that
\begin{align*}
\real^d\ni x\mapsto \fr{1}{p_X(x)}\int_{0}^1 \int_{\real^d}\norm{2}{\Delta}p_W(x-t\Delta, x+(1-t)\Delta)\dderiv \Delta \dderiv t \in L_2(P_{X}),
\end{align*}
then the map in the display is the Riesz representer of $J_{P_{VX}}$. This follows from
\begin{align*}
\int_{\real^d\times\real^d} \int_{0}^1 \mu((1-t) w_{\mathrm{b}}+tw_{\mathrm{e}})\norm{2}{w_\mathrm{e}-w_\mathrm{b}}\dderiv t  p_W(w_\mathrm{b}, w_\mathrm{e}) \dderiv w_\mathrm{b} \dderiv w_\mathrm{e} \\
=\int_{\real^d\times\real^d} \int_{0}^1 \mu(u)\norm{2}{\Delta}p_W(u-t\Delta, u+(1-t)\Delta)|\mathrm{det}J(u,\Delta,t)|\dderiv t  \dderiv \Delta \dderiv u,
\end{align*}
by a change of variables $$(w_\mathrm{b},w_\mathrm{e},t)\mapsto ((1-t) w_{\mathrm{b}}+tw_{\mathrm{e}}, w_\mathrm{e}-w_\mathrm{b},t)\eqc (u,\Delta, t)\in\real^d\times\real^d\times[0,1].$$ Above, $\mathrm{det}J(u,\Delta,t)$ is the determinant of the Jacobian matrix, in $\real^{(2d+1)\times(2d+1)}$, of $(u,\Delta,t)\mapsto (g_1, g_2,g_3)(u,\Delta,t)$ $\ceq (u-t\Delta, u+(1-t)\Delta,t)= (w_\mathrm{b},w_\mathrm{e},t)$, which is
\begin{align*}
J(u,\Delta,t)=\begin{bmatrix}
\Deriv_ u g_1 & \Deriv_\Delta g_1 & \Deriv_t g_1 \\
\Deriv_ u g_2 & \Deriv_\Delta g_2 & \Deriv_t g_2 \\
\Deriv_ u g_3 & \Deriv_\Delta g_3 & \Deriv_t g_3
\end{bmatrix}(u,\Delta,t) =
\begin{bmatrix}
\mathrm{I}_d & -t \mathrm{I}_d & -\Delta \\
\mathrm{I}_d & (1-t) \mathrm{I}_d & -\Delta \\
0_d^\intercal & 0_d^\intercal & 1 \\
\end{bmatrix},
\end{align*}
where $\mathrm{I}_d$ is the identity matrix in $\real^d$, and $0_d^\intercal \in\real^{1\times d}$ is the transposed zero vector. Hence, $|\mathrm{det}J(u,\Delta,t)|=|\mathrm{det}\begin{bmatrix}
\mathrm{I}_d & -t \mathrm{I}_d  \\
\mathrm{I}_d & (1-t) \mathrm{I}_d
\end{bmatrix}|=1$,
because of how the determinant of block matrices is computed. 

While not pursued here, these arguments could be extended to higher order B\'ezier curves with random control points $W_1,\ldots, W_k$, or other curves indeed.
\end{example}

A special subset of our class is a set of parameters with known Riesz representers. Their estimators admit favourable efficiency properties.

\begin{example}[Known Riesz Representer]
\label{priv:ex:known_riesz}
In the general context of \Cref{priv:sec:dr}, suppose that $f(v,x,\mu,\gamma)=\mu(v_\mhit{1},x)b(v_\mhit{1},x)$, independent of $\gamma$, for a \emph{known} function $b:\Vimagesp_\mhit{1}\times\Ximagesp\to\real$. If $b\in L_2(P_{V_{\mhit{1}}X})$, then, trivially, $b$ is the Riesz representer of $L_2(P_{V_{\mhit{1}}X})\ni\mu\mapsto \E \mu(V_\mhit{1},X)b(V_\mhit{1},X)$. 

In this case, \Cref{priv:ass:rates_nuisance_priv} is automatically satisfied, and asymptotic efficiency as per \Cref{priv:cor:psihat_efficiency} follows merely under the consistency requirements of \Cref{priv:ass:consistent_nuisance_priv} for an estimator of $\muX$.
\end{example}

Situations with a known Riesz representer may exist: 

\begin{example}[Time Discounting]
\label{priv:ex:discounting}
Consider a simple economics model with two time periods. Furthest in the future, in Period 2, a stock value $Y\geq 0$ is going to be revealed. In Period 1, a random element $X$ is realised, which potentially helps predict the stock value through $\muX(x)\ceq\Ebc{Y}{X=x}$. In Period 1, an agent owns a stock, and signs a binding contract --- after $X=x$ is realised --- that she would sell the stock at price $\muX(x)$ in Period 2. (This is a ``fair bet,'' as her \emph{a priori} expected profit given $X=x$, $\Ebc{\muX(x)-Y}{X=x}$, is zero.)  

Viewed from Period 1, the present value of the agent's Period-2 income is $\fr{1}{1+\kappa(x)}\muX(x)$, where $\kappa:\Ximagesp\to[0,\infty)$ is a discount rate, a \emph{nonrandom, known} function; the larger the $\kappa$, the more impatient she is. The element $X$ can be thought of as all relevant information available in Period 1.

The parameter
\begin{align*}
\chi(P_{VX})\ceq \E \fr{1}{1+\kappa(X)}\muX(X)
\end{align*}
is the expected (before $X$ is observed) present value of the agent's Period-2 \emph{income} viewed from Period 1. (The expected present value of her \emph{profit}, $\E \fr{1}{1+\kappa(X)}(\muX(X)-Y)$, is zero.) It is of the form \eqref{priv:eq:dr_para} for $V\ceq Y$ and $m(V,X)=Y$.

As $\kappa\geq 0$, $x\mapsto \fr{1}{1+\kappa(x)}\in L_2(P_X)$ is the Riesz-representer of $\mu\mapsto \E \fr{1}{1+\kappa(X)}\mu(X)$, and \eqref{priv:eq:f_continu} and \eqref{priv:eq:f_lin} both hold. This is an instance of a known Riesz representer in \Cref{priv:ex:known_riesz}. Consequently, a merely consistent estimator of $\muX$ suffices for asymptotically efficient estimation of $\chi(P_{VX})$. This is attractive as it affords ``complicated'' predictors $X$, at essentially no cost (although the asymptotic variance  $P_{VZ}\tld\psi^2$ may change with different choices of $X$).

\end{example}

We may extend our class to be indexed by multiple regressions. 

\begin{example}[Multiple Regressions]
\label{priv:ex:extension}
The parameter class \eqref{priv:eq:dr_para} in \Cref{priv:sec:dr} can easily be extended to feature linear dependence on multiple infinite-dimensional regressions
\begin{align}
\label{priv:eq:regression_generalisation}
\begin{aligned}
\muXsub{1}(v_{\mhit{1}},x)&\ceq \Ebc{m_1(V,X)}{V_{\mhit{1}}=v_{\mhit{1}},X=x}, \\
\vdots & \\
\muXsub{K}(v_{\mhit{1}},x)&\ceq \Ebc{m_K(V,X)}{V_{\mhit{1}}=v_{\mhit{1}},X=x},
\end{aligned}
\end{align}
where the $m_k: \Vimagesp\times \Ximagesp\to\real$. Indeed, $H_K\ceq (L_2(P_{V_\mhit{1}X}))^K$ is a Hilbert space with inner product $\innerprodsub{\mu}{\nu}{H_K}\ceq \sum_{k=1}^K\innerprodsub{\mu_k}{\nu_k}{L_2(P_{V_\mhit{1}X})}=\sum_{k=1}^K \int \mu_k\nu_k\dderiv P_{V_\mhit{1}X}$. Then we  define $f:\Vimagesp\times\Ximagesp\times H_K \times\Gamma\to\real$ requiring the linearity of $H_K \ni \mu \mapsto f(V,X,\mu,\gamma)$ $P_{VX}$-almost surely, for all $\gamma\in\Gamma$ (meaning that for all $\beta\in\real$, $f(V,X,(\beta\mu_1,\ldots,\beta\mu_K),\gamma)=\beta f(V,X,(\mu_1,\ldots,\mu_K),\gamma)$, $P_{VX}$-almost surely, for all $\gamma\in\Gamma$), and the continuity of $H_K\ni \mu \mapsto P_{VX}f(V,X,\mu,\gamma)$ for all $\gamma\in\Gamma$ for the distance $\norm{H_K}{\mu-\nu}\ceq \sqrt{\innerprodsub{\mu-\nu}{\mu-\nu}{H_K}}=\sqrt{\sum_{k=1}^K \int (\mu_k-\nu_k)^2\dderiv P_{V_\mhit{1}X}}$. Then the Riesz representation theorem applies equally: for all $(P_{VX},\gamma)$ there exists a unique element $\rieszXs{\gamma}=(\rieszXs{\gamma,1},\ldots,\rieszXs{\gamma,K})\in H_K$ such that 
\begin{align*}
P_{VX}f(V,X,\mu,\gamma)=\sum_{k=1}^K P_{V_\mhit{1}X}(\rieszXs{\gamma,k}\mu_k) \quad\text{ for all } \mu \in H_K.
\end{align*}
This yields results very similar to the single-infinite-dimensional-regression case.

Indeed, let $\muX\ceq (\muXsub{1}, \ldots, \muXsub{K})\in H_K$. Then efficient influence function of $\chi(P_{VX})\ceq P_{VX}f(V,X,\muX,\gammaV(c))$, in \Cref{priv:eqprop:eif} becomes
\begin{align*}
\tld\chi(v,x) =&\, \sum_{k=1}^K \rieszXs{k}(v_{\mhit{1}},x)(m_k(v,x)-\muXsub{k}(v_{\mhit{1}},x))\\
&+\fr{\indic{v_{\mhit{2}}=c}}{p_{V_{\mhit{2}}}(c)}(g(v,x)-\gammaV(c))\E \partial_\gamma f(V,X, \muX, \gammaV(c))  \\
&+ f(v,x, \muX, \gammaV(c))-\chi(P_{VX}), 
\end{align*}
where $\rieszX=(\rieszXs{1},\ldots,\rieszXs{K})\in H_K$ is the Riesz representer of $H_K\ni \mu \mapsto P_{VX}f(V,X,\mu,\gammaV(c))$.

Correspondingly, the rate-double-robustness property of \Cref{priv:thm:dr} becomes
\begin{align*}
\chi'-\chi(P_{VX})+P_{VX}\tld\chi' =&\, \sum_{k=1}^{K}P_{VX}(\rieszXs{k}'-\rieszXs{k})(\muXsub{k}-\muXsub{k}')   \\
&+(\gammaV(c)-\gammaV'(c))\left(\fr{p_{V_{\mhit{2}}}(c)}{p_{V_{\mhit{2}}}'(c)}\expderiv''-\expderiv'\right) \\
&-(\gammaV(c)-\gammaV'(c))^2\fr{P_{VX}\partial_\gamma^2 f(V,X,\muX',\widetilde{\gammaV(c)})}{2},
\end{align*}
with
\begin{align*}
\tld\chi'(v,x)\ceq&\, \sum_{k=1}^K\rieszXs{k}'(v_{\mhit{1}},x)(m_k(v,x)-\muXsub{k}'(v_{\mhit{1}},x))+\fr{\indic{v_{\mhit{2}}=c}}{p_{V_{\mhit{2}}}'(c)}(g(v,x)-\gammaV'(c))\expderiv''  \\
&+ f(v,x, \muX', \gammaV'(c))-\chi', 
\end{align*}
where the $\rieszXs{k}',\muXsub{k}'$ are both arbitrary elements of $L_2(P_{V_\mhit{1}X})$ with $\muX'\ceq(\muXsub{1}',\ldots,\muXsub{K}')$; $p_{V_{\mhit{2}}}'(c),\gammaV'(c),\chi',\expderiv''\in\real$ are arbitrary; $\expderiv'\ceq P_{VX}\partial_\gamma f(V,X,\muX',\gammaV'(c));$ and $\widetilde{\gammaV(c)}$ is some value between $\gammaV(c)$ and $\gammaV'(c)$. That is, the rate-double-robustness property holds, with pairwise rate-tradeoffs between the $\rieszXs{k}'$ and $\muXsub{k}'$.

Moreover, if $\Vimagesp_{1}=\Vimagesp_{\mhit{1},1}\times\ldots \times\Vimagesp_{\mhit{1}, L}$ and $\Ximagesp=\Ximagesp_1\times\ldots \Ximagesp_K$, then we could generalise \eqref{priv:eq:regression_generalisation} further by defining $L\times K$ regressions
\begin{align*}
\muXsub{l,k}(v_{\mhit{1},l},x_k)\ceq\Ebc{m_{lk}(V,X)}{V_{\mhit{1},l}=v_{\mhit{1},l}, X_{k}=x_k},\, (v_{\mhit{1},l},x_k) \in \Vimagesp_{\mhit{1},l}\times \Ximagesp_k,
\end{align*}
with $m_{lk}:\Vimagesp\times\Ximagesp\to\real$, for $(l,k)\in [L]\times [K]$, and by considering the Hilbert space $H\ceq \otimes_{l=1}^L\otimes_{k=1}^K L_2(P_{V_{\mhit{1},l}X_k})$ with the inner product $\innerprodsub{\mu}{\nu}{H}\ceq \sum_{l=1}^L\sum_{k=1}^K\innerprodsub{\mu_{l,k}}{\nu_{l,k}}{L_2(P_{V_{\mhit{1},l}X_k})}=\sum_{k=1}^K \int \mu_{l,k}\nu_{l,k}\dderiv P_{V_{\mhit{1},l}X_k}$. The efficient influence function and the rate-double-robustness would be preserved with the obvious modifications.

Finally, we remark that for $\Vimagesp_{\mhit{2}}=\Vimagesp_{\mhit{2},1}\times\ldots\times \Vimagesp_{\mhit{2},K}$, dependence on multiple low-dimensional regressions 
\begin{align*}
\gammaVsub{k}(v_{\mhit{2}})&\ceq \Ebc{g_k(V,X)}{V_{\mhit{2},k}=v_{\mhit{2},k}},\quad v_{\mhit{2},k}\in\Vimagesp_{\mhit{2},k},\,\, g_k:\Vimagesp \times\Ximagesp\to\real,\,\, k\in[K],
\end{align*}
can be accommodated as well. This entails a vector-valued derivative $\partial_\gamma f$ and corresponding vector $\expderiv$, with number of entries determined by the $|\Vimagesp_{\mhit{2},k}|$.
\end{example}

\subsection{Proofs}
\label{priv:app:sec:dr:subsec:proofs_main}

This section proves \Cref{priv:eqprop:eif,priv:thm:dr} in  \Cref{priv:sec:dr}.

We first note that \eqref{priv:eq:f_continu}, \eqref{priv:eq:f_lin} imply that for all $\gamma\in\Gamma$, there exists a unique function $\rieszX_{P_{VX},\gamma}:\Vimagesp_{\mhit{1}}\times\Ximagesp\to\real$, $\rieszX_{P_{VX},\gamma}\in L_2(P_{V_{\mhit{1}}X})$, such that
\begin{align*}
\E f(V,X, \mu, \gamma)=\E \rieszX_{P_{VX},\gamma}(V_{\mhit{1}},X)\mu(V_{\mhit{1}},X)\quad \text{ for all } \mu\in L_2(P_{V_{\mhit{1}}X}), 
\end{align*}
where the expectations are taken with respect to $P_{VX}$. This is the consequence of the Riesz representation theorem, and $\rieszX_{\gamma,P_{VX}}$ is called the Riesz representer of $\mu\mapsto\E f(V,X, \mu, \gamma)$.

\begin{proof}[Proof of \Cref{priv:eqprop:eif}]
First, note that by the definition of $\muX,\gammaV$,
 \begin{align*}
 \E \rieszX(V_{\mhit{1}},X)(m(V,X)-\muX(V_{\mhit{1}},X)) \\
 = \E\Ebc{\rieszX(V_{\mhit{1}},X)(m(V,X)-\muX(V_{\mhit{1}},X))}{V_{\mhit{1}},X} \\
 	=\E r(V_{\mhit{1}},X)\muX(V_{\mhit{1}},X)-\E r(V_{\mhit{1}},X)\muX(V_{\mhit{1}},X)=0, \\ 
\E \indic{V_{\mhit{2}}=c}(g(V,X)-\gammaV(c))= \Eb{\indic{V_{\mhit{2}}=c}g(V,X)}-p_{V_{\mhit{2}}}(c)\gammaV(c) \\
	=p_{V_{\mhit{2}}}(c)\Ebc{g(V,X)}{V_{\mhit{2}}=c}-p_{V_{\mhit{2}}}(c)\gammaV(c) = 0, 
 \end{align*} 
because $V_{\mhit{2}}$ is distributed on a finite set. Hence, by the definition of $\chi(P_{VX})$ in \eqref{priv:eq:dr_para}, $\E \tld\chi=0$. Because $r\in L_2(P_{V_{\mhit{1}}X})$, \eqref{priv:eq:integrability} implies that $\tld\chi$ is in the tangent set $L_2^0(P_{VX})$ of the nonparametric model. 

Consider now a regular submodel $t\mapsto P_{VX,t}$ for $t\in\real$ in the neighbourhood of zero 
 with $P_{VX,0}=P_{VX}$ and the property that $\fr{\deriv}{\deriv t}\big\vert_{t=0} P_{VX,t}(\deriv v,\deriv x)=s(v,x)P_{VX}(\deriv v,\deriv x)$ for the score function $s$ running through the tangent set $L_2^0(P_{VX})$; for example $\deriv P_{VX}=\exp(ts)(P_{VX}\exp(ts))^{-1}\deriv P_{VX}$ as in \cite{ray_semiparametric_2020}. See \Citet[Part III]{bolthausen_semiparametric_2002} for regular submodels. For \eqref{priv:eq:eif_chi}, it suffices to show $\derivtnull \chi(P_{VX,t})=\E \tld\chi s$. Assuming that differentiation and expectation commutes, we have 
 \begin{align*}
\derivtnull \chi(P_{VX,t})=&\,\derivtnull \int_{\Vimagesp\times\Ximagesp} f(v,x,\muXt,\gammaVt(c))\dderiv P_{VX,t}(v,x) \\
=&\, \int_{\Vimagesp\times\Ximagesp} \derivtnull f(v,x,\muXt,\gammaVt(c))\dderiv P_{VX}(v,x) \\
&+ \E f(V,X,\muX,\gammaV(c))s(V,X).
\end{align*}
Here,
\begin{align*}
\int_{\Vimagesp\times\Ximagesp} \derivtnull f(v,x,\muXt,\gammaVt(c))\dderiv P_{VX}(v,x) \\
 = \derivtnull \E f(V,X,\muXt,\gammaV(c)) +\derivtnull \E f(V,X,\muX,\gammaVt(c)).
\end{align*}
By \eqref{priv:eq:riesz_rep}, $\E f(V,X,\muXt,\gammaV(c))=\E \rieszX(V_{\mhit{1}},X)\muXt(V_{\mhit{1}},X)$. Taking derivatives, the properties of (conditional) expectation give
\begin{align}
\label{priv:eq:mux_t_derivative}
\begin{aligned}
\derivtnull \muXt(v_{\mhit{1}},x)&=\Ebc{(m(V,X)-\muX(v_{\mhit{1}},x))s(V,X)}{V_{\mhit{1}}=v_{\mhit{1}},X=x}, \\
\derivtnull \gammaVt(c)&=\Ebc{(g(V,X)-\gammaV(c))s(V,X)}{V_{\mhit{2}}=c}  \\
				   &= \E \fr{\indic{V_{\mhit{2}}=c}}{p_{V_{\mhit{2}}}(c)}(g(V,X)-\gammaV(c))s(V,X). 
\end{aligned}				 
\end{align}
This can be seen in a few steps. First, given a collection of coordinates $V_\mhit{j}$ of $V$, we can assume without loss of generality that $V=(V_\mhit{j},V_{-\mhit{j}})$ for another collection of coordinates $V_{-\mhit{j}}$ of $V$. Second, $P_{V_{-\mhit{j}}\cond V_{\mhit{j}}X}(B \cond v_{\mhit{j}},x)=\fr{\deriv P_{V_{-\mhit{j}}V_{\mhit{j}}X}(B,\cdot,\cdot)}{\deriv P_{V_{\mhit{j}}X}}(v_{\mhit{j}},x)$ can be verified to be the conditional distribution of $V_{-\mhit{j}}$ given $(V_{\mhit{j}},X)=(v_{\mhit{j}},x)$ as in the proof of \Cref{priv:lem:qtc_implications}, noting that  $P_{V_{-\mhit{j}}V_{\mhit{j}}X}(B,\cdot,\cdot)\ll  P_{V_{\mhit{j}}X}$ for all $B\in\Vsigmasub{\mhit{j}}$.  Third, if $\pi_t\ceq\fr{\deriv \lambda_t}{\deriv \nu_t}$ for valid submodels $t\mapsto(\lambda_t,\nu_t)$ of measures, then $\partial_t \pi_t=\fr{\deriv \partial_t \lambda_t}{\deriv \nu_t}-(\fr{\deriv \lambda_t}{\deriv \nu_t})(\fr{\deriv \partial_t \nu_t}{\deriv \nu_t})$ provided the densities on the right are well defined. Fourth, the given submodel $t\mapsto P_{VX,t}$ induces the marginal $\deriv P_{V_{\mhit{j}}X,t}(v_{\mhit{j}},x)=\int_{\Vimagesp_{\mhit{j}}} s(v_{-\mhit{j}},v_{\mhit{j}},x) P_{V_{-\mhit{j}}V_{\mhit{j}}X}(\deriv v_{-\mhit{j}}, \deriv v_{\mhit{j}}, \deriv x)$. 
Finally, apply these steps to obtain 
\begin{align*}
\fr{\deriv \partial_t P_{V_{-\mhit{j}},V_{\mhit{j}},X,t}(\deriv v_{-\mhit{j}},\cdot,\cdot)\big\vert_{t=0}}{\deriv P_{V_{\mhit{j}}X}}(v_{\mhit{j}},x)=s(v_{-\mhit{j}},v_{\mhit{j}}, x)P_{V_{-\mhit{j}}\cond V_{\mhit{j}}X}(\deriv v_{-\mhit{j}} \cond v_{\mhit{j}},x), \\
\fr{\deriv \partial_t P_{V_{\mhit{j}}X,t}\big\vert_{t=0}}{\deriv P_{V_{\mhit{j}X}}}(v_{\mhit{j}},x)=\int s(v_{-\mhit{j}},v_{\mhit{j}}, x)P_{V_{-\mhit{j}}\cond V_{\mhit{j}}X}(\deriv v_{-\mhit{j}} \cond v_{\mhit{j}},x).
\end{align*}

The display \eqref{priv:eq:mux_t_derivative} then implies
\begin{align*}
\derivtnull\E \rieszX(V_{\mhit{1}},X)\muXt(V_{\mhit{1}},X)\\
 = \Eb{r(V_{\mhit{1}},X)\Ebc{\left(m(V,X)-\muX(V_{\mhit{1}},X)\right)s(V,X)}{V_{\mhit{1}},X}} \\
= \E r(V_{\mhit{1}},X)\left(m(V,X)-\muX(V_{\mhit{1}},X)\right)s(V,X),\\
\derivtnull \E f(V,X,\muX,\gammaVt(c)) =\Eb{\partial_\gamma f(V,X,\muX,\gammaV(c))\derivtnull \gammaVt(c)} \\
=\Eb{\partial_\gamma f(V,X,\muX,\gammaV(c))\Eb{\fr{\indic{V_{\mhit{2}}=c}}{p_{V_{\mhit{2}}}(c)}(g(V,X)-\gammaV(c))s(V,X)}} \\
= \Eb{\Eb{\partial_\gamma f(V,X,\muX,\gammaV(c))} \fr{\indic{V_{\mhit{2}}=c}}{p_{V_{\mhit{2}}}(c)}(g(V,X)-\gammaV(c))s(V,X)}.
\end{align*}
Hence,
\begin{align*}
\derivtnull \chi(P_{VX,t}) =  \derivtnull \E f(V,X,\muXt,\gammaV(c)) +\derivtnull \E f(V,X,\muX,\gammaVt(c)) \\ 
+\Eb{f(V,X,\muX,\gammaV(c))s(V,X)} \\
= \E r(V_{\mhit{1}},X)\left(m(V,X)-\muX(V_{\mhit{1}},X)\right)s(V,X)  \\
 + \Eb{\Eb{\partial_\gamma f(V,X,\muX,\gammaV(c))} \fr{\indic{V_{\mhit{2}}=c}}{p_{V_{\mhit{2}}}(c)}(g(V,X)-\gammaV(c))s(V,X)}
 \\ +\Eb{f(V,X,\muX,\gammaV(c))s(V,X)} \\
 = \E \tld\chi(V,X)s(V,X),
\end{align*}
where the last equality follows from $\E s(V,X)=0$.
\end{proof}

\begin{proof}[Proof of \Cref{priv:thm:dr}]
First,
\begin{align}
P_{VX}\big[ -r(V_{\mhit{1}},X)h(V_{\mhit{1}},X)+f(V,X,h,\gammaV(c))\big]&=0, \label{priv:eq:zero_riesz} \\
P_{VX}\big[ (m(V,X)-\muX(V_{\mhit{1}},X))h(V_{\mhit{1}},X)\big]&=0 \label{priv:eq:zero_mu}
\end{align}
for all  $h\in L_2(P_{V_{\mhit{1}}X})$, where the first equality is by \eqref{priv:eq:riesz_rep} and the second is by the definition of $\muX$ and the tower property of expectation. Then, because $\tld\chi$ is an influence function satisfying $P_{VX}\tld\chi=0$, and $\tld\chi,\chi_0$ --- not depending on $(v,x)$ --- are constants with respect to $P_{VX}$-integration,
\begin{align*}
\chi'-\chi_0+P_{VX}\tld\chi' = P_{VX}\left[\chi'+\tld\chi'\right] - P_{VX}\left[\chi_0+\tld\chi\right] \\
= P_{VX}\left\{\rieszX'(V_{\mhit{1}},X)(m(V,X)-\muX'(V_{\mhit{1}},X))+\fr{\indic{V_{\mhit{2}}=c}}{p_{V_{\mhit{2}}}'(c)}(g(V,X)-\gammaV'(c))e'' \right. \\
+\left. \vphantom{\fr{\indic{V_{\mhit{2}}=c}}{p_{V_{\mhit{2}}}'(c)}} f(V,X, \muX', \gammaV'(c)) \right\} \\
-P_{VX}\left\{\rieszX(V_{\mhit{1}},X)(m(V,X)-\muX(V_{\mhit{1}},X))\vphantom{\fr{\indic{V_{\mhit{2}}=c}}{p_{V_{\mhit{2}}}(c)}} \right. \\
+\left. \fr{\indic{V_{\mhit{2}}=c}}{p_{V_{\mhit{2}}}(c)}(g(V,X)-\gammaV(c))P_{VX} \partial_\gamma f(V,X, \muX, \gammaV(c))+\vphantom{\fr{\indic{V_{\mhit{2}}=c}}{p_{V_{\mhit{2}}}(c)}} f(V,X, \muX, \gammaV(c)) \right\}  \\
-P_{VX}\left\{-r(V_{\mhit{1}},X)(\muX'(V_{\mhit{1}},X)-\muX(V_{\mhit{1}},X)) \right. \\
\left. +f(V,X,\muX',\gammaV(c))-f(V,X,\muX,\gammaV(c))\right\} \\
-P_{VX}\left\{\left[m(V,X)-\muX(V_{\mhit{1}},X)\right]\left[r'(V_{\mhit{1}},X)-r(V_{\mhit{1}},X)\right]\right\},
\end{align*}
where the last two expectations are zero: we apply \eqref{priv:eq:zero_riesz} and \eqref{priv:eq:zero_mu} choosing $h$ to be $\muX'-\muX$ and $r'-r$, respectively, and use the linearity of $f$ in \eqref{priv:eq:f_lin}. As $P_{VX}\fr{\indic{V_{\mhit{2}}=c}}{p_{V_{\mhit{2}}}(c)}(g(V,X)-\gammaV(c))=0$ by the tower property and the definition of $\gammaV$, 
\begin{align*}
\chi'-\chi_0+P_{VX}\tld\chi' =  -P_{VX}(r-r')(\muX-\muX') \\
+P_{VX}\left\{\fr{\indic{V_{\mhit{2}}=c}}{p_{V_{\mhit{2}}}'(c)}(g(V,X)-\gammaV'(c)) e'' \right. \\
\left. \phantom{\fr{\indic{V_{\mhit{2}}=c}}{p_{V_{\mhit{2}}}'(c)}} +f(V,X,\muX',\gammaV'(c))-f(V,X,\muX',\gammaV(c))\right\}.
\end{align*}
Next, a Taylor-approximation of $\gamma\mapsto f(V,X,\muX',\gamma)$ by \eqref{priv:eq:f_diff} with a mean-value representation of the the remainder term gives
\begin{align*}
f(V,X,\muX',\gammaV(c))=&\,f(V,X,\muX',\gammaV'(c))+\partial_\gamma f(V,X,\muX',\gammaV'(c))\big[\gammaV(c)-\gammaV'(c)\big] \\
&+\fr{1}{2} \partial_\gamma^2 f(V,X,\muX',\widetilde{\gammaV(c)})\big[\gammaV(c)-\gammaV'(c)\big]^2.
\end{align*}
As $P_{VX}\indic{V_{\mhit{2}}=c}(g(V,X)-\gammaV'(c))=p_{V_{\mhit{2}}}(c)(\gammaV(c)-\gammaV'(c))$,
\begin{align*}
P_{VX}\left\{\fr{\indic{V_{\mhit{2}}=c}}{p_{V_{\mhit{2}}}'(c)}(g(V,X)-\gammaV'(c)) e'' +f(V,X,\muX',\gammaV'(c))-f(V,X,\muX',\gammaV(c))\right\} \\
= \fr{p_{V_{\mhit{2}}}(c)}{p_{V_{\mhit{2}}}'(c)}(\gammaV(c)-\gammaV'(c))e''- P_{VX}\partial_\gamma f(V,X,\muX',\gammaV'(c))\big[\gammaV(c)-\gammaV'(c)\big] \\
-\fr{1}{2}P_{VX} \partial_\gamma^2 f(V,X,\muX',\widetilde{\gammaV(c)})\big[\gammaV(c)-\gammaV'(c)\big]^2,
\end{align*}
which yields the assertion by the definition of $e'$.
\end{proof}



\section{Privacy}\label{priv:app:priv} 

This section contains complementary results to and proofs of the claims in \Cref{priv:sec:priv} and \Cref{priv:sec:estim_dr_priv}. \Cref{priv:app:priv:subsec:private_inferential_prop} complements 
\Cref{priv:sec:priv} by deriving the tangent set of the private model, and proves \Cref{priv:prop:eif_priv}. \Cref{priv:app:priv:subsec:private_estimation} proves \Cref{priv:sec:estim_dr_priv}'s \Cref{priv:cor:psihat_efficiency}. Our proofs rest on the auxiliary results \Cref{priv:lem:linopqx_properties,priv:lem:qtc_implications,priv:lem:norms} in \Cref{priv:app:priv:subsec:aux}, proven there.

Finally, in \Cref{priv:app:priv:subsec:relation_to_diff_priv}, the relation between total-variation and differential privacy is quantified, while \Cref{priv:app:priv:subsec:invertible_mechanisms} discusses alternative choices for invertible privacy mechanisms in the sense of \eqref{priv:eq:linopL}.

\subsection{Private Inferential Properties}\label{priv:app:priv:subsec:private_inferential_prop}

This section complements \Cref{priv:sec:priv} by deriving the tangent set of the private model, and proves \Cref{priv:prop:eif_priv}.


\Cref{priv:lem:semiparametric_vz} shows that the tangent set of the model $\mathcal{P}_{VZ}(Q,\mathcal{P}_{VX})$ is
$$\tanset_{VZ}(Q,\mathcal{P}_{VX})=\linopQXadj\tanset_{VX}=\set{(v,z)\mapsto\Ebc{s(V,X)}{V=v,Z=z}: s\in\tanset_{VX}},$$
 which is typical of mixture models such as $P_{VZ}$ in \eqref{priv:eq:distribution_vz} (e.g.\ \Citet[Chapter 25.5]{van_der_vaart_asymptotic_1998}), and is also in agreement with \Citet[Lemma 3.1]{steinberger_efficiency_2023}.
\Cref{priv:lem:semiparametric_vz} also shows that $\mathcal{P}_{VZ}(Q,\mathcal{P}_{\Vimagesp\Ximagesp})$ remains nonparametric if $\linopQX$ is invertible, and when $X$ is discrete with $|\Ximagesp|=|\Zimagesp|$, this holds with ``if and only if.''

\begin{lemma}[Private Semiparametric Properties]
\label{priv:lem:semiparametric_vz}
Let $\setQdisc,\setQdiscinv,\setQgen$ be the sets of mechanism defined in \eqref{priv:eq:discreteQdef}, \eqref{priv:eq:discreteQinv}, \eqref{priv:eq:qtv}, respectively. Then the tangent set $\tanset_{VZ}(Q, \mathcal{P}_{VX})$ at $P_{VZ}$ in the model \eqref{priv:eq:vzq_model} is as follows.
\begin{enumerate}[label=(\roman*)]
\item \label{priv:lem:semiparametric_vz_transform} Let $Q\in\setQ(\Ximagesp\to\Zimagesp)$ be arbitrary. Then $\tanset_{VZ}(Q,\mathcal{P}_{VX})=\set{\linopQXadj s: s\in\tanset_{VX}}$ for the tangent set $\tanset_{VX}\subset  L_2^0(P_{VX})$ at $P_{VX}$ in any model $\mathcal{P}_{VX}$, and $\linopQXadj$ in \Cref{priv:lem:linopqx_properties}.
\item \label{priv:lem:semiparametric_vz_nonpara_discr} Suppose that  $\tanset_{VX}= L_2^0(P_{VX})$ and $Q\in\setQdisc$. Then $\tanset_{VZ}(Q, \mathcal{P}_{\Vimagesp\Ximagesp})=L_2^0(P_{VZ})$ if and only if $Q\in\setQdiscinv$.
\item \label{priv:lem:semiparametric_vz_nonpara_gen} Suppose that $\tanset_{VX}= L_2^0(P_{VX})$  and $Q\in\setQgen$. Then the closure of $\tanset_{VZ}(Q, \mathcal{P}_{\Vimagesp\Ximagesp})$ in $L_2(P_{VZ})$ is $L_2^0(P_{VZ})$.
\end{enumerate}
\end{lemma}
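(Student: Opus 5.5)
For part (i), I will follow the standard mixture-model argument (as in \Citet[Chapter 25.5]{van_der_vaart_asymptotic_1998}). Take a regular submodel $t\mapsto P_{VX,t}$ in $\mathcal{P}_{VX}$ with score $s\in\tanset_{VX}$. By \eqref{priv:eq:distribution_vz}, it induces $t\mapsto P_{VZ,t}$ in $\mathcal{P}_{VZ}(Q,\mathcal{P}_{VX})$, and every submodel of the private model arises this way. Since $Q$ does not depend on $t$, the density of $P_{VZ,t}$ with respect to $P_{VZ}$ is $\Ebc{\deriv P_{VX,t}/\deriv P_{VX}}{V,Z}$. Differentiating in quadratic mean, and using that conditional expectation is an $L_2$-contraction, the score of $P_{VZ,t}$ is $\Ebc{s(V,X)}{V,Z}=\linopQXadj s$. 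Here the adjoint identification comes from \Cref{priv:lem:linopqx_properties}: for $k\in L_2(P_{VZ})$ and $h\in L_2(P_{VX})$, $P_{VX}[h\,\linopQX k]=\E h(V,X)k(V,Z)=P_{VZ}[k\,\Ebc{h}{V,Z}]$. Quadratic-mean differentiability transfers through the Hellinger contraction of Markov kernels, so the scores of induced submodels are exactly $\linopQXadj\tanset_{VX}$.

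For part (ii), $\tanset_{VX}=L_2^0(P_{VX})$ and all spaces are finite dimensional. So $\tanset_{VZ}=\linopQXadj L_2^0(P_{VX})$, and this equals $L_2^0(P_{VZ})$ iff $\linopQXadj$ restricted to mean-zero functions is onto. In matrix form $\linopQX$ acts by $Q^\intercal$ in the $x$-coordinate, with $v$ fixed. $\linopQXadj$ is surjective onto $L_2(P_{VZ})$ iff $\linopQX$ is injective on $L_2(P_{VZ})$, iff $Q^\intercal$ (equivalently $Q$) has trivial kernel, iff $Q\in\setQdiscinv$. Constants map to constants, so $\linopQXadj$ preserves mean zero and surjectivity transfers to the mean-zero subspaces. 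For the converse, if $Q$ is singular, choose $k\ne0$ with $Q^\intercal k(v,\cdot)=0$. This $k$ is orthogonal to the range of $\linopQXadj$ and automatically has mean zero, since $P_{VZ}k=P_{VX}\linopQX k=0$, so the tangent set is not all of $L_2^0(P_{VZ})$. The only care point is the dependence on $P_{VX}$-null sets of $x$ where $p_{VX}$ vanishes; I would handle that via the weighted inner products.

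For part (iii), I will show density via orthogonal complements. Let $k\in L_2^0(P_{VZ})$ be orthogonal to $\linopQXadj s$ for all $s\in L_2^0(P_{VX})$. By the adjoint relation, $\linopQX k$ is then orthogonal to $L_2^0(P_{VX})$, so $\linopQX k$ is $P_{VX}$-a.s.\ constant. Since $P_{VX}\linopQX k=P_{VZ}k=0$, we get $\linopQX k=0$ $P_{VX}$-a.s. Using the explicit form $\alpha k(v,x)+(1-\alpha)\int k(v,z)\Qbar(\deriv z)=0$, it follows that $k(v,x)=-\tfrac{1-\alpha}{\alpha}c(v)$, with $c(v)=\int k(v,z)\Qbar(\deriv z)$, $P_{VX}$-a.s. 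Integrating against $\Qbar$ is the delicate step, because the identity only holds $P_{VX}$-a.s.\ while $P_{VZ}=\alpha P_{VX}+(1-\alpha)P_V\otimes\Qbar$ also charges $P_V\otimes\Qbar$. To close it I would also use $\linopQX k=0$ interpreted on $L_2(P_{VX})\cap L_2(P_V\otimes\Qbar)$, the range in \Cref{priv:lem:linopqx_properties}; this gives $c(v)=-\tfrac{1-\alpha}{\alpha}c(v)$, hence $c=0$ and $k=0$ $P_{VZ}$-a.s. Justifying that extension is the main obstacle I expect. A fallback is to invoke the invertibility of $\linopQX$ from \Cref{priv:lem:linopqx_properties} directly to conclude $k=0$. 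Either route yields that the closure of $\tanset_{VZ}$ is $L_2^0(P_{VZ})$.
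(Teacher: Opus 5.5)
Your route matches the paper's. For (i) you use induced DQM submodels with score $\linopQXadj s$. For (ii)--(iii) you identify the orthocomplement of $\linopQXadj L_2^0(P_{VX})$ with the kernel of $\linopQX$. You handle the mean-zero constraint more carefully than the paper, which writes the closure as $L_2(P_{VZ})$.

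\textbf{Part (iii).} The step you flag is a genuine gap, and neither of your remedies closes it. Orthogonality gives $\linopQX k=0$ only in $L_2(P_{VX})$, i.e.\ $P_{VX}$-a.s.; nothing lets you read it $P_V\otimes\Qbar$-a.e. Your fallback is the paper's own one-liner ``$\linopQXinv 0=0$'', and it hides the same problem. The invertibility in \Cref{priv:lem:linopqx_properties} is for $\linopQX$ as a map into $L_2(P_{VX})\cap L_2(P_V\otimes\Qbar)$, with functions identified up to $(P_{VX}+P_V\otimes\Qbar)$-null sets. The orthogonality argument only produces the kernel of $\linopQX$ as a map into $L_2(P_{VX})$. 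Without an extra hypothesis the conclusion is false. Let $V$ be degenerate, $P_X=\delta_{x_0}$ and $\Qbar=\tfrac12(\delta_{x_0}+\delta_{x_1})$. Then $L_2^0(P_{VX})=\set{0}$, so $\tanset_{VZ}=\set{0}$. Yet $k(x_0)=1-\alpha$, $k(x_1)=-(1+\alpha)$ is a nonzero element of $L_2^0(P_Z)$ with $\linopQX k=0$ $P_X$-a.s. The missing ingredient is a domination assumption such as $P_V\otimes\Qbar\ll P_{VX}$. Under it, your identity $\alpha k+(1-\alpha)c=0$ holds $P_V\otimes\Qbar$-a.e. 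Integrating against $\Qbar$ gives $c=0$, and then $k=0$ $P_{VZ}$-a.s.\ since $P_{VZ}=\alpha P_{VX}+(1-\alpha)P_V\otimes\Qbar$.

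\textbf{Part (ii).} The analogous null-set issue cannot be absorbed ``via weighted inner products''.
First, if $p_{VX}(v,\cdot)$ vanishes somewhere, invertibility of $Q$ is not enough. With $V$ degenerate, $P_X=\delta_{x_1}$ and $Q$ as in \eqref{priv:eq:discrete_covar_Qexample}, one gets $\tanset_{VZ}=\set{0}\neq L_2^0(P_{VZ})$. The paper's proof tacitly needs $p_{VX}>0$ when it writes $\linopQX k=0$ ``for all $(v,x)$''.
Second, your converse (a direction the paper does not prove) needs every output to be charged. If $w\in\ker Q^\intercal$ sits on a zero row of $Q$, your $k$ is $P_{VZ}$-null, and the ``only if'' then actually fails. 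Take $J=2$ and $Q(\set{z_2}\cond x)=1$ for both $x$: $Q$ is singular, yet $\linopQXadj$ maps $L_2^0(P_{VX})$ onto $L_2^0(P_{VZ})\cong L_2^0(P_V)$.
Third, $\Vimagesp$ is arbitrary, so the spaces are not finite dimensional. Fibrewise injectivity of $Q^\intercal$ only gives a dense range of $\linopQXadj$, which is all the paper shows. Exact equality needs a closed range, e.g.\ $p_{X\cond V}$ bounded away from zero.

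\textbf{Part (i).} The DQM transfer is the real work. Because of the square root, the $L_2$-contraction of $\linopQXadj$ only controls the conditioned $(V,X)$-remainder. The paper bounds the rest, $\delta_t=\sqrt{\deriv P_{VZ,t}/\deriv P_{VZ}}-\linopQXadj\sqrt{\deriv P_{VX,t}/\deriv P_{VX}}$, through the conditional variance $\sigma_t^2$. Also, ``every submodel of the private model arises this way'' holds only if the private tangent set is defined through induced DQM submodels. In the example above, the non-DQM path $(1-t)\delta_{x_0}+t\delta_{x_1}$ induces a DQM private path with nonzero score.
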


\begin{proof}[Proof of \Cref{priv:lem:semiparametric_vz}]

Assertion \ref{priv:lem:semiparametric_vz_transform}. Consider the submodel $t\mapsto \deriv P_{VX,t}=e^{ts}(P_{VX}e^{ts})^{-1}\deriv P_{VX}$ for $s\in\tanset_{VX}\subset L_2^0(P_{VX})$ (e.g.\ \cite{ray_semiparametric_2020}). Clearly, $P_{VX,t}\ll P_{VX}$ for all $t$. This submodel is differentiable in the quadratic mean with score $s$ (\Citet[Part III, Definition 1.6]{bolthausen_semiparametric_2002}): under $P_{VX,t}\ll P_{VX}$,
\begin{align}
\int \left[\fr{1}{t}\left(\sqrt{\fr{\deriv P_{VX,t}}{{\deriv P_{VX}}}}-1\right)-\fr{1}{2}s\right]^2\dderiv P_{VX}\to 0\,\text{ as } t\to0. \label{priv:eq:dqm}
\end{align}
The submodel $P_{VX,t}$ induces the submodel $P_{VZ,t}\ceq \int_{(\cdot)\times\Ximagesp} Q(\cdot\cond x)\dderiv P_{VX,t}(v,x)$ for $P_{VZ}=\int_{(\cdot)\times\Ximagesp} Q(\cdot\cond x)\dderiv P_{VX}(v,x)$ because $Q$ is known, following the construction \eqref{priv:eq:distribution_vz}. Thus, $P_{VZ,t}\ll P_{VZ}$ for all $t$. For the assertion, it is then necessary and sufficient that $P_{VZ,t}$ be differentiable in quadratic mean with score $\linopQXadj s$:
\begin{align}
\int \left[\fr{1}{t}\left(\sqrt{\fr{\deriv P_{VZ,t}}{{\deriv P_{VZ}}}}-1\right)-\fr{1}{2}\linopQXadj s\right]^2\dderiv P_{VZ}\to 0\,\text{ as } t\to0. \label{priv:eq:dqm_vz}
\end{align}
To show this, we follow \Citet[Chapter 6.6]{pollard_asymptotia_2005}. Note that $\fr{\deriv P_{VZ,t}}{{\deriv P_{VZ}}}=\linopQXadj \fr{\deriv P_{VX,t}}{{\deriv P_{VX}}}$, because one can verify (as in the proof of \Cref{priv:lem:qtc_implications}) that the conditional distribution on the right in $\linopQXadj$ is $P_{X\cond VZ}(B\cond v,z)=\fr{\deriv \int_{(\cdot)\times B}Q(\cdot\cond x)\deriv P_{VX}(\tld v, x)}{\deriv P_{VZ}}(v,z)$. Further,
\begin{align}
\fr{\deriv P_{VZ,t}}{{\deriv P_{VZ}}}=\linopQXadj \fr{\deriv P_{VX,t}}{{\deriv P_{VX}}} = \linopQXadj \left(\sqrt{\fr{\deriv P_{VX,t}}{{\deriv P_{VX}}}} -\linopQXadj \sqrt{\fr{\deriv P_{VX,t}}{{\deriv P_{VX}}}} + \linopQXadj \sqrt{\fr{\deriv P_{VX,t}}{{\deriv P_{VX}}}}  \right)^2 \nonumber \\
=\linopQXadj \left(\sqrt{\fr{\deriv P_{VX,t}}{{\deriv P_{VX}}}} -\linopQXadj \sqrt{\fr{\deriv P_{VX,t}}{{\deriv P_{VX}}}}\right)^2+ \left(\linopQXadj \sqrt{\fr{\deriv P_{VX,t}}{{\deriv P_{VX}}}} \right)^2 \eqc \sigma_t^2 + \left(\linopQXadj \sqrt{\fr{\deriv P_{VX,t}}{{\deriv P_{VX}}}} \right)^2, \label{priv:eq:sigmat}
\end{align}
where the last equality follows from $\linopQXadj$ being the conditional expectation given $(V,Z)$, and we introduced $\sigma_t^2\in L_2(P_{VZ})$. It follows that 
\begin{align}
L_2(P_{VZ})\ni\delta_t\ceq\sqrt{\fr{\deriv P_{VZ,t}}{{\deriv P_{VZ}}}}- \linopQXadj \sqrt{\fr{\deriv P_{VX,t}}{{\deriv P_{VX}}}}\geq 0. \label{priv:eq:positivedelta}
\end{align}

Write
\begin{align}
r_t\ceq \sqrt{\fr{\deriv P_{VX,t}}{{\deriv P_{VX}}}} - 1 - \fr{1}{2}ts,\quad
\ba r_t \ceq \sqrt{\fr{\deriv P_{VZ,t}}{{\deriv P_{VZ}}}} - 1 - \fr{1}{2}t \linopQXadj s \label{priv:eq:residuals_dqm}
\end{align}
for the residuals $r_t\in L_2(P_{VX})$ and $\ba r_t \in L_2(P_{VZ})$. By \eqref{priv:eq:dqm}, $P_{VX}r_t^2=\smallO{t^2}$ as $t\to0$. To show \eqref{priv:eq:dqm_vz}, we show $P_{VZ}\ba r_t^2=\smallO{t^2}$ as $t\to0$. Using that $(x+y)^2\leq 2(x^2+y^2)$ for all $x,y\in\real$, expand
\begin{align*}
P_{VX}\ba r_t^2 \leq  2 P_{VZ}(\linopQXadj r_t)^2 + 2 P_{VZ}(\ba r_t - \linopQXadj  r_t)^2.
\end{align*}
Here, the first term $P_{VZ}(\linopQXadj r_t)^2\leq$ $P_{VZ}\linopQXadj r_t^2=P_{VX}r_t^2=\smallO{t^2}$, where the first inequality is Jensen's, conditional on $(V,Z)$; the second term
\begin{align*}
P_{VZ}(\ba r_t - \linopQXadj  r_t)^2 = P_{VZ}\left(\sqrt{\fr{\deriv P_{VZ,t}}{{\deriv P_{VZ}}}} - \linopQXadj \sqrt{\fr{\deriv P_{VX,t}}{{\deriv P_{VX}}}}  \right)^2 = P_{VZ}\delta_t^2. 
\end{align*}
By definitions \eqref{priv:eq:sigmat} and \eqref{priv:eq:positivedelta}, $\left(\delta_t+\linopQXadj \sqrt{\fr{\deriv P_{VX,t}}{{\deriv P_{VX}}}}\right)^2=\fr{\deriv P_{VZ,t}}{{\deriv P_{VZ}}}=\sigma_t^2+\left(\linopQXadj \sqrt{\fr{\deriv P_{VX,t}}{{\deriv P_{VX}}}}\right)^2$, yielding the algebraic identity 
\begin{align}
\delta_t^2=\sigma_t^2-2\delta_t \linopQXadj \sqrt{\fr{\deriv P_{VX,t}}{{\deriv P_{VX}}}}.\label{priv:eq:delta_identity}
\end{align}
But $\delta_t\geq 0$ by \eqref{priv:eq:positivedelta}, and so is $\linopQXadj \sqrt{\fr{\deriv P_{VX,t}}{{\deriv P_{VX}}}}\geq 0$ as a density is nonnegative; hence $\delta_t^2\leq \sigma_t^2$. Using \eqref{priv:eq:residuals_dqm}, 
\begin{align}
\begin{aligned}\label{priv:eq:sigmabound}
\sigma_t^2= \linopQXadj \left(\fr{1}{2}t(s-\linopQXadj s)+r_t-\linopQXadj r_t\right)^2 \leq \fr{1}{2} t^2 \linopQXadj (s-\linopQXadj s)^2 + 2 \linopQXadj (r_t-\linopQXadj r_t)^2 \\
\leq \fr{t^2}{2} \linopQXadj s^2 + 2 \linopQXadj r_t, 
\end{aligned}
\end{align}
because the projection $\linopQXadj$ decreases the conditional variance. Here, by the tower property of expectations, $P_{VZ}\linopQXadj s^2=P_{VX}s^2=\bigO{1}$ as $s\in L_2(P_{VX})$ since it is a score, and $P_{VZ}\linopQXadj r_t^2=P_{VX}r_t^2=\smallO{t^2}$ by \eqref{priv:eq:dqm}. Hence, $P_{VZ}\sigma_t^2\to0$ as $t\to 0$.

For a fixed, strictly positive $\beta\in\real$, define the set of $(V,Z)$, 
$$A_{t,\beta}\ceq\left\{\linopQXadj \sqrt{\fr{\deriv P_{VX,t}}{{\deriv P_{VX}}}} \geq \fr{1}{2}, \sigma_t^2\leq \beta\right\}.$$
On $A_{t,\beta}$,  squaring \eqref{priv:eq:delta_identity} gives $$\delta_t^2=\fr{\sigma_t^4-\delta_t^4-2\delta_t^3\linopQXadj \sqrt{\fr{\deriv P_{VX,t}}{{\deriv P_{VX}}}}}{4\left(\linopQXadj \sqrt{\fr{\deriv P_{VX,t}}{{\deriv P_{VX}}}}\right)^2}\leq \sigma_t^4\leq \beta \sigma_t^2$$ since $\delta_t\geq 0$; while on its complement $A_{t,\beta}^\mathrm{c}$, like $P_{VZ}$-everywhere, $\delta_t^2\leq\sigma_t^2$ as seen above. From \eqref{priv:eq:sigmabound} and \eqref{priv:eq:dqm},
\begin{align*}
P_{VZ}\delta_t^2 = P_{VZ}\indic{A_{t,\beta}}\delta_t^2+P_{VZ}\indic{A_{t\beta}^{\mathrm{c}}}\delta_t^2 \leq \beta P_{VZ} \sigma_t^2 + P_{VZ}\indic{A_{t,\beta}^{\mathrm{c}}}\sigma_t^2 \\
\leq \fr{\beta t^2}{2} P_{VZ}\linopQXadj s^2 + \smallO{t^2}+\fr{t^2}{2}P_{VZ}\indic{A_{t,\beta}^{\mathrm{c}}}\linopQXadj s^2 + \smallO{t^2}.
\end{align*}
Since $P_{VZ}(A_{t,\beta}^{\mathrm{c}})\to 0$ as $t\to 0$ by the previous paragraph, a small enough choice of $\beta$ shows the right side $\smallO{t^2}$ 
Conclude that $P_{VZ}\ba r_t^2\leq P_{VZ}\delta_t^2=\smallO{t^2}$, whereby \eqref{priv:eq:dqm_vz} holds.

Assertions \ref{priv:lem:semiparametric_vz_nonpara_discr} and \ref{priv:lem:semiparametric_vz_nonpara_gen}. We build on \Citet[Chapter 25]{van_der_vaart_asymptotic_1998}. By \ref{priv:lem:semiparametric_vz_transform}, $\tanset_{VZ}=\set{\linopQXadj s: s\in L_2^0(P_{VX})}$, which is the range $R(\linopQXadj)$ of $\linopQXadj$. It follows from the defining relation between $\linopQX$ and $\linopQXadj$ that $R(\linopQXadj)^\perp=N((\linopQXadj)^{*})$, where $(\linopQXadj)^*=\linopQX$ in Hilbert spaces $L_2(P_{VX})$ and $L_2(P_{VZ})$, and 
$$R(\linopQXadj)^\perp\ceq\set{k\in L_2(P_{VZ}): P_{VZ}k\kappa =0 \text{ holds for all } \kappa\in R(\linopQXadj)}$$
is the orthocomplement of $R(\linopQXadj)$, and $N(\linopQX)\ceq\set{k\in L_2(P_{VZ}): \linopQX k = 0}$ is the kernel of $\linopQX$. By properties of Hilbert spaces, it follows that $\overline{R(\linopQXadj)}=N(\linopQX)^\perp$, where $\overline{R(\linopQXadj)}$ is the closure of $R(\linopQXadj)$ in $L_2(P_{VZ})$. Studying the kernel $N(\linopQX)$, the relation
\begin{align*}
0=(\linopQX k)(v,x)\, \text{ for all } (v,x)\in\Vimagesp\times\Ximagesp,
\end{align*}
for \ref{priv:lem:semiparametric_vz_nonpara_discr} is equivalent to
\begin{align*}
0_{J} = Q^\intercal \begin{bmatrix}
k(v,z_1) \\
k(v,z_2) \\
\vdots \\
k(v,z_J)
\end{bmatrix}\, \text{ for all } v\in\Vimagesp \quad \\ \iff
(Q^\intercal)^{-1}0_J=0_J= \begin{bmatrix}
k(v,z_1) \\
k(v,z_2) \\
\vdots \\
k(v,z_J)
\end{bmatrix}\, \text{ for all } v\in\Vimagesp,
\end{align*}
by invertibility of $Q$. Hence, $k=0$, and thus $\overline{R(\linopQXadj)}=N(\linopQX)^\perp=L_2(P_{VZ})$ so that the model remains nonparametric. For \ref{priv:lem:semiparametric_vz_nonpara_gen}, $\linopQXinv0=0$ with $\linopQXinv$ in \Cref{priv:lem:linopqx_properties} \ref{priv:lem:linopqx_properties_inv_qtc}, yields the same conclusion.
\end{proof}

\begin{proof}[Proof of \Cref{priv:prop:eif_priv}]
We follow \Citet[Chapter 25.5]{van_der_vaart_asymptotic_1998}. First, we address the nonparametric model $\mathcal{P}_{VX}=\mathcal{P}_{\Vimagesp\Ximagesp}$ with tangent set $\tanset_{VX}=L_2^0(P_{VX})$ and efficient influence function $\tld\chi$ in \eqref{priv:eq:eif_chi}. Consider the submodel $t\mapsto P_{VX,t}$ of \Cref{priv:lem:semiparametric_vz} with scores $s\in\tanset_{VX}=L_2^0(P_{VX})$. Because $Q\in\setQident$, these submodels induce submodel $t\mapsto P_{VZ,t}$ and the tangent set $\tanset_{VZ}(Q,\bar{P}_{VX})=\set{\linopQXadj s: s\in L_2^0(P_{VX})}$ by \Cref{priv:lem:semiparametric_vz}. By construction, since $Q\in\setQident$, $\psi(P_{VZ,t})=\chi(P_{VX,t})$ for all $t\in\real$.

The efficient influence function $\tld\psi$ of $\psi(P_{VZ})$ exists if and only if $$\derivtnull \psi(P_{VZ,t})=P_{VZ}[\tld\psi (\linopQXadj s)]$$ for all regular submodels $P_{VZ,t}$ with score $\linopQXadj s\in \set{\linopQXadj s: s\in L_2^0(P_{VX})}$. But $\derivtnull \psi(P_{VZ,t})=\derivtnull \chi(P_{VX,t})$ by the previous paragraph. Since $\tld\chi$ is the efficient influence function of $\chi(P_{VX})$ by assumption, we must also have that $$\derivtnull \chi(P_{VX,t})=P_{VX}[s\tld\chi].$$ Thus, for all  $s\in L_2^0(P_{VX})$,
\begin{align*}
P_{VZ}[\tld\psi (\linopQXadj s)]=\derivtnull \psi(P_{VZ,t})=\derivtnull \chi(P_{VX,t})=P_{VX}[s\tld\chi].
\end{align*}
By the definition of the adjoint $(\linopQXadj)^{*}=\linopQX$, the inner product $P_{VZ}[\tld\psi (\linopQXadj s)]$ is equal to $P_{VZ}[\tld\psi (\linopQXadj s)]=P_{VX}[(((\linopQXadj)^{*})\tld\psi)s]=P_{VX}[(\linopQX \tld\psi)s]$, which by the last display is equal to $P_{VX}[s\tld\chi]$ for all $s\in L_2^0(P_{VX})$. Hence $P_{VX}[(\linopQX \tld\psi)s]=P_{VX}[s\tld\chi]$ for all $s\in L_2^0(P_{VX})$, or, by rearrangement, $P_{VX}[(\linopQX \tld\psi-\tld\chi)s]=0$ for all $s\in L_2^0(P_{VX})$. Equivalently, $\linopQX \tld\psi-\tld\chi$ must be in the orthocomplement of $L_2^0(P_{VX})\subset L_2(P_{VX})$, which, as we show below, is 
$$L_2^0(P_{VX})^\perp=\set{f\in L_2(P_{VX}):f-P_{VX}f=0\, P_{VX}\text{-a.s.}}.$$
Now, $\tld\chi$ is the efficient influence function for $\chi(P_{VX})$, so $P_{VX}\tld\chi=0$. For $\tld\psi$ to be an influence function for $\psi(P_{VZ})$, we must have $P_{VZ}\tld\psi=0$, but by \Cref{priv:lem:linopqx_properties}\ref{priv:lem:linopqx_properties_change}, $P_{VZ}\tld\psi=P_{VX}\linopQX\tld\psi$. Hence, $\tld\chi,\linopQX\tld\psi \in L_2^0(P_{VX})$ and $\linopQX\tld\psi - \tld\chi \in L_2^0(P_{VX})$. But $\linopQX\tld\psi - \tld\chi \in L_2^0(P_{VX})^\perp$ too as we showed above. Since $L_2^0(P_{VX})\cap L_2^0(P_{VX})^\perp=\set{f:f=0\, P_{VX}\text{-a.s.}}$, we must have $\linopQX \tld\psi=\tld\chi$ $P_{VX}$-a.s.. Because $Q\in\setQident$, $\linopQXinv$ exists, giving $\tld\psi=\linopQXinv \tld\chi$.

To see that the orthocomplement $L_2^0(P_{VX})^\perp$ of $L_2^0(P_{VX})$ in $L_2(P_{VX})$ is 
$$\set{f\in L_2(P_{VX}):f-P_{VX}f=0\, P_{VX}\text{-a.s.}},$$ take some $f\in L_2^0(P_{VX})^{\perp}$. Because $f\in L_2^0(P_{VX})^{\perp}$ and $f-P_{VX}f \in L_2^0(P_{VX})$, we must have $P_{VX}[f(f-P_{VX}f)]=0$. Because $P_{VX}[f(f-P_{VX}f)]=P_{VX}[(f-P_{VX}f)(f-P_{VX}f)]$, we must have $f-P_{VX}f=0$ $P_{VX}$-a.s..\ Because $f\in L_2^0(P_{VX})^{\perp}$ was chosen arbitrarily, the assertion follows.

\textcolor{black}{Second, we address an arbitrary model $\mathcal{P}_{VX}\subset\mathcal{P}_{\Vimagesp\Ximagesp}$ with $\tanset_{VX}\subset L_2^0(P_{VX})$ and efficient influence function $\varphi\in\linopQX\linopQXadj \tanset_{VX}$. Let $\tld\Psi$ be the efficient influence function of $\psi(P_{VZ})$ in the model $\mathcal{P}_{VZ}(Q, \mathcal{P}_{VX})$ of \eqref{priv:eq:vzq_model}. We have $P_{VX}[(\linopQX \tld\Psi-\varphi)s]=0$ for all $s\in \tanset_{VX}$ by the arguments above, that is, 
$$\linopQX \tld\Psi-\varphi\in\tanset_{VX}^\perp\ceq\set{f\in L_2(P_{VX}): P_{VX}fs=0\text{ for all } s\in\tanset_{VX}}.$$
If $\linopQX \tld\Psi-\varphi \in\tanset_{VX}$ also, then $\linopQX \tld\Psi-\varphi=0$ a.s.\ must be, from where the assertion follows by the invertibility of $\linopQX$. From the previous display, $\tld\Psi=\linopQXinv(f_0+\varphi)$ for some $f_0\in\tanset_{VX}^\perp.$ Since $\tld\Psi\in\tanset_{VZ}(Q, \mathcal{P}_{VX})$, which set is $\set{\linopQXadj s: s\in \tanset_{VX}}$ by \Cref{priv:lem:semiparametric_vz}, we also have that $(\linopQXadj)^{-1}\tld\Psi\in\tanset_{VX}$.   
But then $P_{VX}f_0(\linopQXadj)^{-1}\tld\Psi=0$ because $f_0\in\tanset_{VX}^\perp$. Whence, $P_{VX}f_0(\linopQXadj)^{-1}\linopQXinv(f_0+\varphi)=0$, or, equivalently,
$$P_{VX}f_0(\linopQXadj)^{-1}\linopQXinv f_0=-P_{VX}f_0(\linopQXadj)^{-1}\linopQXinv\varphi.$$
By the definition of $\linopQX$ and $\linopQXadj$ and by the invertibility of $\linopQX$, one can verify with the tower property of expectations that $(\linopQXadj)^{-1}\linopQXinv=(\linopQX\linopQXadj)^{-1}$ is a positive definite operator, so that the left side of the previous display is zero if and only if $f_0=0$ a.s..\ But the right side is zero because $f_0\in\tanset_{VX}^\perp$, and $(\linopQX\linopQXadj)^{-1}\varphi\in\tanset_{VX}$ by the  assumption $\varphi\in\linopQX\linopQXadj \tanset_{VX}$. Hence, $f_0=0$ a.s., so $\tld\Psi=\linopQXinv\varphi$ a.s..
}
\end{proof}

\subsection{Private Estimation}\label{priv:app:priv:subsec:private_estimation}

This section proves the main result \Cref{priv:cor:psihat_efficiency} of \Cref{priv:sec:estim_dr_priv} via \Cref{priv:lem:chi_eff_empprocess_priv}.


\Cref{priv:lem:norms} in \Cref{priv:app:priv:subsec:aux} establishes some technical results concerning norms under $P_{VX}$ and $P_{VZ}$ and the continuity of the operators $\linopQX,\linopQXinv$. In the light of these technical results, under \Cref{priv:ass:consistent_nuisance_priv}, \Cref{priv:lem:chi_eff_empprocess_priv} shows that the empirical process term in \eqref{priv:eq:chi_eff_decomp_vz} vanishes as $\smallOPs{P_{VZ}}{1}$. It rests on the same arguments as its nonprivate counterpart,  \Cref{priv:lem:chi_eff_empprocess}, but it relies on the continuity of the operator $\linopQXinv$. It is solely because of this that \Cref{priv:ass:consistent_nuisance_priv} is more involved than its nonprivate counterpart, \Cref{priv:ass:consistent_nuisance}, and that it requires that the whole $(V,X)$ be distributed on a finite set, as opposed to only $X$ be finitely distributed. For the proof, note that
\begin{align*}
\partial_\gamma \bar{f}(v,z,\mu,\ba\gamma)\ceq \fr{\partial \bar{f}}{\partial \gamma}(v,z,\mu,\ba\gamma)= \fr{\partial }{\partial \gamma}(\linopQXinv(v,x)\mapsto f(v,x,\mu,\ba\gamma))(v,z)  \\
=(\linopQXinv (v,x)\mapsto \partial_\gamma f(v,x,\mu,\ba\gamma))(v,z),\quad (v,z,\mu,\ba\gamma)\in \Vimagesp\times\Zimagesp\times L_2(P_{V_{\mhit{1}}X})\times\Gamma.
\end{align*}

\begin{lemma}[Vanishing Empirical Process Term --- Private Estimators]
\label{priv:lem:chi_eff_empprocess_priv}
Assume that $\eta$ is estimated by \eqref{priv:eq:nuisance_vz} and \Cref{priv:ass:consistent_nuisance_priv} holds.
Then $\expderivchk-\expderiv=\smallOPs{P_{VZ}}{1}$ and $(\Probnb-P_{VZ})(\hat{\tld{\psi}}-\tld\psi)=\smallOPs{P_{VZ}}{n^{-1/2}}$. 
\end{lemma}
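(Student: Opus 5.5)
We prove the two claims in turn, both by conditioning on the auxiliary samples and using continuity of $\linopQXinv$ (\Cref{priv:lem:norms}).

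\textbf{Consistency of $\expderivchk$.} Decompose
\begin{align*}
\expderivchk-\expderiv=(\Probnb''-P_{VZ})\partial_\gamma\bar f(\cdot,\muXchk,\gammaVchk(c))+P_{VX}\big[\partial_\gamma f(\cdot,\muXchk,\gammaVchk(c))-\partial_\gamma f(\cdot,\muX,\gammaV(c))\big],
\end{align*}
using the change-of-measure identity \eqref{priv:eq:changemeasure} together with $\partial_\gamma\bar f=\linopQXinv\partial_\gamma f$. For the second term, note that $(\muXchk,\gammaVchk(c))$ is consistent in $\rho$ by \Cref{priv:ass:consistent_nuisance_priv}, since $\norm{L_2(P_{VX})}{\cdot}$ is dominated by both $\norm{\linopQXinvSpa}{\cdot}$ and $\supnorm{\cdot}$. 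Then \eqref{priv:eq:f_diffcontinu_priv} and the bound $|P_{VX}h|\le\norm{\linopQXinvSpa}{h}$ make it $\smallOPs{P_{VZ}}{1}$. For the first term, condition on $\bar{\mathcal S}'$; because $\bar{\mathcal S}''$ is independent of it, the term has conditional mean zero. By Chebyshev its conditional variance is at most $n^{-1}P_{VZ}(\linopQXinv\partial_\gamma f(\cdot,\muXchk,\gammaVchk(c)))^2$. \Cref{priv:lem:norms} bounds this by $n^{-1}C\norm{\linopQXinvSpa}{\partial_\gamma f(\cdot,\muXchk,\gammaVchk(c))}^2$. This is $\bigOPs{P_{VZ}}{n^{-1}}$, because by \eqref{priv:eq:f_diffcontinu_priv} that norm converges to the finite $\norm{\linopQXinvSpa}{\partial_\gamma f(\cdot,\muX,\gammaV(c))}$. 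In the case $Q\in\setQdiscinv$ everything is finite-dimensional and the argument is the same. Consequently $\expderivchk\to\expderiv$ in probability.

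\textbf{Empirical process term.} Since $\hat{\tld\psi}-\tld\psi=\linopQXinv(\chk{\tld\chi}-\tld\chi)$ and the constant $\psi(\hat P_{VZ})$ is annihilated by $\Probnb-P_{VZ}$, we may ignore it. Conditionally on $(\bar{\mathcal S}',\bar{\mathcal S}'')$, the function is fixed and $\bar{\mathcal S}$ is independent of both samples. Hence
\begin{align*}
\Ebc{n\{(\Probnb-P_{VZ})(\hat{\tld\psi}-\tld\psi)\}^2}{\bar{\mathcal S}',\bar{\mathcal S}''}\le P_{VZ}\big(\linopQXinv(\chk{\tld\chi}_0-\tld\chi_0)\big)^2\lesssim\norm{\linopQXinvSpa}{\chk{\tld\chi}_0-\tld\chi_0}^2,
\end{align*}
where $\tld\chi_0$ denotes $\tld\chi$ without its constant. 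It therefore suffices to show $\norm{\linopQXinvSpa}{\chk{\tld\chi}_0-\tld\chi_0}=\smallOPs{P_{VZ}}{1}$; a conditional Chebyshev argument then delivers the $\smallOPs{P_{VZ}}{n^{-1/2}}$ rate.

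To get this, split $\chk{\tld\chi}_0-\tld\chi_0$ into four pieces.
\begin{itemize}
\item[(a)] $(\rieszXchk-\rieszX)(m-\muXchk)$.
\item[(b)] $\rieszX(\muX-\muXchk)$.
\item[(c)] The $\gamma$-pieces: $\indic{v_2=c}g\,(\expderivchk/\chk p-\expderiv/p)$ and $\indic{v_2=c}(\gammaVchk\expderivchk/\chk p-\gammaV\expderiv/p)$.
\item[(d)] $f(\cdot,\muXchk,\gammaVchk(c))-f(\cdot,\muX,\gammaV(c))$.
\end{itemize}
Piece (d) vanishes by \eqref{priv:eq:f_sq_continu_priv}. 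Piece (c) vanishes by the first part of the proof, \eqref{priv:eq:consistency_gammaVhat_vz}, \eqref{priv:eq:consistency_phat_vz}, $p_{V_2}(c)>0$, and $g_c\in\linopQXinvSpa$ (from $\tld\chi\in\linopQXinvSpa$ and the integrability condition \eqref{priv:eq:integrability}). Pieces (a) and (b) require a case split according to the alternatives in \Cref{priv:ass:consistent_nuisance_priv}.

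Under \eqref{priv:eq:consistency_bigobound_vz}--\eqref{priv:eq:consistency_mu_supn_vz}, bound $\supnorm{m-\muXchk}\le\supnorm{m-\muX}+\supnorm{\muX-\muXchk}=\bigOPs{P_{VZ}}{1}$, which together with \eqref{priv:eq:consistency_riesz_l2_vz} controls (a). For (b), use $\norm{\linopQXinvSpa}{\rieszX}\supnorm{\muX-\muXchk}$.

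Under the alternative \eqref{priv:eq:consistency_bigobound_hat_vz}--\eqref{priv:eq:consistency_riesz_bound_vz}, (a) is handled the same way. For (b), the bound $|\rieszX|\le\ba R$ holds $\linopQXinvMes$-a.s.\ by \eqref{priv:eq:consistency_riesz_bound_vz}, so $\norm{\linopQXinvSpa}{\rieszX(\muX-\muXchk)}\le\ba R\norm{\linopQXinvSpa}{\muX-\muXchk}$.

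The main obstacle is the norm bookkeeping. One must check that every product and every $\linopQXinv$ image stays controlled in $\norm{\linopQXinvSpa}{\cdot}$, including under the $P_V\otimes\Qbar$ component. This is exactly what \Cref{priv:lem:norms} and the $\linopQXinvMes$-a.s.\ bounds are designed to supply. The conditioning arguments themselves are standard and mirror the nonprivate \Cref{priv:lem:chi_eff_empprocess}.
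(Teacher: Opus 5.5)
Your proposal is correct and follows essentially the paper's proof: the same decomposition of $\chk{\tld\chi}-\tld\chi$, the same independent-sample criterion \eqref{priv:eq:meansq_empproc_vz} (your conditional Chebyshev step) combined with the boundedness of $\linopQXinv$ from \Cref{priv:lem:norms}, and the same case split on \Cref{priv:ass:consistent_nuisance_priv} for the $\rieszX$--$\muX$ terms. The deviations are minor. You bound the $\gamma$-piece directly in $\norm{\linopQXinvSpa}{\cdot}$ instead of using the paper's mean-value expansion plus CLT, and you split $\expderivchk-\expderiv$ into two terms rather than three. One justification is off, though: $g_c\in\linopQXinvSpa$ does not follow from $\tld\chi\in\linopQXinvSpa$ and \eqref{priv:eq:integrability}, because the latter controls only the $P_{VX}$-part and the $g$-piece of $\tld\chi$ may cancel or vanish when $\expderiv=0$. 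It is an implicit integrability condition, and the paper's CLT step for $\linopQXinv g_c$ needs it just as much.
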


\begin{proof}[Proof of \Cref{priv:lem:chi_eff_empprocess_priv}]
As in the proof of \Cref{priv:lem:chi_eff_empprocess}, we apply that if a random function $\hat q \in L_2(P_{VZ})$ is independent of the random sample generating the process $\Probnb$, then
\begin{align}
\label{priv:eq:meansq_empproc_vz}
\begin{aligned}
\int (\hat{q}(v,z)-q(v,z))^2 \dderiv P_{VZ}(v,z)=\smallOPs{P_{VZ}}{1} 
 \text{ implies } \\
 \sqrt{n}(\Probnb-P_{VZ})(\hat q -q)=\smallOPs{P_{VZ}}{1}.
\end{aligned}
\end{align}
In particular, we shall combine this with \Cref{priv:lem:norms}, establishing the boundedness of $\linopQXinv$ for $\norm{\linopQXinvSpa}{\cdot}$, to show the convergence of
\begin{align}
\norm{L_2(P_{VZ})}{\linopQXinv T}\lesssim \norm{\linopQXinvSpa}{T} \label{priv:eq:meansq_empproc_vz_qinv}
\end{align}
to zero in $P_{VZ}$-probability for some $T \in \linopQXinvSpa$.

By the linearity of $\linopQXinv$, $\hat{\tld{\psi}}-\tld\psi=\linopQXinv(\chk{\tld\chi}-\tld\chi)$. By the definitions \eqref{priv:eq:eif_chi} and \eqref{priv:eq:eif_chi_hat_vz},
\begin{align}
\chk{\tld\chi}(v,x)-\tld\chi(v,x) =&\, \ba T_1(v,x)+\ba T_2(v,x)+\ba T_3(v,x)+\ba T_4, \label{priv:eq:emproc_decomp_vz} \\
\ba T_1(v,x)\ceq&\, \rieszXchk(v_{\mhit{1}},x)(m(v,x)-\muXchk(v_{\mhit{1}},x)) \nonumber \\
&-\rieszX(v_{\mhit{1}},x)(m(v,x)-\muX(v_{\mhit{1}},x)), \nonumber  \\
\ba T_2(v,x)\ceq&\, \fr{\indic{v_{\mhit{2}}=c}}{\chk p_{V_{\mhit{2}}}(c)}(g(v,x)-\gammaVchk(c))\expderivchk -\fr{\indic{v_{\mhit{2}}=c}}{p_{V_{\mhit{2}}}(c)}(g(v,x)-\gammaV(c))\expderiv, \nonumber \\
\ba T_3(v,x)\ceq&\, f(v,x, \muXchk, \gammaVchk(c)) -f(v,x, \muX, \gammaV(c)), \nonumber  \\
\ba T_4\ceq &\, -\psi(\hat P_{VZ})+\psi(P_{VZ}). \nonumber
\end{align}
As $\ba T_4$ is constant, not depending on $(v,x)$, $\linopQXinv \ba T_4=\ba T_4$ and $(\Probnb-P_{VZ}) \linopQXinv \ba T_4=0$. It remains to show $(\Probnb-P_{VZ})\linopQXinv \ba T_j=\smallOPs{P_{VZ}}{n^{-1/2}}$ for $j=1,2,3$ by the linearity of the process $\Probnb-P_{VZ}$.

\emph{Term }$\ba T_1.\,\,$ In the light of \eqref{priv:eq:meansq_empproc_vz_qinv}, $(\Probnb-P_{VZ})\linopQXinv \ba T_1=\smallOPs{P_{VZ}}{n^{-1/2}}$ can be established along the same steps as that of $T_1$ in the proof of \Cref{priv:lem:chi_eff_empprocess}. In particular, $\norm{L_2(P_{VZ})}{\linopQXinv \ba T_1}\lesssim \norm{\linopQXinvSpa}{\ba T_1}$. Namely, suppressing the arguments, write
\begin{align*}
\ba T_1=\rieszXchk(m-\muXchk)-\rieszX(m-\muX) &= (\rieszXchk-\rieszX+\rieszX)(m-\muXchk)-\rieszX(m-\muX) \\
&= (\rieszXchk-\rieszX)(m-\muXchk) + \rieszX(\muX-\muXchk).
\end{align*}
By \Cref{priv:ass:consistent_nuisance_priv}, $\supnorm{m-\muXchk}=\bigOPs{P_{VZ}}{1}$; either directly by 
\eqref{priv:eq:consistency_bigobound_hat_vz}, or by \eqref{priv:eq:consistency_bigobound_vz} and \eqref{priv:eq:consistency_mu_supn_vz}, noting that $\supnorm{m-\muXchk}\leq \supnorm{m-\muX}+\supnorm{\muX-\muXchk}=\bigO{1}+\smallOPs{P_{VZ}}{1}=\bigOPs{P_{VZ}}{1}$.
Then the convergence \eqref{priv:eq:consistency_riesz_l2_vz} of $\rieszX$ implies that $(\Probnb-P_{VZ}) \linopQXinv ((\rieszXchk-\rieszX)(m-\muXchk))=\smallOPs{P_{VZ}}{n^{-1/2}}$ by \eqref{priv:eq:meansq_empproc_vz_qinv} as
\begin{align*}
\norm{\linopQXinvSpa}{(\rieszXchk-\rieszX)(m-\muXchk)}\leq\supnorm{m-\muXchk}\norm{\linopQXinvSpa}{\rieszXchk-\rieszX}=\bigOPs{P_{VZ}}{1}\smallOPs{P_{VZ}}{1}=\smallOPs{P_{VZ}}{1}
\end{align*}
since $\supnorm{|q|^2}=\supnorm{q}^2$. 

By \Cref{priv:ass:consistent_nuisance_priv}, either \eqref{priv:eq:consistency_mu_supn_vz}, or \eqref{priv:eq:consistency_mu_l2_vz} and \eqref{priv:eq:consistency_riesz_bound_vz}. In the former case,
\begin{align*}
\norm{\linopQXinvSpa}{\rieszX(\muX-\muXchk)}\leq\supnorm{\muX-\muXchk}\norm{\linopQXinvSpa}{\rieszX}=\smallOPs{P_{VZ}}{1},
\end{align*}
because $\rieszX\in \linopQXinvSpa$. In the latter case, 
\begin{align*}
\norm{\linopQXinvSpa}{\rieszX(\muX-\muXchk)}\leq \ba R \norm{\linopQXinvSpa}{\muX-\muXchk}=\smallOPs{P_{VZ}}{1},
\end{align*}
since \eqref{priv:eq:consistency_riesz_bound_vz} bounds $\rieszX$ and $\muXchk$ is convergent by \eqref{priv:eq:consistency_mu_l2_vz}. Thus, $(\Probnb-P_{VZ})\linopQXinv$ $(\rieszX (\muX-\muXchk))=\smallOPs{P_{VZ}}{n^{-1/2}}$ by \eqref{priv:eq:meansq_empproc_vz}. Conclude that  $(\Probnb-P_{VZ})\linopQXinv \ba T_1=\smallOPs{P_{VZ}}{n^{-1/2}}$.

\emph{Term }$\ba T_2.\,\,$ By the mean-value theorem there exists $(\gammaVtld(c), \tld p_{V_{\mhit{2}}}(c), \tld\expderiv)$ between $(\gammaV(c), p_{V_{\mhit{2}}}(c), \expderiv)$ and $(\gammaVchk(c), \chk p_{V_{\mhit{2}}}(c), \expderivchk)$ such that
\begin{align*}
\ba T_2(v,x)=&\,\fr{\indic{v_{\mhit{2}}=c}}{\chk p_{V_{\mhit{2}}}(c)}(g(v,x)-\gammaVchk(c))\expderivchk -\fr{\indic{v_{\mhit{2}}=c}}{p_{V_{\mhit{2}}}(c)}(g(v,x)-\gammaV(c))\expderiv \\
=&\,-\fr{\indic{v_{\mhit{2}}=c}}{\tld p_{V_{\mhit{2}}}(c)}\tld\expderiv (\gammaVchk(c)-\gammaV(c)) \\
&-\fr{\indic{v_{\mhit{2}}=c}}{\tld p_{V_{\mhit{2}}}(c)^2}(g(v,x)-\gammaVtld(c))\tld\expderiv(\chk p_{V_{\mhit{2}}}(c)-p_{V_{\mhit{2}}}(c)) \\
&+\fr{\indic{v_{\mhit{2}}=c}}{\tld p_{V_{\mhit{2}}}(c)}(g(v,x)-\gammaVtld(c))(\expderivchk-\expderiv).
\end{align*}
The standard central limit theorem applies to the i.i.d.\ sequence
$$\bigg((\linopQXinv(v,x)\mapsto\indic{v_{\mhit{2}}=c})(V_i,Z_i)\bigg)_{i\in[n]},$$
hence $\sqrt{n}(\Probnb-P_{VZ})((\linopQXinv(v,x)\mapsto\indic{v_{\mhit{2}}=c})(V,Z))=\bigOPs{P_{VZ}}{1}$. By the linearity of the process $\sqrt{n}(\Probnb-P_{VZ}) \linopQXinv$,
\begin{align*}
\sqrt{n}(\Probnb-P_{VZ})\big\{\big[\linopQXinv(v,x)\mapsto\indic{v_{\mhit{2}}=c}g(v,x)-\indic{v_{\mhit{2}}=c}\gammaVtld(c)\big](V,Z)\big\} \\
= \sqrt{n}(\Probnb-P_{VZ})\big\{\big[\linopQXinv (v,x)\mapsto \indic{v_{\mhit{2}}=c}g(v,x)\big](V,Z)\big\} \\
- \gammaVtld(c)\sqrt{n}(\Probnb-P_{VZ})\big\{\big[\linopQXinv (v,x)\mapsto\indic{v_{\mhit{2}}=c}\big](V,Z)\big\} \\
=(1-\gammaVtld(c))\bigOPs{P_{VZ}}{1}=\bigOPs{P_{VZ}}{1}
\end{align*} 
again by the standard central limit theorem and \eqref{priv:eq:consistency_gammaVhat_vz}. Suppose that $\expderivchk-\expderiv=\smallOPs{P_{VZ}}{1}$, which we show later. Then by \eqref{priv:eq:consistency_gammaVhat_vz} and \eqref{priv:eq:consistency_phat_vz}, $(\Probnb-P_{VZ})\linopQXinv \ba T_2=\smallOPs{P_{VZ}}{n^{-1/2}}$.

\emph{Term }$\ba T_3.\,\,$ Recall that $\ba T_3(v,x)=f(v,x, \muXchk, \gammaVchk(c))-f(v,x, \muX, \gammaV(c))$. By the consistency of $\gammaVchk$ and $\muXchk$ (\eqref{priv:eq:consistency_gammaVhat_vz} and \eqref{priv:eq:consistency_mu_supn_vz} or \eqref{priv:eq:consistency_mu_l2_vz}), we have $\norm{\linopQXinvSpa}{\ba T_3}=\smallOPs{P_{VZ}}{1}$ by the continuous mapping theorem and \eqref{priv:eq:f_sq_continu_priv}. Conclude by \eqref{priv:eq:meansq_empproc_vz} and \eqref{priv:eq:meansq_empproc_vz_qinv}  that  $(\Probnb-P_{VZ})\linopQXinv \ba T_3=\smallOPs{P_{VZ}}{n^{-1/2}}$.

\emph{Consistency of  $\expderivchk.\,\,$} By the definition of $\expderiv,\expderivchk$,
\begin{align*}
\expderivchk-\expderiv =&\, \Probnb''\partial_\gamma \ba f(V,Z,\muXchk,\gammaVchk(c)) - P_{VZ}\partial_\gamma \ba f(V,Z,\muX,\gammaV(c))   \\
=&\, \Probnb''\partial_\gamma \ba f(V,Z,\muXchk,\gammaVchk(c))-P_{VZ}\partial_\gamma \ba f(V,Z,\muXchk,\gammaVchk(c)) \\
&+ P_{VZ}\partial_\gamma \ba f(V,Z,\muXchk,\gammaVchk(c))- P_{VZ}\partial_\gamma f(V,Z,\muX,\gammaV(c)) \\
=&\, (\Probnb''-P_{VZ})\partial_\gamma \ba f(V,Z,\muXchk,\gammaVchk(c)) \\
&+ P_{VZ}\big[\partial_\gamma \ba f(V,Z,\muXchk,\gammaVchk(c))-\partial_\gamma \ba f(V,Z,\muX,\gammaV(c))  \big] \\
=&\, (\Probnb''-P_{VZ})\partial_\gamma \ba f(V,Z,\muX,\gammaV(c)) \\
&+(\Probnb''-P_{VZ})\big[\partial_\gamma \ba f(V,Z,\muXchk,\gammaVchk(c))-\partial_\gamma \ba f(V,Z,\muX,\gammaV(c))\big] \\
&+ P_{VZ}\big[\partial_\gamma \ba f(V,Z,\muXchk,\gammaVchk(c))-\partial_\gamma \ba f(V,Z,\muX,\gammaV(c))  \big]
\end{align*}
Here, the first term is $\bigOPs{P_{VZ}}{n^{-1/2}}=\smallOPs{P_{VZ}}{1}$ by the standard central limit theorem, and the second and third term are $\smallOPs{P_{VZ}}{1}$ by the continuity \eqref{priv:eq:f_diffcontinu_priv} of $\partial_\gamma f$ using that $\partial_\gamma \ba f =\linopQXinv \partial_\gamma f$ along the same arguments concerning $\ba T_3$ above.
\end{proof}

\begin{proof}[Proof of \Cref{priv:cor:psihat_efficiency}]
Follows from \Cref{priv:lem:chi_eff_empprocess_priv,priv:thm:dr}, noting that, for $\ba\expderiv'$ in \eqref{priv:eq:derivhatprime_vz}, 
$$\expderivchk-\ba\expderiv'=(\Probnb''-P_{VZ})\partial_\gamma \ba f(V,Z,\muXhat,\gammaVhat(c))$$ is $\smallOPs{P_{VZ}}{1}$ by the consistency proof of $\expderivchk$ in \Cref{priv:lem:chi_eff_empprocess_priv}.
\end{proof}

\subsection{Auxiliary Results}
\label{priv:app:priv:subsec:aux}

In this section, auxiliary results underpinning the proofs in \Cref{priv:app:priv} are derived: in \Cref{priv:lem:linopqx_properties}, the properties of the linear operator $\linopQX$; in \Cref{priv:lem:qtc_implications}, the distributions of the (partly) unobserved data $(V,X,Z)$ under mechanism \eqref{priv:eq:qtv}; in \Cref{priv:lem:norms}, norms under $P_{VX}$ and $P_{VZ}$, and consequent continuity of $\linopQX,\linopQXinv$.

The following properties of $\linopQX$ play an essential role in the derivation of the tangent set and the efficient influence function.
\begin{lemma}[Properties of $\linopQX$ in \eqref{priv:eq:linopQX}]
\label{priv:lem:linopqx_properties}
\begin{enumerate}[label=(\roman*)]
\item \label{priv:lem:linopqx_properties_exp} The operator $\linopQX$ is the conditional expectation operator $(\linopQX k)(v,x)=\Ebc{k(V,Z)}{V=v,X=x}$, $(v,x)\in\Vimagesp\times\Ximagesp$.
\item \label{priv:lem:linopqx_properties_adj}  The operator $\linopQX$ has adjoint $\linopQXadj:L_2(P_{VX})\to L_2(P_{VZ})$, 
$(\linopQXadj h)(v,z)=\Ebc{h(V,X)}{V=v,Z=z}$, $(v,z)\in\Vimagesp\times\Zimagesp$.
\item \label{priv:lem:linopqx_properties_change} Change of measure: 
$P_{VZ}k = P_{VX}\linopQX k$ for all $k\in L_2(P_{VZ})$. In particular, if $\linopQX: L_2(P_{VZ})\to S \subset L_2(P_{VX})$ has an inverse $\linopQXinv: S\to L_2(P_{VZ})$ so that  $\linopQX\linopQXinv h=h$ for all $h\in S$, then $k\ceq \linopQXinv h$ yields $P_{VZ}\linopQXinv h=P_{VX}\linopQX \linopQXinv h=P_{VX}h.$

\item \label{priv:lem:linopqx_properties_exp_inv_disc} If $X$ is distributed on a finite set with $|\Zimagesp|=|\Ximagesp|=J$, then the operator $\linopQX: L_2(P_{VZ})\to L_2(P_{VX})$ of \eqref{priv:eq:linopQX} can be represented in the matrix notation \eqref{priv:eq:q_as_matrix} as, for all $v\in\Vimagesp$,
\begin{align*}
\begin{bmatrix}
(\linopQX k)(v,x_1) \\
(\linopQX k)(v,x_2) \\
\vdots \\
(\linopQX k)(v,x_J)
\end{bmatrix}
=Q^\intercal \begin{bmatrix}
k(v,z_1) \\
k(v,z_2) \\
\vdots \\
k(v,z_J)
\end{bmatrix},
\end{align*}
and it has inverse $\linopQXinv:L_2(P_{VX})\to L_2(P_{VZ})$  if and only if $Q$ is invertible, given by, for all $v\in\Vimagesp$,
\begin{align*}
\begin{bmatrix}
k(v,z_1) \\
k(v,z_2) \\
\vdots \\
k(v,z_J)
\end{bmatrix} =
(Q^\intercal)^{-1}
\begin{bmatrix}
h(v,x_1) \\
h(v,x_2) \\
\vdots \\
h(v,x_J)
\end{bmatrix} 
\end{align*}
with $(Q^\intercal)^{-1}=(Q^{-1})^\intercal$.
\item \label{priv:lem:linopqx_properties_exp_qtc} Let $Q\in\setQgen$ in \eqref{priv:eq:qtv}. Then $(\linopQX k)(v,x)=\alpha k(v,x)+(1-\alpha)\int_\Xsupp k(v,z)\Qbar(\deriv z)$; moreover, $\linopQX:L_2(P_{VZ})\to \linopQXinvDom$ is a bounded, hence continuous, linear operator for the norm
\begin{align}
\norm{\linopQXinvDom}{h}\ceq\norm{L_2(P_{VX})}{h}+\norm{L_2(P_V\otimes \Qbar)}{h}\label{priv:eq:normqbar}
\end{align}
on $\linopQXinvDom$, where $P_V\otimes \Qbar$ is the distribution of a random element $(V,\ba Z)$ with independent coordinates $V\sim P_V$ and $\ba Z\sim\Qbar$.
\item \label{priv:lem:linopqx_properties_inv_qtc} Let $Q\in\setQgen$ in \eqref{priv:eq:qtv}. The inverse of $\linopQX: L_2(P_{VZ})\to \linopQXinvDom$ exists, and is, as in \citet[Section 4.9-1., Equation 1]{polanin_handbook_1998}, 
$$(\linopQXinv h)(v,z)=\fr{1}{\alpha}h(v,z)-\fr{1-\alpha}{\alpha}\int_\Ximagesp h(v,x)\Qbar (\deriv x)$$ for all $h \in \linopQXinvDom.$
That is, $\linopQX\linopQXinv h=h$ and $\linopQXinv\linopQX k$ $=k$ for all $h\in \linopQXinvDom$ and all $k\in L_2(P_{VZ})$. Moreover $\linopQXinv:\linopQXinvDom\to L_2(P_{VZ})$ is a bounded, hence continuous, linear operator for the norm \eqref{priv:eq:normqbar}.
\item \label{priv:lem:linopqx_properties_bounds} Let $Q\in\setQgen$ in \eqref{priv:eq:qtv}. Then for all $h\in \linopQXinvDom$,
\begin{align*}
P_{VX}h^2+\fr{1-\alpha}{\alpha}\left((P_V\otimes\Qbar)h^2- P_V\left(\int h(V,x)\Qbar(\deriv x)\right)^2\right) \leq P_{VZ}(\linopQXinv h)^2 \\ \leq \fr{2-\alpha}{\alpha} P_{VX}\tld\chi^2+2\fr{(1-\alpha)(2-\alpha)}{\alpha^2}(P_V\otimes\Qbar)h^2.
\end{align*}
The second term in the lower bound is nonnegative due to Jensen's inequality.
\end{enumerate}
\end{lemma}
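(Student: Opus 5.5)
The plan is to prove the seven items in order, since (iii)–(vii) all rest on the conditional-expectation identity (i). The backbone is the joint law of $(V,X,Z)$ given by $P_{VX}(\deriv v,\deriv x)Q(\deriv z\cond x)$.

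\textbf{Items (i)–(iii).} Because $Z$ is drawn from $Q(\cdot\cond X)$ independently of $V$ given $X$, a Fubini argument on rectangles shows that $(v,x)\mapsto\int k(v,z)Q(\deriv z\cond x)$ is a version of $\Ebc{k(V,Z)}{V=v,X=x}$. This gives (i). By Jensen, $P_{VX}(\linopQX k)^2\le P_{VZ}k^2$, so $\linopQX$ maps $L_2(P_{VZ})$ boundedly into $L_2(P_{VX})$. For (ii), the tower property gives
\[
P_{VX}[h\,\linopQX k]=\E[h(V,X)k(V,Z)]=P_{VZ}[k\,\Ebc{h(V,X)}{V,Z}],
\]
which identifies the adjoint. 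Item (iii) is the tower property with $h\equiv1$; its second half follows by substituting $k=\linopQXinv h$.

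\textbf{Item (iv).} When $\Ximagesp$ is finite, $(\linopQX k)(v,x_j)=\sum_l Q(\set{z_l}\cond x_j)k(v,z_l)$, which is row $j$ of $Q^\intercal$ applied to the vector $(k(v,z_l))_l$. For each fixed $v$, solvability for every $h$ together with uniqueness is exactly invertibility of the $J\times J$ matrix. The identity $(Q^\intercal)^{-1}=(Q^{-1})^\intercal$ is standard linear algebra.

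\textbf{Items (v)–(vi).} Substituting $Q(\deriv z\cond x)=\alpha\delta_x+(1-\alpha)\Qbar$ gives the formula in (v). For boundedness into $\linopQXinvDom$, write $k=\linopQXinv(\linopQX k)$ once (vi) is known; more directly, bound $\norm{L_2(P_V\otimes\Qbar)}{k}$ by computing $P_{VZ}=\alpha P_{VX}+(1-\alpha)P_V\otimes\Qbar$ (this is \Cref{priv:lem:qtc_implications}). That identity yields
\[
(1-\alpha)(P_V\otimes\Qbar)k^2\le P_{VZ}k^2 \quad\text{and}\quad \alpha P_{VX}k^2\le P_{VZ}k^2.
\]
Jensen in the $\Qbar$-integral then controls each term of $\linopQX k$ in both norms. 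For (vi), write $\bar h(v)=\int h(v,x)\Qbar(\deriv x)$. Direct substitution shows
\[
\linopQX\Bigl(\tfrac1\alpha h-\tfrac{1-\alpha}{\alpha}\bar h\Bigr)=h-(1-\alpha)\bar h+\tfrac{1-\alpha}{\alpha}\bigl(\alpha\bar h\bigr)\cdots,
\]
and carefully the $\bar h$ terms cancel because $\int\bar h\,\deriv\Qbar=\bar h$. The identity $\linopQXinv\linopQX k=k$ follows similarly. Boundedness of $\linopQXinv$ uses the same mixture decomposition of $P_{VZ}$ together with $(a+b)^2\le2a^2+2b^2$.

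\textbf{Item (vii).} I expect this to be the main bookkeeping step. Set $k=\linopQXinv h=\frac1\alpha h-\frac{1-\alpha}{\alpha}\bar h$ and expand
\[
P_{VZ}k^2=\alpha P_{VX}k^2+(1-\alpha)(P_V\otimes\Qbar)k^2.
\]
Under $P_V\otimes\Qbar$, $\bar h$ is the conditional mean of $h$ given $V$, so the second term equals $\frac{1-\alpha}{\alpha^2}\bigl[(P_V\otimes\Qbar)h^2-(2\alpha-\alpha^2)P_V\bar h^2\bigr]$, after a short calculation in which the cross term is $-2\frac{1-\alpha}{\alpha^2}P_V\bar h^2$. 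For the first term, write $P_{VX}k^2=\frac1{\alpha^2}P_{VX}(h-(1-\alpha)\bar h)^2$; its cross term $P_{VX}h\bar h$ is the obstacle. For the lower bound, I would combine the two pieces and minimise over the unknown cross term, or use $P_{VX}(h-(1-\alpha)\bar h)^2\ge$ the value obtained by completing the square. The aim is to reach $P_{VX}h^2+\frac{1-\alpha}{\alpha}\bigl[(P_V\otimes\Qbar)h^2-P_V\bar h^2\bigr]$, the bracket being nonnegative by Jensen. For the upper bound, apply $(a+b)^2\le2a^2+2b^2$ to $k$ under each mixture component and bound $\bar h^2$ by $\int h^2\deriv\Qbar$. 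The stated constants $\frac{2-\alpha}{\alpha}$ and $\frac{2(1-\alpha)(2-\alpha)}{\alpha^2}$ should emerge by collecting coefficients; the $\tld\chi$ in the display is read as $h$.
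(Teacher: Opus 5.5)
Items (i)--(vi) are fine and follow the paper's route: the tower property, direct substitution of $\alpha\delta_x+(1-\alpha)\Qbar$, and the mixture identity $P_{VZ}=\alpha P_{VX}+(1-\alpha)P_V\otimes\Qbar$ combined with Jensen. Your measure-level boundedness argument in (v) avoids the paper's detour through densities, and you also check $\linopQXinv\linopQX k=k$, which the paper skips. The problem is (vii): neither half goes through as written.

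\textbf{Upper bound.} Apply $(a+b)^2\le 2a^2+2b^2$ to $k=\tfrac1\alpha h-\tfrac{1-\alpha}{\alpha}\bar h$. This gives $\alpha^2k^2\le 2h^2+2(1-\alpha)^2\bar h^2$, and after the mixture identity and Jensen,
\[
P_{VZ}k^2\le \tfrac{2}{\alpha}P_{VX}h^2+\tfrac{2(1-\alpha)(2-\alpha)}{\alpha^2}(P_V\otimes\Qbar)h^2 .
\]
The second constant is right, but the first is $2/\alpha$, not $(2-\alpha)/\alpha$. Splitting into mixture components does not help, because the $P_{VX}h^2$ coefficient comes entirely from the $P_{VX}$ component. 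So the stated constants do not emerge from your method. You need the weighted Young inequality $2(1-\alpha)|h\bar h|\le(1-\alpha)(h^2+\bar h^2)$, which gives $\alpha^2k^2\le(2-\alpha)h^2+(1-\alpha)(2-\alpha)\bar h^2$; this is the paper's step.

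\textbf{Lower bound.} Your coefficient for the $P_V\otimes\Qbar$ component is wrong. Write $k=\bar h+\tfrac1\alpha(h-\bar h)$; under $P_V\otimes\Qbar$, $h-\bar h$ has conditional mean zero given $V$, so
\[
(1-\alpha)(P_V\otimes\Qbar)k^2=\tfrac{1-\alpha}{\alpha^2}\bigl[(P_V\otimes\Qbar)h^2-(1-\alpha^2)P_V\bar h^2\bigr],
\]
not $(2\alpha-\alpha^2)$. Check with $h\equiv c$: the left side is $(1-\alpha)c^2$. This slip is fatal as written. For $h\equiv c$, your expression for $P_{VZ}k^2$ is $\alpha c^2+(1-\alpha)^3c^2/\alpha^2$, which is below the target $c^2$ whenever $\alpha>1/2$, so no bounding of the cross term can close the argument.

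With the corrected coefficient your split does work. Handle the cross term by $2P_{VX}(h\bar h)\le P_{VX}h^2+P_V\bar h^2$, which gives $\tfrac1\alpha P_{VX}(h-(1-\alpha)\bar h)^2\ge P_{VX}h^2-(1-\alpha)P_V\bar h^2$. Adding the second component yields $P_{VX}h^2+\tfrac{1-\alpha}{\alpha^2}\bigl[(P_V\otimes\Qbar)h^2-P_V\bar h^2\bigr]$, which dominates the stated bound. In fact your decomposition gives the exact identity
\[
P_{VZ}(\linopQXinv h)^2=P_{VX}h^2+\tfrac{1-\alpha}{\alpha}P_{VX}(h-\bar h)^2+\tfrac{1-\alpha}{\alpha^2}(P_V\otimes\Qbar)(h-\bar h)^2,
\]
from which both bounds follow.

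The paper avoids the component bookkeeping. It bounds $|h\bar h|\le\tfrac12(h^2+\bar h^2)$ once under $P_{VZ}$, obtaining $\tfrac1\alpha P_{VZ}h^2-\tfrac{1-\alpha}{\alpha}P_V\bar h^2$ from below and $\tfrac{2-\alpha}{\alpha^2}P_{VZ}h^2+\tfrac{(2-\alpha)(1-\alpha)}{\alpha^2}P_V\bar h^2$ from above. Only then does it substitute the mixture identity, plus Jensen for the upper bound.
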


\begin{proof}[Proof of \Cref{priv:lem:linopqx_properties}]
Assertion \ref{priv:lem:linopqx_properties_exp}. Follows directly from $Z\mid(V,X)\sim Q(\cdot\cond X)$.

Assertion \ref{priv:lem:linopqx_properties_adj}. By definition, the adjoint  $\linopQXadj$ of $\linopQX$ satisfies $P_{VX}[(\linopQX k) h]=P_{VZ}[k\linopQXadj h]$. By the tower property of expectation we can verify that, for $\linopQXadj$ given in \ref{priv:lem:linopqx_properties_adj},
\begin{align*}
P_{VX}[(\linopQX k) h]=\Eb{\Ebc{k(V,Z)}{V,X}h(V,X)}=\Eb{\Ebc{k(V,Z)h(V,X)}{V,X}} \\
=\Eb{\Ebc{k(V,Z)h(V,X)}{V,Z}}=\Eb{k(V,Z)\Ebc{h(V,X)}{V,Z}}=P_{VZ}[k\linopQXadj h].
\end{align*}

Assertion \ref{priv:lem:linopqx_properties_change}. By the tower property of expectation, 
$$P_{VZ}k=\E k(V,Z) =\E \Ebc{k(V,Z)}{V,X} = P_{VX}\linopQX k.$$

Assertion \ref{priv:lem:linopqx_properties_exp_inv_disc}. Under the discrete model for $X$, 
$$\int_\Ximagesp k(v,z)Q(\deriv z\cond x)=\sum_{z\in\Ximagesp}k(v,z)Q(\set{z}\cond x),$$
from which the assertion directly follows.

Assertion \ref{priv:lem:linopqx_properties_exp_qtc}. Under the mechanism $Q$ in \eqref{priv:eq:qtv}, we have
\begin{align*}
(\linopQX k)(v,x)= \int_\Ximagesp k(v,z)Q(\deriv z\cond x) = \int_\Ximagesp k(v,z)\left[\alpha\delta_x(\deriv z)+(1-\alpha)\Qbar(\deriv z)\right] \\
= \alpha k(v,x)+(1-\alpha)\int_\Ximagesp k(v,z) \Qbar(\deriv z).
\end{align*}
Next we show that $\linopQX: L_2(P_{VZ})\to \linopQXinvDom$ is bounded. Take some $k\in L_2(P_{VZ})$, and let $(\linopQX k)(v,x)= \alpha k(v,x)+(1-\alpha)\int_\Ximagesp k(v,z) \Qbar(\deriv z)\eqc \phi_1(v,x)+\phi_2(v)$. By Minkowski's inequality, 
$$\norm{\linopQXinvDom}{\linopQX k}\leq \norm{\linopQXinvDom}{\phi_1}+\norm{\linopQXinvDom}{\phi_2},$$
where, by definition \eqref{priv:eq:normqbar},
$$\norm{\linopQXinvDom}{\phi_1}=\norm{L_2(P_{VX})}{\phi_1}+\norm{L_2(P_V\otimes \Qbar)}{\phi_1}$$
with $\norm{L_2(P_{VX})}{\phi_1}=\alpha\norm{L_2(P_{VX})}{k}\leq \sqrt{\alpha}\norm{L_2(P_{VZ})}{k}$ by \Cref{priv:lem:qtc_implications}\ref{priv:lem:qtc_implications_norms}, and
$$\norm{\linopQXinvDom}{\phi_2}=2\norm{L_2(P_{V})}{\phi_2}.$$
By Jensen's inequality, 
$$\left(\int_\Ximagesp k(v,z) \Qbar(\deriv z)\right)^2\leq \int_\Ximagesp k(v,z)^2 \Qbar(\deriv z),$$
thus
\begin{align*}
 \norm{L_2(P_{V})}{\phi_2}\leq (1-\alpha)\sqrt{\int_\Vimagesp \int_\Ximagesp k(v,z)^2 \Qbar(\deriv z)P_V(\deriv v)}\\ =(1-\alpha)\sqrt{\int_\Vimagesp \int_\Ximagesp k(v,z)^2 \qbar(z) \mesX(\deriv z) p_V(v)\mesV(\deriv v)} \\
 \leq \sqrt{(1-\alpha)} \sqrt{\int_\Vimagesp \int_\Ximagesp k(v,z)^2 p_{VZ}(v,z) \mesX(\deriv z)\mesV(\deriv v)} =  \sqrt{(1-\alpha)} \norm{L_2(P_{VZ})}{k},
\end{align*}
where we used that by \Cref{priv:lem:qtc_implications} \ref{priv:lem:qtc_implications_dens}, 
$$\qbar(z)p_V(v)=\fr{1}{1-\alpha}p_{VZ}(v,z)-\fr{\alpha}{1-\alpha}p_{VX}(v,z)\leq \fr{1}{1-\alpha}p_{VZ}(v,z)$$ for $\alpha \in(0,1)$. 
Finally, by the previous display, $\norm{L_2(P_V\otimes \Qbar)}{\phi_1}\leq \sqrt{\fr{1}{1-\alpha}}\norm{L_2(P_{VZ})}{\phi_1}$, yielding 
\begin{align*}
\norm{\linopQXinvDom}{\linopQX k} \\
 \leq \sqrt{\alpha}\norm{L_2(P_{VZ})}{k} + \fr{\alpha}{\sqrt{1-\alpha}}\norm{L_2(P_{VZ})}{k}+2 \sqrt{(1-\alpha)} \norm{L_2(P_{VZ})}{k}.
\end{align*}

Assertion \ref{priv:lem:linopqx_properties_inv_qtc}. It suffices to show $\linopQX(\linopQXinv h)=h$ for all $h\in \linopQXinvDom$. Under \eqref{priv:eq:qtv},
\begin{align*}
(\linopQX(\linopQXinv h))(v,x) = \alpha (\linopQXinv h)(v,x)+(1-\alpha) \int_{\Ximagesp} (\linopQXinv h)(v,z)\Qbar(\deriv z).
\end{align*}
Here, the first term is
\begin{align*}
\alpha (\linopQXinv h)(v,x) &= \alpha \left\{ \fr{1}{\alpha}h(v,x)-\fr{1-\alpha}{\alpha}\int_\Ximagesp h(v,t)\Qbar (\deriv t)\right\}  \\
&=h(v,x) -(1-\alpha)\int_\Ximagesp h(v,t)\Qbar (\deriv t),
\end{align*}
while the second term is
\begin{align*}
(1-\alpha) \int_{\Ximagesp} (\linopQXinv h)(v,z)\Qbar(\deriv z)\\
 = (1-\alpha) \int_{\Ximagesp} \left\{\fr{1}{\alpha}h(v,z)-\fr{1-\alpha}{\alpha}\int_\Ximagesp h(v,t)\Qbar (\deriv t)\right\} \Qbar(\deriv z) \\
=(1-\alpha)\left\{ \fr{1}{\alpha}\int_{\Ximagesp} h(v,z)\Qbar(\deriv z) - \fr{1-\alpha}{\alpha}\int_\Ximagesp h(v,t)\Qbar (\deriv t) \right\} \\
= (1-\alpha)\int_\Ximagesp h(v,t)\Qbar (\deriv t),
\end{align*}
where we used that $\Qbar$ is a probability measure with $\int_{\Ximagesp}\Qbar(\deriv z)=1$. Collecting terms, conclude that $\linopQX(\linopQXinv h)=h$. 
We showed that $\linopQXinv$ is the inverse of $\linopQX$, which is by  \ref{priv:lem:linopqx_properties_exp_qtc} a bounded operator. 

To see that $\linopQXinv: \linopQXinvDom\to L_2(P_{VZ})$ is bounded, take some $h\in\linopQXinvDom$. As $(\linopQXinv h)(v,z)=\fr{1}{\alpha}h(v,z)-\fr{1-\alpha}{\alpha}\int_\Ximagesp h(v,x)\Qbar(\deriv x)$, we have
\begin{align*}
\norm{L_2(P_{VZ})}{\linopQXinv h}\leq \fr{1}{\alpha}\norm{L_2(P_{VZ})}{h}+\fr{1-\alpha}{\alpha}\norm{L_2(P_V)}{\int_\Ximagesp h(\cdot,x)\Qbar(\deriv x)},
\end{align*}
where, by \Cref{priv:lem:qtc_implications} \ref{priv:lem:qtc_implications_jointobs},
\begin{align*}
\norm{L_2(P_{VZ})}{h}=\sqrt{\int h^2\dderiv [\alpha P_{VX}+(1-\alpha) P_V \otimes \Qbar]} \\
\leq \sqrt{\norm{L_2(P_{VX})}{h}^2 + \norm{L_2(P_V \otimes \Qbar)}{h}^2}  \leq \sqrt{(\norm{L_2(P_{VX})}{h} + \norm{L_2(P_V \otimes \Qbar)}{h})^2} \\ \leq \norm{L_2(P_{VX})}{h} + \norm{L_2(P_V \otimes \Qbar)}{h} = \norm{\linopQXinvDom}{h}.
\end{align*}
By Jensen's inequality, 
\begin{align*}
\norm{L_2(P_V)}{\int_\Ximagesp h(\cdot,x)\Qbar(\deriv x)} \\ \leq \sqrt{\int h^2 \dderiv P_V\otimes\Qbar}=\norm{L_2(P_V \otimes \Qbar)}{h}\leq \norm{\linopQXinvDom}{h},
\end{align*}
whereby
\begin{align*}
\norm{L_2(P_{VZ})}{\linopQXinv h}\leq \fr{1}{\alpha}\norm{\linopQXinvDom}{h} +\fr{1-\alpha}{\alpha}\norm{\linopQXinvDom}{h}.
\end{align*}

Assertion \ref{priv:lem:linopqx_properties_bounds}. By \ref{priv:lem:linopqx_properties_inv_qtc},
\begin{align}
(\linopQXinv h)^2(v,z)= \fr{1}{\alpha^2}h^2(v,z)-2\fr{1-\alpha}{\alpha^2} h(v,z)\int h(v,x)\Qbar(\deriv x)+\left(\fr{1-\alpha}{\alpha}\int h(v,x)\Qbar(\deriv x)\right)^2. \label{priv:eq:qinv_sqr}
\end{align}
Here, because $xy\leq (x^2+y^2)/2$ for all $x,y\in\real$, 
\begin{align}
\bigg\vert h(v,z)\int h(v,x)\Qbar(\deriv x) \bigg\vert \leq \fr{1}{2}\left( h^2(v,z)+\left(\int h(v,x)\Qbar(\deriv x)\right)^2\right). \label{priv:eq:hproduct_bound}
\end{align}
Then \eqref{priv:eq:qinv_sqr} yields the lower bound
\begin{align*}
P_{VZ}(\linopQXinv h)^2\geq \fr{1}{\alpha^2}P_{VZ}h^2- \fr{1-\alpha}{\alpha^2}\left(P_{VZ}h^2+P_V \left(\int h(V,x)\Qbar(\deriv x)\right)^2 \right) \\
+P_V\left(\fr{1-\alpha}{\alpha}\int h(V,x)\Qbar(\deriv x)\right)^2 = \fr{1}{\alpha}P_{VZ}h^2-\fr{1-\alpha}{\alpha}P_V \left(\int h(V,x)\Qbar(\deriv x)\right)^2
\end{align*}
Here, $P_{VZ}=\alpha P_{VX}+(1-\alpha)P_V\otimes\Qbar$ by \Cref{priv:lem:qtc_implications}, which proves the lower bound. For the upper bound, \eqref{priv:eq:qinv_sqr} and \eqref{priv:eq:hproduct_bound} give
\begin{align*}
P_{VZ}(\linopQXinv h)^2\leq \fr{1}{\alpha^2}P_{VZ}h^2+ \fr{1-\alpha}{\alpha^2}\left(P_{VZ}h^2+P_V \left(\int h(V,x)\Qbar(\deriv x)\right)^2 \right) \\
+P_V\left(\fr{1-\alpha}{\alpha}\int h(V,x)\Qbar(\deriv x)\right)^2 = \fr{2-\alpha}{\alpha^2} P_{VZ}h^2 + \fr{(2-\alpha)(1-\alpha)}{\alpha^2} P_V \left(\int h(V,x)\Qbar(\deriv x)\right)^2.
\end{align*}
Then $P_{VZ}=\alpha P_{VX}+(1-\alpha)P_V\otimes\Qbar$ and Jensen's inequality $P_V \left(\int h(v,x)\Qbar(\deriv x)\right)^2\leq (P_V\otimes\Qbar)h^2$ prove the upper bound.
\end{proof}


By the construction of $Z_i$ in \Cref{priv:sec:priv:subsec:priv_mech}, the sequence $((V_i,X_i,Z_i))_{i\in[n]}$ is an i.i.d.\ sample from the distribution of the partly unobserved data $(V,X,Z)$,
\begin{align}
P_{VXZ}(\bsetV, \bsetX, \bsetZ) &= \int \indic{(v,x)\in \bsetV\times \bsetX}Q(\bsetZ\cond x) \dderiv P_{VX}(v,x)\label{priv:eq:distribution_vxz}
\end{align}
for $\bsetV\in\Vsigma, \bsetX \in\Xsigma, \bsetZ\in\Zsigma$. When $Q\in\setQgen$ in \eqref{priv:eq:qtv}, this distribution is as follows.

\begin{lemma}[Distributions under \eqref{priv:eq:qtv}]
\label{priv:lem:qtc_implications}
If the mechanism $Q\in\setQgen$ in \eqref{priv:eq:qtv}, then:
\begin{enumerate}[label=(\roman*)]
\item\label{priv:lem:qtc_implications_jointall} The joint distribution of $(V,X,Z)$ is $P_{VXZ}(\bsetV,\bsetX,\bsetZ)=\alpha P_{VX}(\bsetV,\bsetX\cap \bsetZ)+(1-\alpha)P_{VX}(\bsetV,\bsetX)\Qbar(\bsetZ)$ for $\bsetV\in\Vsigma, \bsetX \in\Xsigma, \bsetZ\in\Zsigma$.

\item\label{priv:lem:qtc_implications_jointobs} The joint distribution of $(V,Z)$ is $P_{VZ}(\bsetV,\bsetZ)=\alpha P_{VX}(\bsetV,\bsetZ)+(1-\alpha)P_{V}(\bsetV)\Qbar(\bsetZ)$ for $\bsetV\in\Vsigma, \bsetZ\in\Zsigma$. Hence, we have the absolute-continuity relations $P_{VX}\ll P_{VZ}$, so $P_X\ll P_Z$, and $\Qbar\ll P_Z$.

\item \label{priv:lem:qtc_implications_dens} If $P_{VX}$ has density $p_{VX}$ with respect to some measure $\mesV\times\mesX$ and $\ba Q$ has density $\qbar$ with respect to $\mesX$, then $P_{VZ}$ has density $p_{VZ}(v,z)\ceq \alpha p_{VX}(v,z)+(1-\alpha)p_{V}(v) \qbar(z) $ for $(v,z)\in\Vimagesp\times\Ximagesp$ with respect to $\mesV\times\mesX$.

\item \label{priv:lem:qtc_implications_norms} For all $h:\Vimagesp\times\Ximagesp\to\real$ and all $p\in[1,\infty]$, $\norm{L_p(P_{VX})}{h}\leq \alpha^{-1/p}\norm{L_p(P_{VZ})}{h}$.

\item \label{priv:lem:qtc_implications_cond_z} The Markov kernel 
\begin{align*}
P_{X\mid Z}(B\cond z) &\ceq \alpha\fr{\deriv P_X}{\deriv P_{Z}}(z)\delta_z(B)+(1-\alpha)P_X(B)\fr{\deriv \ba Q}{\deriv P_{Z}}(z),\quad B\in\Xsigma, z\in\Ximagesp,
\end{align*}
is the conditional distribution of $X$ given $Z=z$, where the Radom-Nykod\'ym derivatives $\fr{\deriv P_X}{\deriv P_{Z}}$, $\fr{\deriv \ba Q}{\deriv P_{Z}}$ exist by \ref{priv:lem:qtc_implications_jointobs}. If $P_{VX}$ and $\ba Q$ have densities $p_{VX}$ and $\qbar$ with respect to $\mesV\times\mesX$ and $\mesX$, respectively, then for $p_Z$ induced by \ref{priv:lem:qtc_implications_jointobs}, the last display is equal to
\begin{align*}
P_{X\mid Z}(B\cond z)=\alpha\fr{p_X(z)}{p_Z(z)}\delta_z(B)+(1-\alpha)\fr{\qbar(z)}{p_Z(z)}P_X(B), \quad B\in\Xsigma, z\in\Ximagesp.
\end{align*}

\item \label{priv:lem:qtc_implications_cond_v} Suppose that $X$ given $V=v$, $v\in\Vimagesp$, admits a conditional distribution $P_{X\cond V}(\cdot\cond v)$. Then the Markov kernel
\begin{align*}
P_{Z\cond V}(B\cond v)\ceq \alpha P_{X\cond V}(B\cond v)+(1-\alpha)\ba Q(B),\quad B\in\Xsigma, v\in\Vimagesp,
\end{align*}
is the conditional distribution of $Z$ given $V=v$. Hence, $\Qbar\ll P_{Z\cond V}(\cdot\cond v)$ for any $v\in\Vimagesp$.

\item \label{priv:lem:qtc_implications_cond_vz} Suppose that $X$ given $V=v$, $v\in\Vimagesp$, admits a conditional distribution $P_{X\cond V}(\cdot\cond v)$. For each $v\in\Vimagesp$, let $\qbar(z\cond v)\ceq \fr{\deriv \Qbar}{\deriv P_{Z\cond V}(\cdot\cond v)}(z)$ be the Radom-Nykod\'ym derivative of $\Qbar$ with respect to $P_{Z\cond V}(\cdot\cond v)$, which exists by \ref{priv:lem:qtc_implications_cond_v}. Then the Markov kernel
\begin{align*}
P_{X\cond VZ}(B\cond v,z)\ceq \alpha \fr{\deriv P_{VX}}{\deriv P_{VZ}}(v,z)\delta_z(B) + (1-\alpha) \qbar(z\cond v) P_{X\cond V}(B\cond v),
\end{align*}
for $B\in\Xsigma, (v,z)\in\Vimagesp\times\Ximagesp$, is the conditional distribution of $X$ given $(V,Z)=(v,z)$.

\end{enumerate}
\end{lemma}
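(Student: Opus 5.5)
The plan is to prove the seven items of \Cref{priv:lem:qtc_implications} in order, each from the previous ones. The base is the explicit form of $P_{VXZ}$ in \eqref{priv:eq:distribution_vxz} with $Q(\bsetZ\cond x)=\alpha\delta_x(\bsetZ)+(1-\alpha)\Qbar(\bsetZ)$.

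For \ref{priv:lem:qtc_implications_jointall}, substitute this $Q$ into \eqref{priv:eq:distribution_vxz} and use $\indic{x\in\bsetX}\delta_x(\bsetZ)=\indic{x\in\bsetX\cap\bsetZ}$. Setting $\bsetX=\Ximagesp$ gives \ref{priv:lem:qtc_implications_jointobs}. The absolute-continuity claims are then immediate: $P_{VZ}(B)=0$ forces both $\alpha P_{VX}(B)=0$ and $(1-\alpha)(P_V\otimes\Qbar)(B)=0$. I first extend the rectangle identity to all of $\VZsigma$ by the $\pi$–$\lambda$ theorem, since both sides are measures agreeing on rectangles. Marginalising over $\Vimagesp$ gives $P_X\ll P_Z$ and $\Qbar\ll P_Z$.

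For \ref{priv:lem:qtc_implications_dens}, integrate the proposed density over rectangles and match \ref{priv:lem:qtc_implications_jointobs}. For \ref{priv:lem:qtc_implications_norms}, use $P_{VZ}\geq\alpha P_{VX}$ setwise, hence $\int|h|^p\dderiv P_{VX}\le\alpha^{-1}\int|h|^p\dderiv P_{VZ}$. The case $p=\infty$ follows from the same domination of null sets, where the bound even holds without the factor.

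For the conditional-distribution claims \ref{priv:lem:qtc_implications_cond_z}–\ref{priv:lem:qtc_implications_cond_vz}, first check that each proposed object is a Markov kernel. Measurability holds because Radon–Nikodým derivatives and $z\mapsto\delta_z(B)=\indic{z\in B}$ are measurable. Total mass one follows from the identity $\alpha\frac{\deriv P_X}{\deriv P_Z}+(1-\alpha)\frac{\deriv\Qbar}{\deriv P_Z}=1$ $P_Z$-a.s., which is \ref{priv:lem:qtc_implications_jointobs} written in densities; I would modify the kernel on a $P_Z$-null set if needed. Then verify the defining disintegration $\int_{\bsetZ}P_{X\cond Z}(B\cond z)\dderiv P_Z(z)=P_{XZ}(B\times\bsetZ)$. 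The left side equals $\alpha P_X(B\cap\bsetZ)+(1-\alpha)P_X(B)\Qbar(\bsetZ)$, which is the $\bsetV=\Vimagesp$ case of \ref{priv:lem:qtc_implications_jointall}. The density version follows by writing the derivatives as $p_X/p_Z$ and $\qbar/p_Z$.

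Item \ref{priv:lem:qtc_implications_cond_v} is proved the same way. Integrate over $\bsetV$ against $P_V$ and compare with \ref{priv:lem:qtc_implications_jointobs}; $\Qbar\ll P_{Z\cond V}(\cdot\cond v)$ holds since $P_{Z\cond V}(\cdot\cond v)\ge(1-\alpha)\Qbar$.

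\textbf{Main obstacle.} The main obstacle is \ref{priv:lem:qtc_implications_cond_vz}. I need to show
\[
\int_{\bsetV\times\bsetZ}P_{X\cond VZ}(B\cond v,z)\dderiv P_{VZ}(v,z)=\alpha P_{VX}(\bsetV,B\cap\bsetZ)+(1-\alpha)P_{VX}(\bsetV,B)\Qbar(\bsetZ).
\]
The first term is direct: $\int\indic{\bsetV\times(\bsetZ\cap B)}\frac{\deriv P_{VX}}{\deriv P_{VZ}}\dderiv P_{VZ}=P_{VX}(\bsetV\times(B\cap\bsetZ))$. For the second term, disintegrate $P_{VZ}(\deriv v,\deriv z)=P_{Z\cond V}(\deriv z\cond v)P_V(\deriv v)$. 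Then $\int_{\bsetZ}\qbar(z\cond v)P_{Z\cond V}(\deriv z\cond v)=\Qbar(\bsetZ)$, and integrating $P_{X\cond V}(B\cond v)$ over $\bsetV$ gives $P_{VX}(\bsetV,B)$.

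The delicate points are joint measurability of $(v,z)\mapsto\qbar(z\cond v)$ and the normalisation of the kernel. I would handle measurability by choosing a jointly measurable version: by \ref{priv:lem:qtc_implications_jointobs}, $P_V\otimes\Qbar\ll P_{VZ}$, and I take $\qbar(z\cond v)\ceq\frac{\deriv(P_V\otimes\Qbar)}{\deriv P_{VZ}}(v,z)$, checking that it is a version of the sectionwise derivative for $P_V$-a.e. $v$. Normalisation then follows from the identity $\alpha\frac{\deriv P_{VX}}{\deriv P_{VZ}}+(1-\alpha)\frac{\deriv(P_V\otimes\Qbar)}{\deriv P_{VZ}}=1$ $P_{VZ}$-a.s. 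Finally, extend from rectangles to all sets by the $\pi$–$\lambda$ theorem.
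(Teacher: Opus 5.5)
Your proposal is correct and follows the paper's proof: substitute $Q(\cdot\cond x)=\alpha\delta_x+(1-\alpha)\Qbar$ into \eqref{priv:eq:distribution_vxz}, marginalise, use $P_{VX}\le\alpha^{-1}P_{VZ}$ for the norm bound, and verify each conditional distribution by integrating the proposed kernel over rectangles and matching item (i). Your extra care is sound and fills in what the paper leaves implicit: the $\pi$--$\lambda$ extension, the almost-sure normalisation of the kernels, and the jointly measurable choice $\frac{\deriv(P_V\otimes\Qbar)}{\deriv P_{VZ}}$ for $\qbar(z\cond v)$. One caveat: identifying that version with the sectionwise derivative for $P_V$-a.e.\ $v$ needs a single null set serving all $\bsetZ$, which you get when $\Xsigma$ is countably generated but not in general, although the disintegration identity in item (vii) already follows from the Radon--Nikod\'ym property of your version alone.
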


\begin{proof}[Proof of \Cref{priv:lem:qtc_implications}]

Assertion \ref{priv:lem:qtc_implications_jointall}. Plug \eqref{priv:eq:qtv} into \eqref{priv:eq:distribution_vxz}, using the definition of the Dirac measure $\delta_x(B)=\indic{x\in B}$, to find
\begin{align*}
P_{VXZ}(\bsetV,\bsetX,\bsetZ)=&\ \alpha \int \indic{(v,x)\in \bsetV\times \bsetX} \indic{x\in \bsetZ} \dderiv P_{VX}(v,x) \\
&+(1-\alpha) \Qbar(\bsetZ)\int\indic{(v,x)\in \bsetV\times \bsetX} \dderiv P_{VX}(v,x) \\
=&\, \alpha \int\indic{(v,x)\in \bsetV\times (\bsetX\cap\bsetZ)}\dderiv P_{VX}(v,x)\\
& +(1-\alpha) \Qbar(\bsetZ)\int \indic{(v,x)\in \bsetV\times \bsetX} \dderiv P_{VX}(v,x) \\
 =&\, \alpha P_{VX}(\bsetV,\bsetX\cap \bsetZ)+(1-\alpha)\Qbar(\bsetZ)P_{VX}(\bsetV,\bsetX).
\end{align*}

Assertion \ref{priv:lem:qtc_implications_jointobs}. Follows from \ref{priv:lem:qtc_implications_jointall} as the marginal distribution by setting $\bsetX\ceq\Ximagesp$.

Assertion \ref{priv:lem:qtc_implications_dens}. A convex combination of two densities, $p_{VZ}$ is nonnegative. From \ref{priv:lem:qtc_implications_jointobs}, write $P_{VZ}(\bsetV,\bsetZ)$ as
\begin{align*}
\alpha \int_{\bsetV}\int_{\bsetZ} p_{VX}(v,x)\dderiv \mesX(x) \dderiv \mesV(v)+(1-\alpha)\int_{\bsetZ}\qbar(z)\dderiv \mesX(x)\int_{\bsetV} p_V(v)\dderiv\mesV(v) \\
= \int_{\bsetV}\int_{\bsetZ} \big\{ \alpha p_{VX}(v,x)+(1-\alpha)\qbar(x) p_V(v) \big\}\dderiv \mesX(x)\dderiv\mesV(v).
\end{align*}

Assertion \ref{priv:lem:qtc_implications_norms}. By \ref{priv:lem:qtc_implications_jointobs}, $P_{VX}\leq \alpha^{-1}P_{VZ}$. Hence, 
\begin{align*}
\norm{L_p(P_{VX})}{h}= \left(\int_{\Ximagesp} \int_{\Vimagesp} |h(v,x)|^p \dderiv P_{VX}(v,x) \right)^{1/p}\leq \alpha^{-1/p} \norm{L_p(P_{VZ})}{h}.
\end{align*}

Assertion \ref{priv:lem:qtc_implications_cond_z}. It is sufficient and necessary to verify that for the $P_{X|Z}$ given in \ref{priv:lem:qtc_implications_cond_z}, $P_{XZ}(\bsetX,\bsetZ)=\int_{\bsetZ} P_{X|Z}(\bsetX\cond z)\dderiv P_Z(z)$. From the right,
\begin{align*}
\int_{\bsetZ} P_{X|Z}(\bsetX\cond z) \dderiv P_Z(z) \\
 = \int_{\bsetZ} \bigg\{ \alpha\fr{\deriv P_X}{\deriv P_{Z}}(z)\delta_z(\bsetX)+(1-\alpha)P_X(\bsetX)\fr{\deriv \ba Q}{\deriv P_{Z}}(z) \bigg\}\dderiv P_Z(z) \\
 = \alpha \int_{\bsetZ\cap\bsetX}\fr{\deriv P_X}{\deriv P_{Z}}(z) \dderiv P_Z(z) +(1-\alpha)P_X(\bsetX)\int_{\bsetZ}\fr{\deriv \ba Q}{\deriv P_{Z}}(z)\dderiv P_Z(z) \\
 = \alpha P_X(\bsetX\cap\bsetZ)+(1-\alpha)P_X(\bsetX) \Qbar(\bsetZ),
\end{align*}
in which we recognise $P_{XZ}(\bsetX,\bsetZ)$ by \ref{priv:lem:qtc_implications_jointall}.

Assertion \ref{priv:lem:qtc_implications_cond_vz}. Follows from the same arguments as \ref{priv:lem:qtc_implications_cond_z}.

Assertion \ref{priv:lem:qtc_implications_cond_vz}. Follows from the same arguments as \ref{priv:lem:qtc_implications_cond_z}, by verifying that for the $P_{X\cond VZ}$ given in the assertion, we have $P_{VXZ}(\bsetV,B,\bsetZ)=\int_{\bsetV\times \bsetZ}P_{X\cond VZ}(B\cond v,z)\dderiv P_{VZ}(v,z)$ for $P_{VXZ}$ in \ref{priv:lem:qtc_implications_jointall} and for all $\bsetV\in\Vsigma$, $B\in\Xsigma$, $\bsetZ\in\Xsigma$. Specifically, noting that $\deriv P_{VZ}(v,z)=P_{Z\cond V}(\deriv z\cond v)\deriv P_{V}(v)$ by definition of the conditional distribution $P_{Z\cond V}$, we have from the right,
\begin{align*}
\int_{\bsetV\times \bsetZ} \left\{ \alpha \fr{\deriv P_{VX}}{\deriv P_{VZ}}(v,z)\delta_z(B) + (1-\alpha) \qbar(z\cond v) P_{X\cond V}(B\cond v) \right\} \dderiv P_{VZ}(v,z) \\
= \alpha \int_{\bsetV\times (\bsetZ\cap B)}\fr{\deriv P_{VX}}{\deriv P_{VZ}}(v,z) \dderiv P_{VZ}(v,z)\\
+(1-\alpha)\int_{\bsetV}P_{X\cond V}(B\cond v)\int_{\bsetZ}\fr{\deriv \Qbar}{\deriv P_{Z\cond V}(\cdot\cond v)}(z)P_{Z\cond V}(\deriv z\cond v)\deriv P_{V}(v)\\
=\alpha P_{VX}(\bsetV,\bsetZ\cap B)+(1-\alpha)\Qbar(\bsetZ)\int_{\bsetV}P_{X\cond V}(B\cond v)\deriv P_V(v) \\
=\alpha P_{VX}(\bsetV,\bsetZ\cap B)+(1-\alpha)\Qbar(\bsetZ) P_{VX}(\bsetV,B),
\end{align*}
in which we recognise $P_{VXZ}(\bsetV,B,\bsetZ)$ in \ref{priv:lem:qtc_implications_jointall}.
\end{proof}


\Cref{priv:lem:norms} is key in proving \Cref{priv:lem:chi_eff_empprocess_priv}.

\begin{lemma}[Norms and continuity of $\linopQX,\linopQXinv$]
\label{priv:lem:norms}
With $c_Q$, we denote strictly positive constants which depend only on $Q$ and whose value may differ in every display in this lemma.

If $Q\in\setQgen$ in \eqref{priv:eq:qtv}, or $Q\in \setQdisc$ in \eqref{priv:eq:discreteQdef} with $\min_{(z,x)\in\Ximagesp^2}Q(\set{z}\cond x)>0$, then for all $h:\Vimagesp\times\Ximagesp\to\real$ and all $p\in[1,\infty]$,
\begin{align}
\norm{L_p(P_{VX})}{h}\leq c_Q\norm{L_p(P_{VZ})}{h}. \label{priv:lem:norms:eq:lp}
\end{align}

If $Q\in\setQdiscinv$ in \eqref{priv:eq:discreteQinv} and $0<\inf_{(v,x)\in\Vimagesp\times\Ximagesp}p_{VX}(v,x)\leq\sup_{(v,x)\in\Vimagesp\times\Ximagesp}p_{VX}(v,x)<\infty$, then for all $h\in L_2(P_{VX})$,
\begin{align}
\norm{L_2(P_{VZ})}{\linopQXinv h}&\leq c_Q\norm{L_2(P_{VX})}{h}. \label{priv:lem:norms:eq:q2_2_disc}
\end{align}

If $Q\in\setQgen$ in \eqref{priv:eq:qtv}, we have for all $h\in \linopQXinvDom$, 
\begin{align}
\norm{L_2(P_{VZ})}{\linopQXinv h}&\leq c_Q\norm{\linopQXinvDom}{h}. \label{priv:lem:norms:eq:q2_2_gen}
\end{align}

For any $Q\in\setQ(\Ximagesp\to\Zimagesp)$, we have for all $k\in L_2(P_{VZ})$,
\begin{align}
\norm{L_2(P_{VX})}{\linopQX k}&\leq \norm{L_2(P_{VZ})}{k}.  \label{priv:lem:norms:eq:q2_1}
\end{align}

\end{lemma}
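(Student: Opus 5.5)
We treat the four displays in turn; each reduces to a pointwise comparison of measures or to the explicit form of $\linopQXinv$.

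\emph{Display \eqref{priv:lem:norms:eq:lp}.} For $Q\in\setQgen$ this is \Cref{priv:lem:qtc_implications}\ref{priv:lem:qtc_implications_norms}, with $c_Q=\alpha^{-1/p}$. It follows from the setwise domination $P_{VX}\leq\alpha^{-1}P_{VZ}$, which comes from \Cref{priv:lem:qtc_implications}\ref{priv:lem:qtc_implications_jointobs}.

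For $Q\in\setQdisc$ with $q_{\min}\ceq\min_{z,x}Q(\set{z}\cond x)>0$ we argue through densities. By \eqref{priv:eq:vz_disrete_dens},
\[
p_{VZ}(v,z)=\sum_x Q(\set{z}\cond x)p_{VX}(v,x)\geq q_{\min}\sum_x p_{VX}(v,x)\geq q_{\min}\,p_{VX}(v,z).
\]
The last step keeps only the term $x=z$, identifying $\Zimagesp$ with $\Ximagesp$ via $z_j\leftrightarrow x_j$. Hence $P_{VX}\leq q_{\min}^{-1}P_{VZ}$, and integrating $|h|^p$ gives $c_Q=q_{\min}^{-1/p}$. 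For $p=\infty$ the domination says that $P_{VZ}$-null sets are $P_{VX}$-null, so essential suprema compare with constant $1$.

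\emph{Display \eqref{priv:lem:norms:eq:q2_1}.} We apply conditional Jensen through \Cref{priv:lem:linopqx_properties}\ref{priv:lem:linopqx_properties_exp} and then the change of measure in \ref{priv:lem:linopqx_properties_change}:
\[
P_{VX}(\linopQX k)^2\leq P_{VX}\linopQX(k^2)=P_{VZ}k^2.
\]

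\emph{Display \eqref{priv:lem:norms:eq:q2_2_gen}.} This is already established in the proof of \Cref{priv:lem:linopqx_properties}\ref{priv:lem:linopqx_properties_inv_qtc}. There, $\norm{L_2(P_{VZ})}{h}\leq\norm{\linopQXinvDom}{h}$ follows from the mixture form $P_{VZ}=\alpha P_{VX}+(1-\alpha)P_V\otimes\Qbar$. Jensen's inequality bounds the $\Qbar$-averaged term by $\norm{L_2(P_V\otimes\Qbar)}{h}$. Together these give $c_Q=1/\alpha+(1-\alpha)/\alpha=2/\alpha-1$.

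\emph{Display \eqref{priv:lem:norms:eq:q2_2_disc}.} This is the main step, because we must pass from a pointwise matrix identity in $z$ to an $L_2(P_{VZ})$ norm. Set $M\ceq(Q^\intercal)^{-1}$. By \Cref{priv:lem:linopqx_properties}\ref{priv:lem:linopqx_properties_exp_inv_disc}, for each $v$ the vector $(\linopQXinv h)(v,z_j)_j$ equals $M(h(v,x_j))_j$. Writing $\norm{M}{}$ for its operator norm, Euclidean norms therefore satisfy
\[
\sum_j(\linopQXinv h)^2(v,z_j)\leq\norm{M}{}^2\sum_j h^2(v,x_j).
\]

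Now $p_{VZ}(v,z)=\sum_x Q(\set{z}\cond x)p_{VX}(v,x)\leq J\sup p_{VX}$. Integrating over $v$ against $\mesV$ gives
\[
P_{VZ}(\linopQXinv h)^2\leq J\sup p_{VX}\,\norm{M}{}^2\int\sum_j h^2(v,x_j)\dderiv\mesV(v).
\]
Since $p_{VX}\geq\inf p_{VX}>0$, the remaining integral is at most $(\inf p_{VX})^{-1}P_{VX}h^2$. This yields $c_Q=\norm{M}{}\sqrt{J\sup p_{VX}/\inf p_{VX}}$.

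The constant depends on $p_{VX}$ as well as on $Q$. We accept this, noting that the hypotheses fix $p_{VX}$. The other delicate point is that $\mesV$ should be finite, which holds if $\Vimagesp$ is finite as in \Cref{priv:ass:consistent_nuisance_priv}. If it is not, the argument still works: we avoid $\mesV$ and use instead the ratio $p_{VZ}(v,z)/p_{VX}(v,x)\leq J\sup p_{VX}/\inf p_{VX}$, comparing summands at the same $v$.
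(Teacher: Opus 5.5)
Your proposal is correct and follows the paper's proof essentially step for step. You use the domination $P_{VX}\le c\,P_{VZ}$ (via \Cref{priv:lem:qtc_implications} or via $p_{VZ}\ge q_{\min}p_{VX}$) for \eqref{priv:lem:norms:eq:lp}, conditional Jensen for \eqref{priv:lem:norms:eq:q2_1}, and the bound already proved in \Cref{priv:lem:linopqx_properties}\ref{priv:lem:linopqx_properties_inv_qtc} for \eqref{priv:lem:norms:eq:q2_2_gen}. For \eqref{priv:lem:norms:eq:q2_2_disc} you combine the operator-norm bound on $(Q^{-1})^\intercal$ with $\sup p_{VZ}<\infty$ and $\inf p_{VX}>0$, as the paper does. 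Your remark that the last constant also depends on $p_{VX}$ is accurate; the paper hides this in $\lesssim$. Your worry about finiteness of $\mesV$ is unnecessary, because the comparison is pointwise in $v$, and in any case $\inf p_{VX}>0$ forces $\mesV(\Vimagesp)\le 1/(J\inf p_{VX})$.
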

\begin{proof}[Proof of \Cref{priv:lem:norms}]
Suppose that $Q\in\setQgen$. Then \eqref{priv:lem:norms:eq:lp} is \Cref{priv:lem:qtc_implications} \ref{priv:lem:qtc_implications_norms}; \eqref{priv:lem:norms:eq:q2_2_gen} is \Cref{priv:lem:linopqx_properties} \ref{priv:lem:linopqx_properties_inv_qtc}.

Suppose now that $X$ is distributed on a finite set $\Ximagesp=\Zimagesp$ and $P_{VX}$ has density $p_{VX}$. First, we show \eqref{priv:lem:norms:eq:lp}. We have, for any $c>0$,
\begin{align*}
\norm{L_p(P_{VZ})}{h}^p - \fr{1}{c}\norm{L_p(P_{VX})}{h}^p  \\
= \int_\Vimagesp\sum_{x\in\Ximagesp}|h(v,x)|^p \left(p_{VZ}(v,x)-\fr{1}{c}p_{VX}(v,x)\right) \dderiv\mesV(v).
\end{align*}
By assumption, 
$$\underline{q}\ceq \min_{(x,\ba x)\in\Xsupp^2}Q(\set{x}\cond \ba x)>0.$$ 
Since $p_{VZ}(v,x)=\sum_{\ba x \in \Ximagesp} Q(\set{x}\cond \ba x)p_{VX}(v,\ba x)$, we have
\begin{align*}
p_{VZ}(v,x)-\fr{1}{c}p_{VX}(v,x) \geq \underline{q}\sum_{\ba x\in\Ximagesp}p_{VX}(v,\ba x) - \fr{1}{c} p_{VX}(v,x) \\ =
 \underline{q}\sum_{\ba x\in\Ximagesp:\ba x\neq x}p_{VX}(v,\ba x)+\left(\underline{q}-\fr{1}{c}\right)p_{VX}(v,x).
\end{align*}
Hence, setting $c\ceq 1/\underline{q}$ implies $p_{VZ}(v,x)-\fr{1}{c}p_{VX}(v,x)\geq 0$. Thus 
$$\norm{L_p(P_{VZ})}{h}^p - \fr{1}{c}\norm{L_p(P_{VX})}{h}^p=\norm{L_p(P_{VZ})}{h}^p - \underline{q}\norm{L_p(P_{VX})}{h}^p\geq 0.$$

Second, we show \eqref{priv:lem:norms:eq:q2_2_disc}. Consider the matrix representation of $\linopQXinv$ in \Cref{priv:lem:linopqx_properties} \ref{priv:lem:linopqx_properties_exp_inv_disc}. The linear operator (matrix) $Q:\real^{J\times J}\to \real^{J\times 1}$ has inverse $Q^{-1}$ because $Q\in\setQident$. As $(Q^{-1})^\intercal:\real^{J\times J}\to \real^{J\times 1}$ is a linear operator on a finite-dimensional space, it is a continuous and bounded linear operator (e.g.\ \Citet[Theorem 2.4]{kress_linear_2014}). Whence, $\norm{2}{(Q^{-1})^\intercal t}\lesssim \norm{2}{t}$ for all $t\in\real^{J\times 1}$, where $\norm{2}{.}$ is the Euclidean norm on $\real^{J\times 1}$. The assertion follows by 
\begin{align*}
\norm{L_2(P_{VZ})}{\linopQXinv h}^2= \int_\Vimagesp\sum_{z\in\Ximagesp} \left[(\linopQXinv h)(v,z) \right]^2 p_{VZ}(v,z)\dderiv\mesV(v) \\ \leq \supnorm{p_{VZ}}\int_\Vimagesp\sum_{z\in\Ximagesp} \left[(\linopQXinv h)(v,z) \right]^2 \dderiv\mesV(v)  
\lesssim \int_\Vimagesp\sum_{z\in\Ximagesp} \left[ \sum_{x\in\Ximagesp}((Q^{-1})^\intercal)_{z,x} h(v,x) \right]^2\dderiv\mesV(v) \\ = \int_\Vimagesp \norm{2}{(Q^{-1})^\intercal (h(v,x))_{x\in\Xsupp}}^2\dderiv\mesV(v)  \lesssim \int_\Vimagesp \norm{2}{(h(v,x))_{x\in\Xsupp}}^2\dderiv\mesV(v)  \\ = \int_\Vimagesp \sum_{x\in\Ximagesp} h(v,x)^2\dderiv\mesV(v)  \simeq \int_\Vimagesp \sum_{x\in\Ximagesp} h(v,x)^2 p_{VX}(v,x)\dderiv\mesV(v) = \norm{L_2(P_{VX})}{h},
\end{align*}
since $\supnorm{p_{VZ}}<\infty$ by the assumption $\supnorm{p_{VX}}<\infty$, and $\inf_{(v,x)\in\Vimagesp\times\Ximagesp} p_{VX}(v,x)>0$ by assumption.

Finally, we show \eqref{priv:lem:norms:eq:q2_1}. By \Cref{priv:lem:linopqx_properties} \ref{priv:lem:linopqx_properties_exp} and Jensen's inequality, 
$$((\linopQX k)(v,x))^2\leq \Ebc{k(V,Z)^2}{V=v,X=x}.$$ But then 
\begin{align*}
\norm{L_2(P_{VX})}{\linopQX k}^2= \Eb{((\linopQX k)(V,X))^2} \\
\leq \Eb{\Ebc{k(V,Z)^2}{V,X}}=\E k(V,Z)^2 = \norm{L_2(P_{VZ})}{k}^2.
\end{align*}
\end{proof}

\subsection{Total-Variational and Differential Privacy}\label{priv:app:priv:subsec:relation_to_diff_priv}

Recall the definition of differential privacy, which is one of the most common privacy guarantees.

\begin{definition}[Local $(\alpha,\delta)$-Differential Privacy: $(\alpha,\delta)$-LDP \citep{hutchison_calibrating_2006}]
\label{priv:def:local_diff_priv}
For $\alpha,\delta\geq 0$, the privacy mechanism $Q\in\setQ(\Ximagesp\to\Zimagesp)$ is locally $(\alpha,\delta)$-differentially private if $Q(B\cond x)\leq e^\alpha Q(B\cond x')+\delta$ for all $B\in\Zsigma$ and for all $x,x'\in\Ximagesp$.
\end{definition}

\Cref{priv:lem:privacy_relations} shows how $\alpha$-LTVP in \Cref{priv:def:local_tv_priv} and $(\alpha,\delta)$-LDP are related. An $\alpha$-LTVP is always an $(.,\delta=\alpha)$-LDP, which provides weaker privacy guarantee than an $(\alpha,\delta=0)$-LDP. Conversely, when $(\alpha,\delta)$ is small enough --- so that privacy is strict ---, $(\alpha,\delta)$-LDP is an $(e^\alpha-1+\delta)$-LTVP. For example, an $(\alpha,\delta=0)$-LDP is also an $(e^\alpha-1)$-LTVP for $\alpha\leq \log(2)\approx 0.69$. 

\begin{lemma}[$(\alpha,\delta)$-LDP and $\alpha$-LTVP]
\label{priv:lem:privacy_relations}
For all $0\leq \alpha\leq 1$, every $\alpha$-LTVP mechanism is $(\tld\alpha,\alpha)$-LDP for any $\tld\alpha\geq 0$.
For all $\alpha,\delta\geq 0$ such that $e^\alpha-1+\delta\leq 1$, every $(\alpha,\delta)$-LDP mechanism is $(e^\alpha-1+\delta)$-LTVP.
\end{lemma}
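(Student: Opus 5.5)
Both claims follow directly from the definitions, using only the triangle inequality and the fact that a probability is at most one.

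\emph{First claim.} Let $Q$ be $\alpha$-LTVP with $0\leq\alpha\leq 1$, and fix $\tld\alpha\geq 0$. For any $B\in\Zsigma$ and $x,x'\in\Ximagesp$, \Cref{priv:def:local_tv_priv} gives
\[
Q(B\cond x)\leq Q(B\cond x')+\alpha .
\]
Since $e^{\tld\alpha}\geq 1$ and $Q(B\cond x')\geq 0$, we have $Q(B\cond x')\leq e^{\tld\alpha}Q(B\cond x')$. Combining the two inequalities yields
\[
Q(B\cond x)\leq e^{\tld\alpha}Q(B\cond x')+\alpha ,
\]
which is exactly $(\tld\alpha,\alpha)$-LDP in the sense of \Cref{priv:def:local_diff_priv}.

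\emph{Second claim.} Let $Q$ be $(\alpha,\delta)$-LDP, fix $B\in\Zsigma$ and $x,x'\in\Ximagesp$. Rewrite the defining inequality as
\[
Q(B\cond x)-Q(B\cond x')\leq (e^\alpha-1)Q(B\cond x')+\delta .
\]
Because $Q(B\cond x')\leq 1$ and $e^\alpha-1\geq 0$, the right side is at most $e^\alpha-1+\delta$. The LDP definition holds for all ordered pairs, so swapping $x$ and $x'$ bounds $Q(B\cond x')-Q(B\cond x)$ by the same quantity. Hence
\[
|Q(B\cond x)-Q(B\cond x')|\leq e^\alpha-1+\delta \quad\text{for all } B\in\Zsigma,\ x,x'\in\Ximagesp .
\]
Taking the supremum over $B$ gives $(e^\alpha-1+\delta)$-LTVP. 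The hypothesis $e^\alpha-1+\delta\leq 1$ only ensures that the resulting parameter lies in the range $[0,1]$ required by \Cref{priv:def:local_tv_priv}.

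The one step requiring attention is the symmetrization in the second claim. The LDP inequality is one-sided, so the absolute-value bound in the TV definition needs the inequality for both $(x,x')$ and $(x',x)$. This holds because \Cref{priv:def:local_diff_priv} quantifies over all $x,x'$.
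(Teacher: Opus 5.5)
Your proof is correct and follows essentially the same argument as the paper. For the first claim you combine the TV bound with $e^{\tld\alpha}\geq 1$. For the second you bound the one-sided LDP difference by $(e^\alpha-1)Q(B\cond\cdot)+\delta\leq e^\alpha-1+\delta$ for both orderings of $(x,x')$, which is exactly the paper's two-case analysis on the sign of $Q(B\cond x)-Q(B\cond x')$.
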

\begin{proof}
Let $Q$ be $\alpha$-LTVP. As $|Q|=Q$, we have for all $B\in\Zsigma$,
\begin{align*}
Q(B\cond x)=|Q(B\cond x)|=|Q(B\cond x)-Q(B\cond x')+Q(B\cond x')| \\
\leq |Q(B\cond x')|+|Q(B\cond x)-Q(B\cond x')| \leq Q(B\cond x')+\alpha\leq e^{\tld\alpha}Q(B\cond x')+\alpha 
\end{align*}
for all $x,x'\in\Ximagesp$ and for any $\tld\alpha\geq 0$. Hence, $Q$ is $(\tld\alpha,\alpha)$-LDP.

Now let $Q$ be an $(\alpha,\delta)$-LDP mechanism. Then for all $x,x'\in\Ximagesp$ and $B\in\Zsigma$,
\begin{align*}
|Q(B \cond x)-Q(B\cond x')|=
\begin{cases}
Q(B\cond x)-Q(B\cond x')\quad&\text{ if }\, Q(B \cond x)-Q(B\cond x')\geq 0 \\
Q(B\cond x')-Q(B\cond x)\quad&\text{ if }\, Q(B \cond x)-Q(B\cond x')< 0
\end{cases} \\
\leq  
\begin{cases}
e^\alpha Q(B\cond x')+\delta-Q(B\cond x')\quad&\text{ if }\, Q(B \cond x)-Q(B\cond x')\geq 0 \\
e^\alpha Q(B\cond x)+\delta-Q(B\cond x)\quad&\text{ if }\, Q(B \cond x)-Q(B\cond x')<0
\end{cases} \\
=
\begin{cases}
(e^\alpha-1) Q(B\cond x')+\delta\quad&\text{ if }\, Q(B \cond x)-Q(B\cond x')\geq 0 \\
(e^\alpha-1) Q(B\cond x)+\delta\quad&\text{ if }\, Q(B \cond x)-Q(B\cond x')< 0
\end{cases} \\
\leq e^\alpha-1+\delta,
\end{align*}
because $Q$ maps to $[0,1]$ and $e^\alpha-1\geq 0$ for all $\alpha\geq 0$. Hence, if $e^\alpha-1+\delta
\leq 1$, $Q$ is $(e^\alpha-1+\delta)$-LTVP.
\end{proof}

\subsection{Invertible Privacy Mechanisms}\label{priv:app:priv:subsec:invertible_mechanisms}

In \Cref{priv:sec:priv:subsec:priv_mech}, we saw that for a generic $X$, a mechanism $Q\in\setQgen$ implies the existence of $\linopL$. In some special cases, choices different from $\setQgen$ also imply the existence of $\linopL$ in \eqref{priv:eq:linopL}.

The special cases arise when $Q(\cdot\cond x)=\Qbar_{\mathrm{c}}(\cdot\cond x)$, where $\Qbar_{\mathrm{c}}(\cdot\cond x)$ admits a $\mesX$-density $\qbar_{\mathrm{c}}(\cdot\cond x)$ (as is also the case for the discretely distributed covariates). Then inverting \eqref{priv:eq:distribution_vz} is equivalent to solving a Fredholm integral equation of the first kind (see e.g.\ \cite{polanin_handbook_1998}) where $\qbar_{\mathrm{c}}$ induces a linear integral operator. Such equations are usually ill-posed \citep{polanin_handbook_1998}, but in a few favourable cases they do admit a unique solution which also retains the nonparametric model class. 

One such favourable case occurs when $\Zimagesp=\Ximagesp=\real^K$ and $\qbar_{\mathrm{c}}$ corresponds to, for example, the Laplace mechanism satisfying $\alpha$-LDP, adding Laplace noise from the Laplace density $p_\varepsilon$. Then for the Fourier transform $\mathcal{F}$ and its inverse $\mathcal{F}^{-1}$, we have $p_{VX}(v,x)=(\mathcal{F}^{-1}(w\mapsto \fr{(\mathcal{F}p_{VZ}(v,\cdot))(w)}{(\mathcal{F}p_\varepsilon)(w)}))(x)$ by the convolution theorem.  Another favourable case occurs when $\Zimagesp=\Ximagesp$ and $\qbar_{\mathrm{c}}$ induces a compact integral operator. Then the equation admits a unique and smooth solution (\Citet[Theorem 3.4]{kress_linear_2014}). One class of compact linear operators are Hilbert-Schmidt operators (\Citet[Chapter VI.6]{reed_methods_1972}, \Citet[Chapter 3]{kress_linear_2014}). The $\qbar_{\mathrm{c}}$ induces a Hilbert-Schmidt operator if 
$$\int_{\Ximagesp\times\Ximagesp}\qbar_{\mathrm{c}}(z\cond x)^2\dderiv\mesX(z)\dderiv\mesX(x)<\infty$$
(\Citet[Theorem VI.23]{reed_methods_1972}), or if $(z,x)\mapsto \qbar_{\mathrm{c}}(z\cond x)$ was continuous and the domain $\Zimagesp=\Ximagesp\subset\real^K$ was compact (\Citet[Theorem 2.27]{kress_linear_2014}).

The first, convolution case is specific to privacy achieved by \emph{additive} noise and image space $\real^K$, which is not suitable for a generic covariate under our consideration, for example when $X$ also contains coordinates distributed on a finite set. The second, compact case also places restrictions on the domain, or require $\int_{\Ximagesp\times\Ximagesp}\qbar_{\mathrm{c}}(z\cond x)^2\dderiv\mesX(z)\dderiv\mesX(x)<\infty$, which we do not expect to hold unless $\Ximagesp$ is compact (for example, when $\Ximagesp=\real$ and the mechanism is the additive Laplace noise, then the last integral is infinite). Moreover, these cases assume the existence of a density. In contrast to these, our mechanism \eqref{priv:eq:qtv} allows for more generic covariate types and space $\Zimagesp=\Ximagesp$ handled smoothly by a single mechanism.



\section{Estimation of Nuisance Parameters}\label{priv:app:priv_nuisance}

This section proves \Cref{priv:prop:rate_mm_priv} and \Cref{priv:prop:priv_bound} of \Cref{priv:sec:estim_nuisance_priv}, and details their application to the estimation of the regression and of the Riesz representer. \Cref{priv:app:priv_nuisance:subsec:finite_dim} contains results on finite-dimensional models, while \Cref{priv:app:priv_nuisance:subsec:infinite_dim} on infinite-dimensional models.


\subsection{Finite-Dimensional Models}\label{priv:app:priv_nuisance:subsec:finite_dim}

In \Cref{priv:app:priv_nuisance:subsec:finite_dim:subsubsec:mm}, we prove \Cref{priv:prop:rate_mm_priv}, and we apply it to the estimation of the regression and the Riesz representer in \Cref{priv:app:priv_nuisance:subsec:finite_dim:subsubsec:regression,priv:app:priv_nuisance:subsec:finite_dim:subsubsec:riesz}, respectively.

\subsubsection{Method-of-Moments}\label{priv:app:priv_nuisance:subsec:finite_dim:subsubsec:mm}

Before proving \Cref{priv:prop:rate_mm_priv} on the private method-of-moments, it is helpful to recall the nonprivate version.

\begin{lemma}[Nonprivate Generalised method-of-moments (\cite{hansen_large_1982} and \cite{newey_chapter_1994})]
\label{priv:lemma:rate_mm}
Suppose that $\theta_0\in\Theta\subset\real^K$ is the unique minimiser
$$\theta_0=\arg\min_{\theta\in\Theta} P_{VX}\Xi_\theta,$$
for a fixed $\Xi_{\theta}:\Vimagesp\times\Ximagesp\to\real$, $\theta\in\Theta$. Further suppose that the derivative $\Deriv_\theta \Xi_{\tld\theta}(v,x)$ of $\theta\mapsto \Xi_\theta(v,x)$ exists at all $\tld\theta\in\neighTheta$, where $\neighTheta$ is a neighbourhood of $\theta_0$, and for all $(v,x)\in\Vimagesp\times\Ximagesp$. 
Assume that
\begin{enumerate}[label=(\roman*)]
\item The value $\theta_0$ is in the interior of the compact $\Theta$.

\item Let 
$$\phi_{\tld\theta}(v,x)\ceq \Deriv_\theta \Xi_{\tld\theta}(v,x)^\intercal,\quad (v,x)\in\Vimagesp\times\Ximagesp,$$
be the $\real^{K\times 1}$-valued derivative of $\theta\mapsto\Xi_\theta(v,x)$ at $\tld\theta$. The $\phi_{\theta}$ satisfies 
\begin{align}
\norm{L_1(P_{VX})}{\norm{2}{\phi_{\theta_0}}^2}<\infty, \label{priv:lemma:rate_mm_phi_bound}
\end{align}
where, for a fixed $(v,x)\in\Vimagesp\times\Ximagesp$, $\norm{2}{\phi_{\theta_0}}^2(v,x)$ is sum of the $K$ squared entries of $\phi_{\theta_0}(v,x)$.

\item The $\real^{K\times K}$-valued derivative $\dot\phi_{\tld\theta}(v,x)\ceq \Deriv_\theta \phi_{\tld\theta}(v,x)$ of $\theta\mapsto \phi_{\theta}(v,x)$ at $\tld\theta$ exist at all $\tld\theta\in\neighTheta$ and all $(v,x)\in\Vimagesp\times\Ximagesp$. The map $\theta\mapsto \dot\phi_{\theta}(v,x)$ is continuous at all $\theta\in\neighTheta$ for all $(v,x)\in\Vimagesp\times\Ximagesp$. The expectation of $\dot\phi_{\theta_0}$ exists, and the matrix $P_{VX}\dot\phi_{\theta_0}$ is invertible. Furthermore, 
\begin{align}
\norm{L_1(P_{VX})}{\sup_{\theta\in\neighTheta}\norm{1}{\dot \phi_\theta}}<\infty,\label{priv:lemma:rate_mm_phideriv_bound}
\end{align}
where, for a fixed $(v,x)\in\Vimagesp\times\Ximagesp$, $\norm{1}{\dot \phi_\theta}(v,x)$ is the sum of the absolute values of the $K^2$ entries of $\dot\phi_\theta(v,x)$.
\end{enumerate}

Let $A_n\in\real^{K\times K}$ be an arbitrary sequence of (possibly random and then $\sigma(\mathcal{S}')$-measurable) matrices with $A_n\os{P_{VX}}{\to}A_0$ as $n\to\infty$ for a symmetric positive definite $A_0$. Then the solution $\hat\theta$ to 
$$\tld\theta\mapsto \Lambda_n(\tld\theta)\ceq \big(\Probn'\phi_{\tld\theta}^\intercal \big)A_n \big(\Probn'\phi_{\tld\theta}\big)\equiv 0$$ 
up to $\Lambda_n(\hat\theta)=\smallOPs{P_{VX}}{n^{-1/2}}$ satisfies $\sqrt{n}(\hat\theta-\theta_0)\overset{P_{VX}}{\rsquig} \mathcal{N}(0,\Sigma)$ as $n\to\infty$, where
\begin{align*}
\Sigma \ceq ({\dot\Phi^\intercal}A_0{\dot\Phi})^{-1}{\dot\Phi^\intercal}A_0\Phi A_0{\dot\Phi}({\dot\Phi^\intercal}A_0{\dot\Phi})^{-1},\quad \dot \Phi \ceq P_{VX} \dot \phi_{\theta_0}, \quad \Phi \ceq P_{VX}\phi_{\theta_0}\phi_{\theta_0}^\intercal.
\end{align*}

Further, let $\xi_{\theta}:\Vimagesp\times\Ximagesp\to\real$, $\theta\in\Theta$, be (possibly random and then $\sigma(\mathcal{S}')$-measurable) functions with the derivative of $\theta\mapsto \xi_\theta(v,x)$ at $\tld\theta$, $\Deriv_\theta\xi_{\tld\theta}(v,x)$, existent at all $\tld\theta\in\neighTheta$ for all $(v,x)\in\Vimagesp\times\Ximagesp$. If
\begin{align}
\norm{L_2(P_{VX})}{\sup_{\tld\theta\in\neighTheta}\norm{2}{\Deriv_\theta \xi_{\tld\theta}}}=\bigOPs{P_{VX}}{1},\label{priv:lemma:rate_mm_xideriv_bound}
\end{align}
where, for a fixed $(v,x)\in\Vimagesp\times\Ximagesp$, $\norm{2}{\Deriv_\theta \xi_{\tld\theta}}^2(v,x)$ is the sum of squared entries of the vector $\Deriv_\theta \xi_{\tld\theta}(v,x)$, then $\norm{L_2(P_{VX})}{\xi_{\hat\theta}-\xi_{\theta_0}}=\bigOPs{P_{VX}}{n^{-1/2}}$.
\end{lemma}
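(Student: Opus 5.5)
This is the standard GMM argument, organised in three steps: consistency, a linearisation of the first-order condition, and then the delta method for $\xi$.

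\emph{Consistency.} Since $\theta_0$ is the unique interior minimiser and $\Xi$ is differentiable, $P_{VX}\phi_{\theta_0}=0$. The function $\Lambda_0(\theta)\ceq (P_{VX}\phi_\theta)^\intercal A_0(P_{VX}\phi_\theta)$ is nonnegative and vanishes at $\theta_0$. I will assume, as identification, that it vanishes only at $\theta_0$ on $\Theta$; this comes from the unique-minimiser hypothesis together with the invertibility of $\dot\Phi$, at least locally. Then I need a uniform law of large numbers for $\Probn'\phi_\theta$. On $\neighTheta$ this follows from the mean-value theorem: $\phi_\theta=\phi_{\theta_0}+\dot\phi_{\tld\theta}(\theta-\theta_0)$. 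Because $\sup_{\theta\in\neighTheta}\norm{1}{\dot\phi_\theta}$ is integrable, the class $\{\phi_\theta\}$ is Lipschitz in $\theta$ with an integrable envelope, so it is Glivenko--Cantelli on the compact set. Combined with $A_n\to A_0$, this gives $\sup_\theta|\Lambda_n-\Lambda_0|\to 0$ in probability. Since $\Lambda_n(\hat\theta)=\smallOPs{}{n^{-1/2}}$, the argmin/zero consistency theorem (van der Vaart, Thm.\ 5.7/5.9) yields $\hat\theta\to\theta_0$ in probability. I expect this to be the main obstacle: the hypotheses only control $\dot\phi$ on a neighbourhood of $\theta_0$. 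I would first try to restrict to solutions in $\neighTheta$, as is usually implicit; failing that, I would add a global domination condition.

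\emph{Linearisation.} I will interpret the estimating equation, as in Hansen, through its first-order condition
\[
(\Probn'\dot\phi_{\hat\theta})^\intercal A_n\,\Probn'\phi_{\hat\theta}=\smallOPs{}{n^{-1/2}},
\]
which is what makes $\Lambda_n(\hat\theta)$ approximately zero at rate $n^{-1/2}$ meaningful. I expand $\Probn'\phi_{\hat\theta}$ row-wise with the mean-value theorem:
\[
\Probn'\phi_{\hat\theta}=\Probn'\phi_{\theta_0}+\Probn'\dot\phi_{\bar\theta}\,(\hat\theta-\theta_0).
\]
By the continuity of $\theta\mapsto\dot\phi_\theta$, the integrable envelope, dominated convergence and consistency, $\Probn'\dot\phi_{\bar\theta}\to\dot\Phi$ and $\Probn'\dot\phi_{\hat\theta}\to\dot\Phi$ in probability. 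Substituting the expansion and using the invertibility of $\dot\Phi^\intercal A_0\dot\Phi$ (since $A_0\succ0$ and $\dot\Phi$ is invertible), I obtain
\[
\sqrt n(\hat\theta-\theta_0)=-(\dot\Phi^\intercal A_0\dot\Phi)^{-1}\dot\Phi^\intercal A_0\sqrt n\,\Probn'\phi_{\theta_0}+\smallOPs{}{1}.
\]
The multivariate CLT, which applies because $\phi_{\theta_0}$ has mean zero and finite second moment, gives $\sqrt n\,\Probn'\phi_{\theta_0}\rsquig\normaldist(0,\Phi)$. Slutsky's lemma then delivers the sandwich covariance $\Sigma$.

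\emph{Rate for $\xi$.} This last part follows from the pointwise mean-value theorem. On the event $\hat\theta\in\neighTheta$, whose probability tends to one, and assuming the neighbourhood is convex,
\[
|\xi_{\hat\theta}-\xi_{\theta_0}|\le\sup_{\tld\theta\in\neighTheta}\norm{2}{\Deriv_\theta\xi_{\tld\theta}}\,\norm{2}{\hat\theta-\theta_0}
\]
by Cauchy--Schwarz. Taking the $L_2(P_{VX})$ norm and using the stated bound, I get $\bigOPs{}{1}\cdot\bigOPs{}{n^{-1/2}}$. The randomness of $\xi$ through $\sigma(\mathcal S')$ causes no difficulty, because the norm bound holds conditionally on $\mathcal S'$.
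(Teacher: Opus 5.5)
Your proposal follows the paper's proof. The paper simply invokes Hansen's (1982) Theorems 2.1 and 3.2 (and Newey--McFadden's Theorems 2.6 and 3.4) for consistency and the sandwich limit, which you unpack into the standard uniform-LLN/argmin step plus the linearised first-order condition, and it then bounds $\xi_{\hat\theta}-\xi_{\theta_0}$ by exactly your mean-value/Cauchy--Schwarz argument. The caveats you flag are genuine and are equally implicit in the paper's appeal to Hansen: the lemma only dominates $\dot\phi$ near $\theta_0$ and does not give global uniqueness of the zero of $P_{VX}\phi_\theta$, and the literal condition $\Lambda_n(\hat\theta)=o_P(n^{-1/2})$ only yields $\mathbb{P}_n'\phi_{\hat\theta}=o_P(n^{-1/4})$, so reading it as Hansen's first-order condition (or strengthening it to $o_P(n^{-1})$) is necessary, not merely convenient.
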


\begin{proof}[Proof of \Cref{priv:lemma:rate_mm}]
We have the identification $P_{VX}\phi_{\theta_0}=0_{K}$. The conditions of the lemma are versions of  Assumptions 2.1--2.5 and 3.1--3.6 in \cite{hansen_large_1982} adapted to our setting, whereby Theorems 2.1 and 3.2 \emph{ibid} apply, giving $\sqrt{n}(\hat\theta-\theta_0)\overset{P_{VX}}{\rsquig}\mathcal{N}(0,\Sigma)$ (see also \citet[Theorems 2.6, 3.4]{newey_chapter_1994}). Then by the mean-value theorem and \eqref{priv:lemma:rate_mm_xideriv_bound}, 
$$\norm{L_2(P_{VX})}{\xi_{\hat\theta}-\xi_{\theta_0}}\leq \norm{2}{\hat\theta-\theta_0}\norm{L_2(P_{VX})}{\sup_{\tld\theta\in\neighTheta}\norm{2}{\Deriv_\theta \xi_{\tld\theta}}}=\bigOPs{P_{VX}}{n^{-1/2}}.$$
\end{proof}

\Cref{priv:prop:rate_mm_priv} is modelled after \Cref{priv:lemma:rate_mm} but the $\norm{L_1(P_{VX})}{\cdot}$ in conditions \eqref{priv:lemma:rate_mm_phi_bound} and \eqref{priv:lemma:rate_mm_phideriv_bound} replaced with $\norm{L_1(P_{VZ})}{\cdot}$ in \Cref{priv:prop:rate_mm_priv}. The restriction of $\norm{L_1(P_{VZ})}{\cdot}$-bounds in \eqref{priv:lemma:rate_mm_phi_bound} and \eqref{priv:lemma:rate_mm_phideriv_bound} instead of the $\norm{L_1(P_{VX})}{\cdot}$-bounds in the nonprivate case of \Cref{priv:lemma:rate_mm} appears to be unavoidable in our construction: we need to control $\norm{L_1(P_{VZ})}{\cdot}$, and while \Cref{priv:lem:linopqx_properties} \ref{priv:lem:qtc_implications_norms} shows that $\norm{L_p(P_{VX})}{h}\lesssim\norm{L_p(P_{VZ})}{h}$ when $Q\in\setQgen$ in \eqref{priv:eq:qtv}, the converse $\norm{L_p(P_{VZ})}{h}\lesssim\norm{L_p(P_{VX})}{h}$  may fail. One can replace $\norm{L_p(P_{VZ})}{\cdot}$ with $\supnorm{\cdot}$-bounds, which could be easier to verify. For example, in estimating $\muX=\muXs{\theta_0}$, if $m$ is bounded and $\muXs{\theta}$ is a generalised linear model with continuous second derivative, then the $\supnorm{.}$-bounds hold provided $\Vimagesp_\mhit{1}\times\Ximagesp$ is compact.

\begin{proof}[Proof of \Cref{priv:prop:rate_mm_priv}]
Because $\linopQXinv$ and differentiation with respect to $\theta$ commutes,
\begin{align*}
\ba \phi_{\tld\theta}^\intercal=\Deriv_\theta \ba\Xi_{\tld\theta}=\Deriv_\theta \linopQXinv \Xi_{\tld\theta}= \linopQXinv \Deriv_\theta \Xi_{\tld\theta}=\linopQXinv\phi_{\tld\theta}^\intercal,
\end{align*}
and then also
\begin{align*}
 \Deriv_\theta \ba\phi_{\tld\theta}=\Deriv_\theta \linopQXinv \phi_{\tld\theta}= \linopQXinv \Deriv_\theta\phi_{\tld\theta}=\linopQXinv \dot \phi_{\tld\theta}.
\end{align*}
(Here, we denote with $\linopQXinv \phi_{\tld\theta}^\intercal$ the vector and with $\linopQXinv \dot \phi_{\tld\theta}$ the matrix where $\linopQXinv$ is applied element-wise to the coordinate functions of $\phi_{\tld\theta}^\intercal$ and $\dot \phi_{\tld\theta}$, respectively.) 
\Cref{priv:lem:linopqx_properties} \ref{priv:lem:linopqx_properties_change} implies the identification 
$P_{VZ}\ba \phi_{\theta_0}^\intercal=P_{VX}\phi_{\theta_0}^\intercal=0_{K}$,
as $P_{VX}\phi_{\theta_0}^\intercal=0_{K}$ by the identifiability conditions of \Cref{priv:lemma:rate_mm}; and also $P_{VZ}\Deriv_\theta\ba\phi_{\tld\theta}=P_{VZ} \linopQXinv \dot \phi_{\tld\theta}=P_{VX}\dot\phi_{\tld\theta}$. By this, and the strengthening of $\norm{L_p(P_{VX})}{\cdot}$ to $\norm{L_p(P_{VZ})}{\cdot}$ in \eqref{priv:lemma:rate_mm_xideriv_bound},\eqref{priv:lemma:rate_mm_phi_bound}, and \eqref{priv:lemma:rate_mm_phideriv_bound}, all assumptions in \Cref{priv:lemma:rate_mm} that hold under $(\Probn', P_{VX})$ continue to hold under $(\Probnb', P_{VZ})$. Thus, the assertion follows with $\dot\Phi$ being the same as in \Cref{priv:lemma:rate_mm}.
\end{proof}

\subsubsection{Regression $\muX$}\label{priv:app:priv_nuisance:subsec:finite_dim:subsubsec:regression}

Suppose that $\muX=\muXs{\theta_0}$ belonging to the model 
\begin{align}
\muXmodel\ceq \set{\muXs{\theta}:\theta\in\Theta\subset\real^{K}} \label{priv:eq:muXmodel}
\end{align}
for a fixed $K\geq 1$.

With \Cref{priv:lemma:rate_mm} and \Cref{priv:prop:rate_mm_priv}, we can attain the parametric root-$n$ rate for the regression in both the nonprivate and the private setting.

\begin{corollary}[Regression --- Rate of Nonprivate Estimator $\muXs{\hat\theta}$]
\label{priv:cor:muXpara_rate}
Assume that $\muX=\muXs{\theta_0}$ belonging to the model in \eqref{priv:eq:muXmodel}.
For $(v,x,\tld\theta)\in\Vimagesp\times\Ximagesp\times\Theta$, let $\Xi_{\tld\theta}(v,x)\ceq \Delta_{\muXs{\tld\theta}}^2(v,x)=(m(v,x) - \muXs{\tld\theta}(v_{\mhit{1}},x))^2$,
\begin{align*}
\phi_{\tld\theta}(v,x)\ceq \Deriv_{\theta}\Delta_{\muXs{\tld\theta}}^2(v,x)^\intercal=2(m(v,x)-\muXs{\tld\theta}(v_\mhit{1},x))\Deriv_\theta \muXs{\tld\theta}(v_\mhit{1},x)^\intercal, \quad  \xi_{\tld\theta}\ceq \muXs{\tld\theta}.
\end{align*} 

Take a sequence of (possibly random and then $\sigma(\mathcal{S}')$-measurable) matrices $A_n\in\real^{K\times K}$, and
let $\hat\theta\in\Theta$ be the solution to the estimating equation 
$\tld\theta\mapsto \Lambda_n(\tld\theta)\ceq \big(\Probn' \phi_{\tld\theta}^\intercal\big)A_n\big(\Probn' \phi_{\tld\theta}\big)\equiv 0$ 
up to $\Lambda_n(\hat\theta)=\smallOPs{P_{VX}}{n^{-1/2}}$.

If $(\Xi_\theta, \phi_{\theta},A_n,\xi_\theta)$ satisfy the conditions of \Cref{priv:lemma:rate_mm}, then $\sqrt{n}(\hat\theta-\theta_0)\overset{P_{VX}}{\rsquig} \mathcal{N}(0,\Sigma)$ as $n\to\infty$ for some $\Sigma$, and 
$\norm{L_2(P_{VX})}{\muXs{\hat\theta}-\muX}=\bigOPs{P_{VX}}{n^{-1/2}}$.
\end{corollary}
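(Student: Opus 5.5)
The plan is to reduce this corollary to \Cref{priv:lemma:rate_mm} by checking that the specific choices $\Xi_\theta=\Delta^2_{\muXs{\theta}}$, $\phi_\theta$, and $\xi_\theta=\muXs{\theta}$ fit its framework. Everything else is then inherited from that lemma.

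The first step is identification. I would show that $\theta_0$ is the unique minimiser of $\theta\mapsto P_{VX}\Xi_\theta$ over $\Theta$. By the tower property conditioning on $(V_{\mhit{1}},X)$ and the definition \eqref{priv:eq:regressions_def} of $\muX$,
\begin{align*}
P_{VX}\Delta^2_{\muXs{\theta}} = P_{VX}(m-\muX)^2 + P_{V_{\mhit{1}}X}(\muX-\muXs{\theta})^2 .
\end{align*}
This is the relation \eqref{priv:eq:sq_crit} restricted to the model $\muXmodel$. It is minimised exactly where $\muXs{\theta}=\muX$ in $L_2(P_{V_{\mhit{1}}X})$, so at $\theta_0$. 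Uniqueness is part of the hypothesis that the conditions of \Cref{priv:lemma:rate_mm} hold, which amounts to identifiability of the parametrisation.

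Next, differentiating under the given differentiability of $\theta\mapsto\muXs\theta$ yields the stated $\phi_{\tld\theta}$. Since $\theta_0$ is interior, the moment condition $P_{VX}\phi_{\theta_0}=0$ holds. It also follows directly from \eqref{priv:eq:zero_mu} with $h=\Deriv_\theta\muXs{\theta_0}$ coordinatewise, which is a cleaner argument.

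With the moment structure in place, \Cref{priv:lemma:rate_mm} applies verbatim to the estimating equation $\Lambda_n$ built from $\Probn'$ and $A_n$. This gives $\sqrt n(\hat\theta-\theta_0)\rsquig\normaldist(0,\Sigma)$ with $\Sigma$ as in that lemma. For the rate, I take $\xi_\theta=\muXs\theta$ in the second part of \Cref{priv:lemma:rate_mm}. Condition \eqref{priv:lemma:rate_mm_xideriv_bound} is assumed, so the mean-value bound gives
\begin{align*}
\norm{L_2(P_{VX})}{\muXs{\hat\theta}-\muX}=\norm{L_2(P_{VX})}{\xi_{\hat\theta}-\xi_{\theta_0}}=\bigOPs{P_{VX}}{n^{-1/2}},
\end{align*}
where I use $\muX=\muXs{\theta_0}$.

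The only genuine content is the identification step, namely that the minimiser of the squared-loss criterion within the model coincides with $\theta_0$. This rests on the orthogonality \eqref{priv:eq:zero_mu} together with the correct specification $\muX\in\muXmodel$. I expect no further obstacle, because all regularity (compactness, integrability of $\phi_{\theta_0}$ and $\dot\phi_\theta$, invertibility of $P_{VX}\dot\phi_{\theta_0}$, convergence of $A_n$) is imposed by hypothesis.
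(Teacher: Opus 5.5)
Your proposal is correct and matches the argument the paper intends; the paper omits this proof, but its text points to the same route. You identify $\theta_0$ through the squared-loss relation \eqref{priv:eq:sq_crit} restricted to $\muXmodel$, obtain $P_{VX}\phi_{\theta_0}=0$ from \eqref{priv:eq:zero_mu}, and invoke \Cref{priv:lemma:rate_mm} with $\xi_\theta=\muXs{\theta}$.

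One harmless quibble: the chain rule gives $\Deriv_\theta\Delta^2_{\muXs{\theta}}=-2(m-\muXs{\theta})\Deriv_\theta\muXs{\theta}$, so the displayed $\phi_{\tld\theta}$ is off by a sign rather than being exactly what differentiation yields. Nothing changes, because $\Lambda_n$, the moment condition and $\Sigma$ are all invariant under $\phi\mapsto-\phi$.
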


\begin{corollary}[Regression --- Rate of Private Estimator $\muXs{\chk\theta}$]
\label{priv:cor:muXpara_rate_priv}
Assume that $P_{VZ}\in\mathcal{P}_{VZ}(Q, \mathcal{P}_{VX})$, for a fixed $Q\in\setQident$ and $P_{VX}\in\mathcal{P}_{VX}$ subject to the parametric assumption \eqref{priv:eq:muXmodel}. Let $(\Xi_\theta, \phi_{\theta},\xi_\theta)$, $\theta\in\Theta$, be as defined in \Cref{priv:cor:muXpara_rate}.

Take a sequence of (possibly random and then $\sigma(\bar{\mathcal{S}}')$-measurable) matrices $\ba A_n\in\real^{K\times K}$, and let $\chk\theta\in\Theta$ be the solution to the estimating equation 
$\tld\theta\mapsto \ba\Lambda_n(\tld\theta)\ceq \big(\Probnb' \ba\phi_{\tld\theta}^\intercal\big)\ba A_n \big(\Probnb' \ba\phi_{\tld\theta}\big)\equiv 0$
up to $\ba\Lambda_n(\chk\theta)=\smallOPs{P_{VZ}}{n^{-1/2}}$, where $\ba\phi_{\tld\theta}\ceq \Deriv_{\theta}\ba\Xi_{\tld\theta}$ with $\ba\Xi_{\theta}\ceq\linopQXinv \Xi_{\theta}$ for the inverse $\linopQXinv$ of $\linopQX$ in \eqref{priv:eq:linopQX}.

If $(\Xi_\theta, \phi_{\theta}, \ba A_n, \xi_\theta)$ satisfy the conditions of \Cref{priv:prop:rate_mm_priv}, then $\sqrt{n}(\chk\theta-\theta_0)\overset{P_{VZ}}{\rsquig} \mathcal{N}(0,\ba\Sigma)$ as $n\to\infty$ for some $\ba\Sigma$, and $\norm{L_2(P_{VX})}{\muXs{\chk\theta}-\muX}=\bigOPs{P_{VZ}}{n^{-1/2}}$.
\end{corollary}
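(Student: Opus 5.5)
This follows by specialising \Cref{priv:prop:rate_mm_priv}, in exactly the way \Cref{priv:cor:muXpara_rate} specialises \Cref{priv:lemma:rate_mm}. We take $\Xi_\theta=\Delta^2_{\muXs{\theta}}$, $\phi_\theta=2(m-\muXs{\theta})\Deriv_\theta\muXs{\theta}^\intercal$ and $\xi_\theta=\muXs{\theta}$, and let the private objects be $\ba\Xi_\theta=\linopQXinv\Xi_\theta$ and $\ba\phi_\theta=\Deriv_\theta\ba\Xi_\theta$.

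The first step is to check that $\theta_0$ is the unique minimiser of $\theta\mapsto P_{VX}\Xi_\theta$ over $\Theta$.
\begin{itemize}
\item By \eqref{priv:eq:sq_crit}, $\muX$ minimises $\mu\mapsto P_{VX}\Delta_\mu^2$ over all of $L_2(P_{V_\mhit{1}X})$.
\item The tower property gives the decomposition $P_{VX}\Delta^2_{\muXs{\theta}}=P_{VX}(m-\muX)^2+P_{V_\mhit{1}X}(\muX-\muXs{\theta})^2$.
\item Since $\muX=\muXs{\theta_0}$ lies in the model \eqref{priv:eq:muXmodel}, $\theta_0$ is a minimiser. Uniqueness (identifiability of the parametrisation) is part of \Cref{priv:ass:mm_priv}, which we assume.
\end{itemize}
Consequently $P_{VX}\phi_{\theta_0}=0$. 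By the change of measure in \Cref{priv:lem:linopqx_properties}\ref{priv:lem:linopqx_properties_change}, together with the fact that $\linopQXinv$ commutes with $\Deriv_\theta$ (it acts on $(v,x)$ only and is linear, in both the discrete and the $\setQgen$ cases), we get $P_{VZ}\ba\phi_{\theta_0}=0$.

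With the hypotheses of \Cref{priv:prop:rate_mm_priv} assumed for $(\Xi_\theta,\phi_\theta,\ba A_n,\xi_\theta)$, that proposition applies directly to the estimating equation $\ba\Lambda_n$. It gives $\sqrt n(\chk\theta-\theta_0)\rsquig\normaldist(0,\ba\Sigma)$ with $\ba\Sigma$ as displayed there.

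For the rate we use a mean-value argument. Since $\chk\theta\to\theta_0$ in probability, $\chk\theta$ lies in $\neighTheta$ with probability tending to one. On that event, pointwise in $(v_\mhit{1},x)$,
\[
|\muXs{\chk\theta}-\muXs{\theta_0}|\le \norm{2}{\chk\theta-\theta_0}\,\sup_{\tld\theta\in\neighTheta}\norm{2}{\Deriv_\theta\muXs{\tld\theta}}.
\]
Taking $L_2(P_{VX})$ norms and using $\norm{L_2(P_{VX})}{\sup_{\tld\theta\in\neighTheta}\norm{2}{\Deriv_\theta\xi_{\tld\theta}}}=\bigOPs{P_{VZ}}{1}$ gives
\[
\norm{L_2(P_{VX})}{\muXs{\chk\theta}-\muX}=\bigOPs{P_{VZ}}{n^{-1/2}}\bigOPs{P_{VZ}}{1}=\bigOPs{P_{VZ}}{n^{-1/2}}.
\]
This last statement is in fact already the final claim of \Cref{priv:prop:rate_mm_priv} with $\xi_\theta=\muXs{\theta}$.

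The only step needing care is the identification: that $\theta_0$ solves the private moment equation and is its unique root, so that the consistency underlying \Cref{priv:prop:rate_mm_priv} holds. This is secured by \eqref{priv:eq:changemeasure} and $\theta_0\in\Int\Theta$. It also requires $\Xi_\theta,\phi_\theta\in\linopQXinvDom$ when $Q\in\setQgen$, so that $\linopQXinv$ is well defined and the change of measure is valid; this is implicit in the $L_1(P_{VZ})$-moment conditions of \Cref{priv:ass:mm_priv}. Everything else is a direct invocation of earlier results.
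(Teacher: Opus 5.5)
Your proposal is correct and matches the intended argument. The paper omits this proof, and the proof it has in mind is exactly your specialisation of \Cref{priv:prop:rate_mm_priv} to $\Xi_\theta=\Delta^2_{\muXs{\theta}}$ and $\xi_\theta=\muXs{\theta}$, with your tower-property decomposition identifying the proposition's $\theta_0$ with the model parameter so that $\xi_{\theta_0}=\muX$.

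One small overstatement in your closing paragraph: \eqref{priv:eq:changemeasure} and $\theta_0\in\Int\Theta$ only show that $\theta_0$ is \emph{a} root of $\theta\mapsto P_{VZ}\ba\phi_\theta$, not the unique one. Since you delegate consistency to \Cref{priv:prop:rate_mm_priv} and invoke it as a black box, the argument is unaffected.
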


The proofs of \Cref{priv:cor:muXpara_rate,priv:cor:muXpara_rate_priv} are omitted.

\subsubsection{Riesz Representer $\rieszX$}\label{priv:app:priv_nuisance:subsec:finite_dim:subsubsec:riesz}

In this section, we consider the estimation of the Riesz representer $\rieszX$.  We begin with an identification result, which is not limited to finite-dimensional models (see also {\citet[Theorem 2 (v)]{rotnitzky_characterization_2021}}).

\begin{lemma}[Identification of $\rieszX$]
\label{priv:lem:riesz_maxim}
For all $\gamma\in\Gamma$, the Riesz representer $\rieszX_\gamma$ of 
$L_2(P_{V_\mhit{1}}X)\ni\mu\mapsto P_{VX} f(V,X,\mu, \gamma)$ satisfies \eqref{priv:eq:risz_crit}.
\end{lemma}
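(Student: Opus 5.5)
The plan is to reduce the claim to the standard completing-the-square argument for a bounded linear functional on the Hilbert space $L_2(P_{V_\mhit{1}X})$. Fix $\gamma\in\Gamma$. By \eqref{priv:eq:f_continu} and \eqref{priv:eq:f_lin}, the map $\mu\mapsto P_{VX}f(V,X,\mu,\gamma)$ is a continuous linear functional on $L_2(P_{V_\mhit{1}X})$. The Riesz representation theorem, as recalled at the start of \Cref{priv:app:sec:dr:subsec:proofs_main}, then gives a unique $\rieszX_\gamma\in L_2(P_{V_\mhit{1}X})$ with $P_{VX}f(V,X,h,\gamma)=P_{VX}[\rieszX_\gamma(V_\mhit{1},X)h(V_\mhit{1},X)]$ for every $h\in L_2(P_{V_\mhit{1}X})$.

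Next, I substitute this identity into the criterion $\Upsilon_{\gamma,h}$ of \eqref{priv:eq:risz_crit}. For any $h\in L_2(P_{V_\mhit{1}X})$, linearity of expectation gives
\begin{align*}
P_{VX}\Upsilon_{\gamma,h}=P_{VX}h^2-2P_{VX}[\rieszX_\gamma h]=P_{VX}(h-\rieszX_\gamma)^2-P_{VX}\rieszX_\gamma^2 .
\end{align*}
All terms are finite because $h,\rieszX_\gamma\in L_2$ and $f(V,X,h,\gamma)$ is integrable by the representation. Since $P_{VX}\rieszX_\gamma^2$ does not depend on $h$, the criterion is minimised exactly when $P_{VX}(h-\rieszX_\gamma)^2=0$. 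So $\rieszX_\gamma$ attains the minimum, which equals $-P_{VX}\rieszX_\gamma^2$, and every other minimiser agrees with it $P_{V_\mhit{1}X}$-a.s.; in that sense $\rieszX_\gamma$ is the $\arg\min$.

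The only delicate point is a measure-theoretic one. The expectation $P_{VX}$ is taken over $(V,X)$, while $h$ and $\rieszX_\gamma$ are functions of $(V_\mhit{1},X)$ only. Hence $P_{VX}h^2=P_{V_\mhit{1}X}h^2$, and the norms in the two spaces coincide. I also need the a.s.\ linearity in \eqref{priv:eq:f_lin} to pass inside the expectation, so that $P_{VX}f(V,X,h,\gamma)$ is a genuine linear functional. I will check both facts briefly before the main computation and do not expect either to cause difficulty.
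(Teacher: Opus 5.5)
Your proposal is correct and matches the paper's proof: substitute the Riesz identity $P_{VX}f(V,X,h,\gamma)=P_{VX}[\rieszX_\gamma h]$ into $P_{VX}\Upsilon_{\gamma,h}$ and complete the square to obtain $P_{VX}[(h-\rieszX_\gamma)^2-\rieszX_\gamma^2]$, which is minimised at $\rieszX_\gamma$. Your remarks on almost-sure uniqueness of the minimiser and on integrability are small additions that the paper leaves implicit.
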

\begin{proof}[Proof of \Cref{priv:lem:riesz_maxim}]
By the definition of $\rieszX_\gamma$, 
\begin{align*}
P_{VX}f(V,X,h,\gamma)=P_{VX}\rieszX_\gamma(V_\mhit{1},X)h(V_\mhit{1},X) \text{ for all }  h \in L_2(P_{V_\mhit{1}}X). 
\end{align*}
Then 
\begin{align*}
P_{VX}\Upsilon_{\gamma,h}(V,X)=P_{VX}\left[h^2-2f\right]=P_{VX}\left[h^2-2h\rieszX_\gamma\right]=P_{VX}\left[(h-\rieszX_\gamma)^2 - \rieszX_\gamma^2 \right],
\end{align*}
whereby $\arg\min_{h\in L_2(P_{V_\mhit{1}}X)}P_{VX}\Upsilon_{\gamma,h}=\arg\min_{h\in L_2(P_{V_\mhit{1}}X)}P_{VX}\left[(h-\rieszX_\gamma)^2\right]=\rieszX_\gamma$.
\end{proof}

Suppose that the Riesz representer $\rieszXs{\gamma}$ of $\mu\mapsto P_{VX}f(V,X,\mu,\gamma)$ is $\rieszXs{\gamma}=\rieszXs{\gamma,\theta_0}$  for each $\gamma\in\Gamma$ \emph{uniformly}, where $\rieszX_{\gamma,\theta_0}$ belongs to the model 
\begin{align}
\rieszXmodelgamma\ceq \set{\rieszXs{\gamma,\theta}:\theta\in\Theta\subset\real^{K}}, \label{priv:eq:rieszXmodelgamma}
\end{align}
with the map $(\gamma,\theta)\mapsto \rieszXs{\gamma,\theta}$ known. A  prototypical example is a parametric model for the propensity score in inferring the average treatment effect on the treated.

\begin{example}[name={Average Treatment Effect on the Treated},continues=priv:ex:att_dr]
Recall that for $\gammaV(c)\ceq \ba\gamma\ceq \E D=p_1$, the Riesz representer for $\Ebc{Y^0}{D=1}$ is $\rieszX(d,x)=\fr{1-d}{\gammaV(c)}\fr{\piX(1\cond x)}{1-\piX(1\cond x)}$, and for $\Ebc{Y^1-Y^0}{D=1}$, it is $\rieszX(d,x)=\fr{d}{\gammaV(c)}-\fr{1-d}{\gammaV(c)}\fr{\piX(1\cond x)}{1-\piX(1\cond x)}$. Suppose that the propensity score $\piX(1\cond x)=\pi_{\theta_0}(x)$ for some $\pi_{\theta_0}$ in the presumed model $\set{\pi_{\theta}:\theta\in\Theta\subset\real^{K}}$, for instance, the logistic model $\pi_\theta(x)=(1+\exp(-\theta^\intercal x))^{-1}$. Then
\begin{align*}
\rieszXs{\gamma,\theta_0}(d,x) =\fr{1-d}{\gamma}\fr{\pi_{\theta_0}(x)}{1-\pi_{\theta_0}(x)},\quad
\rieszXs{\gamma,\theta_0}(d,x) =\fr{d}{\gamma}-\fr{1-d}{\gamma}\fr{\pi_{\theta_0}(x)}{1-\pi_{\theta_0}(x)}.
\end{align*}
are the Riesz representers for $\Ebc{Y^0}{D=1}$ and $\Ebc{Y^1-Y^0}{D=1}$, respectively, for $\gamma=\gammaV(c)$ $=\ba\gamma=\E D$. Conversely, $\rieszXs{\gammaV(c),\theta}$ is not a Riesz representer unless $\theta=\theta_0$. Let $\lambda_{0,\theta}(d,x)\ceq (1-d)\fr{\pi_\theta(x)}{1-\pi_\theta(x)}$ and $\lambda_{1,\theta}(d,x)\ceq d- (1-d)\fr{\pi_\theta(x)}{1-\pi_\theta(x)}$. Then the models for the Riesz representers are
\begin{align*}
\rieszXmodelgamma =\set{\gamma^{-1}\lambda_{0,\theta}:\theta\in\Theta}, \quad \rieszXmodelgamma =\set{\gamma^{-1}\lambda_{1,\theta}:\theta\in\Theta} 
\end{align*}
 for $\Ebc{Y^0}{D=1}$ and $\Ebc{Y^1-Y^0}{D=1}$, respectively.
\end{example}

In the parametric model \eqref{priv:eq:rieszXmodelgamma}, the Riesz representer $\rieszXs{\gamma,\theta_0}$ is identified by \Cref{priv:lem:riesz_maxim} as 
\begin{align}
\rieszX_{\gamma,\theta_0}&=  \arg\min_{\rho \in\rieszXmodelgamma} P_{VX}\Upsilon_{\gamma,\rho}(V,X) \nonumber \\
&= \arg\min_{\rho\in\rieszXmodelgamma}P_{VX}\bigg[\rho(v_\mhit{1},x)^2-2f(V,X,\rho,\gamma)\bigg]. \label{priv:eq:rieszgamma_para}
\end{align}
Importantly, for all $\gamma\in\Gamma$ uniformly, $\theta=\theta_0$, and only $\theta=\theta_0$, gives the Riesz representer in the model $\rieszXmodelgamma$. An implication is that for \emph{any} $\gamma\in\Gamma$,
\begin{align}
\theta_0 = \arg\min_{\theta\in\Theta}P_{VX}\Upsilon_{\gamma,\rieszXs{\gamma,\theta}}(V,X)= \arg\min_{\theta\in\Theta}P_{VX}\bigg[\rieszXs{\gamma,\theta}(v_\mhit{1},x)^2-2f(V,X,\rieszXs{\gamma,\theta},\gamma)\bigg].\label{priv:eq:rieszgamma_para_theta}
\end{align}
This permits us to infer $\theta_0$ without suffering any bias from the unknown $\gammaV(c)$. Take an arbitrary $\gamma_0\in\Gamma$. In the nonprivate setting, \eqref{priv:eq:rieszgamma_para_theta} can directly be used to construct a method-of-moments estimator $\hat\theta_{\gamma_0}$ of $\theta_0$, and next estimate the representer as $\rieszXhat\ceq \rieszXs{\gammaVhat(c),\hat\theta_{\gamma_0}}$ for $\gammaVhat(c)$ of \eqref{priv:eq:gammaVhat}. In the private setting, \Cref{priv:prop:rate_mm_priv} can be used to construct a private estimator $\chk\theta_{\gamma_0}$ of $\theta_0$, and then estimate the representer as $\rieszXchk\ceq \rieszXs{\gammaVchk(c),\chk\theta_{\gamma_0}}$ for $\gammaVchk(c)$ of \eqref{priv:eq:gammaVchk}. If $(\gamma,\theta)\mapsto\rieszXs{\gamma,\theta}$ is smooth enough, then analogues to \Cref{priv:cor:muXpara_rate,priv:cor:muXpara_rate_priv} hold, giving root-$n$ rates.

\begin{corollary}[Riesz Representer --- Rate of Nonprivate Estimator $\rieszXs{\gammaVhat(c),\hat\theta_{\gamma_0}}$]
\label{priv:cor:rieszXpara_rate}
Suppose that $\rieszXs{\gamma}=\rieszXs{\gamma,\theta_0}$ for the model $\rieszXmodelgamma$ of \eqref{priv:eq:rieszXmodelgamma}. Assume that
\begin{enumerate}[label=(\roman*)]
\item \label{priv:cor:rieszXpara_rate:diff_riesz} The map $(\gamma,\theta)\mapsto \rieszXs{\gamma,\theta}(v_\mhit{1},x)$ is differentiable at all $(\gamma,\theta)\in\neighGamma\times\neighTheta$, where the $\neigh(w)$ are neighbourhoods of $w$, for all $(v_\mhit{1},x)\in\Vimagesp_\mhit{1}\times\Ximagesp$ with partial derivatives $\partial_\gamma \rieszX_{\tld\gamma,\tld\theta}(v_\mhit{1},x)$ with respect to $\gamma$,  and $\Deriv_\theta \rieszX_{\tld\gamma,\tld\theta}(v_\mhit{1},x)$ with respect to $\theta$, at $(\tld\gamma,\tld\theta)\in\neighGamma\times\neighTheta$.

\item \label{priv:cor:rieszXpara_rate:diff_upsilon} The map $\theta\mapsto \Upsilon_{\gamma,\rieszXs{\gamma,\theta}}(v,x)=\rieszXs{\gamma,\theta}(v_\mhit{1},x)^2-2f(v,x,\rieszXs{\gamma,\theta},\gamma)$ is differentiable at all $\theta\in\neighTheta$ for all $(v,x,\gamma)\in\Vimagesp\times\Ximagesp\times\Gamma$ with $\real^{ K\times 1}$-valued derivative  $\Deriv_\theta \Upsilon_{\gamma,\rieszXs{\gamma,\tld\theta}}(v,x)^\intercal\eqc \phi_{\gamma,\tld\theta}(v,x)$ at $\tld\theta\in\neighTheta$.
\end{enumerate}
Fix an arbitrary $\gamma_0\in\Gamma$, and take a sequence of (possibly random and then $\sigma(\mathcal{S}')$-measurable) matrices $A_n\in\real^{K\times K}$. Let $\hat\theta_{\gamma_0}\in\Theta$ be the solution to the estimating equation
$\tld\theta\mapsto \Lambda_n(\tld\theta)\ceq \big(\Probn' \phi_{\gamma_0,\tld\theta}^\intercal\big)A_n\big(\Probn' \phi_{\gamma_0,\tld\theta}\big) \equiv 0$
up to $\Lambda_n(\hat\theta_{\gamma_0})=\smallOPs{P_{VX}}{n^{-1/2}}$. If $\Xi_\theta\ceq \Upsilon_{\gamma_0,\rieszXs{\gamma_0,\theta}}$, $\phi_{\theta}\ceq \phi_{\gamma_0,\theta}$, and $A_n$ satisfy the conditions of \Cref{priv:lemma:rate_mm} pertaining to $(\Xi_\theta,\phi_\theta, A_n)$ therein, then $\sqrt{n}(\hat\theta_{\gamma_0}-\theta_0)\overset{P_{VX}}{\rsquig} \mathcal{N}(0,\Sigma_{\gamma_0})$ as $n\to\infty$ for some $\Sigma_{\gamma_0}$. If, in addition,
\begin{align}
\norm{L_2(P_{VX})}{\partial_\gamma r_{\gammaVtld(c),\tld\theta_{\gamma_0}}}&=\bigOPs{P_{VX}}{1}, \label{priv:cor:rieszXpara_rate:diff_riesz_bound_gamma} \\
\norm{L_2(P_{VX})}{\norm{2}{\Deriv_\theta r_{\gammaVtld(c),\tld\theta_{\gamma_0}}}}&=\bigOPs{P_{VX}}{1} \label{priv:cor:rieszXpara_rate:diff_riesz_bound_theta} 
\end{align}
for some $(\gammaVtld(c),\tld\theta_{\gamma_0})$ between $(\gammaV(c),\theta_0)$ and $(\gammaVhat(c),\hat\theta_{\gamma_0})$,
then 
$\norm{L_2(P_{VX})}{\rieszXhat-\rieszX}=\bigOPs{P_{VX}}{n^{-1/2}}$,
where $\rieszXhat\ceq\rieszXs{\gammaVhat(c),\hat\theta_{\gamma_0}}$, $\rieszX=\rieszXs{\gammaV(c),\theta_0}$  for the model  in \eqref{priv:eq:rieszXmodelgamma} and $\gammaVhat(c)$ in \eqref{priv:eq:gammaVhat}.
\end{corollary}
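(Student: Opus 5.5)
The proof has two parts: asymptotic normality of $\hat\theta_{\gamma_0}$, then the $L_2$ rate for the plug-in $\rieszXhat=\rieszXs{\gammaVhat(c),\hat\theta_{\gamma_0}}$.

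\emph{Step 1: identification.} Fix $\gamma_0\in\Gamma$. By \Cref{priv:lem:riesz_maxim} and the uniform parametrisation \eqref{priv:eq:rieszXmodelgamma}, display \eqref{priv:eq:rieszgamma_para_theta} holds. Hence $\theta_0$ is the minimiser of $\theta\mapsto P_{VX}\Xi_\theta$ with $\Xi_\theta\ceq\Upsilon_{\gamma_0,\rieszXs{\gamma_0,\theta}}$. Uniqueness follows because $P_{VX}\Upsilon_{\gamma_0,h}=P_{VX}(h-\rieszXs{\gamma_0})^2-P_{VX}\rieszXs{\gamma_0}^2$, and only $\theta_0$ reproduces the representer.

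\emph{Step 2: asymptotic normality.} Assumption \ref{priv:cor:rieszXpara_rate:diff_upsilon} supplies the derivative $\phi_\theta=\phi_{\gamma_0,\theta}$. The corollary assumes the remaining conditions of \Cref{priv:lemma:rate_mm} for $(\Xi_\theta,\phi_\theta,A_n)$. Applying that lemma gives $\sqrt{n}(\hat\theta_{\gamma_0}-\theta_0)\rsquigs{P_{VX}}\normaldist(0,\Sigma_{\gamma_0})$, with $\Sigma_{\gamma_0}$ the sandwich matrix of the lemma. In particular $\hat\theta_{\gamma_0}-\theta_0=\bigOPs{P_{VX}}{n^{-1/2}}$.

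\emph{Step 3: rate for $\rieszXhat$.} First I need $\gammaVhat(c)-\gammaV(c)=\bigOPs{P_{VX}}{n^{-1/2}}$. This follows from the central limit theorem and the delta method applied to the ratio $\Probn'g_c/\Probn'\indic{V_{\mhit{2}}=c}$, using $\E\indic{V_{\mhit{2}}=c}g^2<\infty$ from \eqref{priv:eq:integrability} and $p_{V_{\mhit{2}}}(c)>0$.

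Next, by assumption \ref{priv:cor:rieszXpara_rate:diff_riesz} and consistency, with probability tending to one $(\gammaVhat(c),\hat\theta_{\gamma_0})$ lies in $\neighGamma\times\neighTheta$. On that event, apply the mean-value theorem pointwise in $(v_\mhit{1},x)$ along the segment from $(\gammaV(c),\theta_0)$ to $(\gammaVhat(c),\hat\theta_{\gamma_0})$:
\begin{align*}
\rieszXhat-\rieszX=\partial_\gamma\rieszX_{\gammaVtld(c),\tld\theta_{\gamma_0}}\big(\gammaVhat(c)-\gammaV(c)\big)+\Deriv_\theta\rieszX_{\gammaVtld(c),\tld\theta_{\gamma_0}}\big(\hat\theta_{\gamma_0}-\theta_0\big).
\end{align*}
Minkowski's and Cauchy--Schwarz's inequalities then give
\begin{align*}
\norm{L_2(P_{VX})}{\rieszXhat-\rieszX}\leq{}&|\gammaVhat(c)-\gammaV(c)|\,\norm{L_2(P_{VX})}{\partial_\gamma r_{\gammaVtld(c),\tld\theta_{\gamma_0}}}\\
&+\norm{2}{\hat\theta_{\gamma_0}-\theta_0}\,\norm{L_2(P_{VX})}{\norm{2}{\Deriv_\theta r_{\gammaVtld(c),\tld\theta_{\gamma_0}}}}.
\end{align*}
By \eqref{priv:cor:rieszXpara_rate:diff_riesz_bound_gamma} and \eqref{priv:cor:rieszXpara_rate:diff_riesz_bound_theta}, together with Step 2, the right-hand side is $\bigOPs{P_{VX}}{n^{-1/2}}$.

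\emph{Main obstacle.} The intermediate point $(\gammaVtld(c),\tld\theta_{\gamma_0})$ depends on $(v_\mhit{1},x)$, so it is not a single random point. I would read the hypotheses \eqref{priv:cor:rieszXpara_rate:diff_riesz_bound_gamma}--\eqref{priv:cor:rieszXpara_rate:diff_riesz_bound_theta} as controlling these pointwise intermediate values, as the statement's phrasing allows. If that reading is too generous, the fallback is to replace the derivative norms by their suprema over $\neighGamma\times\neighTheta$, in the same way as \eqref{priv:lemma:rate_mm_xideriv_bound}; the bound above then goes through unchanged.
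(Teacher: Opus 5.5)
Your proposal is correct and follows the paper's proof. Both identify $\theta_0$ via \eqref{priv:eq:rieszgamma_para_theta}, obtain asymptotic normality from \Cref{priv:lemma:rate_mm}, and then apply a mean-value expansion of $(\gamma,\theta)\mapsto\rieszXs{\gamma,\theta}$ together with \eqref{priv:cor:rieszXpara_rate:diff_riesz_bound_gamma}, \eqref{priv:cor:rieszXpara_rate:diff_riesz_bound_theta} and the $\sqrt{n}$-rates of $\gammaVhat(c)$ and $\hat\theta_{\gamma_0}$. You also note that the intermediate point depends on $(v_{\mhit{1}},x)$ and offer a fallback to supremum bounds over $\neighGamma\times\neighTheta$, as in \eqref{priv:lemma:rate_mm_xideriv_bound}; this makes explicit a point the paper's one-line sketch leaves implicit.
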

\begin{proof}[Proof of \Cref{priv:cor:rieszXpara_rate}]
The asymptotic normality of $\hat\theta_{\gamma_0}$ follows directly from \Cref{priv:lemma:rate_mm} via the identification \eqref{priv:eq:rieszgamma_para_theta}, whereby $P_{VX}\phi_{\gamma,\theta_0}=0_{K }$ for any fixed $\gamma\in\Gamma$. A mean-value expansion of $\rieszXs{\gammaVhat(c),\hat\theta_{\gamma_0}}$ via \ref{priv:cor:rieszXpara_rate:diff_riesz} in combination with \eqref{priv:cor:rieszXpara_rate:diff_riesz_bound_gamma} and \eqref{priv:cor:rieszXpara_rate:diff_riesz_bound_theta} yields the second assertion as $\gammaVhat(c)-\gammaV(c)=\bigOPs{P_{VX}}{n^{-1/2}}$, and $\hat\theta_{\gamma_0}-\theta_0=\bigOPs{P_{VX}}{n^{-1/2}}$ by the first assertion.
\end{proof}

\begin{corollary}[Riesz Representer --- Rate of Private Estimator $\rieszXs{\gammaVchk(c),\chk\theta}$]
\label{priv:cor:rieszXpara_rate_priv}
Assume that $P_{VZ}\in\mathcal{P}_{VZ}(Q, \mathcal{P}_{VX})$, for a fixed $Q\in\setQident$ and $P_{VX}\in\mathcal{P}_{VX}$ subject to the parametric assumption of \eqref{priv:eq:rieszXmodelgamma}. Suppose that \ref{priv:cor:rieszXpara_rate:diff_riesz} and \ref{priv:cor:rieszXpara_rate:diff_upsilon} of \Cref{priv:cor:rieszXpara_rate} hold, and let $\Upsilon_{\gamma,\rieszXs{\gamma,\theta}}, \phi_{\gamma,\theta}$, $(\gamma,\theta)\in\Gamma\times\Theta$, be as they are defined therein.

Fix an arbitrary $\gamma_0\in\Gamma$, and take a (possibly random and then $\sigma(\bar{\mathcal{S}}')$-measurable) sequence of matrices $\ba A_n\in\real^{K\times K}$. Let $\chk\theta_{\gamma_0}\in\Theta$ be the solution to the estimating equation $\tld\theta\mapsto \ba\Lambda_n(\tld\theta)\ceq \big(\Probnb' \ba\phi_{\gamma_0,\tld\theta}^\intercal\big)\ba A_n \big(\Probnb' \ba\phi_{\gamma_0,\tld\theta}\big)\equiv 0$, 
up to $\ba\Lambda_n(\chk\theta_{\gamma_0})=\smallOPs{P_{VZ}}{n^{-1/2}}$, with $\ba\phi_{\gamma,\tld\theta}\ceq \Deriv_\theta \ba\Upsilon_{\gamma,\rieszXs{\gamma,\tld\theta}}$, $\ba\Upsilon_{\gamma,\rieszXs{\gamma,\tld\theta}} \ceq \linopQXinv \Upsilon_{\gamma,\rieszXs{\gamma,\tld\theta}}$, for the inverse $\linopQXinv$ of $\linopQX$ in \eqref{priv:eq:linopQX}.

If $\Xi_\theta\ceq\Upsilon_{\gamma_0,\rieszXs{\gamma_0,\theta}}$, $\phi_\theta\ceq\phi_{\gamma_0,\theta}$, and $\ba A_n$ satisfy the conditions of \Cref{priv:prop:rate_mm_priv} pertaining to $(\Xi_\theta, \phi_\theta, \ba A_n)$ therein, then $\sqrt{n}(\chk\theta_{\gamma_0}-\theta_0)\overset{P_{VZ}}{\rsquig} \mathcal{N}(0,\ba\Sigma_{\gamma_0})$ as $n\to\infty$ for some $\ba\Sigma_{\gamma_0}$. If, in addition, $\norm{L_2(P_{VX})}{\partial_\gamma r_{\gammaVtld(c),\tld\theta_{\gamma_0}}}=\bigOPs{P_{VZ}}{1}$ and $\norm{L_2(P_{VX})}{\norm{2}{\Deriv_\theta r_{\gammaVtld(c),\tld\theta_{\gamma_0}}}}=\bigOPs{P_{VZ}}{1}$ for some $(\gammaVtld(c),\tld\theta_{\gamma_0})$ between $(\gammaV(c),\theta_0)$ and $(\gammaVchk(c),\chk\theta_{\gamma_0})$, then 
$\norm{L_2(P_{VX})}{\rieszXchk-\rieszX}=\bigOPs{P_{VZ}}{n^{-1/2}}$,
where $\rieszXchk\ceq \rieszXs{\gammaVchk(c),\chk\theta_{\gamma_0}}$, $\rieszX=\rieszXs{\gammaV(c),\theta_0}$ for the model in \eqref{priv:eq:rieszXmodelgamma} and $\gammaVchk(c)$ in \eqref{priv:eq:gammaVchk}.
\end{corollary}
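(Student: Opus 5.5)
The plan is to reduce the statement to \Cref{priv:prop:rate_mm_priv}, in the same way \Cref{priv:cor:rieszXpara_rate} reduces to \Cref{priv:lemma:rate_mm}, and then to close with a mean-value expansion in $(\gamma,\theta)$.

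\emph{Step 1: identification in the private model.} Fix $\gamma_0\in\Gamma$ and set $\Xi_\theta\ceq\Upsilon_{\gamma_0,\rieszXs{\gamma_0,\theta}}$. By \Cref{priv:lem:riesz_maxim} and the uniform parametrisation \eqref{priv:eq:rieszXmodelgamma}, the identity \eqref{priv:eq:rieszgamma_para_theta} shows that $\theta_0$ is the unique minimiser of $\theta\mapsto P_{VX}\Xi_\theta$, whatever the choice of $\gamma_0$. In particular $P_{VX}\phi_{\gamma_0,\theta_0}=0_K$.

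Since $\linopQXinv$ is linear and acts only on the $(v,x)$ argument, it commutes with $\Deriv_\theta$. Hence
\begin{align*}
\ba\phi_{\gamma_0,\theta}=\linopQXinv\phi_{\gamma_0,\theta},
\end{align*}
with $\linopQXinv$ applied coordinatewise. The change of measure \eqref{priv:eq:changemeasure}, i.e.\ \Cref{priv:lem:linopqx_properties}\ref{priv:lem:linopqx_properties_change}, then gives
\begin{align*}
P_{VZ}\ba\phi_{\gamma_0,\theta_0}=P_{VX}\phi_{\gamma_0,\theta_0}=0_K .
\end{align*}

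\emph{Step 2: asymptotic normality.} The hypotheses of \Cref{priv:prop:rate_mm_priv} are assumed for $(\Xi_\theta,\phi_\theta,\ba A_n)$. That proposition therefore yields $\sqrt{n}(\chk\theta_{\gamma_0}-\theta_0)\rsquigs{P_{VZ}}\normaldist(0,\ba\Sigma_{\gamma_0})$, with $\ba\Sigma_{\gamma_0}$ the sandwich matrix given there. In particular $\chk\theta_{\gamma_0}-\theta_0=\bigOPs{P_{VZ}}{n^{-1/2}}$.

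\emph{Step 3: rate for $\gammaVchk(c)$.} By definition, $\gammaVchk(c)$ is the ratio of two sample means over $\bar{\mathcal S}'$. Its numerator $\Probnb'\ba g_c$ has mean $P_{VX}g_c=p_{V_\mhit{2}}(c)\gammaV(c)$ by \eqref{priv:eq:changemeasure}, and it has finite variance because $\indic{V_\mhit{2}=c}g\in L_2$ by \eqref{priv:eq:integrability} and $\linopQXinv$ is bounded (\Cref{priv:lem:norms}). Its denominator $\chk p_{V_\mhit{2}}(c)$ is an empirical frequency with $p_{V_\mhit{2}}(c)>0$. The central limit theorem together with the delta method gives $\gammaVchk(c)-\gammaV(c)=\bigOPs{P_{VZ}}{n^{-1/2}}$.

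\emph{Step 4: mean-value expansion.} This is the step needing the most care. By differentiability \ref{priv:cor:rieszXpara_rate:diff_riesz}, with probability tending to one both $(\gammaVchk(c),\chk\theta_{\gamma_0})$ and $(\gammaV(c),\theta_0)$ lie in $\neighGamma\times\neighTheta$, so the segment between them does too. Applying the mean-value theorem pointwise in $(v_\mhit{1},x)$ along this segment gives
\begin{align*}
\rieszXchk-\rieszX=\partial_\gamma \rieszX_{\gammaVtld(c),\tld\theta_{\gamma_0}}\,\bigl(\gammaVchk(c)-\gammaV(c)\bigr)+\Deriv_\theta \rieszX_{\gammaVtld(c),\tld\theta_{\gamma_0}}\,\bigl(\chk\theta_{\gamma_0}-\theta_0\bigr).
\end{align*}
Here the intermediate point $(\gammaVtld(c),\tld\theta_{\gamma_0})$ depends on $(v_\mhit{1},x)$; the stated $\bigOPs{P_{VZ}}{1}$ bounds on the $L_2(P_{VX})$ norms of the derivatives are read as holding for these intermediate points, as in \Cref{priv:cor:rieszXpara_rate}.

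Taking $L_2(P_{VX})$ norms, applying Minkowski's inequality, and using Cauchy--Schwarz in $\real^K$ on the second term gives
\begin{align*}
\norm{L_2(P_{VX})}{\rieszXchk-\rieszX}\le \bigOPs{P_{VZ}}{1}\,\bigOPs{P_{VZ}}{n^{-1/2}}+\bigOPs{P_{VZ}}{1}\,\bigOPs{P_{VZ}}{n^{-1/2}}=\bigOPs{P_{VZ}}{n^{-1/2}},
\end{align*}
using Steps 2 and 3 for the two parameter errors.
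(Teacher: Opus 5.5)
Your proposal is correct and follows the paper's own route: the paper proves this corollary by repeating the argument of \Cref{priv:cor:rieszXpara_rate}. That argument identifies $\theta_0$ through \eqref{priv:eq:rieszgamma_para_theta}, gets asymptotic normality of $\chk\theta_{\gamma_0}$ from \Cref{priv:prop:rate_mm_priv}, and closes with a mean-value expansion in $(\gamma,\theta)$ using the root-$n$ rates of $\gammaVchk(c)$ and $\chk\theta_{\gamma_0}$. The one imprecision is in your Step 3. For $Q\in\setQgen$, \Cref{priv:lem:norms} bounds $\linopQXinv$ only in the norm $\norm{L_2(P_{VX})}{\cdot}+\norm{L_2(P_V\otimes\Qbar)}{\cdot}$, and in the discrete case only under its density bounds. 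So \eqref{priv:eq:integrability} alone does not give $\ba g_c\in L_2(P_{VZ})$; you additionally need $g_c\in\linopQXinvDom$. The paper also leaves this integrability condition implicit when it asserts the $\bigOPs{P_{VZ}}{n^{-1/2}}$ rate of $\gammaVchk(c)$ in \Cref{priv:sec:estim_dr_priv}.
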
 
\begin{proof}[Proof of \Cref{priv:cor:rieszXpara_rate_priv}]
Follows from \Cref{priv:prop:rate_mm_priv} as \Cref{priv:cor:rieszXpara_rate} follows from \Cref{priv:lemma:rate_mm}.
\end{proof}

\subsection{Infinite-Dimensional Models}\label{priv:app:priv_nuisance:subsec:infinite_dim}

\begin{proof}[Proof of \Cref{priv:prop:priv_bound}]
Write, for $T_n$ in \eqref{priv:eq:secnubound},
\begin{align*}
\infnuparchk(v_\mhit{1},x)-\infnupar(v_\mhit{1},x) =&\, T_n(v_\mhit{1}, x) + M_{n}(v_\mhit{1}, x) + B_{n}(v_\mhit{1}, x), \\
 M_{n}(v_\mhit{1}, x) \ceq&\, \nsumn{i}\left\{\ba\nweight_{n,i}(v_\mhit{1},x,V_i',Z_i',\secnu)-P_{VZ}[\ba\nweight_{n,i}(v_\mhit{1},x,V,Z,\secnu)]\right\}, \\
 B_{n}(v_\mhit{1}, x) \ceq&\, \nsumn{i}P_{VZ}[\ba\nweight_{n,i}(v_\mhit{1},x,V,Z,\secnu)]-\infnupar(v_\mhit{1},x),
\end{align*}
for $(v_\mhit{1},x)\in \Vimagesp_{\mhit{1}}\times\Ximagesp$,  so that
\begin{align}
P_{VX}(\infnuparchk-\infnupar)^2 \leq 2\left\{P_{VX}T_n^2 + 2(P_{VX}M_{n}^2+P_{VX}B_{n}^2)\right\},
\end{align}
since $(x+y)^2\leq2(x^2+y^2)$ for all $x,y\in\real$. For $\ba\nweight_{ni}$ in \eqref{priv:eq:nonparaestim_priv}, we have by \Cref{priv:lem:linopqx_properties}  \ref{priv:lem:linopqx_properties_change} that $P_{VZ}[\ba\nweight_{n,i}(v_\mhit{1},x,V,Z,\secnu)]=P_{VX}[\nweight_{n,i}(v_\mhit{1},x,V,X,\secnu)]$. Then, in the light of \eqref{priv:eq:secnubound}, the bound \eqref{priv:eq:nonparaestim_priv_bound} follows once we show
\begin{align}
P_{VX}M_n^2=\bigOPs{P_{VZ}}{\fr{1}{n^2}\sumn{i}\int \Vb{\ba\nweight_{n,i}(v_\mhit{1},x,V,Z,\secnu)} \dderiv P_{V_{\mhit{1}}X}(v_{\mhit{1}},x)}. \label{priv:eq:varianceorder}
\end{align}
By Markov's inequality, for all $K>0$,
\begin{align*}
\prob{P_{VX}M_{n}^2>K}\leq \fr{1}{K}\Eb{\int  M_{n}(v_\mhit{1},x)^2\dderiv P_{V_\mhit{1}X}(v_\mhit{1},x)}\\
 = \fr{1}{K}\int \Eb{M_{n}(v_\mhit{1},x)^2}\dderiv P_{V_\mhit{1}X}(v_\mhit{1},x).
\end{align*}
For any fixed $(v_\mhit{1},x)$, $\Eb{M_{n}(v_\mhit{1},x)^2}=\fr{1}{n^2}\sumn{i}\Vb{\ba\nweight_{n,i}(v_\mhit{1},x,V,Z,\secnu)}$, because the $(V_i',Z_i)$ are i.i.d.. Hence \eqref{priv:eq:varianceorder} holds.

Bound \eqref{priv:eq:sigmacondbound}. If $Q\in\setQgen$, then, by \Cref{priv:lem:linopqx_properties}, we have for $h\in L_2(P_{VZ})$,
\begin{align*}
\Vb{\linopQXinv h(V,Z)}\leq 2 \left\{\fr{1}{\alpha^2}\Vb{h(V,Z)}+\left(\fr{1-\alpha}{\alpha}\right)^2\Vb{\int h(V,z)\Qbar(\deriv z)} \right\},
\end{align*}
since for random variables $X,Y$, we have $\Vb{X+Y}\leq 2(\Vb{X}+\Vb{Y})$. Clearly, $\Vb{h(V,Z)}\leq P_{VZ}h^2$, and for the same reason, $\Vb{\int h(V,z)\Qbar(\deriv z)}\leq (P_V \otimes \Qbar)h^2$  by Jensen's inequality. But $P_V \otimes \Qbar\leq \fr{1}{1-\alpha}P_{VZ}$ by \Cref{priv:lem:qtc_implications}, so \eqref{priv:eq:sigmacondbound} holds.
\end{proof}

%

In the following, we apply \Cref{priv:prop:priv_bound} to various estimators. The regression $\muX$ is estimated by kernel and orthogonal series estimators in \Cref{priv:ex:regression_nw_kernel,priv:ex:regression_ortho}, respectively. The Riesz representer $\rieszX$ is estimated by orthogonal series in \Cref{priv:ex:riesz_ortho}.

\begin{example}[name={Regression $\muX$ --- Nadaraya--Watson Kernel Estimator}]
\label{priv:ex:regression_nw_kernel}
Suppose that $\Vimagesp_{\mhit{1}}\times\Ximagesp\subset \real^{d_{\mhit{1}}}\times\real^{d_\Xsupp}$ for fixed positive integers $d_\mhit{1},d_\Xsupp$ and that the mechanism $Q\in\setQgen$ in \eqref{priv:eq:qtv}. To estimate $\muX$, consider the nonprivate estimator
\begin{align*}
\muXhat(v_\mhit{1},x)\ceq \nsumn{i} &\fr{K_h(v_\mhit{1},x,V_{\mhit{1}i}',X_i')m(V_i',X_i')}{\nsumn{j}K_h(v_\mhit{1},x,V_{\mhit{1}j'},X_j')},\\
 K_h(v_\mhit{1},x,V_{\mhit{1}i},X_i)&\ceq K\left(\fr{1}{h}\left((v_\mhit{1},x)-(V_{\mhit{1}i},X_i)\right)\right)
\end{align*}
for a bandwidth $h>0$, and a kernel $K:\real^d\to\real$ with $d\ceq d_{\mhit{1}}+d_\Xsupp$. For the density estimator $\hat p_{V_\mhit{1}X}(v_\mhit{1},x)\ceq \fr{1}{nh^d}\sumn{i}K_h(v_\mhit{1},x,V_{\mhit{1}i}',X_i')$, $\muXhat$ is of the form \eqref{priv:eq:nonparaestim} for $\nweight_{n,i}(v_\mhit{1}, x, V',X',\secnu)=\nweight_{n,i}(v_\mhit{1}, x, V',X', p_{V_\mhit{1}X})\ceq \fr{K_h(v_\mhit{1}, x, V_\mhit{1}',X')m(V',X')}{h^d p_{V_\mhit{1}X}(v_\mhit{1},x)}$. 

The private estimator $\muXchk$ is defined as in \eqref{priv:eq:nonparaestim_priv}, with $\check p_{V_\mhit{1}X}$ an arbitrary estimator of $p_{V_\mhit{1}X}$ constructed from $\bar{\mathcal{S}}'=((V_i',Z_i'))_{i\in[n]}$. For instance, if $\Qbar$ of $Q$ in \eqref{priv:eq:qtv} admits a density $\qbar$ with respect to the same dominating measure as $P_X$ does, then \Cref{priv:lem:qtc_implications} \ref{priv:lem:qtc_implications_dens} motivates the choice
\begin{align} 
\label{priv:eq:estim_pvx_priv}
\check p_{V_\mhit{1}X}(v_\mhit{1},x)\ceq \fr{1}{\alpha}\chk p_{V_\mhit{1}Z}(v_\mhit{1},x)-\fr{1-\alpha}{\alpha}\check p_{V_\mhit{1}}(v)\qbar(x), 
\end{align}
where $(\check p_{V_\mhit{1}Z},\check p_{V_\mhit{1}})$ is an arbitrary estimator of 
$(p_{V_\mhit{1}Z}, p_{V_\mhit{1}})$ constructed from $\bar{\mathcal{S}}'$; for example
\begin{align}
\label{priv:eq:kernel_pvz}
\begin{aligned} 
\check p_{V_\mhit{1}Z}(v_\mhit{1},z)\ceq \fr{1}{nh^d}\sumn{i}K_h(v_\mhit{1},x,V_{\mhit{1}i}',Z_i'), \\ 
\check p_{V_\mhit{1}}(v_\mhit{1})\ceq \fr{1}{nh_\mhit{1}^{d_\mhit{1}}}\sumn{i}K_{\mhit{1},h_\mhit{1}}(v_\mhit{1},V_{\mhit{1}i}'),
\end{aligned} 
\end{align}
where $K_{\mhit{1},h}(v_\mhit{1},V_\mhit{1})\ceq K_\mhit{1}((v_\mhit{1}-V_\mhit{1})/h_\mhit{1})$ for a kernel $K_\mhit{1}:\real^{d_\mhit{1}}\to\real$ and bandwidth $h_\mhit{1}>0$. Next, we study $\muXchk$ in relation to \Cref{priv:prop:priv_bound}.

Term $T_n$ in \eqref{priv:eq:secnubound}. Let $g_h(v_\mhit{1},x,v',x')\ceq K_h(v_\mhit{1},x,v_\mhit{1}',x')m(v',x')$ and $\ba g_h(v_\mhit{1},x,v',z')\ceq(\linopQXinv(v',x')\mapsto g_h(v_\mhit{1},x,v',x'))(v',z')$. Then
\begin{align*}
T_n(v_\mhit{1},x)=\fr{p_{V_\mhit{1}X}(v_\mhit{1},x)-\check p_{V_\mhit{1}X}(v_\mhit{1},x)}{p_{V_\mhit{1}X}(v_\mhit{1},x)}\left(\fr{1}{\check p_{V_\mhit{1}X}(v_\mhit{1},x)}\fr{1}{nh^d}\sumn{i}\ba g_h(v_\mhit{1},x,V_i',Z_i')\right).
\end{align*}
Assume that the $(v_\mhit{1},x)$-supremum of the bracketed factor is $\bigOPs{P_{VZ}}{1}$ independent of $h$, and $\supnorm{\fr{1}{p_{V_\mhit{1}X}}}<\infty$. Then $P_{VX}T_n^2$ is dominated by $P_{VX}(p_{V_\mhit{1}X}-\check p_{V_\mhit{1}X})^2$. Suppose that $\check p_{V_\mhit{1}X}(v_\mhit{1},x)$ is defined as \eqref{priv:eq:estim_pvx_priv} and \eqref{priv:eq:kernel_pvz}, and that $p_{V_\mhit{1}Z}(v_\mhit{1},z)=\alpha p_{V_\mhit{1}X}(v_\mhit{1},z)+(1-\alpha)p_{V_\mhit{1}}(v_\mhit{1})\qbar(z)$ has $\beta$ continuous derivatives which are uniformly bounded. Further suppose that the kernel $K$ is chosen suitably, so that $K$ is of order $\beta$, satisfying $\int K(u)\dderiv u=1$, $\int u^j K(u)\dderiv u = 0$ if $|j|<\beta$ and $\int u^j K(u)\dderiv u \neq 0$ if $|j|=\beta$, where $u^j$ is the multi-index notation $u^j\ceq u_1^{j_1}u_2^{j_1}\cdots u_d^{j_d}$ for nonnegative integers $j_1,j_2,\ldots,j_d$ with $|j|\ceq\sum_{k=1}^d j_k$. Then we expect $P_{VX}(p_{V_\mhit{1}Z}-\check p_{V_\mhit{1}Z})^2=\bigOPs{P_{VZ}}{\fr{1}{nh^d}+h^{2\beta}}$, and, consequently, 
$$P_{VX}(p_{V_\mhit{1}X}-\check p_{V_\mhit{1}X})^2=\bigOPs{P_{VZ}}{\fr{1}{\alpha^2}\left(\fr{1}{nh^d}+h^{2\beta}\right)}$$ 
under suitable kernel $K_\mhit{1}$ and bandwidth $h_\mhit{1}$, because the error of $\check p_{V_\mhit{1}X}$ is dominated by the error of $\check p_{V_\mhit{1}Z}$ as opposed to that of $\check p_{V_\mhit{1}}$, since, clearly, $V_\mhit{1}$ is lower dimensional than $(V_\mhit{1},Z)$ and $\qbar$ is known.

Turning to the variance term in \eqref{priv:eq:nonparaestim_priv_bound}, $\Vb{\ba\nweight_{n,i}(v_\mhit{1},x,V,Z)}=\fr{1}{h^{2d}p_{V_\mhit{1}X}^2(v_\mhit{1},x)}\Vb{\ba g_h(v_\mhit{1},x,V,Z)}$. By \eqref{priv:eq:sigmacondbound},  
\begin{align*}
(\alpha^2/4)\Vb{\ba g_h(v_\mhit{1},x,V,Z)}\leq \int \Eb{K_h^2(v_\mhit{1},x,V_\mhit{1},Z)m^2(V,Z)}\dderiv P_{V_{\mhit{1}}X}(v_{\mhit{1}},x) \\
 = \Eb{m^2(V,Z)\int {K_h^2(v_\mhit{1},x,V_\mhit{1},Z)}\dderiv P_{V_{\mhit{1}}X}(v_{\mhit{1}},x)} = h^d \E m^2(V,Z)K^2(V_\mhit{1},Z),
\end{align*}
where the last step is by a change of variables and using that $p_{V_{\mhit{1}}X}$ integrates to one.
Hence, if $\supnorm{\fr{1}{p_{V_\mhit{1}X}}}<\infty$ and $P_{VZ} (mK)^2<\infty$, then $\fr{1}{n^2}\sumn{i}\int \ba \sigma_i^2(v_\mhit{1},x)  \dderiv P_{V_{\mhit{1}}X}(v_{\mhit{1}},x)\lesssim\fr{1}{\alpha^2 nh^d}$.

The bias term in \eqref{priv:eq:nonparaestim_priv_bound} satisfies
\begin{align*}
\nsumn{i}P_{VX}[\nweight_{n,i}(v_\mhit{1},x,V,X,\secnu)]-\infnupar(v_\mhit{1},x)=\fr{P_{VX}K_h(v_\mhit{1},x, V_\mhit{1}, X)m(V,X)}{h^d p_{V_\mhit{1}X}(v_\mhit{1},x)}-\muX(v_\mhit{1},x) \\
=\fr{P_{VX}K_h(v_\mhit{1},x, V_\mhit{1}, X)\muX(V_\mhit{1},X)-h^d p_{V_\mhit{1}X}(v_\mhit{1},x)\muX(v_\mhit{1},x)}{h^d p_{V_\mhit{1}X}(v_\mhit{1},x)},
\end{align*}
by the tower property of expectations and the definition of $\muX$. Let $g(w)\ceq p_{V_\mhit{1}X}(w)\muX(w)$ for $w\ceq (v_\mhit{1},x)$. If $g$ has $\beta'$ continuous derivatives which are uniformly bounded, then a change-of-variables and a Taylor-expansion argument show that, if $\supnorm{\fr{1}{p_{V_\mhit{1}X}}}<\infty$, then the $P_{VX}$-integrated square of the last display is of the order $h^{2\beta'}$. Then the bias term in \eqref{priv:eq:nonparaestim_priv_bound} is $\bigOPs{P_{VZ}}{h^{2\beta'}}$.

Conclude that
\begin{align*}
P_{VX}(\muXchk-\muX)^2=\bigOPs{P_{VZ}}{\fr{1}{\alpha^2}\left(\fr{1}{nh^d}+h^{2\beta}\right)} + \bigOPs{P_{VZ}}{\fr{1}{\alpha^2 n h^d}}+\bigOPs{P_{VZ}}{h^{2\beta'}},
\end{align*}
which, apart from the $\alpha^{-2}$ factor deriving from privacy, is the usual nonprivate error rate of kernel estimators for $\beta=\beta'$. The privacy mechanism does introduce a bias through the estimation of the secondary-nuisance parameter $p_{V_\mhit{1}X}$ because of the $\alpha^{-1}$ factors in \eqref{priv:eq:estim_pvx_priv}.
\end{example}

\begin{example}[name={Regression $\muX$ --- Orthogonal Series Estimator}]
\label{priv:ex:regression_ortho}
Let $(\varphi_j)$, $j=1,2,\ldots$, be an orthonormal basis in $L_2(P_{V_\mhit{1}X})$, that is, $P_{V_\mhit{1}X}(\varphi_j\varphi_k)=\indic{k=j}$ for all $j,k\geq1$ and $\mu = \sum_{j=1}^\infty c_j \varphi_j$ for projection coefficients  $c_j\ceq P_{V_\mhit{1}X}(\mu\varphi_j)$ for all $\mu\in L_2(P_{V_\mhit{1}X})$. If $(\varphi_j)$ were known, one could estimate $\muX$ in the nonprivate setting by $\sum_{j=1}^J \hat c_j \varphi_j$ with $ \hat c_j \ceq \nsumn{i} m(V_i',X_i')\varphi_j(V_{\mhit{1}i}', X_i')$ and a positive integer $J=J_n$ tending to infinity. However, we cannot construct a basis $(\varphi_j)$ in $L_2(P_{V_\mhit{1}X})$, because $P_{V_\mhit{1}X}$ itself is unknown. A remedy to this is to assume that $\muX$ is in the smaller space $L_2(\mesVXsub{\mhit{1}})\subseteq L_2(P_{V_\mhit{1}X})$, where $\mesVXsub{\mhit{1}}$ is the dominating measure of $P_{V_\mhit{1}X}$. For instance, with $(V_\mhit{1},X)$ taking values in a subspace of $\real^d$ and $\mesVXsub{\mhit{1}}$ the Lebesgue measure, this assumption necessitates that $|\muX(v_\mhit{1},x)|$ decay to zero as $v_\mhit{1}$ or $x$ tends  to infinity. Under this assumption, we can write $\muX=\sum_{j=1}^\infty a_j \phi_j$ for $a_j\ceq \mesVXsub{\mhit{1}}(\mu\phi_j)$ and an orthonormal basis $(\phi_j)$ in $L_2(\mesVXsub{\mhit{1}})$. Correspondingly, if ${\mathcal{S}}'=((V_i',X_i'))_{i\in[n]}$ were observed, we could estimate $\muX$ by
\begin{align}
\muXhat(v_\mhit{1},x)\ceq \sum_{j=1}^J \hat a_j \phi_j(v_\mhit{1},x), \quad \hat a_j\ceq \nsumn{i} \fr{m(V_i',X_i')\phi_j(V_{\mhit{1}i}', X_i')}{\hat p_{V_\mhit{1}X}(V_{\mhit{1}i}',X_i')},
\end{align}
for $J=J_n$ tending to infinity with $n$. We inversely weight with the estimated density $\hat p_{V_\mhit{1}X}$ to correct for having the basis in $L_2(\mesVXsub{\mhit{1}})$ but using the empirical version of $P_{V_\mhit{1}X}$ to compute the projection coefficients, since $P_{V_\mhit{1}X}\left(\fr{\mu\phi_j}{p_{V_\mhit{1}X}}\right)= \mesVXsub{\mhit{1}}(\mu\phi_j)=a_j$. We can rewrite $\muXhat$ in the form \eqref{priv:eq:nonparaestim} as
\begin{align*}
\muXhat(v_\mhit{1},x)&= \nsumn{i} \nweight_{n,i}(v_\mhit{1}, x, V_{i}',X_i',\hat p_{V_\mhit{1}X}), \\
 \nweight_{n,i}(v_\mhit{1}, x, v',x', p_{V_\mhit{1}X})&\ceq \sum_{j=1}^J \fr{m(v',x')\phi_j(v_\mhit{1}',x')}{p_{V_\mhit{1}X}(v_\mhit{1}',x')}\phi_j(v_\mhit{1},x),
\end{align*}
and modify it according to \eqref{priv:eq:nonparaestim_priv} for private estimation from $\bar{\mathcal{S}}'=((V_i',Z_i'))_{i\in[n]}$ as
\begin{align*}
\muXchk(v_\mhit{1},x)&\ceq \nsumn{i} \bar\nweight_{n,i}(v_\mhit{1}, x, V_{i}',Z_i',\chk p_{V_\mhit{1}X}), \\
\bar\nweight_{n,i}(v_\mhit{1}, x, v',z',\chk p_{V_\mhit{1}X}) &= \left(\linopQXinv (v',x')\mapsto \sum_{j=1}^J \fr{m(v',x')\phi_j(v_\mhit{1}',x')}{\chk p_{V_\mhit{1}X}(v_\mhit{1}',x')}\phi_j(v_\mhit{1},x) \right)(v',z') \\
&=\sum_{j=1}^J \left(\linopQXinv (v',x')\mapsto \fr{m(v',x')\phi_j(v_\mhit{1}',x')}{\chk p_{V_\mhit{1}X}(v_\mhit{1}',x')}\right)(v',z')\phi_j(v_\mhit{1},x), \\
&\eqc \sum_{j=1}^J (\linopQXinv\chk\varrho_j)(v',z')\phi_j(v_\mhit{1},x),
\end{align*}
by the linearity of $\linopQXinv$, where $\check p_{V_\mhit{1}X}$ is an arbitrary estimator of $p_{V_\mhit{1}X}$ computed from $\bar{\mathcal{S}}'$. Next, we study $\muXchk$ in relation to \Cref{priv:prop:priv_bound}, assuming that $Q\in\setQgen$.

The term $T_{n}$ in \eqref{priv:eq:secnubound} is 
\begin{align*}
T_n(v_\mhit{1},x)&=\nsumn{i} \bar\nweight_{n,i}(v_\mhit{1}, x, V_{i}',Z_i',\chk p_{V_\mhit{1}X})-\bar\nweight_{n,i}(v_\mhit{1}, x, V_{i}',Z_i',p_{V_\mhit{1}X}) \nonumber \\
&=\sum_{j=1}^J \left(\nsumn{i}(\linopQXinv(\chk\varrho_j-\varrho_j))(V_i',Z_i')\right) \phi_j(v_\mhit{1},x)\eqc \sum_{j=1}^J \Delta_{n,j}\phi_j(v_\mhit{1},x), 
\end{align*}
where $\varrho_j(v',x')\ceq\fr{m(v',x')\phi_j(v_\mhit{1}',x')}{p_{V_\mhit{1}X}(v_\mhit{1}',x')}$. The $\Delta_{n,j}$ do not depend on $(v_\mhit{1},x)$. Because $T_n^2$ is nonnegative, 
\begin{align*}
P_{VX}T_n^2\leq \supnorm{p_{V_\mhit{1}X}}\sum_{j=1}^J\sum_{k=1}^J\Delta_{n,j}\Delta_{k,n}P_{VX}\left(\fr{\phi_j\phi_k}{p_{V_\mhit{1}X}}\right) = \supnorm{p_{V_\mhit{1}X}}\sum_{j=1}^J \Delta_{n,j}^2
\end{align*}
as $P_{VX}\left(\fr{\phi_j\phi_k}{p_{V_\mhit{1}X}}\right)=\mesVXsub{\mhit{1}}(\phi_j\phi_k)=\indic{j=k}$ by the orthonormality of the $(\phi_j)$. Assume that 
\begin{align*}
\supnorm{\fr{m}{p_{V_\mhit{1}X}}}\supnorm{p_{V_\mhit{1}X}}<\infty, \quad \sum_{j=1}^J\supnorm{\phi_j}^2=\bigO{J}.
\end{align*}
As $\supnorm{\linopQXinv h}\leq \fr{2}{\alpha} \supnorm{h}$, $P_{VX}T_n^2=\bigO{\fr{J}{\alpha^2}\supnorm{\fr{\chk p_{V_\mhit{1}X} - p_{V_\mhit{1}X}}{\chk p_{V_\mhit{1}X}}}^2}$.

To bound the variance term in \eqref{priv:eq:nonparaestim_priv_bound}, we use \eqref{priv:eq:sigmacondbound} under $Q\in\setQgen$:
\begin{align*}
\Vb{\ba \nweight_{n,i}(v_\mhit{1}, x,V,Z,p_{V_\mhit{1}X})}\leq \E \nweight_{n,i}^2(v_\mhit{1}, x,V,Z,p_{V_\mhit{1}X}) \\
=\Eb{ \fr{m^2(V,Z)p_{V_\mhit{1}Z}(V_\mhit{1}, Z)p_{V_\mhit{1}X}(v_\mhit{1}, x)}{p_{V_\mhit{1}X}^2(V_\mhit{1}, Z)}\fr{\left(\sum_{j=1}^J \phi_j(V_\mhit{1}, Z)\phi_j(v_\mhit{1}, x)\right)^2}{p_{V_\mhit{1}Z}(V_\mhit{1}, Z)p_{V_\mhit{1}X}(v_\mhit{1}, x)}} \\
\leq \supnorm{\fr{m^2p_{V_\mhit{1}Z}}{p_{V_\mhit{1}X}^2}}\supnorm{p_{V_\mhit{1}X}}\sum_{j=1}^J\sum_{k=1}^J \Eb{\fr{\phi_j\phi_k}{p_{V_\mhit{1}Z}}(V,Z)}\fr{\phi_j\phi_k}{p_{V_\mhit{1}X}}(v_\mhit{1}, x).
\end{align*}
Here the expectation equals $\indic{j=k}$, and likewise $P_{V_\mhit{1}X}\left(\fr{\phi_j\phi_k}{p_{V_\mhit{1}X}}\right)=\indic{j=k}$. Hence, if $\supnorm{\fr{m^2p_{V_\mhit{1}Z}}{p_{V_\mhit{1}X}^2}}\supnorm{p_{V_\mhit{1}X}}<\infty$, then $\int \Vb{\ba \nweight_{n,i}(v_\mhit{1}, x,V,Z,p_{V_\mhit{1}X})} \dderiv P_{V_\mhit{1}X}(v_\mhit{1}, x)\lesssim J$.

To bound the bias in \eqref{priv:eq:nonparaestim_priv_bound}, first write by the tower property of expectations, expanding $\muX=\sum_{j=1}^\infty a_j\phi_j$ for $a_j=P_{VX}(\muX\phi_j)$,
\begin{align*}
\nsumn{i}P_{VX}[\nweight_{n,i}(v_\mhit{1},x,V,X,\secnu)] &= P_{VX}\left[\sum_{j=1}^J \fr{m(V,X)\phi_j(V_\mhit{1},X)}{p_{V_\mhit{1}X}(V_\mhit{1}X)}\phi_j(v_\mhit{1},x)\right] \\
&=P_{VX}\left[\left(\sum_{j=1}^J \fr{\phi_j(V_\mhit{1},X)}{p_{V_\mhit{1}X}(V_\mhit{1}X)}\phi_j(v_\mhit{1},x)\right)\sum_{j=1}^\infty a_j \phi_j(V_\mhit{1},X)\right] \\
&=\sum_{j=1}^J a_j \phi_j(v_\mhit{1},x),
\end{align*}
where we used the orthonormality of  $(\phi_j)$. Hence, $$\left(\sum_{j=1}^J a_j \phi_j(v_\mhit{1},x)-\muX(v_\mhit{1},x)\right)^2=\left(\sum_{j=J+1}^\infty a_j \phi_j(v_\mhit{1},x)\right)^2.$$ A nonnegative function, its integral is bounded by
\begin{align*}
P_{VX}\left(\sum_{j=J+1}^\infty a_j \phi_j(V_\mhit{1},X)\right)^2\leq \supnorm{p_{V_\mhit{1}X}}\mesVXsub{\mhit{1}}\left(\sum_{j=J+1}^\infty a_j \phi_j(V_\mhit{1},X)\right)^2 \leq \supnorm{p_{V_\mhit{1}X}}\sum_{j=J+1}^\infty a_j^2,
\end{align*}
again by the orthonormality of $(\phi_j)$. Thus, the bias in \eqref{priv:eq:nonparaestim_priv_bound} is bounded by $\bigO{\sum_{j=J+1}^\infty a_j^2}$.

Conclude that 
\begin{align}
P_{VX}(\muXchk-\muX)^2=\bigO{\fr{J}{\alpha^2}\supnorm{\fr{\chk p_{V_\mhit{1}X} - p_{V_\mhit{1}X}}{\chk p_{V_\mhit{1}X}}}^2} + \bigOPs{P_{VZ}}{\fr{J}{\alpha^2 n}} + \bigO{\sum_{j=J+1}^\infty a_j^2}, \label{priv:eq:ortseries_muxpriv_bound}
\end{align}
where the first term dominates the second one. Apart from the $\alpha^{-2}$ privacy factor in the second term, the last two term correspond to the usual bound for orthogonal series estimates under equidistant or uniformly distributed design $(V_\mhit{1},X)$. The first term derives from of the weighting correction to accommodate the unknown density $p_{V_\mhit{1}X}$. This first term usually results in a suboptimal rate. Indeed, suppose that $\muX$ only depends on $X$ and that $X_i=\fr{i}{n}$. If $\muX$ belongs to a Sobolev smoothness class $S(\beta,L)$ for an $L>0$ and integer $\beta>0$, so that the $(\beta-1)$-th derivative of $\muX$ is absolutely continuous and its integrated squared $\beta$-th derivative is bounded by $L$, then we expect $P_{VX}(\muXhat-\muX)^2=\bigO{\fr{J}{n}}+\bigO{J^{-2\beta}}$ for a trigonometric basis (see \citet[Chapter 1.7]{tsybakov_introduction_2009}). If $\beta$ is known, a suitable choice of $J\sim n^\fr{1}{2\beta+1}$ gives a rate $n^\fr{-2\beta}{2\beta+1}$. In contrast, if $\supnorm{\hat p_{X}- p_{X}}=\left(\fr{\log n}{n}\right)^{a}$ for some $0<a\leq\fr{1}{2}$ --- a common case for nonparametric density estimation under regularity conditions --- and $\supnorm{\fr{1}{\hat p_{X}}}<\infty$, then the choice of $J$ balancing the terms in \eqref{priv:eq:ortseries_muxpriv_bound} in the nonprivate setting is $J\sim \left[\left(\fr{\log n}{n}\right)^{2a}+\fr{1}{n}\right]^\fr{-1}{2\beta+1}$, yielding a rate of $\left[\left(\fr{\log n}{n}\right)^{2a}+\fr{1}{n}\right]^\fr{2\beta}{2\beta+1}$ for $\muXhat$. This is of the order $\left(\fr{\log n}{n}\right)^{2a\fr{2\beta}{2\beta+1}}$, so a loss of $n^{2a}$ is incurred. 

To prevent such a loss, \cite{kohler_multivariate_2008} constructs an orthonormal basis in $L_2(\Probn')$ for the empirical distribution $\Probn'$ of $((V_i',X_i'))_{i\in[n]}$. Such a basis is constructed using the whole sample $((V_{\mhit{1}i}',X_i'))_{i\in[n]}$ and for that reason it is unclear how it would lend itself for transformation to the private setting and analysis by \Cref{priv:prop:priv_bound}. 
\end{example}

\begin{example}[name={Riesz Representer $\rieszX$ --- Orthogonal Series Estimator}]
\label{priv:ex:riesz_ortho}
The Riesz representer $\rieszX=\rieszXs{\gammaV(c)}$ can be estimated by orthogonal series similarly to the regression in \Cref{priv:ex:regression_ortho}.  Assume that $\rieszX\in L_2(\mesVXsub{\mhit{1}})\subseteq L_2(P_{V_\mhit{1}X})$, and let  $(\phi_j)$, $j=1,2,\ldots$, be an orthonormal basis in $L_2(\mesVXsub{\mhit{1}})$. Then we can write $\rieszX=\sum_{j=1}^\infty a_j \phi_j$, where the projection coefficients $a_j\ceq \mesVXsub{\mhit{1}}(\rieszXs{\gammaV(c)}\phi_j)$ depend on $\gammaV(c)$. Assume that  $\phi_j/p_{V_\mhit{1}X}\in L_2(P_{V_\mhit{1}X})$ for all $j\geq 1$. Then, using the Riesz property ``backwards,'' we have 
$$a_j=\mesVXsub{\mhit{1}}(\rieszXs{\gammaV(c)}\phi_j)=P_{V_\mhit{1}X}\left(\rieszXs{\gammaV(c)}\fr{\phi_j}{p_{V_\mhit{1}X}}\right)=P_{VX}\left(f\left(V,X,\fr{\phi_j}{p_{V_\mhit{1}X}},\gammaV(c)\right)\right).$$
This is useful, because the form of $\rieszXs{\gammaV(c)}$ may be unknown, so constructing estimates $\hat a_j$ from the empirical version of $P_{VX}\left(\rieszXs{\gammaV(c)}\fr{\phi_j}{p_{V_\mhit{1}X}}\right)$ 
may not be feasible; but $f$ is known, so estimates can be derived from the rightmost side of the display. Indeed, we can estimate $\rieszX$ in the nonprivate setting in the form \eqref{priv:eq:nonparaestim} as
\begin{align*}
\rieszXhat(v_\mhit{1},x)&\ceq \nsumn{i} \nweight_{n,i}(v_\mhit{1}, x, V_{i}',X_i',\hat p_{V_\mhit{1}X}, \gammaVhat(c)), \\
 \nweight_{n,i}(v_\mhit{1}, x, v',x', p_{V_\mhit{1}X}, \gamma)&\ceq \sum_{j=1}^J f\left(v',x', \fr{\phi_j}{p_{V_\mhit{1}X}},\gamma \right)\phi_j(v_\mhit{1},x),
\end{align*} 
for a positive integer $J=J_n$ tending to infinity with $n$; some estimator $\gammaVhat(c)$ of $\gammaV(c)$, such as \eqref{priv:eq:gammaVhat}, and $\hat p_{V_\mhit{1}X}$, an estimator of $p_{V_\mhit{1}X}$, both computed from $\mathcal{S}'=((V_i',X_i'))_{i\in[n]}$.


Transferring $\rieszXhat$ to the private setting according to \eqref{priv:eq:nonparaestim_priv}, we obtain
\begin{align*}
\rieszXchk(v_\mhit{1},x)&\ceq \nsumn{i} \bar\nweight_{n,i}(v_\mhit{1}, x, V_{i}',Z_i',\chk p_{V_\mhit{1}X}, \gammaVchk(c)), \\
\bar\nweight_{n,i}(v_\mhit{1}, x, v',z', p_{V_\mhit{1}X}, \gamma) &= \sum_{j=1}^J \ba f\left(v',z',\fr{\phi_j}{ p_{V_\mhit{1}X}},\gamma\right)\phi_j(v_\mhit{1},x),
\end{align*}
where $\ba f(\cdot,\mu,\gamma)=(\linopQXinv (v,x)\mapsto f(v,x, \mu, \gamma))(\cdot)$, and $\gammaVchk(c)$ is an estimator of $\gammaV(c)$, such as \eqref{priv:eq:gammaVchk}, and $\chk p_{V_\mhit{1}X}$ of $p_{V_\mhit{1}X}$; all computed from $\bar{\mathcal{S}}'=((V_i',Z_i'))_{i\in[n]}$. Let us turn to the implications of \Cref{priv:prop:priv_bound}, which are similar to those in \Cref{priv:ex:regression_ortho}, except now we have two secondary-nuisance parameters $\secnu= (\gammaV(c),p_{V_\mhit{1}X})$. We assume that $Q\in\setQgen$ in \eqref{priv:eq:qtv}.

The term $T_n$ in \eqref{priv:eq:secnubound} is
\begin{align*}
T_{n}(v_\mhit{1}, x)=&\, \left\{\nsumn{i} \bar\nweight_{n,i}(v_\mhit{1}, x, V_{i}',Z_i',\chk p_{V_\mhit{1}X}, \gammaVchk(c)) - \nsumn{i} \bar\nweight_{n,i}(v_\mhit{1}, x, V_{i}',Z_i',\chk p_{V_\mhit{1}X}, \gammaV(c)) \right\} \\
&+ \left\{\nsumn{i} \bar\nweight_{n,i}(v_\mhit{1}, x, V_{i}',Z_i',\chk p_{V_\mhit{1}X}, \gammaV(c)) - \nsumn{i} \bar\nweight_{n,i}(v_\mhit{1}, x, V_{i}',Z_i', p_{V_\mhit{1}X}, \gammaV(c))\right\}  \\
\eqc&\, T_{n1}(v_\mhit{1}, x) + T_{n2}(v_\mhit{1}, x).
\end{align*}
By the mean-value theorem, $T_{n1}(v_\mhit{1}, x)=(\gammaVchk(c)-\gammaV(c))\sum_{j=1}^J D_{n,j} \phi_j(v_\mhit{1}, x)$, where the $D_{n,j}\ceq \nsumn{i} \partial_\gamma \ba f \left(V_i',Z_i', \fr{\phi_j}{\chk p_{V_\mhit{1}}X} ,\gammaVtld(c) \right)$ do not depend on $(v_\mhit{1}, x)$, and $\gammaVtld(c)$ is some value between $\gammaVchk(c)$ and $\gammaV(c)$. Because $V_\mhit{2}$ is discretely distributed, $\gammaVchk(c)-\gammaV(c)=\bigOPs{P_{VZ}}{n^{-1/2}}$ for any reasonable estimator such as \eqref{priv:eq:gammaVchk}. Then, along arguments in \Cref{priv:ex:regression_ortho},  $$P_{VX}T_{n1}^2 \leq  \bigOPs{P_{VZ}}{1/n} \supnorm{p_{V_\mhit{1}X}} \sum_{j=1}^J D_{n,j}^2$$ by the orthonormality of $(\phi_j)$. Assume $\sum_{j=1}^J D_{n,j}^2=\bigOPs{P_{VZ}}{\fr{J}{\alpha^2}}$, which is reasonable for $\partial_\gamma f$ bounded in all its arguments and privacy mechanism \eqref{priv:eq:qtv}. Then, by arguments in \Cref{priv:ex:regression_ortho}, $P_{VX}T_{n1}^2=\bigOPs{P_{VZ}}{\fr{J}{\alpha^2n}}$ provided $\supnorm{p_{V_\mhit{1}X}}<\infty$. 

By the linearity of $f$ and thus $\ba f$ in the regression argument, $T_{2n}(v_\mhit{1}, x)=\sum_{j=1}^J \Delta_{n,j} \phi_j(v_\mhit{1}, x)$ for $\Delta_{n,j}\ceq \nsumn{i} \ba f\left(V_i',Z_i',\phi_j\cdot\left(\fr{1}{ \chk p_{V_\mhit{1}X}}-\fr{1}{p_{V_\mhit{1}X}}\right),\gammaV(c)\right)$. Assuming $\supnorm{p_{V_\mhit{1}X}}<\infty$ and 
$$\sum_{j=1}^J \Delta_{n,j}^2=\bigOPs{P_{VZ}}{\fr{J}{\alpha^2}\supnorm{\fr{\chk p_{V_\mhit{1}X} - p_{V_\mhit{1}X}}{\chk p_{V_\mhit{1}X}}}^2},$$
we obtain 
$$P_{VX}T_n^2=\bigOPs{P_{VZ}}{\fr{J}{\alpha^2n}}+\bigOPs{P_{VZ}}{\fr{J}{\alpha^2}\supnorm{\chk p_{V_\mhit{1}X}-p_{V_\mhit{1}X}}^2}=\bigOPs{P_{VZ}}{\fr{J}{\alpha^2}\supnorm{\chk p_{V_\mhit{1}X}-p_{V_\mhit{1}X}}^2}.$$

We bound the variance term in \eqref{priv:eq:nonparaestim_priv_bound} by \eqref{priv:eq:sigmacondbound} under $Q\in\setQgen$. Assume that $\supnorm{p_{V_\mhit{1}X}}<\infty$ and
\begin{align*}
\sum_{j=1}^J \E f^2\left(V,Z,\fr{\phi_j}{p_{V_\mhit{1}X}},\gammaV(c)\right) = \bigO{J}.
\end{align*}
Then the arguments in \Cref{priv:ex:regression_ortho} give $\int \Vb{\ba \nweight_{n,i}(v_\mhit{1}, x,V,Z,p_{V_\mhit{1}X},\gammaV(c))} \dderiv P_{V_\mhit{1}X}(v_\mhit{1}, x)\lesssim J$.

The bias in \eqref{priv:eq:nonparaestim_priv_bound} can be bounded as in \Cref{priv:ex:regression_ortho}, yielding the bound $\bigO{\sum_{j=J+1}^\infty a_j^2}$ for $a_j= \mesVXsub{\mhit{1}}(\rieszXs{\gammaV(c)}\phi_j)$.

Conclude that 
\begin{align*}
P_{VZ}(\rieszXchk-\rieszX)^2=\bigOPs{P_{VZ}}{\fr{J}{\alpha^2}\supnorm{\fr{\chk p_{V_\mhit{1}X} - p_{V_\mhit{1}X}}{\chk p_{V_\mhit{1}X}}}^2}+ \bigOPs{P_{VZ}}{\fr{J}{\alpha^2n}} + \bigO{\sum_{j=J+1}^\infty a_j^2},
\end{align*}
where the first term dominates the second one. See \Cref{priv:ex:regression_ortho} for a discussion of these terms; in particular, on how the estimated secondary-nuisance $p_{V_\mhit{1}X}$ of the first term leads to a rate slower than what only the last two terms would imply.

\end{example}



\section{Nonprivate Estimation}
\label{priv:app:sec:nonprivate_estimation}

Our aim is private inference, but given that our parameter class is novel, we also present results on nonprivate inference in \Cref{priv:app:sec:nonprivate_estimation:results}, proven in \Cref{priv:app:sec:nonprivate_estimation:proofs}.

\subsection{Results}
\label{priv:app:sec:nonprivate_estimation:results}

In this section, we estimate $\chi(P_{VX})$ of \eqref{priv:eq:dr_para} in the nonprivate setting, from random samples from $(V,X)\sim P_{VX}$ in the nonparametric model \eqref{priv:eq:nonparamodel_set}. For estimation, we further require the approximability conditions
\begin{align}
\label{priv:eq:sq_continu}\tag{C.SC}
\begin{aligned}
\Eb{\{f(V,X,\mu,\gamma)-f(V,X,\muX,\gammaV(c))\}^2}&\to 0 \\
\Eb{\{\partial_\gamma f(V,X,\mu,\gamma)-\partial_\gamma f(V,X,\muX,\gammaV(c))\}^2}&\to 0
\end{aligned}
\end{align}
as $\rho((\mu,\gamma),(\muX,\gammaV(c)))\to0$. Also note that the linearity \eqref{priv:eq:f_lin} implies the $P_{VX}$-a.s.\ linearity of $\mu\mapsto \fr{\partial^j f}{\partial \gamma^j}(V,X,\mu,\ba\gamma)$ for any integer $j\geq 1$ for which the derivative exists. This follows directly from the definition of the derivative.

Recall the one-step estimator
\begin{align*}
\hat\chi_n= \chi(\hat P_{VX})+\Probn \hat{\tld\chi} =  \chi(\hat P_{VX})+\nsumn{i}\hat{\tld\chi}(V_i,X_i).
\end{align*}
from \eqref{priv:eq:bias_corrected_chihat}. Here, the estimator of the influence function \eqref{priv:eq:eif_chi} is
\begin{align}
\label{priv:eq:eif_chi_hat}
\begin{aligned}
\hat{\tld\chi}(v,x)\ceq&\, \rieszXhat(v_{\mhit{1}},x)(m(v,x)-\muXhat(v_{\mhit{1}},x))+\fr{\indic{v_{\mhit{2}}=c}}{\hat p_{V_{\mhit{2}}}(c)}(g(v,x)-\gammaVhat(c))\expderivhat \\
&+ f(v,x, \muXhat, \gammaVhat(c))-\chi(\hat P_{VX}), 
\end{aligned}
\end{align}
where $\rieszXhat,\muXhat$, taking values in $L_2(P_{V_{\mhit{1}}X})$, are some estimators of $\rieszX,\muX$, respectively; $\hat p_{V_{\mhit{2}}}(c)$, taking values in $\real$, is some estimator of $p_{V_{\mhit{2}}}(c)$; and
$\expderivhat$, taking values in $\real$, is some estimator of 
\begin{align}
\expderiv\ceq\E \partial_\gamma f(V,X, \muX, \gammaV(c)).\label{priv:eq:e_def}
\end{align}
Note that we use the same initial estimator $\chi(\hat P_{VX})$ in $\hat{\tld\chi}$, which we may set to
\begin{align}
\chi(\hat P_{VX})=\Probn f(V,X, \muXhat, \gammaVhat(c))=\nsumn{i} f(V_i,X_i, \muXhat, \gammaVhat(c)) \label{priv:eq:plugin_chihat}.
\end{align}
Therefore,
\begin{align*}
\hat\chi_n =&\, \chi(\hat P_{VX})+\Probn \hat{\tld\chi}  \\
	=&\, \nsumn{i}\left\{ \rieszXhat(v_{\mhit{1}i},X_i)(m(V_i,X_i)-\muXhat(v_{\mhit{1}i},X_i))+\fr{\indic{V_{\mhit{2}i}=c}}{\hat p_{V_{\mhit{2}}}(c)}(g(V_i,X_i)-\gammaVhat(c))\expderivhat \right. \\
	 &+\left. \vphantom{\fr{\indic{V_{\mhit{2}i}=c}}{\hat p_{V_{\mhit{2}}}(c)}}  f(V_i,X_i, \muXhat, \gammaVhat(c)) \right\}.
\end{align*}

We assume that the estimators 
\begin{align}
 \label{priv:eq:nuisance}
 \begin{aligned}
\hat\eta&\ceq (\rieszXhat,\muXhat,\gammaVhat(c),\hat p_{V_{\mhit{2}}}(c), \expderivhat)\in L_2(P_{V_{\mhit{1}}X})\times L_2(P_{V_{\mhit{1}}X}) \times\Gamma\times\real\times\real\,\text{ of } \\ 
\eta&\ceq (\rieszX,\muX,\gammaV(c), p_{V_{\mhit{2}}}(c), \expderiv)
\end{aligned}
\end{align}
are computed from random samples from $P_{VX}$ which are independent
of $$ \mathcal{S}\ceq\mathcal((V_i,X_i))_{i\in[n]}.$$ Specifically, we assume that there are two more random samples 
\begin{align*}
 \mathcal{S}'\ceq ((V_i',X_i'))_{i\in[n]}\,\text{ and }\,  \mathcal{S}''\ceq((V_i'',X_i''))_{i\in[n]}
\end{align*}
from $P_{VX}$, with $\mathcal{S},\mathcal{S}',\mathcal{S}''$ mutually independent, where $\mathcal{S'}$ is used for the estimation of $(\rieszX,\muX,\gammaV(c), p_{V_{\mhit{2}}}(c))$, and $\mathcal{S}',\mathcal{S}''$ are used for the estimation of $\expderiv$ in \eqref{priv:eq:e_def} as
\begin{align}
\expderivhat \ceq \Probn'' \partial_\gamma f(V,X,\muXhat,\gammaVhat(c))\ceq \nsumn{i} \partial_\gamma f(V_i'',X_i'',\muXhat,\gammaVhat(c)). \label{priv:eq:expderivhat}
\end{align}
With this estimation strategy, we can establish the consistency of $\expderivhat$, and hence of $\hat{\tld\chi}$ and $\hat\chi_n$ in turn, without additional regularity conditions. Decompose
\begin{align}
\sqrt{n}(\hat\chi_n - \chi(P_{VX})) &= \sqrt{n}\Probn \tld\chi+\sqrt{n}(\Probn-P_{VX})(\hat{\tld{\chi}}-\tld\chi)+\sqrt{n}R_n, \label{priv:eq:chi_eff_decomp} \\
R_n&\ceq \chi(\hat P_{VX})-\chi(P_{VX})+P_{VX} \hat{\tld\chi}, \label{priv:eq:chi_eff_decomp_bias}
\end{align}
where we used that $P_{VX}\tld\chi=0$ by $\tld\chi$ being the influence function. The term $\sqrt{n}\Probn \tld\chi\rsquigs{P_{VX}}\normaldist(0,P_{VX}\tld\chi^2)$ by the standard central limit theorem. The second term in \eqref{priv:eq:chi_eff_decomp} is called the empirical process term and is vanishing as $\smallOPs{P_{VX}}{1}$ under consistent estimators $\hat\eta$ and stochastic boundedness conditions.

\begin{assumption}[Consistent Estimators]
\label{priv:ass:consistent_nuisance}
It holds that 
\begin{align}
\norm{L_2(P_{VX})}{\rieszXhat-\rieszX}&=\smallOPs{P_{VX}}{1} \label{priv:eq:consistency_riesz_l2}, \\
\gammaVhat(c)-\gammaV(c)&=\smallOPs{P_{VX}}{1}, \label{priv:eq:consistency_gammaVhat} \\
\hat p_{V_2}(c)-p_{V_2}(c)&=\smallOPs{P_{VX}}{1}. \label{priv:eq:consistency_phat}
\end{align}
Further, it either holds that
\begin{align}
\supnorm{m-\muX}&=\bigO{1}, \label{priv:eq:consistency_bigobound} \\
\supnorm{\muX-\muXhat}&=\smallOPs{P_{VX}}{1},  \label{priv:eq:consistency_mu_supn}
\end{align}
or that
\begin{align}
\supnorm{m-\muXhat}&=\bigOPs{P_{VX}}{1}, \label{priv:eq:consistency_bigobound_hat} \\
\norm{L_2(P_{VX})}{\muX-\muXhat}&=\smallOPs{P_{VX}}{1},  \label{priv:eq:consistency_mu_l2} \\
P_{VX}\left(\set{(V_{\mhit{1}},X)\in \Vimagesp_{\mhit{1}}\times\Ximagesp: |r(V_{\mhit{1}},X)|>\ba R }\right)&=0  \label{priv:eq:consistency_riesz_bound}
\end{align}
for some constant $\ba R<\infty$, where \eqref{priv:eq:consistency_riesz_bound} may be replaced by $\supnorm{\rieszX}<\infty$.
\end{assumption}

\begin{lemma}[Vanishing Empirical Process Term]
\label{priv:lem:chi_eff_empprocess}
Assume that \Cref{priv:ass:consistent_nuisance} holds for $\hat\eta$ in \eqref{priv:eq:nuisance}. 
Then $\expderivhat-\expderiv=\smallOPs{P_{VX}}{1}$ and $(\Probn-P_{VX})(\hat{\tld{\chi}}-\tld\chi)=\smallOPs{P_{VX}}{n^{-1/2}}$. 
\end{lemma}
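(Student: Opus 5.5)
The plan is to follow the proof of \Cref{priv:lem:chi_eff_empprocess_priv} line by line, with every $\linopQXinv$ removed and $\norm{\linopQXinvSpa}{\cdot}$ replaced by $\norm{L_2(P_{VX})}{\cdot}$. The basic tool is the conditional-independence fact \eqref{priv:eq:meansq_empproc_vz}, used in its nonprivate form. If a random function $\hat q$ is built from $\mathcal{S}',\mathcal{S}''$ only, and hence is independent of $\mathcal{S}$, then conditionally on $(\mathcal{S}',\mathcal{S}'')$ the variable $\sqrt n(\Probn-P_{VX})(\hat q-q)$ has mean zero and variance at most $P_{VX}(\hat q-q)^2$. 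By Chebyshev and dominated convergence, $P_{VX}(\hat q-q)^2=\smallOPs{P_{VX}}{1}$ then gives $\sqrt n(\Probn-P_{VX})(\hat q-q)=\smallOPs{P_{VX}}{1}$.

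First I prove $\expderivhat-\expderiv=\smallOPs{P_{VX}}{1}$. I split it as
\[
(\Probn''-P_{VX})\partial_\gamma f(\cdot,\muX,\gammaV(c))+(\Probn''-P_{VX})\big[\partial_\gamma f(\cdot,\muXhat,\gammaVhat(c))-\partial_\gamma f(\cdot,\muX,\gammaV(c))\big]+P_{VX}\big[\partial_\gamma f(\cdot,\muXhat,\gammaVhat(c))-\partial_\gamma f(\cdot,\muX,\gammaV(c))\big].
\]
The first term is $\bigOPs{P_{VX}}{n^{-1/2}}$ by the CLT, using square integrability of $\partial_\gamma f$ at the truth, which is implicit in \eqref{priv:eq:sq_continu}. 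For the other two terms I use $\rho((\muXhat,\gammaVhat(c)),(\muX,\gammaV(c)))=\smallOPs{P_{VX}}{1}$, which follows from \eqref{priv:eq:consistency_gammaVhat} together with \eqref{priv:eq:consistency_mu_supn} or \eqref{priv:eq:consistency_mu_l2}, since the sup norm dominates the $L_2$ norm. The second condition in \eqref{priv:eq:sq_continu} plus the continuous mapping theorem then make the bracket $\smallOPs{P_{VX}}{1}$ in $L_2(P_{VX})$. The third term is bounded via Cauchy--Schwarz, and the second is handled by the conditional-variance argument applied with $\mathcal{S}''$, which is independent of $\mathcal{S}'$.

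Next, as in \eqref{priv:eq:emproc_decomp_vz}, I write $\hat{\tld\chi}-\tld\chi=T_1+T_2+T_3+T_4$, where $T_4=-\chi(\hat P_{VX})+\chi(P_{VX})$ is constant and so is killed by $\Probn-P_{VX}$. For $T_1$ I use the identity $T_1=(\rieszXhat-\rieszX)(m-\muXhat)+\rieszX(\muX-\muXhat)$.
\begin{itemize}
\item The first piece has $L_2$ norm at most $\supnorm{m-\muXhat}\norm{L_2(P_{VX})}{\rieszXhat-\rieszX}=\bigOPs{P_{VX}}{1}\smallOPs{P_{VX}}{1}$. The bound $\supnorm{m-\muXhat}=\bigOPs{P_{VX}}{1}$ comes from \eqref{priv:eq:consistency_bigobound_hat} directly, or from \eqref{priv:eq:consistency_bigobound} with \eqref{priv:eq:consistency_mu_supn} via the triangle inequality.
\item The second piece has norm at most $\supnorm{\muX-\muXhat}\norm{L_2(P_{VX})}{\rieszX}$, or at most $\ba R\norm{L_2(P_{VX})}{\muX-\muXhat}$ under \eqref{priv:eq:consistency_riesz_bound}. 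Either bound is $\smallOPs{P_{VX}}{1}$, since $\rieszX\in L_2$.
\end{itemize}

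For $T_2$ I apply the mean-value theorem in $(\gamma,p,e)$. This writes $T_2$ as a finite sum of terms of the form (random scalar tending to zero) $\times$ (fixed function), namely $\indic{v_2=c}$ or $\indic{v_2=c}g$. Both functions lie in $L_2$ by \eqref{priv:eq:integrability}, and the scalar coefficients are $\smallOPs{P_{VX}}{1}$ by \eqref{priv:eq:consistency_gammaVhat}, \eqref{priv:eq:consistency_phat} and the consistency of $\expderivhat$ just shown. The CLT applied to these fixed functions makes each term $\smallOPs{P_{VX}}{n^{-1/2}}$. For $T_3$ I use the first condition in \eqref{priv:eq:sq_continu} with the continuous mapping theorem to get $\norm{L_2(P_{VX})}{T_3}=\smallOPs{P_{VX}}{1}$, and then apply the conditional-independence fact.

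I expect the main obstacle to be the term $T_2$, together with the dependence of $\expderivhat$ on $\mathcal{S}''$. The mean-value point $(\gammaVtld(c),\tld p,\tld\expderiv)$ is random and not independent of $\mathcal{S}$, and I need $\tld p$ bounded away from zero, which follows from \eqref{priv:eq:consistency_phat} and $p_{V_2}(c)>0$. I avoid the random mean-value point altogether by expanding $T_2$ algebraically into products of scalar errors and fixed functions, which is exactly what the linearity argument above does.
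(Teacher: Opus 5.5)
Your proposal is correct and matches the paper's proof essentially step for step. It uses the same conditional Chebyshev/dominated-convergence device \eqref{priv:eq:meansq_empproc} and the same decomposition into $T_1,\dots,T_4$, including the identical splitting $T_1=(\rieszXhat-\rieszX)(m-\muXhat)+\rieszX(\muX-\muXhat)$, together with \eqref{priv:eq:sq_continu} and continuous mapping for $T_3$, and the same three-term split of $\expderivhat-\expderiv$ that exploits the independence of $\mathcal{S}''$ from $\mathcal{S}'$.

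Your exact algebraic rewriting of $T_2$ as consistent scalar coefficients times the fixed functions $\indic{v_{\mhit{2}}=c}g$ and $\indic{v_{\mhit{2}}=c}$ is slightly cleaner than the paper's mean-value step, whose intermediate point could otherwise depend on $(v,x)$ through $g$. One small correction: \eqref{priv:eq:sq_continu} does not imply square integrability of $\partial_\gamma f(\cdot,\muX,\gammaV(c))$, but the weak law of large numbers already gives the needed $\smallOPs{P_{VX}}{1}$ for that first term.
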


Hence, if the nuisance parameters in $\eta$ are consistently estimated and bound conditions apply, the behaviour of $\sqrt{n}(\hat\chi_n-\chi(P_{VX}))$ is governed by the second-order bias term $R_n$ in \eqref{priv:eq:chi_eff_decomp_bias}. We show that our class of parameters \eqref{priv:eq:dr_para} enjoys a rate-double-robustness property, therefore  $R_n$ exhibits a product-structure of estimation errors.
 

\Cref{priv:thm:dr} implies that the bias in \eqref{priv:eq:chi_eff_decomp_bias} is 
\begin{align}
 \label{priv:eq:bias_r}
 \begin{aligned}
R_n =&\ \chi(\hat P_{VX})-\chi(P_{VX})+P_{VX}\hat{\tld\chi} \\
=&\ -P_{VX}(\rieszX-\rieszXhat)(\muX-\muXhat) +(\gammaV(c)-\gammaVhat(c))\left(\fr{p_{V_{\mhit{2}}}(c)}{\hat p_{V_{\mhit{2}}}(c)}\expderivhat-\expderiv'\right) \\
&-(\gammaV(c)-\gammaVhat(c))^2\fr{P_{VX}\partial_\gamma^2 f(V,X,\muXhat,\gammaVtld(c))}{2},  
\end{aligned}
\end{align}
for some $\gammaVtld(c)$ between $\gammaV(c)$ and $\gammaVhat(c)$, and
\begin{align}
\expderiv' \ceq&\, P_{VX}\partial_\gamma f(V,X,\muXhat,\gammaVhat(c)). \label{priv:eq:derivhatprime}
\end{align}
Because $V_{\mhit{2}}$ is distributed on a finite set, any reasonable estimators of $\gammaV(c)$, $p_{V_{\mhit{2}}}(c)$ are $\sqrt{n}$-consistent; for instance
\begin{align}
\gammaVhat(c) \ceq \fr{1}{N_c}\sumn{i}\indic{V_{\mhit{2}i}'=c} g(V_i',X_i'), \quad \hat p_{V_{\mhit{2}}}(c)\ceq N_c/n, \quad N_c\ceq \sumn{i}\indic{V_{\mhit{2}i}'=c} \label{priv:eq:gammaVhat}
\end{align}
satisfy $\gammaVhat(c)-\gammaV(c)=\bigOPs{P_{VX}}{n^{-1/2}}$, $\hat p_{V_{\mhit{2}}}(c)-p_{V_{\mhit{2}}}(c)=\bigOPs{P_{VX}}{n^{-1/2}}$ by the standard central limit theorem. Suppose that $\expderivhat-\expderiv'=\smallOPs{P_{VX}}{1}$ and that $P_{VX} \partial_\gamma^2 f(V,X,\muXhat,\gammaVtld(c))=\bigOPs{P_{VX}}{1}$. It follows from \eqref{priv:eq:bias_r} that the bias is then
\begin{align}
R_n = -P_{VX}(\rieszX-\rieszXhat)(\muX-\muXhat) + \smallOPs{P_{VX}}{n^{-1/2}}. \label{priv:eq:chihat_dr}
\end{align}
Hence, $R_n$ is ultimately determined by the product of the estimation errors of the Riesz representer $\rieszX$ and the regression function $\muX$. 
For example, we find that the average treatment effect on the treated is double robust. This is aligned with \citet[Example 6]{chernozhukov_automatic_2022}, and is an improvement on \citet[Example 12]{rotnitzky_characterization_2021}, who too, establish asymptotic normality, but not efficiency, as this parameter is not natively included in their class.

Our results amounts to the asymptotic efficiency of $\hat\chi_n$ under fast enough estimation rates, boundedness conditions, and the estimation strategy using independent samples.

\begin{assumption}[Rates of Estimators]
\label{priv:ass:rates_nuisance}
For $\hat\eta$ in \eqref{priv:eq:nuisance},
\begin{align*}
P_{VX}(\rieszX-\rieszXhat)(\muX-\muXhat)&=\smallOPs{P_{VX}}{n^{-1/2}},  \\
\gammaVhat(c)-\gammaV(c)&=\bigOPs{P_{VX}}{n^{-1/2}}, \\
\hat p_{V_{\mhit{2}}}(c)-p_{V_{\mhit{2}}}(c)&=\bigOPs{P_{VX}}{n^{-1/2}}, \\
P_{VX} \partial_\gamma^2 f(V,X,\muXhat,\gammaVtld(c))&=\bigOPs{P_{VX}}{1}.
\end{align*} 
\end{assumption}

\begin{corollary}[Asymptotic Efficiency of $\hat\chi_n$]
\label{priv:cor:eff_chihat}
If \Cref{priv:ass:consistent_nuisance,priv:ass:rates_nuisance} hold, then
$\sqrt{n}(\hat\chi_n-\chi(P_{VX}))\rsquigs{P_{VX}}\normaldist(0, P_{VX}\tld\chi^2)$ as $n\to\infty$.
\end{corollary}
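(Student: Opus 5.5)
The proof will follow from the decomposition \eqref{priv:eq:chi_eff_decomp}, $\sqrt{n}(\hat\chi_n-\chi(P_{VX}))=\sqrt{n}\Probn\tld\chi+\sqrt{n}(\Probn-P_{VX})(\hat{\tld\chi}-\tld\chi)+\sqrt{n}R_n$, treating the three terms separately and combining them with Slutsky's lemma. This decomposition is an algebraic identity, obtained by adding and subtracting $P_{VX}\hat{\tld\chi}$ and using $P_{VX}\tld\chi=0$.

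\textbf{Leading term.} By \Cref{priv:eqprop:eif} and \eqref{priv:eq:integrability}, $\tld\chi\in L_2^0(P_{VX})$. Since $\mathcal{S}$ is i.i.d., the classical central limit theorem gives $\sqrt{n}\Probn\tld\chi\rsquigs{P_{VX}}\normaldist(0,P_{VX}\tld\chi^2)$.

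\textbf{Empirical process term.} Under \Cref{priv:ass:consistent_nuisance}, \Cref{priv:lem:chi_eff_empprocess} gives $(\Probn-P_{VX})(\hat{\tld\chi}-\tld\chi)=\smallOPs{P_{VX}}{n^{-1/2}}$. The cross-fitting with $\mathcal{S}',\mathcal{S}''$ is what makes this lemma applicable, because $\hat{\tld\chi}$ is then independent of $\mathcal{S}$. The same lemma also gives $\expderivhat-\expderiv=\smallOPs{P_{VX}}{1}$.

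\textbf{Remainder.} I will apply \Cref{priv:thm:dr} with $\muX'=\muXhat$, $r'=\rieszXhat$, $p'_{V_{\mhit{2}}}(c)=\hat p_{V_{\mhit{2}}}(c)$, $\gammaV'(c)=\gammaVhat(c)$, $\expderiv''=\expderivhat$ and $\chi'=\chi(\hat P_{VX})$. This yields \eqref{priv:eq:bias_r}, which has three terms.
\begin{itemize}
\item The product term is $\smallOPs{P_{VX}}{n^{-1/2}}$ directly by \Cref{priv:ass:rates_nuisance}.
\item The quadratic term is $\bigOPs{P_{VX}}{n^{-1}}\cdot\bigOPs{P_{VX}}{1}$, since \Cref{priv:ass:rates_nuisance} gives $\gammaVhat(c)-\gammaV(c)=\bigOPs{P_{VX}}{n^{-1/2}}$ and the second-derivative bound.
\item The middle term is $(\gammaV(c)-\gammaVhat(c))\bigl(\tfrac{p_{V_{\mhit{2}}}(c)}{\hat p_{V_{\mhit{2}}}(c)}\expderivhat-\expderiv'\bigr)$. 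Its first factor is $\bigOPs{P_{VX}}{n^{-1/2}}$, so it suffices to show the second factor is $\smallOPs{P_{VX}}{1}$.
\end{itemize}

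\textbf{Main obstacle: the middle term.} I will split the second factor as
\begin{align*}
\fr{p_{V_{\mhit{2}}}(c)}{\hat p_{V_{\mhit{2}}}(c)}\expderivhat-\expderiv' = \Bigl(\fr{p_{V_{\mhit{2}}}(c)}{\hat p_{V_{\mhit{2}}}(c)}-1\Bigr)\expderivhat+(\expderivhat-\expderiv').
\end{align*}
The first piece is $\smallOPs{P_{VX}}{1}$: the ratio tends to one because $\hat p_{V_{\mhit{2}}}(c)\to p_{V_{\mhit{2}}}(c)>0$, and $\expderivhat=\expderiv+\smallOPs{P_{VX}}{1}$ is $\bigOPs{P_{VX}}{1}$. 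For the second piece, recall $\expderiv'=P_{VX}\partial_\gamma f(\cdot,\muXhat,\gammaVhat(c))$, so $\expderivhat-\expderiv'=(\Probn''-P_{VX})\partial_\gamma f(\cdot,\muXhat,\gammaVhat(c))$. Conditionally on $\mathcal{S}'$, this is a centred average of i.i.d.\ variables. Its conditional variance is at most $n^{-1}P_{VX}(\partial_\gamma f(\cdot,\muXhat,\gammaVhat(c)))^2$, which is $\bigOPs{P_{VX}}{n^{-1}}$ by \eqref{priv:eq:sq_continu} together with the consistency of $(\muXhat,\gammaVhat(c))$. Conditional Chebyshev then gives $\smallOPs{P_{VX}}{1}$. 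This conditioning step, and checking that \eqref{priv:eq:sq_continu} applies along the estimated nuisance sequence, is where I expect the most care to be needed.

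Collecting the three pieces, $\sqrt{n}R_n=\smallOPs{P_{VX}}{1}$, and Slutsky's lemma gives the stated limit $\normaldist(0,P_{VX}\tld\chi^2)$.
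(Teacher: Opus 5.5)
Your proposal is correct and follows the paper's proof. You use the decomposition \eqref{priv:eq:chi_eff_decomp}, \Cref{priv:lem:chi_eff_empprocess} for the empirical process term, and \Cref{priv:thm:dr} for the remainder, with the key extra observation that $\expderivhat-\expderiv'=(\Probn''-P_{VX})\partial_\gamma f(V,X,\muXhat,\gammaVhat(c))=\smallOPs{P_{VX}}{1}$. The only differences are in detail: the paper gets that fact by pointing back to the consistency argument for $\expderivhat$ in the lemma (a CLT for the fixed function plus \eqref{priv:eq:sq_continu} and \eqref{priv:eq:meansq_empproc}), whereas you use conditional Chebyshev directly and also spell out the $p_{V_{\mhit{2}}}(c)/\hat p_{V_{\mhit{2}}}(c)\to 1$ step that the paper leaves implicit.
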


Conveniently and expectedly, when $(X,V_{\mhit{1}})$ is discretely distributed, the same limit is achievable without  the use of independent samples $\mathcal{S},\mathcal{S}',\mathcal{S}''$. Indeed, the plug-in estimator $\chi(\hat P_{VX})$ is asymptotically efficient when we compute the estimators $(\muXhat,\gammaVhat)$ and $\chi(\hat P_{VX})$ on the same sample $\mathcal{S}$, provided some boundedness conditions hold.

\begin{proposition}[Asymptotic Efficiency of the Plug-in Estimator for Discrete $(X,V_{\mhit{1}})$]
\label{priv:prop:asymeff_plugin_discretecovar}
Suppose that $\Ximagesp$ and $\Vimagesp_{\mhit{1}}$ are finite, and define
\begin{align*}
\chi(\hat P_{VX})\ceq&\, \nsumn{i} f(V_i,X_i,\muXhat,\gammaVhat(c)), \\
\muXhat(v_{\mhit{1}},x)\ceq&\, \fr{1}{N_{v_{\mhit{1}}x}}\sumn{i}\indic{V_{\mhit{1}i}=v_{\mhit{1}}, X_i=x}m(V_i,X_i), \quad N_{v_{\mhit{1}}x}\ceq \sumn{i}\indic{V_{\mhit{1}i}=v_{\mhit{1}}, X_i=x}, \\
\gammaVhat(c) \ceq&\, \fr{1}{N_c}\sumn{i}\indic{V_{\mhit{2}i}=c} g(V_i,X_i),  \quad N_c\ceq \sumn{i}\indic{V_{\mhit{2}i}=c}.
\end{align*} 
Assume that $\supnorm{\muX}<\infty$ and that there exists an $\rieszXhat:\Vimagesp_1\times\Ximagesp \to\real$ such that $\supnorm{\rieszXhat-\rieszX}=\smallOPs{P_{VX}}{1}$, where $r$ is the Riesz representer of $\mu\mapsto \E f(V,X,\mu,\gammaV(c))$ $=\chi(P_{VX})$ as before. Further assume that for every fixed $\mu\in L_2(P_{V_{\mhit{1}}X})$ and for every $\gammaVtld(c)$ between $\gammaV(c)$ and $\gammaVhat(c)$, the bound conditions
\begin{align}
(\Probn-P_{VX})\partial_\gamma f(V,X,\mu,\gammaVtld(c))&=\bigOPs{P_{VX}}{1},\label{priv:eq:emproc_discrete_bound1}  \\
(\Probn-P_{VX})\partial_\gamma^2 f(V,X,\mu,\gammaVtld(c))&=\bigOPs{P_{VX}}{1},\label{priv:eq:emproc_discrete_bound2} \\
P_{VX}\partial_\gamma f(V,X,\muX,\gammaVtld(c))&=\bigOPs{P_{VX}}{1}, \label{priv:eq:emproc_discrete_bound3} \\
P_{VX}\partial_\gamma^2 f(V,X,\muX,\gammaVtld(c))&=\bigOPs{P_{VX}}{1}, \label{priv:eq:emproc_discrete_bound4} \\
P_{VX}\partial_\gamma^2 f(V,X,\muXhat,\gammaVtld(c))&=\bigOPs{P_{VX}}{1} \label{priv:eq:emproc_discrete_bound5}
\end{align}
hold. Then $\sqrt{n}(\chi(\hat P_{VX})-\chi(P_{VX}))\rsquigs{P_{VX}}\normaldist(0, P_{VX}\tld\chi^2)$ as $n\to\infty$.
\end{proposition}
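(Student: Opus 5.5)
The plan is to show that the plug-in estimator with cell-mean nuisances is asymptotically linear with influence function $\tld\chi$ in \eqref{priv:eq:eif_chi}, so that the central limit theorem gives the claim. Because $\Vimagesp_{\mhit{1}}\times\Ximagesp$ and $\Vimagesp_{\mhit{2}}$ are finite, every cell has positive probability with probability tending to one. On that event the cell means $\muXhat$ and $\gammaVhat(c)$ are exactly empirical averages, so that $\muXhat-\muX=\bigOPs{P_{VX}}{n^{-1/2}}$ in $\supnorm{\cdot}$ and $\gammaVhat(c)-\gammaV(c)=\bigOPs{P_{VX}}{n^{-1/2}}$.

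\textbf{Step 1: decomposition.} I would write
\[
\chi(\hat P_{VX})-\chi(P_{VX})=(\Probn-P_{VX})f(\cdot,\muXhat,\gammaVhat(c))+\{P_{VX}f(\cdot,\muXhat,\gammaVhat(c))-\chi(P_{VX})\}.
\]
For the first term I would Taylor-expand in $\gamma$ around $\gammaV(c)$, using \eqref{priv:eq:f_diff}. The derivative terms $(\Probn-P_{VX})\partial_\gamma f$ and $(\Probn-P_{VX})\partial^2_\gamma f$ are $\bigOPs{P_{VX}}{1}$ by \eqref{priv:eq:emproc_discrete_bound1}--\eqref{priv:eq:emproc_discrete_bound2}. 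Multiplied by $\gammaVhat(c)-\gammaV(c)=\bigOPs{P_{VX}}{n^{-1/2}}$, they are therefore $\smallOPs{P_{VX}}{n^{-1/2}}$, once one has the stronger $\smallOPs{P_{VX}}{1}$ convergence for the first derivative. That convergence comes from the same bound evaluated after scaling, or directly from the finite support, which makes $(\Probn-P_{VX})$ of any fixed function $\bigOPs{P_{VX}}{n^{-1/2}}$.

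What remains of the first term is $(\Probn-P_{VX})f(\cdot,\muXhat,\gammaV(c))$. By linearity \eqref{priv:eq:f_lin} this equals $(\Probn-P_{VX})f(\cdot,\muX,\gammaV(c))+(\Probn-P_{VX})f(\cdot,\muXhat-\muX,\gammaV(c))$. Since $\muXhat-\muX$ lives in a fixed finite-dimensional space (indicators of cells) with coefficients of order $\bigOPs{P_{VX}}{n^{-1/2}}$, the second piece is a finite sum of $\bigOPs{P_{VX}}{n^{-1/2}}$ coefficients times $\bigOPs{P_{VX}}{n^{-1/2}}$ empirical-process terms. Hence it is $\smallOPs{P_{VX}}{n^{-1/2}}$. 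This is where discreteness replaces sample splitting.

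\textbf{Step 2: bias term.} I would apply \Cref{priv:thm:dr} with $\muX'=\muXhat$, $\gammaV'(c)=\gammaVhat(c)$, $p'=p_{V_{\mhit{2}}}(c)$, $\expderiv''=\expderiv$, $\rieszX'=\rieszX$ and $\chi'=P_{VX}f(\cdot,\muXhat,\gammaVhat(c))$. The product term vanishes because $\rieszX'=\rieszX$. The theorem then gives
\begin{align*}
P_{VX}f(\cdot,\muXhat,\gammaVhat(c))-\chi(P_{VX})={}&-P_{VX}\rieszX(m-\muXhat)-P_{VX}\fr{\indic{V_{\mhit{2}}=c}}{p_{V_{\mhit{2}}}(c)}(g-\gammaVhat(c))\expderiv\\
&+(\gammaV(c)-\gammaVhat(c))(\expderiv-\expderiv')+\bigOPs{P_{VX}}{n^{-1}},
\end{align*}
where the last remainder uses \eqref{priv:eq:emproc_discrete_bound5}.

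Here $\expderiv-\expderiv'$ is $\smallOPs{P_{VX}}{1}$ by a further mean-value expansion, using \eqref{priv:eq:emproc_discrete_bound3}--\eqref{priv:eq:emproc_discrete_bound4} and linearity in $\mu$ with $\supnorm{\muXhat-\muX}\to0$. Next, $-P_{VX}\rieszX(m-\muXhat)=P_{VX}\rieszX(\muXhat-\muX)$. The key identity is that, on each cell, $\muXhat-\muX$ equals $(\Probn-P_{VX})[\indic{\text{cell}}(m-\muX)]/\hat p_{\text{cell}}$. It follows that $P_{VX}\rieszX(\muXhat-\muX)=\Probn[\rieszX(m-\muX)]+\smallOPs{P_{VX}}{n^{-1/2}}$, because $p_{\text{cell}}/\hat p_{\text{cell}}\to1$. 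The analogous identity for $\gammaVhat(c)$ gives $\Probn[\fr{\indic{V_{\mhit{2}}=c}}{p_{V_{\mhit{2}}}(c)}(g-\gammaV(c))]\expderiv+\smallOPs{P_{VX}}{n^{-1/2}}$. The hypothesis on $\rieszXhat$ is not even needed beyond guaranteeing that $\rieszX$ is finite on cells.

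\textbf{Step 3: collect.} Summing the pieces gives $\chi(\hat P_{VX})-\chi(P_{VX})=\Probn\tld\chi+\smallOPs{P_{VX}}{n^{-1/2}}$, and the central limit theorem concludes. The main obstacle is Step 1's handling of the empirical process at the estimated $\muXhat$, which is computed on the same sample, together with keeping track of the random intermediate points $\gammaVtld(c)$. I would handle both through the finite-dimensional reduction: write every function of $\muXhat$ as a fixed finite linear combination, via \eqref{priv:eq:f_lin}, and then treat the mean-value points using the uniform bounds assumed in \eqref{priv:eq:emproc_discrete_bound1}--\eqref{priv:eq:emproc_discrete_bound5}.
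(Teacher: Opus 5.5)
Your proof is correct, but it is organised differently from the paper's.

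\emph{The paper's route.} The paper never expands the plug-in directly. It introduces an auxiliary estimated influence function $\hat{\tld\chi}$, built from $\rieszXhat$, $\muXhat$, $\gammaVhat(c)$, $N_c/n$ and $\expderivhat$. An ``empirical tower property'' of the cell means gives $\Probn\hat{\tld\chi}=0$ exactly, so $\chi(\hat P_{VX})$ coincides with the one-step estimator. The paper then reruns the one-step analysis:
\begin{enumerate}[label=(\roman*)]
\item the CLT for $\Probn\tld\chi$;
\item the remainder $R_n$, controlled by \Cref{priv:thm:dr} through the product $P_{VX}(\rieszX-\rieszXhat)(\muX-\muXhat)$;
\item a same-sample empirical-process term $T_1+\dots+T_4$, handled cell by cell.
\end{enumerate}

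\emph{Your route.} You split the plug-in into $(\Probn-P_{VX})f(\cdot,\muXhat,\gammaVhat(c))$ plus a smooth functional of the cell means. You use \Cref{priv:thm:dr} only in the degenerate case $\rieszX'=\rieszX$, which is just a Taylor expansion. You then read off the correction terms of $\tld\chi$ from the exact identity $\muXhat-\muX=(\Probn-P_{VX})[\indic{\text{cell}}(m-\muX)]/\hat p_{\text{cell}}$ and its analogue for $\gammaVhat(c)$.

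\emph{What each buys.} The paper's route makes the TMLE-type reason for efficiency visible: the plug-in solves the efficient score equation. It also recycles the general one-step machinery. Yours is more elementary and shows exactly where each term of $\tld\chi$ comes from. It also correctly exposes that the hypothesis on $\rieszXhat$ is vacuous; one may take $\rieszXhat=\rieszX$ in the paper's argument too.

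\emph{One repair in Step 1.} The justification for the first-derivative empirical process being $\smallOPs{P_{VX}}{1}$ does not work as stated.
\begin{enumerate}[label=(\roman*)]
\item Only $(V_{\mhit{1}},X)$ and $V_{\mhit{2}}$ are finitely supported, not $(V,X)$; for example, $V$ may contain a continuous outcome.
\item Rescaling a fixed $\mu$ cannot upgrade the $\bigOPs{P_{VX}}{1}$ of \eqref{priv:eq:emproc_discrete_bound1} to $\smallOPs{P_{VX}}{1}$.
\end{enumerate}
Instead, set $\lambda_k\ceq\muX(k)$ and $\hat\lambda_k\ceq\muXhat(k)$ on the cells $k$. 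By linearity,
\[
\partial_\gamma f(\cdot,\muXhat,\gammaV(c))=\partial_\gamma f(\cdot,\muX,\gammaV(c))+\sum_k(\hat\lambda_k-\lambda_k)\,\partial_\gamma f(\cdot,\indic{k},\gammaV(c)).
\]
The law of large numbers (or the CLT) handles the fixed first function. For the sum, \eqref{priv:eq:emproc_discrete_bound1} gives $\bigOPs{P_{VX}}{1}$ for each cell term, and this is multiplied by $\hat\lambda_k-\lambda_k=\smallOPs{P_{VX}}{1}$. This is essentially how the paper treats its $T_3$. It relies on the same implicit integrability of $\partial_\gamma f(\cdot,\muX,\gammaV(c))$ that the paper's own proof uses.
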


Note that for particular forms of $f$, the conditions of \Cref{priv:prop:asymeff_plugin_discretecovar} are easier to verify. Namely, if $f(v,x,\mu,\gamma)$ factors as
$$f(v,x,\mu,\gamma)=f_1(v,x,\mu)f_2(\gamma)$$
where $\fr{\partial ^2 f_2}{\partial\gamma^2}$ exists and is continuous and $(v,x)\mapsto f_1(v,x,\mu)$ belongs to $L_2(P_{VX})$ for any $\mu\in L_2(P_{V_{\mhit{1}}X})$, then the boundedness conditions \eqref{priv:eq:emproc_discrete_bound1},\eqref{priv:eq:emproc_discrete_bound2},\eqref{priv:eq:emproc_discrete_bound3}, \eqref{priv:eq:emproc_discrete_bound4} are easily met by the standard central limit theorem and the continuous mapping theorem. For instance, this includes the average treatment effect on the treated in \Cref{priv:ex:att_dr}.

\subsection{Proofs}
\label{priv:app:sec:nonprivate_estimation:proofs}

In this section, we prove the results in \Cref{priv:app:sec:nonprivate_estimation:results}: \Cref{priv:lem:chi_eff_empprocess,priv:cor:eff_chihat,priv:prop:asymeff_plugin_discretecovar}.

\begin{proof}[Proof of \Cref{priv:lem:chi_eff_empprocess}]
Throughout, we apply that if a $P_{VX}$-random function $\hat q \in L_2(P_{VX})$ is independent of the random sample generating the process $\Probn$, then
\begin{align}
\label{priv:eq:meansq_empproc}
\begin{aligned}
\int(\hat{q}(v,x)-q(v,x))^2 \dderiv P_{VX}(v,x)=\smallOPs{P_{VX}}{1} \text{ implies } \\ 
 \sqrt{n}(\Probn-P_{VX})(\hat q -q)=\smallOPs{P_{VX}}{1};
\end{aligned}
\end{align}
which follows from Markov's inequality and the dominated convergence theorem.

By the definitions \eqref{priv:eq:eif_chi} and \eqref{priv:eq:eif_chi_hat},
\begin{align}
\hat{\tld\chi}(v,x)-\tld\chi(v,x) =&\, T_1(v,x)+T_2(v,x)+T_3(v,x)+T_4, \label{priv:eq:emproc_decomp} \\
T_1(v,x)\ceq&\, \rieszXhat(v_{\mhit{1}},x)(m(v,x)-\muXhat(v_{\mhit{1}},x)) \nonumber \\
&-\rieszX(v_{\mhit{1}},x)(m(v,x)-\muX(v_{\mhit{1}},x)), \nonumber  \\
T_2(v,x)\ceq&\, \fr{\indic{v_{\mhit{2}}=c}}{\hat p_{V_{\mhit{2}}}(c)}(g(v,x)-\gammaVhat(c))\expderivhat -\fr{\indic{v_{\mhit{2}}=c}}{p_{V_{\mhit{2}}}(c)}(g(v,x)-\gammaV(c))\expderiv, \nonumber \\
T_3(v,x)\ceq&\, f(v,x, \muXhat, \gammaVhat(c)) -f(v,x, \muX, \gammaV(c)), \nonumber  \\
T_4\ceq&\, -\chi(\hat P_{VX})+\chi(P_{VX}). \nonumber 
\end{align}
As $T_4$ is constant, not depending on $(v,x)$, $(\Probn-P_{VX})T_4=0$. It remains to show $(\Probn-P_{VX})T_j=\smallOPs{P_{VX}}{n^{-1/2}}$ for $j=1,2,3$ by the linearity of the process $\Probn-P_{VX}$.

\emph{Term }$T_1.\,\,$ Suppressing the arguments, write
\begin{align}
T_1=\rieszXhat(m-\muXhat)-\rieszX(m-\muX) &= (\rieszXhat-\rieszX+\rieszX)(m-\muXhat)-\rieszX(m-\muX) \\
&= (\rieszXhat-\rieszX)(m-\muXhat) + \rieszX(\muX-\muXhat).
\end{align}
By \Cref{priv:ass:consistent_nuisance}, $\supnorm{m-\muXhat}=\bigOPs{P_{VX}}{1}$; either directly by 
\eqref{priv:eq:consistency_bigobound_hat}, or by \eqref{priv:eq:consistency_bigobound} and \eqref{priv:eq:consistency_mu_supn}, noting that $\supnorm{m-\muXhat}\leq \supnorm{m-\muX}+\supnorm{\muX-\muXhat}=\bigOPs{P_{VX}}{1}+\smallOPs{P_{VX}}{1}=\bigOPs{P_{VX}}{1}$.
Then the $L_2(P_{V_{\mhit{1}}X})$-convergence \eqref{priv:eq:consistency_riesz_l2} of $\rieszX$ implies that $(\Probn-P_{VX})((\rieszXhat-\rieszX)(m-\muXhat))=\smallOPs{P_{VX}}{n^{-1/2}}$ by \eqref{priv:eq:meansq_empproc} as
\begin{align*}
P_{VX}((\rieszXhat-\rieszX)^2(m-\muXhat)^2) \\
\leq\supnorm{m-\muXhat}^2 P_{VX}(\rieszXhat-\rieszX)^2=\bigOPs{P_{VX}}{1}\smallOPs{P_{VX}}{1}=\smallOPs{P_{VX}}{1}
\end{align*}
since $\supnorm{|q|^2}=\supnorm{q}^2$. 

By \Cref{priv:ass:consistent_nuisance}, either \eqref{priv:eq:consistency_mu_supn}, or \eqref{priv:eq:consistency_mu_l2} and \eqref{priv:eq:consistency_riesz_bound} hold. In the former case,
\begin{align*}
P_{VX}(\rieszX^2(\muX-\muXhat)^2)\leq\supnorm{\muX-\muXhat}^2 P_{VX}\rieszX^2=\smallOPs{P_{VX}}{1},
\end{align*}
by \eqref{priv:eq:meansq_empproc} because $\rieszX\in L_2(P_{V_{\mhit{1}}X})$. In the latter case, letting $$B\ceq \set{(V_{\mhit{1}},X)\in \Vimagesp_{\mhit{1}}\times\Ximagesp: |r(V_{\mhit{1}},X)|>\ba R }$$ with complement $B^C$, we have
\begin{align*}
P_{VX}(\rieszX^2(\muX-\muXhat)^2) &= \int_{B^C} r^2(\muX-\muXhat)^2 \dderiv P_{VX}+ \int_{B} r^2(\muX-\muXhat)^2 \dderiv P_{VX} \\
&\leq \ba R^2 \norm{L_2(P_{V_{\mhit{1}X}})}{\muX-\muXhat}^2+0=\smallOPs{P_{VX}}{1},
\end{align*}
since by \eqref{priv:eq:consistency_riesz_bound}, $P_{VX}(B)=0$ and $\muXhat$ is $L_2(P_{V_{\mhit{1}X}})$-convergent by \eqref{priv:eq:consistency_mu_l2}. Thus, $(\Probn-P_{VX})(\rieszX(\muX-\muXhat))=\smallOPs{P_{VX}}{n^{-1/2}}$ by \eqref{priv:eq:meansq_empproc}. Conclude that  $(\Probn-P_{VX})T_1=\smallOPs{P_{VX}}{n^{-1/2}}$.

\emph{Term }$T_2.\,\,$ By the mean-value theorem there exists $(\gammaVtld(c), \tld p_{V_{\mhit{2}}}(c), \tld\expderiv)$ between 
\begin{align*}
(\gammaV(c), p_{V_{\mhit{2}}}(c), \expderiv) \text{ and } (\gammaVhat(c), \hat p_{V_{\mhit{2}}}(c), \expderivhat)
\end{align*}
such that
\begin{align*}
T_2(v,x)=&\,\fr{\indic{v_{\mhit{2}}=c}}{\hat p_{V_{\mhit{2}}}(c)}(g(v,x)-\gammaVhat(c))\expderivhat -\fr{\indic{v_{\mhit{2}}=c}}{p_{V_{\mhit{2}}}(c)}(g(v,x)-\gammaV(c))\expderiv \\
=&\,-\fr{\indic{v_{\mhit{2}}=c}}{\tld p_{V_{\mhit{2}}}(c)}\tld\expderiv (\gammaVhat(c)-\gammaV(c)) \\
&-\fr{\indic{v_{\mhit{2}}=c}}{\tld p_{V_{\mhit{2}}}(c)^2}(g(v,x)-\gammaVtld(c))\tld\expderiv(\hat p_{V_{\mhit{2}}}(c)-p_{V_{\mhit{2}}}(c)) \\
&+\fr{\indic{v_{\mhit{2}}=c}}{\tld p_{V_{\mhit{2}}}(c)}(g(v,x)-\gammaVtld(c))(\expderivhat-\expderiv).
\end{align*}
By the standard central limit theorem, $\sqrt{n}(\Probn-P_{VX})\indic{V_{\mhit{2}}=c}=\bigOPs{P_{VX}}{1}$. By the linearity of the process $\Probn-P_{VX}$,
\begin{align*}
\sqrt{n}(\Probn-P_{VX})\big[\indic{V_{\mhit{2}}=c}g(V,X)-\indic{V_{\mhit{2}}=c}\gammaVtld(c)\big] \\
= \sqrt{n}(\Probn-P_{VX})\indic{V_{\mhit{2}}=c}g(V,X) - \gammaVtld(c)\sqrt{n}(\Probn-P_{VX})\indic{V_{\mhit{2}}=c} \\
=(1-\gammaVtld(c))\bigOPs{P_{VX}}{1}=\bigOPs{P_{VX}}{1}
\end{align*} 
again by the standard central limit theorem and \eqref{priv:eq:consistency_gammaVhat}. Suppose that $\expderivhat-\expderiv=\smallOPs{P_{VX}}{1}$, which we show later. Then by \eqref{priv:eq:consistency_gammaVhat} and \eqref{priv:eq:consistency_phat}, $(\Probn-P_{VX})T_2=\smallOPs{P_{VX}}{n^{-1/2}}$.

\emph{Term }$T_3.\,\,$ Recall that $T_3(v,x)=f(v,x, \muXhat, \gammaVhat(c))-f(v,x, \muX, \gammaV(c))$. The continuity \eqref{priv:eq:sq_continu}, together with the consistency 
of $\gammaVhat$ and $\muXhat$ (\eqref{priv:eq:consistency_gammaVhat} and \eqref{priv:eq:consistency_mu_supn} or \eqref{priv:eq:consistency_mu_l2}) imply that $\int T_3(v,x)^2\dderiv P_{VX}(v,x)=\smallOPs{P_{VX}}{1}$ by the continuous mapping theorem. Conclude by \eqref{priv:eq:meansq_empproc}  that  $(\Probn-P_{VX})T_3=\smallOPs{P_{VX}}{n^{-1/2}}$.

\emph{Consistency of } $\expderivhat.\,\,$ By the definition of $\expderiv$ and $\expderivhat$,
\begin{align*}
\expderivhat-\expderiv =&\, \Probn''\partial_\gamma f(V,X,\muXhat,\gammaVhat(c)) - P_{VX}\partial_\gamma f(V,X,\muX,\gammaV(c))   \\
=&\, \Probn''\partial_\gamma f(V,X,\muXhat,\gammaVhat(c))-P_{VX}\partial_\gamma f(V,X,\muXhat,\gammaVhat(c)) \\
&+ P_{VX}\partial_\gamma f(V,X,\muXhat,\gammaVhat(c))- P_{VX}\partial_\gamma f(V,X,\muX,\gammaV(c)) \\
=&\, (\Probn''-P_{VX})\partial_\gamma f(V,X,\muXhat,\gammaVhat(c)) \\
&+ P_{VX}\big[\partial_\gamma f(V,X,\muXhat,\gammaVhat(c))-\partial_\gamma f(V,X,\muX,\gammaV(c))  \big] \\
=&\, (\Probn''-P_{VX})\partial_\gamma f(V,X,\muX,\gammaV(c)) \\
&+(\Probn''-P_{VX})\big[\partial_\gamma f(V,X,\muXhat,\gammaVhat(c))-\partial_\gamma f(V,X,\muX,\gammaV(c))\big] \\
&+ P_{VX}\big[\partial_\gamma f(V,X,\muXhat,\gammaVhat(c))-\partial_\gamma f(V,X,\muX,\gammaV(c))  \big].
\end{align*}
Here, the first term is $\bigOPs{P_{VX}}{n^{-1/2}}=\smallOPs{P_{VX}}{1}$ by the standard central limit theorem, and the second and third term are $\smallOPs{P_{VX}}{1}$ by the continuity \eqref{priv:eq:sq_continu} along the same arguments concerning $T_3$ above.
\end{proof}

\begin{proof}[Proof of \Cref{priv:cor:eff_chihat}]
Follows from \Cref{priv:lem:chi_eff_empprocess,priv:thm:dr}, noting that, for $\expderiv'$ in \eqref{priv:eq:derivhatprime},
$$\expderivhat-\expderiv'=(\Probn''-P_{VX})\partial_\gamma f(V,X,\muXhat,\gammaVhat(c))$$ is $\smallOPs{P_{VX}}{1}$ by the consistency proof of $\expderivhat$ in \Cref{priv:lem:chi_eff_empprocess}.
\end{proof}

\begin{proof}[Proof of \Cref{priv:prop:asymeff_plugin_discretecovar}]
We start with an auxiliary result. Define
\begin{align*}
\hat{\tld\chi}(v,x)\ceq&\,  \rieszXhat(v_{\mhit{1}},x)(m(v,x)-\muXhat(v_{\mhit{1}},x))+\fr{\indic{v_{\mhit{2}}=c}}{\hat p_{V_{\mhit{2}}}(c)}(g(v,x)-\gammaVhat(c))\hat\expderiv\nonumber \\
&+ f(v,x, \muXhat, \gammaVhat(c))-\chi(\hat P_{VX}) \\
\hat p_{V_{\mhit{2}}}(c)\ceq&\, N_c/n.
\end{align*} 
First we show that 
\begin{align*}
\Probn\hat{\tld\chi} =\Probn \left[\rieszXhat(V_{\mhit{1}},X)(m(V,X)-\muXhat(V_{\mhit{1}},X)\right] \\
+ \fr{\expderivhat}{\hat p_{V_{\mhit{2}}}(c)}\Probn \left[\indic{V_{\mhit{2}}=c}g(V,X)-\gammaVhat(c)\indic{V_{\mhit{2}}=c}\right] 
+\Probn\left[f(V,X,\muXhat,\gammaVhat(c))-\chi(\hat P_{VX}))\right]
\end{align*}
is zero. We apply an `empirical tower property' to the first term to get
\begin{align*}
\Probn \rieszXhat(V_{\mhit{1}},X)(m(V,X)-\muXhat(V_{\mhit{1}},X)) = \nsumn{i}\rieszXhat(V_{\mhit{1}i},X_i)(m(V_i,X_i)-\muXhat(V_{\mhit{1}i},X_i))  \\
= \fr{1}{n}\sum_{(v_{\mhit{1}}, x)\in \Vimagesp_{\mhit{1}}\times\Ximagesp}\sum_{i: (V_{\mhit{1}i},X_i)=(v_{\mhit{1}}, x)}\rieszXhat(V_{\mhit{1}i},X_i)(m(V_i,X_i)-\muXhat(V_{\mhit{1}i},X_i))  \\
= \fr{1}{n}\sum_{(v_{\mhit{1}}, x)\in \Vimagesp_{\mhit{1}}\times\Ximagesp} \left\{\rieszXhat(v_{\mhit{1}}, x) \vphantom{ \left[\left(\sum_{i: (V_{\mhit{1}i},X_i)=(v_{\mhit{1}}, x)}m(V_i,X_i)\right) -\left( \sum_{i: (V_{\mhit{1}i},X_i)=(v_{\mhit{1}}, x)}\muXhat(v_{\mhit{1}}, x)\right) \right]} \right. \\
\left. \times \left[\left(\sum_{i: (V_{\mhit{1}i},X_i)=(v_{\mhit{1}}, x)}m(V_i,X_i)\right) -\left( \sum_{i: (V_{\mhit{1}i},X_i)=(v_{\mhit{1}}, x)}\muXhat(v_{\mhit{1}}, x)\right) \right]\right\} \\
= \fr{1}{n}\sum_{(v_{\mhit{1}}, x)\in \Vimagesp_{\mhit{1}}\times\Ximagesp}\rieszXhat(v_{\mhit{1}}, x)\left[N_{v_{\mhit{1}}x}\muXhat(v_{\mhit{1}}, x)-N_{v_{\mhit{1}}x}\muXhat(v_{\mhit{1}}, x) \right] = 0.
\end{align*}
The second term satisfies
\begin{align*}
\Probn \left[\indic{V_{\mhit{2}}=c}g(V,X)-\gammaVhat(c)\indic{V_{\mhit{2}}=c}\right]&=\Probn\indic{V_{\mhit{2}}=c}g(V,X) - \gammaVhat(c)\Probn \indic{V_{\mhit{2}}=c} \\
&= N_c \gammaVhat(c)-\gammaVhat(c)N_c=0
\end{align*}
by the definition of $\gammaVhat(c), N_c$. The last term satisfies $$\Probn \left[f(V,X,\muXhat,\gammaVhat(c))-\chi(\hat P_{VX}))\right]=\Probn f(V,X,\muXhat,\gammaVhat(c))-\chi(\hat P_{VX})=0$$ by the definition of $\chi(\hat P_{VX})$ as $\chi(\hat P_{VX})$ is constant with respect to the empirical measure $\Probn$. Conclude that $\Probn\hat{\tld\chi}=0$.

Now we show asymptotic efficiency. Using that $\Probn \hat{\tld\chi}=0$, write
\begin{align*}
\sqrt{n}(\chi(\hat P_{VX})-\chi(P_{VX})) &= \sqrt{n}\Probn \tld\chi + \sqrt{n}(\Probn-P_{VX})(\hat{\tld\chi}-\tld\chi)+\sqrt{n} R_n, \\
R_n &= \chi(\hat P_{VX})-\chi(P_{VX}) + P_{VX}\hat{\tld\chi},
\end{align*}
for $\tld\chi$ in \eqref{priv:eq:eif_chi}. The first term satisfies $\sqrt{n}\Probn \tld\chi \rsquigs{P_{VX}}\normaldist(0, P_{VX}\tld\chi^2)$. Because the arguments in \Cref{priv:thm:dr} also apply when $(\muXhat,\gammaVhat)$ and $\chi(\hat P_{VX})$ are computed from the same sample $\mathcal{S}$, we have $\sqrt{n}R_n=\smallOPs{P_{VX}}{1}$. This follows from \eqref{priv:eq:chihat_biasdecomp} of \Cref{priv:thm:dr},  because $\muXhat$, $\gammaVhat(c)$ and $\hat p_{V_{\mhit{2}}}(c)$ are asymptotically normal, $\rieszXhat$ is consistent , $P_{VX}\partial_\gamma^2 f(V,X,\muXhat,\gammaVtld(c)=\bigOPs{P_{VX}}{1}$ by \eqref{priv:eq:emproc_discrete_bound5}, and, as we show at the end of this proof, $\expderivhat-\expderiv'=\smallOPs{P_{VX}}{1}$ for $\expderiv'=P_{VX}\partial_\gamma f(V,X,\muXhat,\gammaVhat(c))$. 

It remains to show $\sqrt{n}(\Probn-P_{VX})(\hat{\tld\chi}-\tld\chi)=\smallOPs{P_{VX}}{1}$. Because \Cref{priv:lem:chi_eff_empprocess} assumes that $(\muXhat,\gammaVhat)$ are computed from a sample independent from what generates $\Probn$, we need to adapt the arguments therein. As in \eqref{priv:eq:emproc_decomp}, write
\begin{align}
\hat{\tld\chi}(v,x)-\tld\chi(v,x) =&\, T_1(v,x)+T_2(v,x)+T_3(v,x)+T_4, \label{priv:eq:emproc_decomp_discrete} \\
T_1(v,x)\ceq&\, \rieszXhat(v_{\mhit{1}},x)(m(v,x)-\muXhat(v_{\mhit{1}},x)) \nonumber \\
&-\rieszX(v_{\mhit{1}},x)(m(v,x)-\muX(v_{\mhit{1}},x)), \nonumber  \\
T_2(v,x)\ceq&\,\fr{\indic{v_{\mhit{2}}=c}}{\hat p_{V_{\mhit{2}}}(c)}(g(v,x)-\gammaVhat(c))\expderivhat -\fr{\indic{v_{\mhit{2}}=c}}{p_{V_{\mhit{2}}}(c)}(g(v,x)-\gammaV(c))\expderiv, \nonumber \\
T_3(v,x)\ceq&\, f(v,x, \muXhat, \gammaVhat(c)) -f(v,x, \muX, \gammaV(c)), \nonumber  \\
T_4\ceq&\, -\chi(\hat P_{VX})+\chi(P_{VX}). \nonumber 
\end{align}
Again, $T_4$ being a constant under $\Probn-P_{VX}$, $\sqrt{n}(\Probn-P_{VX})T_4=0$, so we need show $(\Probn-P_{VX})T_j=\smallOPs{P_{VX}}{n^{-1/2}}$ for $j=1,2,3$. 

\emph{Term }$T_1.\,\,$ Write
\begin{align}
\sqrt{n}(\Probn-P_{VX})T_1(V,X) = \sqrt{n}(\Probn-P_{VX})\left[m(V,X)(\rieszXhat(V_{\mhit{1}},X)-\rieszX(V_{\mhit{1}},X))\right] \nonumber \\
+\sqrt{n}(\Probn-P_{VX})\left[\muXhat(V_{\mhit{1}},X)(\rieszXhat(V_{\mhit{1}},X)-\rieszX(V_{\mhit{1}},X))\right] \label{priv:eq:emproc_decomp_discrete_t1_2}  \\
+\sqrt{n}(\Probn-P_{VX})\left[\rieszX(V_{\mhit{1}},X)(\muX(V_{\mhit{1}},X)-\muXhat(V_{\mhit{1}},X))\right].  \label{priv:eq:emproc_decomp_discrete_t1_3} 
\end{align}
Because $\Vimagesp_\mhit{1}\times\Ximagesp$ is finite, we can write 
\begin{align*}
\rieszX(v_{\mhit{1}},x)&=\sum_{(\ba v_{\mhit{1}},\ba x)\in \Vimagesp_\mhit{1}\times\Ximagesp}\varrho_{\ba v_{\mhit{1}},\ba x}\indic{(\ba v_{\mhit{1}},\ba x)}(v_{\mhit{1}},x), \quad \varrho_{\ba v_{\mhit{1}},\ba x}\ceq \rieszX(\ba v_{\mhit{1}},\ba x), \\
\rieszXhat(v_{\mhit{1}},x)&=\sum_{(\ba v_{\mhit{1}},\ba x)\in \Vimagesp_\mhit{1}\times\Ximagesp}\hat\varrho_{\ba v_{\mhit{1}},\ba x}\indic{(\ba v_{\mhit{1}},\ba x)}(v_{\mhit{1}},x), \quad \hat\varrho_{\ba v_{\mhit{1}},\ba x}\ceq \rieszXhat(\ba v_{\mhit{1}},\ba x).
\end{align*}
But then
\begin{align*}
 \sqrt{n}(\Probn-P_{VX})\left[m(V,X)(\rieszXhat(V_{\mhit{1}},X)-\rieszX(V_{\mhit{1}},X))\right] \\
 =\sqrt{n}(\Probn-P_{VX})\left[m(V,X)\left(\sum_{(\ba v_{\mhit{1}},\ba x)\in \Vimagesp_\mhit{1}\times\Ximagesp}(\hat\varrho_{\ba v_{\mhit{1}},\ba x}-\varrho_{\ba v_{\mhit{1}},\ba x})\indic{(\ba v_{\mhit{1}},\ba x)}(V_{\mhit{1}},X)\right)\right]  \\
 = \sum_{(\ba v_{\mhit{1}},\ba x)\in \Vimagesp_\mhit{1}\times\Ximagesp}(\hat\varrho_{\ba v_{\mhit{1}},\ba x}-\varrho_{\ba v_{\mhit{1}},\ba x})\sqrt{n}(\Probn-P_{VX})\left[ m(V,X)\indic{(\ba v_{\mhit{1}},\ba x)}(V_{\mhit{1}},X)\right],
\end{align*}
which is $\smallOPs{P_{VX}}{1}$, because $\rieszXhat$ is consistent, $$\sqrt{n}(\Probn-P_{VX})\left[ m(V,X)\indic{(\ba v_{\mhit{1}},\ba x)}(V_{\mhit{1}},X)\right]=\bigOPs{P_{VX}}{1}$$ by the standard central limit theorem, and $|\Vimagesp_\mhit{1}\times\Ximagesp|$ is finite. The terms \eqref{priv:eq:emproc_decomp_discrete_t1_2}, \eqref{priv:eq:emproc_decomp_discrete_t1_3} can be handled similarly because $\supnorm{\muXhat}\leq \supnorm{\muXhat-\muX}+\supnorm{\muX}=\smallOPs{P_{VX}}{1}+\bigO{1}=\bigOPs{P_{VX}}{1}$ by assumption. Thus $\sqrt{n}(\Probn-P_{VX})T_1=\smallOPs{P_{VX}}{1}$.

\emph{Term }$T_2.\,\,$ Same arguments as in \Cref{priv:lem:chi_eff_empprocess} apply, yielding $\sqrt{n}(\Probn-P_{VX})T_2$ $=\smallOPs{P_{VX}}{1}$, because $\expderivhat-\expderiv=\smallOPs{P_{VX}}{1}$ --- which we show at the end of this proof ---, and $\gammaVhat(c),\hat p_{V_{\mhit{2}}}(c)$ are consistent. 

\emph{Term }$T_3.\,\,$ Write
\begin{align}
T_3(v,x)=&\,f(v,x,\muXhat,\gammaVhat(c))-f(v,x,\muX,\gammaV(c)) \nonumber \\
	=&\,f(v,x,\muXhat,\gammaVhat(c))-f(v,x,\muX,\gammaVhat(c)) \nonumber \\
	&+f(v,x,\muX,\gammaVhat(c))-f(v,x,\muX,\gammaV(c)). \label{priv:eq:emproc_decomp_discrete_t3_2}
\end{align}
As with $\rieszX$ before, represent $\muX(v_{\mhit{1}},x)=\sum_{(\ba v_{\mhit{1}},\ba x)\in \Vimagesp_\mhit{1}\times\Ximagesp}\lambda_{\ba v_{\mhit{1}},\ba x}\indic{(\ba v_{\mhit{1}},\ba x)}(v_{\mhit{1}},x)$ for some parameter $\lambda\in\real^{|\Vimagesp_\mhit{1}\times\Ximagesp|}$ with $\lambda_{\ba v_{\mhit{1}},\ba x}\ceq \muX(\ba v_{\mhit{1}},\ba x)$; similarly, write 
\begin{align*}
\muXhat(v_{\mhit{1}},x)&=\sum_{(\ba v_{\mhit{1}},\ba x)\in \Vimagesp_\mhit{1}\times\Ximagesp}\hat\lambda_{\ba v_{\mhit{1}},\ba x}\indic{(\ba v_{\mhit{1}},\ba x)}(v_{\mhit{1}},x), \\
\hat\lambda_{\ba v_{\mhit{1}},\ba x}&\ceq \muXhat(\ba v_{\mhit{1}},\ba x).
\end{align*}
By the linearity \eqref{priv:eq:f_lin} of $f$ and the mean value theorem,
\begin{align*}
\sqrt{n}(\Probn-P_{VX})\left[f(V,X,\muXhat,\gammaVhat(c))-f(V,X,\muX,\gammaV(c))\right] \\
=\sqrt{n}(\Probn-P_{VX})\left[f(V,X,\muXhat-\muX,\gammaVhat(c))\right] \\
=\sqrt{n}(\Probn-P_{VX})\left[\sum_{(\ba v_{\mhit{1}},\ba x)\in \Vimagesp_\mhit{1}\times\Ximagesp}(\hat\lambda_{\ba v_{\mhit{1}},\ba x}-\lambda_{\ba v_{\mhit{1}},\ba x}) f(V,X,\indic{\ba v_{\mhit{1}},\ba x},\gammaVhat(c))\right] \\
= \sum_{(\ba v_{\mhit{1}},\ba x)\in \Vimagesp_\mhit{1}\times\Ximagesp}(\hat\lambda_{\ba v_{\mhit{1}},\ba x}-\lambda_{\ba v_{\mhit{1}},\ba x})\sqrt{n}(\Probn-P_{VX})\left[f(V,X,\indic{\ba v_{\mhit{1}},\ba x},\gammaVhat(c))\right] \\
= \sum_{(\ba v_{\mhit{1}},\ba x)\in \Vimagesp_\mhit{1}\times\Ximagesp}(\hat\lambda_{\ba v_{\mhit{1}},\ba x}-\lambda_{\ba v_{\mhit{1}},\ba x})\sqrt{n}(\Probn-P_{VX})\left[f(V,X,\indic{\ba v_{\mhit{1}},\ba x},\gammaV(c))\right] \\
+\sum_{(\ba v_{\mhit{1}},\ba x)\in \Vimagesp_\mhit{1}\times\Ximagesp}(\hat\lambda_{\ba v_{\mhit{1}},\ba x}-\lambda_{\ba v_{\mhit{1}},\ba x})(\Probn-P_{VX})\left[\partial_\gamma f(V,X,\indic{\ba v_{\mhit{1}},\ba x},\gammaVtld(c))\right] \\
\times \sqrt{n}(\gammaVhat(c)-\gammaV(c))
\end{align*}
for some $\gammaVtld(c)$ between $\gammaV(c)$ and $\gammaVhat(c)$. But this is $\smallOPs{P_{VX}}{1}$ by the standard central limit theorem, and because $\sqrt{n}(\gammaVhat(c)-\gammaV(c))=\bigOPs{P_{VX}}{1}$, $\muXhat$ being consistent and $(\Probn-P_{VX})\left[\partial_\gamma f(V,X,\indic{\ba v_{\mhit{1}},\ba x},\gammaVtld(c))\right]=\bigOPs{P_{VX}}{1}$ by \eqref{priv:eq:emproc_discrete_bound1}. For the term \eqref{priv:eq:emproc_decomp_discrete_t3_2}, apply the mean-value theorem twice to get
\begin{align*}
T_{3,1}(v,x)\ceq f(v,x,\muX,\gammaVhat(c))-f(v,x,\muX,\gammaV(c)) \\
=(\gammaVhat(c)-\gammaV(c))\left\{\partial_\gamma f(v,x,\muX,\gammaV(c))+(\gammaVtld(c)-\gammaV(c))\partial_\gamma^2 f(v,x,\muX,\gammaVtld'(c))\right\}
\end{align*}
for some $\gammaVtld'(c)$ between $\gammaV(c)$ and $\gammaVhat(c)$. Thus $\sqrt{n}(\Probn-P_{VX})T_{3,1}=\smallOPs{P_{VX}}{1}$ by the standard central limit theorem, $\sqrt{n}$-consistency of $\gammaVhat(c)$, and stochastic boundedness \eqref{priv:eq:emproc_discrete_bound2} of the second derivative. Hence, $\sqrt{n}(\Probn-P_{VX})T_3=\smallOPs{P_{VX}}{1}$.

\emph{Consistency of } $\expderivhat.\,\,$  Write
\begin{align}
\expderivhat - \expderiv =&\, \Probn \partial_\gamma f(V,X,\muXhat,\gammaVhat(c))-P_{VX}\partial_\gamma
f(V,X,\muXhat,\gammaVhat(c)) \nonumber \\
=&\, (\Probn - P_{VX}) \partial_\gamma f(V,X,\muXhat,\gammaVhat(c)) \label{priv:eq:emproc_decomp_discrete_e_1} \\
&+ P_{VX}\left[ \partial_\gamma f(V,X,\muXhat,\gammaVhat(c))- \partial_\gamma f(V,X,\muX,\gammaV(c))\right]. \label{priv:eq:emproc_decomp_discrete_e_2}
\end{align}
As $\mu\mapsto \partial_\gamma f(v,x,\mu,\ba\gamma)$ is linear, \eqref{priv:eq:emproc_decomp_discrete_e_1} is $\smallOPs{P_{VX}}{1}$ along similar arguments as $T_3$ above; because we only need consistency, we only need the existence and stochastic boundedness \eqref{priv:eq:emproc_discrete_bound2} of the second derivative; no need for higher order derivatives. For the term \eqref{priv:eq:emproc_decomp_discrete_e_2}, write it as
\begin{align*}
P_{VX}\left[ \partial_\gamma f(V,X,\muXhat,\gammaVhat(c))- \partial_\gamma f(V,X,\muX,\gammaVhat(c))\right] \\
+P_{VX}\left[ \partial_\gamma f(V,X,\muX,\gammaVhat(c))- \partial_\gamma f(V,X,\muX,\gammaV(c))\right].
\end{align*}
By the linearity of $\mu\mapsto \partial_\gamma f(v,x,\mu,\gammaVhat(c))$, the first term here is $\smallOPs{P_{VX}}{1}$, following a mean-value expansion, by the consistency of $\muXhat$ and the assumed boundedness \eqref{priv:eq:emproc_discrete_bound4} of $P_{VX}\partial_\gamma f(V,X,\muX,\gammaVhat(c))$. The second term is $\smallOPs{P_{VX}}{1}$ by the same arguments under assumption \eqref{priv:eq:emproc_discrete_bound4} for $P_{VX}\partial_\gamma^2 f(V,X,\muX,\gammaVhat(c))$. Hence, $\expderivhat-\expderiv=\smallOPs{P_{VX}}{1}$. Finally, note that $\expderivhat-\expderiv'=\smallOPs{P_{VX}}{1}$ too as we claimed, because it is equal to \eqref{priv:eq:emproc_decomp_discrete_e_1}.
\end{proof}

\end{appendices}

\end{document}